\documentclass[10pt,a4paper,final]{article}
\usepackage[utf8]{inputenc}
\usepackage[lmargin=2.5cm,tmargin=2cm,bmargin=2cm,rmargin=2.5cm]{geometry}
\usepackage{amsfonts}
\usepackage{amssymb}
\usepackage{fix-cm}
\usepackage{amscd,amssymb,stmaryrd}
\usepackage{amsmath}
\usepackage{graphicx}
\usepackage{subfigure}
\usepackage{amscd,amstext}
\usepackage{hyperref}
\usepackage{color}
\usepackage{caption}
\usepackage{cite}
\usepackage{amsthm}
\usepackage{mathtools}
\usepackage{bigints}
\usepackage{amsrefs}
\usepackage{showlabels} 
\usepackage{enumitem}
\usepackage{mathrsfs}
\usepackage{empheq}

\usepackage{nameref}
\usepackage{esint}

\numberwithin{equation}{section}


\newcommand{\ds}{\displaystyle}

\newtheorem{theorem}{Theorem}[section]
\newtheorem{lemma}[theorem]{Lemma}

\newtheorem{definition}[theorem]{Definition}
\newtheorem{prop}[theorem]{Proposition}

\newtheorem*{theorem*}{Theorem}
\newtheorem*{lemma*}{Lemma}
\newtheorem*{conj*}{Conjecture}
\newtheorem*{corollary*}{Corollary}
\newtheorem*{proposition*}{Proposition}
\newtheorem{remark}[theorem]{Remark}
\newcommand{\rom}[1]{\uppercase\expandafter{\romannumeral #1\relax}}




\newcommand{\Z}{\mathbb{Z}}
\newcommand{\R}{\mathbb{R}}

\newcommand{\N}{\mathbb{N}}

\newcommand{\cH}{\mathcal{H}}

\newcommand{\Ga}{\alpha}
\newcommand{\Gb}{\beta}

\newcommand{\Ge}{\varepsilon}

\newcommand{\Gth}{\theta}

\newtheorem*{eg}{Example}

\DeclareMathOperator*{\Lip}{Lip}

\makeatletter
\newcommand{\labitem}[2]{%
\def\@itemlabel{\textbf{#1}}
\item
\def\@currentlabel{#1}\label{#2}}
\makeatother

\begin{document}

\begin{center}
	{\Large  On periodic solutions of the Benjamin-Bona-Mahony-Burgers equation}\\\vspace{0.25in} 
    Chun-Ho Lau \footnote[1]{Department of Mathematical Sciences, University of Cincinnati, Cincinnati, OH 45221, USA. Email: dchlau.math@gmail.com}
	Taige Wang \footnote[2]{Department of Mathematical Sciences, University of Cincinnati, Cincinnati, OH 45221, USA. Email: taige.wang@uc.edu}\ \ \ \
	   \vspace{0.06in}
\end{center}

\begin{abstract}
	In this paper, we would establish the existence and stability of periodic solutions to the Benjamin-Bona-Mahony-Burgers (BBM-Burgers) equation in $H^1_0([0, 1])$, whose medium interior is applied with time-periodic force $f(x, t)$ with period $\theta$. High regularity analysis has been conducted in Hilbert spaces $H^\ell, \ell>1$. We also consider periodic solution to same IBVP scenario of a pseudo-parabolic-regularized equation as an extension of the BBM-Burgers in $\cH^{\ell}, \ell=\{1, 2\}$. 
	
	\vskip .1 in \noindent {\it Mathematical Subject Classification 2010:}  35D05, 35K55, 35B10, 35Q93.\\
	\noindent{\it Keywords}:  Periodic solutions; dispersive equation; viscous Burgers term; existence; global stability; pseudo-parabolic
\end{abstract}
\section{Introduction}

\ In this paper, we first concern with the tempral-periodic solution of a BBM (Benjamin-Bona-Mahony) equation regularized by a viscous diffusion: 

\begin{equation}\label{BBM}
    u_t+u_x+uu_x-u_{xx}-u_{xxt}=f(x,t), 
\end{equation}

\noindent posed on a finite domain (a segment) $[0, 1]$ prescribed with homogeneous two-point boundary condition (Dirichlet boundary condition) and initial value: 

\begin{equation}\label{IBVP}
u(0,t)=u(1,t)=0,\,\,u(x, 0)=\phi(x).
\end{equation}

We also consider similar problem of solution to a pseudo-parabolic equation:

\begin{eqnarray}
\label{eqmain2}
u_t+u_x-u_{xx}-u_{xxt}+[F(u)]_x=[\Phi(u_x)]_x+(I-\partial_{xx})[G(u)]+f(x,t),  \quad (x, t)\in [0,1]\times [0, \infty)
\end{eqnarray} prescribed with same initial-boundary conditions (\ref{IBVP}). \\

Equation (\ref{BBM}) is BBM-Burgers equation. The original BBM equation was proposed in numerical fluid simulations in \cite{Peregrine} by Peregrine in 1960s and later systematically presented aligned with Korteweg-de Vries (KdV) equation in \cite{BBM} by Benjamin, Bona, and Mahony in 1970s. Both BBM and KdV models share same ground in proposing and are used to model the unidirectional propagation of small amplitude surface water waves formulated  by gravity; however, the KdV has longer identification in the past tracing back in 19th century by 
Boussinesq ({see e.g. his earliest papers on the model \cite{Bouss1, Bouss2} in 1871 and 1872}) and Korteweg and de Vries, respectively (see e.g. \cite{KDV} in 1895); we would also refer readers for related historical review to \cite{Whitham, Miura} by Whitham and Miura, respectively. \\

In this manuscript, we pursue the theory of periodic solution of the initial-boundary-value-problem (\ref{BBM}--\ref{IBVP}) in function space $H^\ell, \ell\ge 1$ on segment $[0,1]$: existence, uniqueness, and stability. The framework is to separating original equation into two auxiliary linear and nonlinear ones. Essentially, the nonlinear term in the equation is treated as a perturbation of the linear problem, and can be estimated by linear theory via interpolation. This approach has been used in \cite{BSZ3, BSZ5} by Bona, Sun, and Zhang on KdV equations posed on segment $[0, 1]$. The KdV reads

\begin{eqnarray*}
u_t + u_{xxx} + u_x + uu_x = f(x,t)
\end{eqnarray*}

\noindent with initial data $\phi$ but nonhomogeneous two-point boundary conditions

\begin{eqnarray*}
u(0,t)=h_1(t), u(1,t)=h_2(t), u_x(1,t)=h_3(t), 
\end{eqnarray*}

\noindent where data are restricted: $(\phi, h_1, h_2, h_3)\in H^s(0, 1)\times H^{s+1\over 3}(0, T)\times H^{s+1\over 3}(0, T)\times H^{s\over 3}(0, T)$ for time $T> 0$. \\

In \cite{BSZ3}, Bona, Sun and Zhang considered the linear KdV prescribed with same conditions:

\begin{eqnarray}\label{KDV-IBVP}
\begin{cases}
v_t + v_{xxx} + v_x = f(x,t)\\
v(x,0)=\phi(x),\\
v(0,t)=h_1(t), v(1,t)=h_2(t), v_x(1,t)=h_3(t). 
\end{cases}
\end{eqnarray}

Authors were able to formulate the solution map of (\ref{KDV-IBVP}): $(\phi, h_1, h_2, h_3)\mapsto v$, 

\begin{eqnarray*}
v(x, t)=W(t)\phi(x) +\int_0^t W(t-s)f(x,s)ds +  W_b(t)(h_1(t), h_2(t), h_3(t))
\end{eqnarray*}

\noindent where $W$ is the semigroup leading to mild solution and $W_b$ is vector form of the boundary integral operator mapping $(h_1, h_2, h_3)$. Local well-posedness on $t\in(0, T)$ in $H^\ell(0, 1), \ell\ge 0,$ can follow from this form. The nonlinear problem is a rewriting of linear IBVP (\ref{KDV-IBVP}) with perturbation $f=uu_x$. $u$'s existence in certain $H^\ell$ can be proved by using fixed point theorem, where the nonlinear term works as a perturbation of the linear problem. More generally and pragmatically, the half-line problem was considered (i.e., quarter plane of $(x, t): x\ge 0, t\ge 0$ referred in earliest paper \cite{BB} by Bona and Bryant, and \cite{BSZ1, BSZ4} by Bona, Sun, Zhang). In the laboratory experiment setting, one end of straight channel full of rest water is applied with mounting wavemaker (at $x=0$), which generates small-amplitude water wave with long wavelength ($u(0,t)=h_1(t)$ with such temporal-periodic $h_1$). The wave will propagate down in the channel to the right toward the other end (``$x=\infty$"), where water has steady state: $u=0$. It is observed that when the small-amplitude periodic force is thrown at $x=0$, the wave turns to become periodic in short term. Two-point IBVP provides a pragmatic approximation of half-line problems as the simulations are implemented on finite segment $[0, L]$. This problem was proposed first in \cite{BD} by Bona and Dougalis, and numerically implemented in \cite{BPS}. Comparison and connection between half-line and two-point boundary problems are studied in \cite{BCSZ} by Bona, Chen, Sun, Zhang. In their presentation, BBM's IBVP about $v(x,t)$ is similar to (\ref{IBVP}) but non-homogeneous at one boundary: $v(0, t)=h_1(t), v(L, t)\equiv 0. $ They proved that $\|u(\cdot, t)-v(\cdot, t)\|_{H^1(0, L)}\le \gamma(t)e^{-L\lambda}$ where $u$ is the solution of half-line problem; $\lambda\in(0,1)$ is selected so that $\gamma$ is a positive function depending on $h_1, \lambda$. Besides, if data $h_1$ is continuous, enlarge $L$ such that $L\rightarrow \infty$, it holds $v(x,t)\rightarrow u(x,t)$. This facilitates the simulation of two-point problem has prediction power on half-line one when $L$ is large. They also studied the connection between IBVP and whole line problem. In their later work \cite{BCSZ2}, the exact solution of IBVP has been given, and the comparison between solutions $v$ to whole line problem and $u_{LR}$ to IBVP is established. Two problems have same initial value $\phi\in H^1,$ but IBVP is posed on $[-L, R]$ with $u_{LR}(-L, t)=g(t), u_{LR}(R, t)=h(t)$ where $(g, h)$ is sufficiently small. It holds that for $\lambda\in(0,1)$, $\|u_{LR}(\cdot, t)-v(\cdot, t)\|_{H^1(-L,R)}\le c_1(t)e^{-\lambda\min\{L, R\}t+c_2t}$, in which $c_1(t)$ is an increasing function and $c_2$ is a constant related to $\phi, \lambda$.\\

It is noteworthy that there were generalized two-point boundary values or forcing problems. For instance, the following generalized boundary condition was studied by Bubnov \cite{Bubnov} on linearized KdV: 

\begin{eqnarray*}
&&\alpha_1 u_{xx}(0,t) + \alpha_2 u_x(0,t) + \alpha_3 u(0,t) = 0,\ \beta_1 u_{xx}(1, t)+\beta_2 u_x(1, t) + \beta_3 u(1, t)=0,\\ &&\gamma_1 u_x(1, t) + \gamma_2 u(1, t) = 0.
\end{eqnarray*}

\noindent Another example, in Bona and Dougalis \cite{BD}, nonhomogeneous two-point boundary condition $u(0, t)=g(t),\,\,u(1, t)=h(t)$ similar as those conditions in (\ref{KDV-IBVP}) for BBM-Burgers equation. Moreover, denote

\begin{eqnarray*}
f(x, t) = xg(t)+(1-x)h(t),
\end{eqnarray*} 

then the error $w=u - f$ satisfies a forced equation:

\begin{eqnarray*}
\begin{cases}  
w_t-w_{xxt}-w_{xx}+w_x + (fw)_x+ww_x = -f_t-ff_x-f_x,\\
w(0,t)=w(1,t)=0,
\end{cases}
\end{eqnarray*}

which leads to same original solution $u(x,t)$.\\

Specifically, analysis on temporally periodic solutions generated by external time-periodic force (forced oscillation) had been also studied via similar semigroup fashion (see, e.g. \cite{BSZ2} by Bona, Sun, Zhang on half-line, and \cite{UsmanZ1, UsmanZ2, WZ, GWX} by Usman, Zhang, and Wang et al, respectively on KdV, viscous Burgers equation, and a 2D hydrodynamical model posed in finite intervals). A further stability question related to large-time dynamics generated by the solutions is asked, due to the fact the water wave evolves to steadily periodic. We might summarize its answer as: \\

\begin{theorem}\label{th1-1}
The periodic solution is unique and globally stable in a phase space $H^\ell, \ell\ge 0$. That is, if the force is time-periodic, the force-generated surface wave turns into time-periodic flow.
\end{theorem}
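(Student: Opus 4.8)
\emph{Plan of proof.} The plan is to reduce \eqref{BBM}--\eqref{IBVP} to an abstract evolution equation and to run a three-step program: (i) prove exponential decay of the underlying linear flow; (ii) produce the periodic orbit as a fixed point of the period-$\Gth$ map; (iii) prove global attractivity by an energy estimate on the difference of two solutions. Applying $(I-\partial_{xx})^{-1}$, with the homogeneous Dirichlet condition built into its domain, rewrites the equation as $u_t = Au + B\big(f - uu_x\big)$, $u(0)=\phi$, where $B=(I-\partial_{xx})^{-1}$ and $A = B(\partial_{xx}-\partial_x)$; since $B$ gains two derivatives while $\partial_{xx}-\partial_x$ loses two, $A$ is a bounded operator on each $H^\ell_0([0,1])$, so $S(t)=e^{tA}$ is well defined and the mild formulation reads $u(t)=S(t)\phi+\int_0^t S(t-s)B\big(f(s)-u(s)u_x(s)\big)\,ds$.

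The first and decisive step is the linear decay estimate $\|S(t)\|_{\mathcal L(H^\ell_0)}\le C e^{-\Gg t}$ with $\Gg>0$. Testing \eqref{BBM} (first with $f=0$) against $u$ gives $\tfrac12\tfrac{d}{dt}\big(\|u\|_{L^2}^2+\|u_x\|_{L^2}^2\big)+\|u_x\|_{L^2}^2=0$; here the transport term contributes $\tfrac13\int(u^3)_x=0$, so the identity in fact holds for the \emph{full} equation. The one-dimensional Poincar\'e inequality on $[0,1]$ then turns the viscous dissipation $\|u_x\|_{L^2}^2$ into $\Gg\|u\|_{H^1}^2$, forcing uniform exponential decay at the $H^1$ level; the top-order estimates for $\ell>1$ follow by differentiating the equation and repeating the computation, the resulting nonlinear commutators being controlled by Moser-type product inequalities. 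As an immediate corollary, the \emph{linear} period map $\phi\mapsto S(\Gth)\phi+\int_0^\Gth S(\Gth-s)Bf(s)\,ds$ is an affine contraction on $H^\ell_0$, hence has a unique fixed point: the linear problem already possesses a unique $\Gth$-periodic solution that attracts every linear trajectory exponentially.

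For the nonlinear problem I would first use the $L^2$-type energy identity above (now with $f$ present) to obtain an absorbing ball in $H^\ell_0$ of radius controlled by $\|f\|$, so that the nonlinear solution map is globally defined; then I would run a Banach fixed-point argument for the period-$\Gth$ (Poincar\'e) map on a suitable closed ball of $C([0,\Gth];H^\ell_0)$ with matching endpoints, treating $uu_x$ as a perturbation. The contraction constant is $\|S(\Gth)\|_{\mathcal L(H^\ell_0)}$ plus a term controlled by the radius of the ball, hence by $\|f\|$, so for forcing in the admissible range Banach's theorem yields a unique $\Gth$-periodic solution $u^\ast$. Global stability then follows by setting $w=u-u^\ast$ for an arbitrary solution $u$: the function $w$ solves the equation with zero forcing and nonlinearity $\tfrac12\big((u+u^\ast)w\big)_x$, and the energy identity gives $\tfrac{d}{dt}\|w\|_{H^\ell}^2+2\|\partial_x w\|_{H^{\ell-1}}^2\le C\,\Psi\!\left(\|u\|_{H^\ell},\|u^\ast\|_{H^\ell}\right)\|w\|_{H^\ell}^2$; once the a priori bounds place $\|u\|$ and $\|u^\ast\|$ below the threshold at which $C\Psi$ is dominated by the dissipation (a smallness inherited from that of $f$), the Poincar\'e inequality yields $\|w(t)\|_{H^\ell}\le e^{-\Gd t}\|w(0)\|_{H^\ell}$. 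This simultaneously gives uniqueness of the periodic solution (two periodic solutions differ by an exponentially decaying, hence null, function) and its global attractivity, which is the content of the theorem.

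The pseudo-parabolic equation \eqref{eqmain2} is handled by the same three steps for $\ell\in\{1,2\}$: after applying $B$, the term $(I-\partial_{xx})[G(u)]$ becomes simply $G(u)$, while $[F(u)]_x$ and $[\Phi(u_x)]_x$ are absorbed as perturbations under the structural and growth hypotheses imposed on $F,\Phi,G$. I expect the main obstacle throughout to be closing the nonlinear energy estimates so that the viscous dissipation $\|u_x\|_{L^2}^2$ (and its higher-order analogues) strictly dominates the contribution of the transport term --- and, in the pseudo-parabolic case, of $F,\Phi,G$ --- since this is precisely what determines the admissible range of forcing amplitudes and is the only place where smallness is genuinely invoked; securing the uniform-in-time a priori bounds, the strict contraction constant, and the Moser-type estimates needed to pass from $\ell=1$ to general $\ell$ is the technical heart of the argument.
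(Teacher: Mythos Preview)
Your outline is essentially correct and tracks the paper's own strategy: the operator $A=(I-\Delta)^{-1}(\partial_{xx}-\partial_x)$ is exactly the generator the paper uses, the $H^1$ energy identity with $\int u^2u_x=0$ and Poincar\'e is the paper's Lemma~\ref{aprih1}, and the global-stability mechanism (absorbing ball from the full nonlinear energy estimate, then the linearized difference equation for $w=u-u^\ast$) is precisely the content of the paper's Section~5.2.

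There are two differences worth flagging. First, the paper does not run the contraction directly on the Poincar\'e map in a space of periodic functions; instead it fixes an arbitrary initial datum $\phi$, shows that $w(t)=u(t+\Gth)-u(t)$ satisfies a linear equation with variable coefficient $a=\tfrac12(u(\cdot+\Gth)+u)$ and decays exponentially (Lemma~\ref{aprieqap1}, Theorem~\ref{th2-2}), deduces that $\{u(\cdot,k\Gth)\}_k$ is Cauchy in $\cH^\ell$, and takes its limit $\widetilde\phi$ as the initial datum of the periodic solution (Theorem~\ref{th2-3}). Your Poincar\'e-map contraction is equivalent in spirit and arguably cleaner, but the paper's route has the advantage that asymptotic periodicity of \emph{every} trajectory is proved first and the periodic orbit falls out as a by-product. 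Second, to obtain the uniform-in-$T$ estimates needed for the fixed point and for the $Y^\ell_{\tau,T}$ bounds, the paper systematically splits $u=v+w$ with $v$ solving the linear forced problem and $w$ solving a nonlinear problem with zero data (Propositions~\ref{existsub1}--\ref{Prop4}); this decomposition is what makes the constants independent of $T$ and is more delicate than your single mild formulation suggests. Finally, note that the paper's rigorous statements (Theorems~\ref{th2-1}--\ref{th2-4}) are for $\ell\ge 1$, not $\ell\ge 0$: the Poincar\'e inequality and the algebra/product estimates you invoke need at least one derivative, so your plan as written does not cover $\ell\in[0,1)$ either.
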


It is equivalent to view the periodic solution as limit cycle on function space $H^\ell$. For the KdV problems, the periodic solution exists uniquely and the answer to the above question is yes in $H^\ell$ (see e.g \cite{BSZ2, UsmanZ2}). In half-line KdV, the zero-order dissipation term $\alpha u, \alpha > 0,$ must be present in the model to predict the nature of stability that it is observed in experiment in which water wave will be temporally-periodic in a short term, which necessitates the adding dissipation in modelling practice. In corresponding PDE analysis, if dissipation is added, the obtained stability is exponential, alike in the two-point boundary problem. Still in half-line problems of BBM and KdV, \cite{BW} by Bona and Wu discussed the necessity of introducing dissipation terms to stabilize periodic solutions and they found that the viscous term $-\nu u_{xx}$ is not strong as $\alpha u$. Roughly speaking, if only viscous Burgers term is in, the decay is algebraic; however, if $\alpha u$ instead of Burgers' term is in, the decay turns out to be an exponential decay.\\ 

It is sufficiently a historical physically interesting problem when one considers the whole line problems of long-wave models. The BBM can be considered being regularized more by Burgers term, as applying same fashion to ``regularize" the KdV. The related results about BBM-Burgers, KdV-Burgers, and viscous Burgers were discussed in early work \cite{Amick} by Amick, Bona, and Shonbek. On the BBM-Burgers, series of fundamental a priori estimates were prepared, which include the large-time decay behaviours. In the paper, authors already had similar observation on dissipation terms as \cite{BW}: 

$$\|u\|_{L^2}\lesssim t^{-1/4}, \,\,{\rm when}\,\, -\nu u_{xx}\,\,{\rm is\,\, added,}$$ 

while  

$$\|u\|_{L^2}\lesssim e^{-\alpha t},\,\,{\rm when}\,\, \alpha u\,\,{\rm is\,\,added.}$$ 

A later work \cite{GChen} by G. Chen et al proposed the free-vibration Cauchy problem ((\ref{eqmain2}) with $f\equiv 0$ on whole line), and reached existence and exponential stability in $H^1(\mathbb{R})$ as in \cite{Amick}. In model (\ref{eqmain2}) so-called {\it pseudo-parabolic}, there are more regularization terms: smooth functions $\Phi, G$ with respect to $u$, besides Burgers term. In particular, $F$ represents one of generalized nonlinear convection terms including $u^pu_x$, while $\Phi$'s and $G$'s derivative terms are built in to provide stabilization, which agrees with mixed effects of $\alpha u$ and $-\nu u_{xx}$. Close to model's dispersive origin, there are extended models related to theoretical and numerical aspects such as Sobolev–Galpern equation, and we would refer readers to  \cite{GChen} and references therein. \\

Aforementioned references \cite{BD, BL, BCSZ} on BBM equations inspired our current work on two-point boundary problem (\ref{BBM})-(\ref{IBVP}), and \cite{GChen} cushions the ground of the remaining of the manuscript on modified model: (\ref{eqmain2}). The framework we used is classic but extends their results in high function spaces not only limited in $H^1$:

\begin{itemize}
\item We establish the well-posedness and stability results in $H^\ell, \ell\in [1, \infty)$ with elaborate and detailed estimates on Bessel-potential norms of $H^\ell$, given $\phi\in H^\ell$ by probing in $H^2$ and $H^3$, compared to fundamental $H^1$ results seen in previous seminal works \cite{BD, BL, BCSZ} etc in this field.

\item Specifically, we reached the standard contractive semigroup results by using Phillips-Lumer Theorem in $H^2$, and merely energy estimates to reach similar result in $H^3$. We also point out demonstrated for $H^2, H^3$ results, the similar bootstrap argument works for arbitrary $\ell>3$, being similar to that of parabolic equations. 

\end{itemize}

Our paper is organized as follows: Section 2, where we present notations and main theorems of (\ref{BBM}) on existence and local (global) stability of temporally periodic solutions; Section 3, shows the estimates on linearized problem; Section 4 shows the nonlinear estimates and we are able to conclude proof of Theorem \ref{th2-1};  Section 5 addresses the stability of obtained periodic solution; Section 6 extends the discussion in $\cH^\ell, \ell = \{1, 2\}$ of temporally-periodic solution of a pseudo-parabolic version of BBM equation.

\section{Main results}

\ We have norm notations $\|\cdot\|_X$ endowed for standard norm of a classic Banach space $X$. In the following context, $X$ might be of Hilbert: $H^\ell$ or $\cH^\ell$, or that added with smoothing: $Y^\ell_{\tau, T}$, etc. We will have all theorems presented at the end of this section. \\

We also have the following holding through the entire paper:

\begin{itemize}
    \item $\partial_{xx}=\Delta.\,\,(I-\Delta)^{-1}:L^2(0,1)\mapsto L^2(0,1)$ is compact, given the homogeneous two-point boundary condition in (\ref{IBVP}). 
    \item $\|f\|_{\cH^s(0,1)}^2:=\|(I-\Delta)^{\frac{s}{2}}f\|_{L^2(0,1)}^2$.
    \item $\|f\|_{H^k(0,1)}= \sum_{i=0}^k \|\partial^i_xf\|_{L^2(0,1)}$. Note that $H^1_0(0,1)=\{f\in L^2(0,1): f_x\in L^2(0,1), \ f(0)=f(1)=0\}$ is equivalent to  $\cH^1(0,1)$. Also, for $H^1_0(0,1)\cap H^k(0,1)$, the norm $\|u\|_{H^1_0(0,1)\cap H^k(0,1)}:=\|(I-\Delta)^{\frac k2}u\|_{L^2(0,1)}\simeq \|u\|_{H^k(0,1)}$ for any $k\in\N$.
    \item The norm $H^{-1}(0,1)$ is defined to be that of $\cH^{-1}(0,1)$ and the dual $(H^1_0)^*\simeq H^{-1}$.
\end{itemize}

\begin{definition}
    For a dynamical system, we say $u(x,t)$ is locally stable, if $u(t)$ converges to a $\tilde u$ in Banach space $X$ as $t\rightarrow \infty$, when initial value $u_0$ is sufficiently close to $\tilde u$.\\


    We say $u(x,t)$ is globally stable, if $u(t)$ converges to a $\tilde u$ in Banach space $X$ no matter how far the initial value $u_0$ is from $\tilde u$.
\end{definition}

\begin{definition}
    We define the spaces 
    $$Y_{\tau,T}^\ell:=L^{\infty}([\tau,T+\tau]; \cH^\ell(0,1))\cap L^2([\tau,T+\tau]; \cH^\ell(0,1)).$$
    The corresponding norm is defined to be 
    
    \begin{align*}
        \|u\|_{Y^{\ell}_{\tau.T}}^2:=\sup_{\tau\leq t\leq T+\tau}\|u(\cdot,t)\|_{\cH^{\ell}(0,1)}^2+\int_{\tau}^{T+\tau}\|u(\cdot,s)\|_{\cH^{\ell}(0,1)}^2ds.
    \end{align*}
\end{definition}
\begin{remark}
    Throughout this paper, we will use
    $$\frac{1}{M}\|f\|_{H^1_0(0,1)}\leq \|f\|_{\cH^1(0,1)}\leq M\|f\|_{H^1_0(0,1)} $$
    and 
    $$\frac{1}{M}\|f\|_{H^i(0,1)}\leq \|f\|_{\cH^i(0,1)}\leq M\|f\|_{H^i(0,1)}$$
    for all $f\in H^1_0(0,1)\cap H^i(0,1)$ for $i=2,3$.

    Moreover, it can be seen that 
    \begin{align*}
        &\frac{1}{M^2}\bigg(\sup_{\tau\leq t\leq T+\tau}\|u(\cdot,t)\|_{H^{\ell}(0,1)}^2+\int_{\tau}^{T+\tau}\|u(\cdot,s)\|_{H^{\ell}(0,1)}^2ds\bigg)\\
        &\quad \leq \sup_{\tau\leq t\leq T+\tau}\|u(\cdot,t)\|_{\cH^{\ell}(0,1)}^2+\int_{\tau}^{T+\tau}\|u(\cdot,s)\|_{\cH^{\ell}(0,1)}^2ds\\
        &\leq M^2\bigg(\sup_{\tau\leq t\leq T+\tau}\|u(\cdot,t)\|_{H^{\ell}(0,1)}^2+\int_{\tau}^{T+\tau}\|u(\cdot,s)\|_{H^{\ell}(0,1)}^2ds\bigg)
    \end{align*}
\end{remark}

We state our main theorems on (\ref{BBM}) as follows:

\begin{theorem}\label{th2-1}
Let $T, \tau>0$, and $\ell\in [1,\infty)$.
\begin{enumerate}
    \item If $\phi\in \cH^\ell(0,1)$, $f\in L^2([0,\infty);\cH^{\ell-2}(0,1))$, and $\|\phi\|_{\cH^i(0,1)}^2+\|f\|^2_{L^2([0,\infty);\cH^{\ell-2}(0,1))}$ is sufficiently small, then there exists a unique solution $u$ to the equation \eqref{eqmain} and a constant $C>0$ independent of $T$ and $\tau$ such that 
    $$\|u\|_{Y^{\ell}_{0,T}}\leq C(\|\phi\|_{\cH^\ell(0,1)}^2+\|f\|^2_{L^2([0,\infty);\cH^{\ell-2}(0,1))})^{\frac{1}{2}},$$
    and 
    $$\|u\|_{Y^i_{\tau,T}}\leq C(\|\phi\|_{\cH^\ell(0,1)}^2+\|f\|_{L^2([0,\infty);\cH^{\ell-2}(0,1))}^2)^{\frac{1}{2}}.$$
    \item  If $\phi\in \cH^\ell(0,1)$, $f\in L^\infty([0,\infty);\cH^{\ell-2}(0,1))$, and $\|\phi\|_{\cH^\ell(0,1)}^2+\|f\|^2_{L^\infty([0,\infty);\cH^{\ell-2}(0,1))}$ is sufficiently small, then there exists a unique solution $u$ to the equation \eqref{eqmain} and a constant $C>0$ independent of $\tau$ (but dependent on $T$) such that 
    $$\|u\|_{Y^{\ell}_{0,T}}\leq C(\|\phi\|_{\cH^\ell(0,1)}^2+\|f\|^2_{L^\infty([0,\infty);\cH^{\ell-2}(0,1))})^{\frac{1}{2}},$$
    and 
    $$\|u\|_{Y^\ell_{\tau,T}}\leq C(\|\phi\|_{\cH^\ell(0,1)}^2+\|f\|_{L^\infty([0,\infty);\cH^{\ell-2}(0,1))}^2)^{\frac{1}{2}}.$$
\end{enumerate}
\end{theorem}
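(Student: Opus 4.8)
\textbf{Proof proposal for Theorem \ref{th2-1}.}

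The plan is to treat the nonlinear BBM-Burgers equation \eqref{BBM} as a perturbation of its linearization $v_t+v_x-v_{xx}-v_{xxt}=g$ with data \eqref{IBVP}, and to run a contraction-mapping argument on the space $Y^\ell_{0,T}$. First I would record the linear solution operator: writing the equation as $(I-\Delta)v_t=-v_x+v_{xx}+g$, the operator $A:=(I-\Delta)^{-1}(\Delta-\partial_x)$ generates a $C_0$-semigroup $W(t)$ on each $\cH^\ell(0,1)$ (for $\ell=2$ via the Lumer--Phillips theorem as announced in the introduction, for $\ell=1,3$ via energy estimates), so the mild solution is $v(t)=W(t)\phi+\int_0^t W(t-s)(I-\Delta)^{-1}g(\cdot,s)\,ds$. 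The essential input, which I would take from the linear estimates of Section 3, is the inequality
$$\|v\|_{Y^\ell_{0,T}}\le C_0\big(\|\phi\|_{\cH^\ell(0,1)}+\|g\|_{L^2([0,\infty);\cH^{\ell-2}(0,1))}\big),$$
with $C_0$ independent of $T$ and $\tau$; the $\cH^{\ell-2}$ loss of two derivatives on the forcing is exactly the gain of the $(I-\Delta)^{-1}$ smoothing, and the uniformity in $T$ comes from the dissipative $-v_{xx}$ term, which produces the $L^2$-in-time control built into the $Y^\ell$ norm.

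Next I would set up the fixed-point map. For $u\in Y^\ell_{0,T}$ with $\|u\|_{Y^\ell_{0,T}}\le R$, define $\Lambda(u)$ to be the solution of the linear problem with forcing $g=f-uu_x=f-\tfrac12(u^2)_x$. The key nonlinear estimate I need is
$$\|(u^2)_x\|_{L^2([0,T];\cH^{\ell-2}(0,1))}=\|u^2\|_{L^2([0,T];\cH^{\ell-1}(0,1))}\lesssim \|u\|_{Y^\ell_{0,T}}^2,$$
and similarly $\|u^2-w^2\|_{L^2([0,T];\cH^{\ell-1})}\lesssim (\|u\|_{Y^\ell}+\|w\|_{Y^\ell})\|u-w\|_{Y^\ell}$. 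For $\ell\ge 1$ one has $\cH^{\ell-1}\subset \cH^0=L^2$ when $\ell\le 1$ and otherwise one uses that $\cH^\ell(0,1)$ is a Banach algebra for $\ell>1/2$ together with the embedding $\cH^\ell\hookrightarrow L^\infty$; the $L^\infty_tL^\infty_x$ bound on one factor and the $L^2_tL^2_x$ (or $L^2_t\cH^{\ell-1}_x$) bound on the other, both controlled by $\|u\|_{Y^\ell}$, give the quadratic bound. Combining with the linear estimate yields $\|\Lambda(u)\|_{Y^\ell_{0,T}}\le C_0(\|\phi\|_{\cH^\ell}+\|f\|_{L^2\cH^{\ell-2}}+C_1R^2)$ and a contraction factor $\le 2C_0C_1R$; choosing $R$ comparable to $C_0(\|\phi\|_{\cH^\ell}^2+\|f\|_{L^2\cH^{\ell-2}}^2)^{1/2}$ and then imposing the smallness hypothesis on $\|\phi\|_{\cH^\ell}^2+\|f\|_{L^2\cH^{\ell-2}}^2$ makes $\Lambda$ a contraction on the ball of radius $R$ in $Y^\ell_{0,T}$. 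Its unique fixed point is the desired solution, and the first displayed bound of part (1) is precisely $\|u\|_{Y^\ell_{0,T}}\le R$. Uniqueness in the full space (not just the small ball) follows by a Gronwall argument on the difference of two solutions, absorbing the nonlinear term using the a priori bound. The estimate on $Y^i_{\tau,T}$ for $\tau>0$ follows by translating the time origin to $\tau$, using $u(\cdot,\tau)\in\cH^\ell$ with $\|u(\cdot,\tau)\|_{\cH^\ell}\le R$ as new initial data and the tail $\|f\|_{L^2([\tau,\infty);\cH^{\ell-2})}$ as forcing, then reapplying the same linear estimate with constant independent of $\tau$.

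For part (2), the forcing lies only in $L^\infty_t\cH^{\ell-2}$, so on the finite window $[0,T]$ I would use $\|f\|_{L^2([0,T];\cH^{\ell-2})}\le T^{1/2}\|f\|_{L^\infty([0,\infty);\cH^{\ell-2})}$ and repeat the argument of part (1) verbatim; this is why the constant $C$ is now allowed to depend on $T$ (through this $T^{1/2}$ factor and the corresponding smallness threshold) but remains independent of $\tau$, the translation argument going through unchanged. I expect the main obstacle to be twofold: first, proving the $T$-uniform linear estimate in $Y^\ell$ for the non-integer range $\ell\in[1,\infty)$ — this requires interpolating between the $\ell=1,2,3$ cases and checking that the dissipation genuinely controls the $L^2$-in-time norm uniformly, i.e. that the semigroup $W(t)$ decays (or at least stays bounded with integrable-in-time corrections) on $\cH^\ell$; and second, the nonlinear estimate when $\ell$ is close to $1$, where $\cH^{\ell-1}$ has negative or small positive order and the product $uu_x$ must be estimated in a negative-order space, forcing one to use duality or the precise fractional Leibniz rule rather than the algebra property. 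Everything else is a routine Picard iteration once those two estimates are in hand.
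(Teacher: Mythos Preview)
Your direct fixed-point on $u$ is a legitimate alternative to the paper's decomposition $u=v+w$, where $v$ solves the linear problem with data $(\phi,f)$ and $w$ absorbs the nonlinearity with zero initial data (Propositions~\ref{existsub1}--\ref{Prop4}); the paper runs the contraction on $w$ rather than on $u$. Both routes rest on the same two ingredients you name --- the $T$-uniform linear estimate in $Y^\ell$ and the bilinear bound (the paper's Lemma~\ref{bilinearest}) --- and your argument for part~(1) is correct. The paper's splitting pays dividends later, because it isolates the quadratically small piece $\|w\|_{Y^\ell_{\tau,T}}\lesssim \sup_t\|v\|_{Y^\ell_{t,T}}^2$ (Proposition~\ref{Prop4}), which is exactly what drives the asymptotic-periodicity arguments in Section~5; your direct approach is cleaner if Theorem~\ref{th2-1} is all one wants.

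There is, however, a genuine gap in your treatment of part~(2). You propose to use the crude bound $\|f\|_{L^2([0,T];\cH^{\ell-2})}\le T^{1/2}\|f\|_{L^\infty}$ and then ``repeat the argument of part~(1) verbatim,'' including the translation step that feeds $\|u(\cdot,\tau)\|_{\cH^\ell}\le R$ back as new initial data. But that translation step in part~(1) worked only because the contraction constant was independent of $T$: the bound $\|u\|_{Y^\ell_{0,T}}\le R$ then holds for \emph{every} $T$ with the \emph{same} $R$, whence $\sup_{t\ge0}\|u(\cdot,t)\|_{\cH^\ell}\le R$. Once you insert the factor $T^{1/2}$, the ball radius becomes $R_T$, and $\|u\|_{Y^\ell_{0,T}}\le R_T$ controls $\|u(\cdot,\tau)\|$ only for $\tau\le T$; for $\tau>T$ a naive restart produces a bound that grows geometrically in $\lfloor\tau/T\rfloor$. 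The paper closes this gap by using the exponential decay of the semigroup more carefully: for $f\in L^\infty_t$ one has $\|v(\cdot,\tau)\|_{\cH^\ell}\le e^{-c\tau}\|\phi\|_{\cH^\ell}+C\|f\|_{L^\infty_t\cH^{\ell-2}}$ uniformly in $\tau$ (Remark~\ref{rmkforfinLinfty}), and then an iteration on $w_k=w(kT)$ with contraction factor $\xi=e^{-cT}+(\text{small})<1$ (Proposition~\ref{Prop4}) yields the $\tau$-uniform control on $w$. In your direct framework the analogous fix is available: replace the crude $T^{1/2}$ step by the sharper Duhamel bound $\int_0^t e^{-c(t-s)}\|f(s)\|_{\cH^{\ell-2}}\,ds\le c^{-1}\|f\|_{L^\infty}$, run the contraction first in $L^\infty([0,\infty);\cH^\ell)$ alone to obtain $\sup_{t\ge0}\|u(\cdot,t)\|_{\cH^\ell}\le R$ with $R$ independent of $T$, and only afterwards pick up the $L^2_t$ piece over the finite window $[\tau,\tau+T]$ --- that is where the $T$-dependence of $C$ legitimately enters, and it is harmless for $\tau$-uniformity. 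Without this refinement the $\tau$-independence claimed in part~(2) is unproved.
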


\begin{theorem}\label{th2-2}
If the conditions in 2 of Theorem \ref{th2-1} hold, and $f$ has temporal-period $\theta$, then the solution $u(x,t)$ has asymptotic temporal-periodicity, i.e., given a positive $T$ and initial time point $\tau,$ there exist positive constants $C$ and $\rho$ such that 
\begin{equation}
\|u(\cdot, \cdot+\theta)-u(\cdot,\cdot)\|_{Y^{\ell}_{\tau, T}}\le C_{\ell}\exp(-\rho\tau)\|u(\cdot,\Gth)-u(\cdot,0)\|_{\cH^\ell(0,1)},
\end{equation}
where $\ell\in [1,\infty)$ and the constant $C_{\ell}$ is independent of $\tau$.
\end{theorem}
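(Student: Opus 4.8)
\emph{Proof plan.} The plan is to reduce the statement to an exponential-decay estimate for the difference of two nearby solutions. Set $w(x,t):=u(x,t+\theta)-u(x,t)$. Because $f$ has period $\theta$, inserting $u(\cdot,\cdot+\theta)$ and $u$ into \eqref{BBM} and subtracting the two identities cancels the forcing, and a short computation (using $u(t+\theta)u_x(t+\theta)-u(t)u_x(t)=\partial_x(uw+\tfrac12 w^2)$) shows that $w$ solves the homogeneously forced problem
\begin{equation}\label{eq-w-plan}
w_t-w_{xxt}-w_{xx}+w_x+\partial_x\!\left(u\,w+\tfrac12 w^2\right)=0,\qquad w(0,t)=w(1,t)=0,
\end{equation}
with $w(\cdot,0)=u(\cdot,\theta)-u(\cdot,0)$. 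Under the hypotheses of part 2 of Theorem \ref{th2-1}, ranging the bound $\|u\|_{Y^\ell_{\tau,T}}\le C\delta$ over all $\tau\ge 0$ gives $\sup_{t\ge 0}\|u(\cdot,t)\|_{\cH^\ell}\le C\delta$ with $\delta$ small; the same holds for the time shift $u(\cdot,\cdot+\theta)$, so \eqref{eq-w-plan} is a linear BBM--Burgers equation for $w$ perturbed by a term with uniformly-in-time small coefficients, and everything hinges on showing that the dissipation $-w_{xx}$ together with the Dirichlet condition forces $w$ to decay exponentially.

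For $\ell=1$ I would pair \eqref{eq-w-plan} with $w$: the terms $w_t-w_{xxt}$ give $\tfrac12\frac{d}{dt}\|w\|_{\cH^1}^2$, the term $-w_{xx}$ gives $+\|w_x\|_{L^2}^2$, $w_x$ contributes nothing, and the nonlinearity integrates by parts to $\int_0^1 u\,w\,w_x\,dx$, bounded by $\|u\|_{L^\infty}\|w\|_{L^2}\|w_x\|_{L^2}\le C\delta\,\|w\|_{L^2}\|w_x\|_{L^2}$ after the one-dimensional embedding $\cH^1\hookrightarrow L^\infty$. Taking $\delta$ small enough to absorb $\tfrac12\|w_x\|_{L^2}^2$ on the left and invoking the Poincaré inequality $\|w\|_{L^2}\le C_P\|w_x\|_{L^2}$ (legitimate since $w$ vanishes at both endpoints) produces a differential inequality $\frac{d}{dt}\|w\|_{\cH^1}^2\le -2\rho\,\|w\|_{\cH^1}^2$ for some $\rho>0$, hence $\|w(\cdot,t)\|_{\cH^1}\le e^{-\rho t}\|w(\cdot,0)\|_{\cH^1}$. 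Substituting this into the definition of $\|w\|_{Y^1_{\tau,T}}$ — the supremum over $[\tau,T+\tau]$ contributes $e^{-2\rho\tau}\|w(\cdot,0)\|_{\cH^1}^2$ and the time integral at most $\tfrac1{2\rho}e^{-2\rho\tau}\|w(\cdot,0)\|_{\cH^1}^2$ — yields the asserted inequality with $C_1=(1+\tfrac1{2\rho})^{1/2}$, independent of $\tau$ (and of $T$).

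For general $\ell\in[1,\infty)$ I would carry out the same scheme at the $\cH^\ell$ level. The cleanest route is via Duhamel: write \eqref{eq-w-plan} as $w_t=(I-\Delta)^{-1}(\Delta-\partial_x)w-(I-\Delta)^{-1}\partial_x(uw+\tfrac12 w^2)$, use the exponentially decaying linear estimate $\|e^{tA}g\|_{\cH^\ell}\le M e^{-\rho t}\|g\|_{\cH^\ell}$ coming from the contraction/energy analysis of Section 3 (the Phillips--Lumer argument in $H^2$ and the energy argument in $H^3$, extended by the paper's bootstrap to all $\ell$), note that $(I-\Delta)^{-1}\partial_x$ gains one derivative and that $\cH^\ell$ is a multiplicative algebra for $\ell\ge 1$, so the product estimates of Section 4 give $\|(I-\Delta)^{-1}\partial_x(uw+\tfrac12 w^2)\|_{\cH^\ell}\le C(\|u\|_{\cH^\ell}+\|w\|_{\cH^\ell})\|w\|_{\cH^\ell}\le C\delta\,\|w\|_{\cH^\ell}$; a Gr\"onwall estimate on $e^{\rho t}\|w(\cdot,t)\|_{\cH^\ell}$ then gives $\|w(\cdot,t)\|_{\cH^\ell}\le M e^{-(\rho-C\delta)t}\|w(\cdot,0)\|_{\cH^\ell}$, and the $Y^\ell_{\tau,T}$ bound follows exactly as in the $\ell=1$ case. (Alternatively, one can bootstrap directly with energy identities for $\partial_x^\ell$ of \eqref{eq-w-plan}, mirroring the $H^2,H^3$ computations of the paper.) The main obstacle is exactly this higher-order step: controlling the nonlinear interaction $\partial_x(uw+\tfrac12 w^2)$ in $\cH^\ell$ while tracking the boundary contributions that appear in the integrations by parts (the reason the linear $H^2$ theory needs Phillips--Lumer rather than a bare energy identity), and confirming that the merely second-order dissipation $-w_{xx}$ — not a zeroth-order damping $\alpha w$ — still yields a genuine spectral gap at every regularity level, so that the smallness of $\delta$ is enough to close the exponential decay.
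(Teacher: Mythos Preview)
Your proposal is correct and reaches the same conclusion as the paper, but the mechanics differ in two notable ways.

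First, the paper rewrites the nonlinearity more cleanly: instead of $\partial_x(uw+\tfrac12 w^2)$ it uses $u(t+\theta)u_x(t+\theta)-u(t)u_x(t)=(aw)_x$ with $a=\tfrac12(u(\cdot,t+\theta)+u(\cdot,t))$, so the equation for $w$ is \emph{genuinely linear} in $w$ with a small coefficient $a$ (Lemma~\ref{aprieqap1}). Your formulation is equivalent, but you then need the extra observation that $\|w\|_{\cH^\ell}$ is itself small (as the difference of two small solutions) to close the estimate; the paper's linear form avoids this.

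Second, and more substantively, the paper does \emph{not} run a continuous Gr\"onwall on $e^{\rho t}\|w(t)\|_{\cH^\ell}$. Instead it performs a discrete iteration: set $w_k=w(kT)$, use Duhamel over one window $[(k-1)T,kT]$ together with the a~priori bound $\|w\|_{Y^\ell_{(k-1)T,T}}\le C\|w_{k-1}\|_{\cH^\ell}$ to get $\|w_k\|_{\cH^\ell}\le \mu\,\|w_{k-1}\|_{\cH^\ell}$ with $\mu=e^{-cT}+\mathrm{(small)}<1$, then iterate to $\|w_k\|_{\cH^\ell}\le \mu^k\|w(0)\|_{\cH^\ell}$ and interpolate for $\tau\in((k-1)T,kT)$. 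For $\ell=3$ the paper repeats this discrete scheme on the energy identity for $w_{xxx}$ rather than using the semigroup directly. Your continuous Gr\"onwall argument is shorter and treats all $\ell$ uniformly once the semigroup decay $\|e^{At}\|_{\cH^\ell\to\cH^\ell}\le Me^{-\rho t}$ is available from Section~3; the paper's discrete iteration is heavier but matches the Poincar\'e-map machinery reused later in proving Theorem~\ref{th2-3}, so the same lemmas do double duty. Your direct $\ell=1$ energy computation is more elementary than either route and is a nice simplification at that level.
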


\begin{theorem}\label{th2-3}
If the conditions in 2 of Theorem \ref{th2-1} hold, $f$ has period $\theta$ in time, then IBVP (\ref{BBM})--(\ref{IBVP}) possesses a time-periodic solution with period $\theta$ in $\cH^{\ell}(0,1)$, provided that the initial data  $\|\phi\|^2_{\cH^\ell(0,1)}+\|f\|^2_{L^\infty([0,\infty);\cH^{\ell-2}(0,1))}$ is even smaller (depends on $\max\{T,\theta\}$). And this time-periodic solution exhibits local stability. \end{theorem}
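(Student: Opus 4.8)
\textbf{Proof proposal for Theorem \ref{th2-3}.}

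The plan is to construct the time-periodic solution as a fixed point of the period-$\theta$ solution map (the Poincar\'e map) acting on initial data, and then to identify its stability with the contraction estimate of Theorem \ref{th2-2}. First I would set up the Poincar\'e map $\cP:\phi\mapsto u(\cdot,\theta)$, where $u$ is the solution furnished by Theorem \ref{th2-1}(2) on $[0,\theta]$; the hypotheses of Theorem \ref{th2-1}(2) guarantee that, for a sufficiently small closed ball $B_r\subset\cH^\ell(0,1)$, the map $\cP$ is well-defined on $B_r$ and the a priori bound $\|u\|_{Y^\ell_{0,\theta}}\le C(\|\phi\|_{\cH^\ell}^2+\|f\|^2)^{1/2}$ together with the trace inequality $\|u(\cdot,\theta)\|_{\cH^\ell}\le \|u\|_{Y^\ell_{0,\theta}}$ shows $\cP(B_r)\subseteq B_r$ once the smallness threshold is tightened in terms of $\max\{T,\theta\}$ (this is the ``even smaller'' clause in the statement, needed so that the output radius does not exceed the input radius).

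Next I would observe that a fixed point $\phi_\ast=\cP(\phi_\ast)$ yields, by uniqueness in Theorem \ref{th2-1}(2) and the time-translation invariance of the equation under $f\mapsto f(\cdot,\cdot+\theta)=f$, a solution satisfying $u(\cdot,t+\theta)=u(\cdot,t)$ for all $t\ge 0$, i.e.\ a genuine $\theta$-periodic solution in $\cH^\ell(0,1)$. To produce the fixed point I would apply the contraction mapping principle: taking two data $\phi,\psi\in B_r$, the difference $w=u-v$ of the corresponding solutions solves the same BBM-Burgers equation with $f\equiv 0$ and with the nonlinearity linearized around $u,v$; the linear estimates of Section 3 combined with the nonlinear (perturbative) estimates of Section 4 give $\|w\|_{Y^\ell_{0,\theta}}\le \kappa\,\|w(\cdot,0)\|_{\cH^\ell}$ with $\kappa<1$ provided $r$ (hence the data) is small enough, so $\|\cP\phi-\cP\psi\|_{\cH^\ell}\le\kappa\|\phi-\psi\|_{\cH^\ell}$. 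Hence $\cP$ has a unique fixed point in $B_r$, which is the desired periodic solution; its uniqueness in the small ball follows immediately.

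Finally, local stability is essentially a restatement of Theorem \ref{th2-2}. Given initial data $\phi$ close to $\phi_\ast$, let $u$ be the corresponding solution and $u_\ast$ the periodic one; then $u(\cdot,\cdot+\theta)-u(\cdot,\cdot)$ measures the deviation from periodicity, and iterating the estimate of Theorem \ref{th2-2} over successive period-windows $[\tau,\tau+\theta]$, $[\tau+\theta,\tau+2\theta],\dots$ gives geometric decay of $\|u(\cdot,\cdot+\theta)-u(\cdot,\cdot)\|_{Y^\ell_{n\theta,T}}$ in $n$, whence $u(\cdot,t)\to u_\ast(\cdot,t)$ in $\cH^\ell(0,1)$ as $t\to\infty$ by a Cauchy/telescoping argument on $\sum_n\|u(\cdot,\cdot+(n+1)\theta)-u(\cdot,\cdot+n\theta)\|$; comparing the limit with the periodic solution constructed above (again by uniqueness) identifies it as $u_\ast$. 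I expect the main obstacle to be bookkeeping the smallness thresholds: one must check that the single radius $r$ can be chosen simultaneously small enough to make $\cP$ a self-map of $B_r$, to make it a contraction, and to keep the solution within the regime where the Section 4 nonlinear estimates and the Theorem \ref{th2-2} decay rate $\rho$ are valid — and that this choice depends only on $\max\{T,\theta\}$ and not on $\tau$, as claimed.
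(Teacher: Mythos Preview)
Your Poincar\'e-map strategy is essentially equivalent to the paper's argument, just packaged differently: the paper picks any admissible $\phi$, uses Theorem \ref{th2-2} to show that $u_k:=u(\cdot,k\theta)$ is Cauchy in $\cH^\ell$, sets $\widetilde\phi:=\lim_k u_k$, and then verifies that the solution launched from $\widetilde\phi$ is $\theta$-periodic. That is nothing but Picard iteration for your map $\cP$, so the two routes coincide in substance. For local stability the paper is more direct than your telescoping detour: it applies the decay estimate directly to $w=u-\widetilde u$, which solves \eqref{eqmainperi1} with $a=\tfrac12(u+\widetilde u)$ and $\psi=\phi-\widetilde\phi$, yielding $\|u-\widetilde u\|_{Y^\ell_{\tau,T}}\le C e^{-\rho\tau}\|\phi-\widetilde\phi\|_{\cH^\ell}$ in one stroke.

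There is, however, a real slip in your contraction step. You assert $\|w\|_{Y^\ell_{0,\theta}}\le\kappa\,\|w(\cdot,0)\|_{\cH^\ell}$ with $\kappa<1$, but this is impossible: by definition $\|w\|_{Y^\ell_{0,\theta}}\ge\sup_{0\le t\le\theta}\|w(\cdot,t)\|_{\cH^\ell}\ge\|w(\cdot,0)\|_{\cH^\ell}$, so any such $\kappa$ satisfies $\kappa\ge1$. What is actually needed---and what the paper proves inside the proof of Theorem \ref{th2-2}---is the \emph{endpoint} estimate
\[
\|w(\cdot,\theta)\|_{\cH^\ell}\;\le\;\Big(e^{-c\theta}+\tfrac{C'}{\sqrt{2c}}\sup_{t>0}\|a\|_{Y^\ell_{t,\theta}}\Big)\,\|w(\cdot,0)\|_{\cH^\ell}\;=:\;\mu\,\|w(\cdot,0)\|_{\cH^\ell},
\]
obtained by writing $w(\theta)=e^{A\theta}w(0)-\int_0^\theta e^{A(\theta-s)}(I-\Delta)^{-1}(aw)_x\,ds$, using the semigroup decay $e^{-c\theta}$ on the first piece, and bounding the integral by $C\|a\|_{Y^\ell_{0,\theta}}\|w\|_{Y^\ell_{0,\theta}}\le C'\|a\|_{Y^\ell_{0,\theta}}\|w(\cdot,0)\|_{\cH^\ell}$ via Lemma \ref{bilinearest} and Lemma \ref{aprieqap1} (the latter giving $\|w\|_{Y^\ell_{0,\theta}}\le C\|w(\cdot,0)\|_{\cH^\ell}$ with a constant that is \emph{not} below $1$). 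Smallness of the data then forces $\mu<1$, and $\cP$ is a genuine contraction on $\cH^\ell$. With this correction your plan goes through; just be aware that the $Y^\ell$-bound from ``Section~3 plus Section~4'' alone cannot deliver a contraction constant below $1$---the semigroup factor $e^{-c\theta}$ at the endpoint is what does the work.
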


\begin{theorem}\label{th2-4}
If $\sup_{t\geq 0}\|f(\cdot,t)\|_{\cH^{\ell-2}(0,1)}$ is sufficiently small (with no restriction on $\|\phi\|_{\cH^{\ell}}$), then the periodic solution $\tilde u(x,t)$ obtained from Theorem \ref{th2-3} for equation (\ref{BBM}) is globally stable in $\cH^{\ell}(0,1)$. That is, any other non-temporally-periodic $u(x,t)$ will exponentially decay towards temporally periodic $\tilde u(x,t)$ in $\mathcal{H}$ other than $L^2$ as $t\rightarrow \infty.$ Additionally, this periodic solution is (globally) unique. 
\end{theorem}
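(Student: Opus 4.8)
The plan is to argue by an energy/Gronwall comparison between the periodic solution $\tilde u$ obtained in Theorem \ref{th2-3} and an arbitrary solution $u$ of (\ref{BBM})--(\ref{IBVP}) driven by the same force $f$, with $w:=u-\tilde u$. Writing the equation for $w$, one finds
$$w_t - w_{xxt} - w_{xx} + w_x + \tfrac12\big((u+\tilde u)w\big)_x = 0, \qquad w(0,t)=w(1,t)=0,$$
since the forcing cancels. The key point of the statement is that smallness is now imposed only on $\sup_{t\ge0}\|f(\cdot,t)\|_{\cH^{\ell-2}}$, with \emph{no} restriction on $\|\phi\|_{\cH^\ell}$; so unlike Theorem \ref{th2-2}, I cannot assume $\|u\|_{Y^\ell}$ is globally small. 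I would therefore first record that, by the a priori estimates behind Theorem \ref{th2-1} (the dissipative structure of the $-u_{xx}$ term together with $-u_{xxt}$), any solution $u$ satisfies a uniform-in-time bound $\sup_{t\ge T_0}\|u(\cdot,t)\|_{\cH^\ell}\le K$ depending on $f$ (and, for $t$ in an initial layer, on $\phi$); the dissipation forces the $\cH^\ell$-norm into an absorbing ball whose radius is controlled by $\sup_t\|f\|_{\cH^{\ell-2}}$ alone. Likewise $\tilde u$ lies in that ball for all $t$.

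Next I would run the energy estimate for $w$ at the $\cH^1$ level first. Pairing the $w$-equation with $w$ (using $(I-\partial_{xx})$-type test functions to respect the $\cH^1$ inner product) gives
$$\tfrac{d}{dt}\|w\|_{\cH^1}^2 + 2\|w_x\|_{L^2}^2 = -\big\langle \big((u+\tilde u)w\big)_x , w\big\rangle + (\text{lower-order transport terms}),$$
and the convection term is estimated by $\|u+\tilde u\|_{L^\infty}\|w_x\|_{L^2}\|w\|_{L^2}$ plus similar, bounded via Sobolev embedding $\cH^1\hookrightarrow L^\infty$ by $C K \|w_x\|_{L^2}\|w\|_{\cH^1}$. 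Here is where the smallness of $f$ enters decisively: because $K=K(\sup_t\|f\|_{\cH^{\ell-2}})$ can be made small, the bad term is absorbed into the dissipation $2\|w_x\|_{L^2}^2$, and a Poincaré inequality on $[0,1]$ converts the remaining dissipation into $-\rho\|w\|_{\cH^1}^2$. This yields $\frac{d}{dt}\|w\|_{\cH^1}^2 \le -\rho\|w\|_{\cH^1}^2$, hence $\|w(\cdot,t)\|_{\cH^1}\le e^{-\rho(t-T_0)/2}\|w(\cdot,T_0)\|_{\cH^1}\to 0$. For $\ell>1$ I would bootstrap: differentiate, pair $\partial_x^k w$ against the equation, use the already-established exponential decay in lower norms and the uniform bound $K$ to close the higher-order estimate, exactly the parabolic-type bootstrap the authors flag after Theorem \ref{th2-1}; interpolation handles non-integer $\ell\in[1,\infty)$.

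Finally, global uniqueness of the periodic solution is a corollary: if $\tilde u_1,\tilde u_2$ are both $\theta$-periodic solutions, the estimate above gives $\|\tilde u_1(\cdot,t)-\tilde u_2(\cdot,t)\|_{\cH^\ell}\le Ce^{-\rho t}\|\tilde u_1(\cdot,0)-\tilde u_2(\cdot,0)\|_{\cH^\ell}$ for all $t\ge0$; but the left side is $\theta$-periodic in $t$, hence bounded below along $t=n\theta$ by its value at $0$, while the right side $\to0$, forcing $\tilde u_1\equiv\tilde u_2$. I expect the main obstacle to be the first step: establishing the \emph{uniform-in-time absorbing-ball bound} for arbitrary (large) initial data with constant depending only on $\sup_t\|f\|_{\cH^{\ell-2}}$, since without it the convection term cannot be absorbed into the dissipation. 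This requires carefully exploiting the $-u_{xx}$ dissipation in the BBM-Burgers structure (the $u_{xxt}$ term contributes to the "energy" $\|u\|_{\cH^1}^2$ but not to dissipation, so the Burgers term must supply all the damping), together with the fact that on the bounded interval $[0,1]$ with Dirichlet data the Poincaré constant is explicit; this is precisely the mechanism highlighted in the introduction by which the viscous term, though weaker than a zero-order damping $\alpha u$, still yields exponential decay in the \emph{bounded-domain} setting.
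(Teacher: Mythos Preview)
Your plan is essentially the paper's: establish an absorbing-ball estimate $\|u(t)\|_{\cH^\ell}\le Ce^{-ct}\|\phi\|_{\cH^\ell}+C'\delta$ valid for arbitrary $\phi$, wait until the solution enters the small ball, then use smallness of $u+\tilde u$ to close the $w$-estimate. The one ingredient you should name explicitly is the identity $\int_0^1 u\cdot uu_x\,dx=\tfrac13\int_0^1(u^3)_x\,dx=0$ (from the Dirichlet conditions), which is precisely what makes the $\cH^1$ absorbing estimate close with \emph{no} restriction on $\|\phi\|_{\cH^1}$: the nonlinear term drops out of the energy balance entirely, leaving pure dissipation plus Poincar\'e. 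Without this cancellation the ``dissipative structure'' you invoke would not by itself control large data, so this is the linchpin of the step you correctly flag as the main obstacle.

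After the absorbing ball, the paper shortcuts your direct $w$-energy argument by simply invoking the already-proved local stability (Theorem~\ref{th2-3}) starting from the time $t_0$ at which $\|u(t_0)\|_{\cH^\ell}$ is small; your direct route is of course equivalent. For the bootstrap to $\ell>1$ the paper applies $(I-\Delta)^{\ell/2}$ to the rewritten equation $(I-\Delta)u_t+(I-\Delta)u=-u_x+u+\tfrac12(u^2)_x+f$ and uses that $\cH^{\ell-1}$ is an algebra when $\ell\ge 2$, rather than differentiating in $x$; either method works.
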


\section{A priori estimates}
In the following sections before Section 6, we consider the initial-boundary-value problem (IBVP) of BBM-Burgers equation:

\begin{equation}\label{eqmain}
\begin{cases}
    u_t+u_x+uu_x-u_{xx}-u_{xxt}=f(x,t),  \quad (x, t)\in [0,1]\times [0, \infty),&\\
    u(x,0)=\phi(x) &\\
    u(0,t)=u(1,t)=0.&
    \end{cases}
\end{equation}
\subsection{A simplified linear problem}
Firstly, consider a simplified linear problem:

\begin{equation} \label{eq1}
\begin{cases}
    u_t+u_x-u_{xx}-u_{xxt}=0, \quad\quad (x, t)\in [0,1]\times [0, \infty)&\\
    u(x,0)=\phi(x), &\\
    u(0,t)=u(1,t)=0. &
\end{cases}
\end{equation}
The solution to \eqref{eq1} can be written as 
\begin{align*}
    (I-\Delta)u_t&=-u_x+u_{xx}\\
    u_t &= (I-\Delta)^{-1}(-u_x+u_{xx}-u+u)=-(I-\Delta)^{-1}\partial_xu-u+(I-\Delta)^{-1}u,
\end{align*}

therefore, 

\begin{equation*}
 u(t)=e^{At}\phi,    
\end{equation*}

where the generator $A$ is defined by $A\psi=-(I-\Delta)^{-1}\partial_x\psi-\psi+(I-\Delta)^{-1}\psi$ and  the domain of $A$ is $H^1_0(0,1)$.

To start estimates on linear problem (\ref{eq1}), we first consider the operator $A$ and its generated $C_0$-semigroup. It holds the following lemma:

\begin{lemma}
    The operator $A$ is bounded on $\cH^\ell(0,1)$ for all $\ell\in \R$, and is dissipative on $\cH^1(0,1)$ and $\cH^2(0,1)$. Moreover, the operator $A$ generates a $C_0$-semigroup and there exists $c>0$ such that 
    $$\|e^{At}\|_{\cH^1(0,1)\mapsto \cH^1(0,1)}\leq e^{-ct}, \quad \|e^{At}\|_{H^1_0(0,1)\mapsto H^1_0(0,1)}\leq Me^{-ct},$$
  and 
  
    $$ \|e^{At}\|_{\cH^2(0,1)\mapsto \cH^2(0,1)}\leq e^{-c''t}, \quad \|e^{At}\|_{H^1_0(0,1)\cap H^2(0,1)\mapsto H^1_0(0,1)\cap H^2(0,1)}\leq M'e^{-ct}. $$
    As a consequence, for any $\ell\in(1,2)$,  there exists $c_{\ell}>0$ such that the interpolation of semigroup holds:
    
     $$ \|e^{At}\|_{\cH^{\ell}(0,1)\mapsto \cH^{\ell}(0,1)}\leq e^{-c_{\ell}t}. $$
\end{lemma}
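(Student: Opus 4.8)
My approach rests on the algebraic identity $A=(I-\Delta)^{-1}(\partial_{xx}-\partial_x)$, obtained from the defining formula for $A$ by writing $-\psi=(I-\Delta)^{-1}(\psi_{xx}-\psi)$; equivalently $A=\big((I-\Delta)^{-1}-I\big)-(I-\Delta)^{-1}\partial_x$. The first bracket acts as multiplication by $-k^2\pi^2/(1+k^2\pi^2)\in(-1,0)$ in the Dirichlet sine eigenbasis of $-\Delta$ on $[0,1]$, hence is bounded on $\cH^\ell(0,1)$ for every $\ell\in\R$; the second summand is an operator of order $-1$, since $w:=(I-\Delta)^{-1}\psi_x$ solves the elliptic ODE $w-w_{xx}=\psi_x$, $w(0)=w(1)=0$, and testing against $w$ (and differentiating the equation for the higher scales) yields $\|w\|_{\cH^{s+1}}\lesssim\|\psi\|_{\cH^{s}}$, so $A$ is bounded on each $\cH^\ell$. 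A bounded operator generates a uniformly continuous — hence $C_0$ — semigroup $e^{At}=\sum_{n\ge 0}t^nA^n/n!$, so the generation claim is automatic; alternatively it follows from the Lumer--Phillips theorem once dissipativity is in hand, since $\lambda I-A$ is boundedly invertible (Neumann series) for $\lambda>\|A\|$ and so has full range.

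For $\cH^1(0,1)$ I would use $\langle f,g\rangle_{\cH^1}=\langle (I-\Delta)f,g\rangle_{L^2}$ together with $(I-\Delta)A\psi=\psi_{xx}-\psi_x$. Two integrations by parts using $\psi(0)=\psi(1)=0$ give $\langle\psi_{xx},\psi\rangle_{L^2}=-\|\psi_x\|_{L^2}^2$ and $\mathrm{Re}\,\langle\psi_x,\psi\rangle_{L^2}=\tfrac12[\,|\psi|^2\,]_0^1=0$, so $\mathrm{Re}\,\langle A\psi,\psi\rangle_{\cH^1}=-\|\psi_x\|_{L^2}^2\le 0$; Poincar\'e's inequality $\|\psi\|_{L^2}^2\le\pi^{-2}\|\psi_x\|_{L^2}^2$ upgrades this to $\mathrm{Re}\,\langle A\psi,\psi\rangle_{\cH^1}\le -c\,\|\psi\|_{\cH^1}^2$ with $c=\pi^2/(1+\pi^2)$. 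Differentiating $t\mapsto\|e^{At}\psi\|_{\cH^1}^2$ and applying Gr\"onwall then gives $\|e^{At}\|_{\cH^1\to\cH^1}\le e^{-ct}$, and the $H^1_0$ bound follows from the norm equivalence recorded in the Remark.

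The $\cH^2(0,1)$ estimate is the heart of the matter, and I expect it to be the main obstacle. With $\langle f,g\rangle_{\cH^2}=\langle(I-\Delta)f,(I-\Delta)g\rangle_{L^2}$ one computes $\mathrm{Re}\,\langle A\psi,\psi\rangle_{\cH^2}=\mathrm{Re}\,\langle\psi_{xx}-\psi_x,\psi-\psi_{xx}\rangle_{L^2}=-\|\psi_x\|_{L^2}^2-\|\psi_{xx}\|_{L^2}^2+\tfrac12\big(\psi_x(1)^2-\psi_x(0)^2\big)$, the last term coming from $\mathrm{Re}\,\langle\psi_x,\psi_{xx}\rangle_{L^2}=\tfrac12[\,\psi_x^2\,]_0^1$, which is genuinely indefinite because elements of $\cH^2=H^1_0\cap H^2$ carry no constraint on $\psi_x$ at the endpoints. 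The delicate point is that one must use the \emph{lossless} trace inequality $\psi_x(1)^2\le\|\psi_x\|_{L^2}^2+2\|\psi_x\|_{L^2}\|\psi_{xx}\|_{L^2}$ (from $\psi_x(1)^2=\int_0^1\big(x\psi_x^2\big)_x\,dx$ and Cauchy--Schwarz) together with the Young split $2\|\psi_x\|\|\psi_{xx}\|\le\|\psi_x\|^2+\|\psi_{xx}\|^2$, whose constant is exactly $1$, so that $\tfrac12\psi_x(1)^2\le\|\psi_x\|_{L^2}^2+\tfrac12\|\psi_{xx}\|_{L^2}^2$ and the dissipative terms survive: $\mathrm{Re}\,\langle A\psi,\psi\rangle_{\cH^2}\le-\tfrac12\|\psi_{xx}\|_{L^2}^2\le0$. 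A lossy trace bound would leave a positive multiple of $\|\psi_x\|^2$ and destroy dissipativity, which is why this step is the crux. Since $\psi\in H^1_0$ forces $\psi_x$ to vanish at an interior point, $\|\psi_x\|_{L^2}\le\|\psi_{xx}\|_{L^2}$ and hence $\|\psi\|_{L^2}\le\pi^{-1}\|\psi_{xx}\|_{L^2}$, giving $\|\psi\|_{\cH^2}^2=\|\psi\|_{L^2}^2+2\|\psi_x\|_{L^2}^2+\|\psi_{xx}\|_{L^2}^2\lesssim\|\psi_{xx}\|_{L^2}^2$; this promotes dissipativity to $\mathrm{Re}\,\langle A\psi,\psi\rangle_{\cH^2}\le-c''\|\psi\|_{\cH^2}^2$ and, as in the $\cH^1$ case, to $\|e^{At}\|_{\cH^2\to\cH^2}\le e^{-c''t}$, with the $H^1_0\cap H^2$ version by norm equivalence.

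The interpolated bound is then routine: the family $\{\cH^s(0,1)\}_{s}$ is a complex-interpolation scale — these are the fractional-power domains of the positive self-adjoint operator $I-\Delta$ — so $[\cH^1,\cH^2]_\theta=\cH^{1+\theta}$ for $\theta\in(0,1)$, and applying the complex interpolation theorem to the single operator $e^{At}$, bounded on $\cH^1$ with norm $\le e^{-ct}$ and on $\cH^2$ with norm $\le e^{-c''t}$, gives $\|e^{At}\|_{\cH^\ell\to\cH^\ell}\le e^{-((1-\theta)c+\theta c'')t}=:e^{-c_\ell t}$ for $\ell=1+\theta\in(1,2)$, with $c_\ell=(1-\theta)c+\theta c''>0$.
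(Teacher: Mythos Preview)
Your proof is correct; the main divergence from the paper is in the $\cH^2$ dissipativity step, where you take a considerably longer route than necessary. After writing $\langle A\psi,\psi\rangle_{\cH^2}=-\|\psi_x\|_{L^2}^2-\|\psi_{xx}\|_{L^2}^2+\langle\psi_x,\psi_{xx}\rangle_{L^2}$ you integrate the cross term by parts to produce the boundary contribution $\tfrac12(\psi_x(1)^2-\psi_x(0)^2)$ and then control it via a sharp trace inequality, calling this ``the crux.'' The paper never integrates $\langle\psi_x,\psi_{xx}\rangle_{L^2}$ by parts at all: it simply bounds that cross term in one stroke by Cauchy--Schwarz and Young, $\langle\psi_x,\psi_{xx}\rangle_{L^2}\le\tfrac12\|\psi_x\|_{L^2}^2+\tfrac12\|\psi_{xx}\|_{L^2}^2$, which immediately yields $\langle A\psi,\psi\rangle_{\cH^2}\le-\tfrac12\|\psi_x\|_{L^2}^2-\tfrac12\|\psi_{xx}\|_{L^2}^2$ --- one line, no boundary traces, and in fact a sharper estimate than your $-\tfrac12\|\psi_{xx}\|_{L^2}^2$. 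So the ``delicate point'' you identify is an artifact of integrating by parts prematurely. Elsewhere your argument is at least as clean as the paper's: you note that $A$ is bounded and hence generates a uniformly continuous semigroup automatically, whereas the paper invokes Lumer--Phillips and must therefore solve $Au=g$ explicitly to check the range condition; and your complex-interpolation remark for $\ell\in(1,2)$ makes explicit what the paper leaves as a one-line consequence.
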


\begin{proof}

First, using the boundedness of $\partial_x:\cH^\ell(0,1)\to\cH^{\ell-1}(0,1)$ and the embedding of $\cH^{\ell-1}(0,1)$ into $\cH^\ell(0,1)$, 
\begin{align*}
    \|Au\|_{\cH^\ell(0,1)}&\leq \|(I-\Delta)^{\frac{\ell-2}{2}}\partial_xu\|_{L^2(0,1)}+\|(I-\Delta)^{\frac{\ell}{2}}u\|_{L^2(0,1)}+\|(I-\Delta)^{\frac{\ell-2}{2}}u\|_{L^2(0,1)}\leq C\|u\|_{\cH^\ell(0,1)}.
\end{align*}

Note that for $u\in \mathcal{D}(A)$, straight calculation using integration by parts in $L^2$-inner product leads to

\begin{align*}
    \langle Au, u\rangle_{\cH^1(0,1)}=\langle -u_x+u_{xx},u\rangle_{L^2(0,1)}=-\|u_x\|_{L^2(0,1)}^2&\leq -\frac{1}{2}\|u_x\|_{L^2(0,1)}^2-\frac{(c')^2}{2}\|u\|_{L^2(0,1)}^2\\
    &\leq -\frac{1}{2}\min\{(c')^2,1\}\|u\|_{H^1_0(0,1)}^2\\
    &\leq -\frac{M^2}{2}\min\{(c')^2,1\}\|u\|_{\cH^1(0,1)}^2
\end{align*}
where $c'$ is from the Poincar\'e's inequality $\|u_x\|_{L^2(0,1)}\geq c'\|u\|_{L^2(0,1)}$
This shows that $A$ is dissipative in $\cH^1$. \\


On the other hand, consider $Au=g$, that is $u_x-u_{xx}=(I-\Delta)g$ and we can see that 

$$u(x)=e^x\int_0^xe^{-y}\int_0^y(g(z)-g_{zz}(z))dzdy,$$

it is clear that $u\in H^1_0(0,1)\cap H^2(0,1)$ and $\|u\|_{H^1_0(0,1)}\leq C\|g\|_{H^1_0(0,1)}$ (i.e., $\cH^1$). \\

Combining the disspativity and this estimate, by Phillips-Lumer Theorem, there holds exponential decay for semigroup $\{e^{At}\}_{t}: \|e^{At}\|_{\cH^1(0,1)\mapsto \cH^1(0,1)}\leq e^{-ct}, c = -\frac{M}{2}\min\{(c')^2,1\}.$\\

On $\cH^2$'s, note that 
\begin{align*}
    \langle Au,u\rangle_{\cH^{2}(0,1)}&=\langle-u_x+u_{xx},u-u_{xx}\rangle_{L^2(0,1)}\\
    &=\langle u_x,u_{xx}\rangle_{L^2(0,1)}-\|u_{xx}\|_{L^2(0,1)}^2-\|u_{x}\|^2_{L^2(0,1)}\\
    &\leq \frac{1}{2}\|u_x\|_{L^2(0,1)}^2+\frac{1}{2}\|u_{xx}\|^2_{L^2(0,1)}-\|u_{xx}\|_{L^2(0,1)}^2-\|u_{x}\|^2_{L^2(0,1)}\\
    &=-\frac{1}{2}\|u_x\|^2_{L^2(0,1)}-\frac{1}{2}\|u_{xx}\|^2_{L^2(0,1)}\leq -\frac{1}{4}\|u_x\|_{L^2(0,1)}^2-\frac{1}{2}\|u_{xx}\|_{L^2(0,1)}^2-\frac{(c')^2}{4}\|u\|^2_{L^2(0,1)}\\
    &\leq-c''_M\|u\|^2_{\cH^2(0,1)}.
\end{align*}
Thus, we can see that $A$ is also dissipative on $H^2(0,1)\cap H^1_0(0,1)$. We can also see that $A$ has an inverse on $H^1_0(0,1)\cap H^2(0,1)$ similar to above. In fact here, the only thing we need to estimate is the norm of $\|u_{xx}\|_{L^2(0,1)}$. If $Au=g$, it is straight to see that 
\begin{align*}
    \|u_{xx}\|_{L^2(0,1)}&\leq \|(I-\Delta)g\|_{L^2(0,1)}+\|u_x\|_{L^2}\\
    &\leq C\|g\|_{\cH^2(0,1)}+\|u\|_{\cH^1(0,1)}\leq C'\|g\|_{\cH^2(0,1)}.
\end{align*}

Via similar argument in that of $\cH^1$ , we also have  $\{e^{At}\}_{t}: \|e^{At}\|_{\mathcal{D}(A)\mapsto \mathcal{D}(A)}\leq e^{-c''t}.$

\end{proof}

\begin{remark}\label{Re3-2}
 Dissipativity of $A$ in $\cH$ and $\|u\|_{\cH}\lesssim \|g\|_{\cH}$ are equivalent to conditions in Phillips-Lumer Theorem. There are analogous arguments leading to decay semigroups from this aspect in the recent paper \cite{Liu} by Liu, Liu and Zhao and monograph \cite{LZ} by Liu and Zheng on dissipative semigroups cited therein.\\
\end{remark}

\begin{remark}
We first note that the solution is continuous in time because it is of the form $e^{At}\phi$ and $\{ e^{At}\}_t$ is a $C_0$-semigroup. {We also consider the semigroup generated by $A$ on $H^1_0(0,1)\cap H^2(0,1)$ to obtain a similar result without much difference so that higher order regularity ($\ell>3$) can be obtain without difficulty.}
\end{remark}

\begin{lemma}[A priori $H^1$estimate]\ \label{aprih1}\\
Let $\phi \in \cH^1$. Then, we have the following estimate for all $t>0$
    $$\|u(\cdot,t)\|_{\cH^1(0,1)}^2\leq C(\|\phi\|_{L^2(0,1)}^2+\|\phi_x\|_{L^2(0,1)}^2)e^{-\min\{c^2,1\}t},$$
    where $c$ is the majorizing constant depending on the spatial domain for the Poincar\'e's inequality.\\
    
     It follows that there exists $C>0$ such that for $T> 0$, 
    $$\|u\|_{L^{\infty}([0,T];\cH^{1}(0,1))} + \|u\|_{L^2([0,T];\cH^{1}(0,1))}\leq C\|\phi\|_{\cH^{1}(0,1)}$$
and 
$$\|u_t\|_{L^{\infty}([0,T];\cH^{1}(0,1))} + \|u_t\|_{L^2([0,T];\cH^{1}(0,1))}\leq C\|\phi \|_{\cH^{1}(0,1)}.$$
\end{lemma}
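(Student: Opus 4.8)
The plan is to run the standard $\cH^1$ energy estimate for the linear evolution \eqref{eq1} — which is essentially the dissipativity computation already performed in the preceding lemma — and then sharpen it with Poincaré's inequality and Grönwall. Since the preceding lemma gives that $A$ is bounded on $\cH^1(0,1)$, the orbit $u(t)=e^{At}\phi$ is real-analytic in $t$ with values in $\cH^1(0,1)$, so every manipulation below is legitimate; alternatively one first argues for $\phi\in\mathcal D(A)$ and passes to the general case by density using the estimate itself.

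First I would record that, by integration by parts and the Dirichlet condition in \eqref{IBVP}, $\|u\|_{\cH^1(0,1)}^2=\langle(I-\Delta)u,u\rangle_{L^2(0,1)}=\|u\|_{L^2(0,1)}^2+\|u_x\|_{L^2(0,1)}^2$. Differentiating in time and using $(I-\Delta)u_t=-u_x+u_{xx}$ from \eqref{eq1},
\[
\tfrac12\tfrac{d}{dt}\|u(\cdot,t)\|_{\cH^1(0,1)}^2=\langle(I-\Delta)u_t,u\rangle_{L^2(0,1)}=\langle-u_x+u_{xx},u\rangle_{L^2(0,1)}=-\|u_x(\cdot,t)\|_{L^2(0,1)}^2,
\]
since $\langle u_x,u\rangle_{L^2(0,1)}=\tfrac12\big(u(1,t)^2-u(0,t)^2\big)=0$; this is exactly the identity $\langle Au,u\rangle_{\cH^1}=-\|u_x\|_{L^2}^2$ from the dissipativity lemma. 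Then I would split $2\|u_x\|_{L^2}^2\ge\|u_x\|_{L^2}^2+c^2\|u\|_{L^2}^2$ using Poincaré ($\|u_x\|_{L^2}\ge c\|u\|_{L^2}$) and bound the right-hand side below by $\min\{c^2,1\}\,\|u\|_{\cH^1}^2$, so that Grönwall yields $\|u(\cdot,t)\|_{\cH^1(0,1)}^2\le\|\phi\|_{\cH^1(0,1)}^2 e^{-\min\{c^2,1\}t}=(\|\phi\|_{L^2(0,1)}^2+\|\phi_x\|_{L^2(0,1)}^2)e^{-\min\{c^2,1\}t}$. This is the asserted pointwise bound and also re-derives the semigroup decay of the previous lemma with an explicit rate.

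For the space–time norms, the $L^\infty([0,T];\cH^1)$ bound is immediate from the pointwise estimate, while $\|u\|_{L^2([0,T];\cH^1)}^2=\int_0^T\|u(\cdot,s)\|_{\cH^1}^2\,ds\le\|\phi\|_{\cH^1}^2\int_0^\infty e^{-\min\{c^2,1\}s}\,ds\le C\|\phi\|_{\cH^1}^2$, so the sum is $\le C\|\phi\|_{\cH^1(0,1)}$. For the time derivative I would read off from \eqref{eq1} that $u_t=Au$ with the bounded operator $A$ of the previous lemma, whence $\|u_t(\cdot,t)\|_{\cH^1}\le\|A\|_{\cH^1\to\cH^1}\|u(\cdot,t)\|_{\cH^1}\le C\|\phi\|_{\cH^1}e^{-\frac12\min\{c^2,1\}t}$, and the same two observations give the $L^\infty$ and $L^2$ in-time bounds for $u_t$.

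I do not expect a real obstacle here: the only step warranting a line of justification is that $u$ is regular enough for the energy identity, which is handled by the boundedness of $A$ (so $t\mapsto e^{At}\phi$ is smooth) or, failing that, by approximating $\phi$ in $\cH^1(0,1)$ by elements of $\mathcal D(A)$ and passing to the limit via the uniform-in-data estimate. Everything else — the integration by parts, the vanishing boundary term, Poincaré, and the $\min\{c^2,1\}$ bookkeeping — is routine.
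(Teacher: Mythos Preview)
Your proof is correct and follows essentially the same route as the paper: multiply the equation by $u$, integrate by parts to obtain the energy identity $\tfrac{d}{dt}(\|u\|_{L^2}^2+\|u_x\|_{L^2}^2)+2\|u_x\|_{L^2}^2=0$, apply Poincar\'e and Gr\"onwall for the pointwise decay, then deduce the $L^\infty_t$ and $L^2_t$ bounds. The only cosmetic difference is in the $u_t$ estimate: the paper rewrites $v=u_t$ as the solution of \eqref{eq1} with initial datum $A\phi$ and reapplies the a~priori bound, whereas you use $\|u_t\|_{\cH^1}=\|Au\|_{\cH^1}\le\|A\|\,\|u\|_{\cH^1}$ directly; both rely on the boundedness of $A$ and are equivalent here.
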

\begin{proof}
Multiply both sides by $u$ and then integrate with respect to $L^2(0,1)$, at time $t$ we have

\begin{align*}
    \int_0^1 u_tudx + \int_0^1 u_xudx-\int_{0}^1u_{xx}udx-\int_0^1u_{xxt}udx &=0, \\
    \frac{1}{2}\frac{d}{dt}\|u\|_{L^2(0,1)}^2+\frac{1}{2}[(u(1,t))^2-(u(0,t))^2]+\|u_x\|_{L^2(0,1)}^2+\frac{1}{2}\frac{d}{dt}\|u_x\|_{L^2(0,1)}^2&=0, \\
  \frac{d}{dt}\|u\|_{L^2(0,1)}^2+2\|u_x\|_{L^2(0,1)}^2+\frac{d}{dt}\|u_x\|_{L^2(0,1)}^2&=0.
\end{align*}

We shall note that $\int_0^1 u_{xtx}udx=[u(1,t)u_{xt}(1,t)-u(0,t)u_{xt}(0,t)]-\int_0^1 u_{xt}u_xdx=-\int_0^1 u_{xt}u_xdx$ owing to the boundary conditions provided in (\ref{IBVP}).\\

By Poincar\'e's inequality, i.e., $\|u_x(\cdot,s)\|_{L^2(0,1)}\geq c\|u(\cdot,s)\|_{L^2(0,1)}$ , at time $t$, we have

\begin{align*}
 (\|u_x\|_{L^2(0,1)}^2+c^2\|u\|_{L^2(0,1)}^2)+   \frac{d}{dt}\|u\|_{L^2(0,1)}^2+\frac{d}{dt}\|u_x\|_{L^2(0,1)}^2&\leq 0 \\
  \frac{d}{dt}\bigg(\|u\|_{L^2(0,1)}^2+\|u_x\|_{L^2(0,1)}^2\bigg)\leq -\min\{c^2,1\}(\|u\|_{L^2(0,1)}^2+\|u_x\|_{L^2(0,1)}^2).
\end{align*}

Therefore, by Gronwall's lemma, 
$$\|u(t)\|_{L^2(0,1)}^2+\|u_x(t)\|_{L^2(0,1)}^2\leq (\|u(\cdot,0)\|_{L^2(0,1)}^2+\|u_x(\cdot,0)\|_{L^2(0,1)}^2)e^{-\min\{c^2,1\}t}.$$
Thus, 
$$\|u(t)\|_{\cH^1(0,1)}^2\leq 2M(\|u(\cdot,0)\|_{L^2(0,1)}^2+\|u_x(\cdot,0)\|_{L^2(0,1)}^2)e^{-\min\{c^2,1\}t}.$$

We have the desired estimate by taking $L^{\infty}([0,T])$ and $L^2([0,T])$ with respect to $t$.\\

Let $v:=u_t$ hence $v=Au =Ae^{At}\phi$.  Then we can see that $v_t=Av$ and it solves
\begin{equation*}
\begin{cases}
    v_t+v_x-v_{xx}-v_{xxt}=0, &\\
    v(x,0)= A\phi(x),&\\ 
    v(0,t)=v(1,t)=0.&
\end{cases}
\end{equation*}

Using a priori estimate we have 

$$\|u_t\|_{L^{\infty}([0,T];\cH^{1}(0,1))} + \|u_t\|_{L^2([0,T];\cH^{1}(0,1))}\leq C\|A\phi\|_{\cH^1(0,1)}\leq C'\|\phi \|_{\cH^1(0,1)}.$$
\end{proof}

\begin{lemma}[Estimate for $\ell=2$:]\label{aprih2}\ \\
    Let $\phi \in \cH^2(0,1)$. Then, for all $t>0$, 
    $$\|u(\cdot,t)\|_{\cH^2(0,1)}^2\leq e^{-2c''t}\|\phi\|_{\cH^2(0,1)}^2. $$
     It follows that there exists $C>0$ such that for $T> 0$, 
    $$\|u\|_{L^{\infty}([0,T];\cH^{2}(0,1))} + \|u\|_{L^2([0,T];\cH^{2}(0,1))}\leq C\|\phi\|_{\cH^2(0,1)}$$
and 
$$\|u_t\|_{L^{\infty}([0,T];\cH^{2}(0,1))} + \|u_t\|_{L^2([0,T];\cH^{2}(0,1))}\leq C\|\phi \|_{\cH^2(0,1)}.$$
\end{lemma}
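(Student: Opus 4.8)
The plan is to deduce everything from the $\cH^2$ semigroup bound $\|e^{At}\|_{\cH^2(0,1)\to\cH^2(0,1)}\le e^{-c''t}$ already established in the previous lemma, in exactly the way Lemma \ref{aprih1} was obtained from the $\cH^1$ decay. Since $\phi\in\cH^2(0,1)\cong H^1_0(0,1)\cap H^2(0,1)\subset H^1_0(0,1)=\mathcal D(A)$ and $A$ is a \emph{bounded} operator on $\cH^2(0,1)$, the function $u(t)=e^{At}\phi$ is an $\cH^2$-valued classical solution with $u_t=Au\in\cH^2(0,1)$ for all $t$, so no regularity subtleties arise. The pointwise bound is then immediate: $\|u(\cdot,t)\|_{\cH^2(0,1)}=\|e^{At}\phi\|_{\cH^2(0,1)}\le e^{-c''t}\|\phi\|_{\cH^2(0,1)}$, whence $\|u(\cdot,t)\|_{\cH^2(0,1)}^2\le e^{-2c''t}\|\phi\|_{\cH^2(0,1)}^2$. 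Alternatively, as in Lemma \ref{aprih1}, one can re-derive this directly from $\tfrac12\tfrac{d}{dt}\|u\|_{\cH^2(0,1)}^2=\langle Au,u\rangle_{\cH^2(0,1)}$ together with the dissipativity computation of the previous lemma, in which the boundary terms from the integrations by parts vanish thanks to $u(0,t)=u(1,t)=0$ and the remaining inner product $\langle u_x,u_{xx}\rangle_{L^2(0,1)}$ is controlled by Young's inequality, followed by Gronwall's lemma.

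For the space-time norms, taking the supremum over $t\in[0,T]$ gives $\|u\|_{L^\infty([0,T];\cH^2(0,1))}^2\le\|\phi\|_{\cH^2(0,1)}^2$, while $\int_0^Te^{-2c''s}\,ds\le\tfrac1{2c''}$ gives $\|u\|_{L^2([0,T];\cH^2(0,1))}^2\le\tfrac1{2c''}\|\phi\|_{\cH^2(0,1)}^2$; summing and taking square roots yields the first estimate with $C=(1+\tfrac1{2c''})^{1/2}$, independent of $T$. For the estimate on $u_t$, set $v:=u_t=Ae^{At}\phi=e^{At}(A\phi)$; then $v$ solves the same linear problem \eqref{eq1} with initial datum $A\phi$, and since $A$ is bounded on $\cH^2(0,1)$ we have $\|A\phi\|_{\cH^2(0,1)}\le C\|\phi\|_{\cH^2(0,1)}$. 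Applying the estimate just proved with $v$ in place of $u$ gives $\|u_t\|_{L^\infty([0,T];\cH^2(0,1))}+\|u_t\|_{L^2([0,T];\cH^2(0,1))}\le C\|A\phi\|_{\cH^2(0,1)}\le C'\|\phi\|_{\cH^2(0,1)}$.

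I do not expect a genuine obstacle here: the analytic content — dissipativity of $A$ in $\cH^2(0,1)$, the inverse/resolvent estimate, and the Phillips--Lumer argument — was already discharged in the previous lemma, and the present statement is essentially its Gronwall/time-integration corollary, parallel to Lemma \ref{aprih1}. The only points needing a line of care are (i) recording that $\cH^2(0,1)\subset\mathcal D(A)$ so that $u_t=Au$ really holds in $\cH^2(0,1)$, and (ii) tracking that the exponent constant $c''$ is the very one furnished by the $\cH^2$ semigroup bound of the previous lemma, so that the factor $e^{-2c''t}$ in the statement is literally the square of $\|e^{At}\|_{\cH^2\to\cH^2}$.
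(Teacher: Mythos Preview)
Your proposal is correct and matches the paper's own proof essentially line for line: the paper simply writes $\|u(t)\|_{\cH^2}\le\|e^{At}\|_{\cH^2\to\cH^2}\|\phi\|_{\cH^2}\le e^{-c''t}\|\phi\|_{\cH^2}$ and then says the remaining estimates follow from the exponential decay exactly as in the $\ell=1$ case. Your additional remarks (the alternative energy derivation, the explicit constant $(1+\tfrac1{2c''})^{1/2}$, and the $u_t$ argument via $v=e^{At}A\phi$) spell out details the paper leaves implicit but do not deviate from its approach.
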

\begin{proof}
   It suffices to note that 
    $$\|u(t)\|_{\cH^{2}(0,1)}\leq \|e^{At}\|_{\cH^2(0,1)\to \cH^2(0,1)}\|\phi\|_{\cH^2(0,1)}\leq e^{-c''t}\|\phi\|_{\cH^2(0,1)}.$$

Rest of estimates can be obtained by similar calculation given the exponential decay as $\ell=1$.
\end{proof}

\begin{lemma}[Estimate for $\ell=3$:]\label{aprih3}\ \\
    Let $\phi \in \cH^3(0,1)$. Then, for all $t>0$, 
    $$\|u(\cdot,t)\|_{\cH^3(0,1)}^2\leq Ce^{-c'''t}\|\phi\|_{\cH^3(0,1)}^2. $$
     It follows that there exists $C>0$ such that for $T> 0$, 
    $$\|u\|_{L^{\infty}([0,T];\cH^{3}(0,1))} + \|u\|_{L^2([0,T];\cH^{3}(0,1))}\leq C'\|\phi\|_{\cH^3(0,1)}$$
and 
$$\|u_t\|_{L^{\infty}([0,T];\cH^{3}(0,1))} + \|u_t\|_{L^2([0,T];\cH^{3}(0,1))}\leq C'\|\phi \|_{\cH^3(0,1)}.$$
\end{lemma}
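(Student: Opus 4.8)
\emph{Proof proposal.} I would prove the pointwise bound first and then read off the two time-integrated estimates. Lemma~\ref{aprih2} already controls the low-order part of the norm, $\|u(\cdot,t)\|_{L^2}^2+\|u_x(\cdot,t)\|_{L^2}^2+\|u_{xx}(\cdot,t)\|_{L^2}^2\le C\|u(\cdot,t)\|_{\cH^2}^2\le Ce^{-2c''t}\|\phi\|_{\cH^2}^2$, and since $\|u\|_{\cH^3}^2\simeq\|u\|_{L^2}^2+\|u_x\|_{L^2}^2+\|u_{xx}\|_{L^2}^2+\|u_{xxx}\|_{L^2}^2$, only the top-order term $\|u_{xxx}(\cdot,t)\|_{L^2}^2$ needs a new estimate. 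Differentiating \eqref{eq1} in $x$ (the equation has constant coefficients), multiplying by $-u_{xxx}$, integrating over $(0,1)$ and integrating by parts twice in $x$, I get the identity
$$\frac12\frac{d}{dt}\Big(\|u_{xx}\|_{L^2}^2+\|u_{xxx}\|_{L^2}^2\Big)+\|u_{xxx}\|_{L^2}^2=\Big[u_{xt}\,u_{xx}+\tfrac12 u_{xx}^2\Big]_{x=0}^{x=1}.$$
To make this rigorous I would first run it for smooth boundary-compatible data (dense in $\cH^3\cap H^1_0$), for which $u(\cdot,t)=e^{At}\phi$ is as regular in $x$ as needed because $A$ is bounded on every $\cH^k$, and then pass to the limit using continuity of $e^{At}$ on $\cH^3$.

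The main obstacle is the right-hand side: it does \emph{not} vanish, since the traces $u_x,u_{xx},u_{xt}$ at $x=0,1$ are not prescribed by \eqref{IBVP}. The plan is to absorb it using the decay of the lower-order norms already in hand. For the $u_{xx}^2$ trace, the one-dimensional Agmon inequality $\|u_{xx}\|_{L^\infty(0,1)}\le C\big(\|u_{xx}\|_{L^2}+\|u_{xx}\|_{L^2}^{1/2}\|u_{xxx}\|_{L^2}^{1/2}\big)$ together with Young's inequality gives, for any $\delta>0$, $|[u_{xx}^2]_0^1|\le\delta\|u_{xxx}\|_{L^2}^2+C_\delta\|u_{xx}\|_{L^2}^2$, and $\|u_{xx}\|_{L^2}^2$ is exponentially small by Lemma~\ref{aprih2}. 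For the cross trace $[u_{xt}u_{xx}]_0^1$ I would use that $v:=u_t$ solves the same IBVP \eqref{eq1} with initial datum $A\phi$; since $A$ is bounded on $\cH^3$ we have $A\phi\in\cH^3\subset\cH^2$ with $\|A\phi\|_{\cH^2}\le C\|\phi\|_{\cH^3}$, so Lemma~\ref{aprih2} applied to $v$ yields $\|u_t(\cdot,t)\|_{\cH^2}\le Ce^{-c''t}\|\phi\|_{\cH^3}$, hence $\|u_{xt}(\cdot,t)\|_{L^\infty(0,1)}\le C\|u_t(\cdot,t)\|_{\cH^2}\le Ce^{-c''t}\|\phi\|_{\cH^3}$ by Sobolev embedding. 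Combined with the Agmon bound for $u_{xx}$ and Young again, $|[u_{xt}u_{xx}]_0^1|\le\delta\|u_{xxx}\|_{L^2}^2+Ce^{-2c''t}\|\phi\|_{\cH^3}^2$.

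Choosing $\delta$ small to absorb the $\|u_{xxx}\|_{L^2}^2$ terms into the left-hand side, and then adding $\tfrac12\|u_{xx}\|_{L^2}^2\ (\lesssim e^{-2c''t}\|\phi\|_{\cH^3}^2)$ to both sides, I obtain for $E(t):=\|u_{xx}(\cdot,t)\|_{L^2}^2+\|u_{xxx}(\cdot,t)\|_{L^2}^2$ a differential inequality $\tfrac{d}{dt}E+E\le Ce^{-2c''t}\|\phi\|_{\cH^3}^2$; note that no Poincaré inequality for $u_{xxx}$ is needed, the missing coercivity being supplied for free by the already-decaying $\|u_{xx}\|_{L^2}^2$. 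Since $E(0)\le C\|\phi\|_{\cH^3}^2$, Grönwall's lemma gives $E(t)\le Ce^{-c'''t}\|\phi\|_{\cH^3}^2$ with $c'''=\min\{1,2c''\}$ (shrunk slightly if the two exponents coincide). Adding back the lower-order norms from Lemma~\ref{aprih2} gives $\|u(\cdot,t)\|_{\cH^3}^2\le Ce^{-c'''t}\|\phi\|_{\cH^3}^2$, and then taking $\sup_{[0,T]}$ and $\int_0^T(\cdot)\,dt$ yields $\|u\|_{L^\infty([0,T];\cH^3)}+\|u\|_{L^2([0,T];\cH^3)}\le C'\|\phi\|_{\cH^3}$; applying this pointwise $\cH^3$-decay to $v=u_t$ (datum $A\phi$, $\|A\phi\|_{\cH^3}\le C\|\phi\|_{\cH^3}$) gives the same bounds for $u_t$. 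As a cleaner alternative that avoids the traces altogether: $u_t+u=(I-\Delta)^{-1}(u-u_x)$ vanishes at $x=0,1$, so elliptic regularity gives $\|u_t+u\|_{\cH^3}\le C\|u-u_x\|_{\cH^1}\le C\|u\|_{\cH^2}$, whence $u(t)=e^{-t}\phi+\int_0^t e^{-(t-s)}(u_t+u)(s)\,ds$ combined with Lemma~\ref{aprih2} and a convolution estimate delivers the same decay.
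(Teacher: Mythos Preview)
Your argument is correct, but you take a longer route than the paper. After differentiating \eqref{eq1} in $x$ and multiplying by $u_{xxx}$, the paper simply does \emph{not} integrate by parts on the terms $\int_0^1 u_{xt}\,u_{xxx}\,dx$ and $\int_0^1 u_{xx}\,u_{xxx}\,dx$; it leaves them as volume integrals and applies Cauchy--Schwarz directly, obtaining
\[
\frac{d}{dt}\|u_{xxx}\|_{L^2}^2+\eta\,\|u_{xxx}\|_{L^2}^2\;\le\;C\big(\|u_{xt}\|_{L^2}^2+\|u_{xx}\|_{L^2}^2\big)
\]
for a fixed $\eta>0$. The right-hand side then decays exponentially by Lemma~\ref{aprih1} (applied to $v=u_t$ with datum $A\phi$) and Lemma~\ref{aprih2}, and Gr\"onwall finishes. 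No boundary traces ever appear.

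Your choice to integrate by parts converts exactly these two harmless volume integrals into the boundary contributions $[u_{xt}u_{xx}+\tfrac12 u_{xx}^2]_0^1$, which forces you to bring in Agmon/trace estimates and a separate $\cH^2$-bound on $u_t$ just to absorb them back. This works, but it is precisely the detour the paper avoids. Your closing ``cleaner alternative'' via the representation $u(t)=e^{-t}\phi+\int_0^t e^{-(t-s)}(I-\Delta)^{-1}(u-u_x)(s)\,ds$ together with elliptic regularity on $(I-\Delta)^{-1}:\cH^1\to\cH^3$ is a genuinely different and neat proof, arguably the most transparent of the three.
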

\begin{proof}
Here we will differentiate \eqref{eq1} once with respect to $x$ and we have 
$$u_{xt}+u_{xx}-u_{xxx}-u_{xxxt}=0.$$
Then, by multiplying $u_{xxx}$ and then integrate over $x\in (0,1)$, 
\begin{align*}
    \frac{d}{dt}\|u_{xxx}\|_{L^2(0,1)}^2+\|u_{xxx}\|_{L^2(0,1)}^2&=\int_0^1-2(v_{xt}+u_{xx})u_{xxx}dx \\
     \frac{d}{dt}\|u_{xxx}\|_{L^2(0,1)}^2+\|u_{xxx}\|_{L^2(0,1)}^2 &\leq 2C_{c,c''}\|u_{xt}+u_{xx}\|_{L^2}^2+(1-\min\{1/2,(c)^2,2c''\})\|u_{xxx}\|_{L^2(0,1)}^2 \\
    \frac{d}{dt}\|u_{xxx}\|_{L^2(0,1)}^2 +\min\{1/2,(c)^2,2c''\}\|u_{xxx}\|_{L^2(0,1)}^2&\leq 4C_{c,c''}\|u_t\|_{H^1(0,1)}^2+4C_{c,c''}\|u\|_{H^2(0,1)}^2. 
\end{align*}
Therefore, we can conclude that by Lemmas \ref{aprih1} and \ref{aprih2},
\begin{align*}
    e^{\min\{1/2,(c)^2,2c''\}t}\|u_{xxx}\|_{L^2(0,1)}^2-\|\phi_{xxx}\|_{L^2(0,1)}^2&\leq 4\int_0^te^{\min\{1/2,(c)^2,2c''\}(s)}(\|u_t\|_{H^1(0,1)}^2+\|u\|_{H^2(0,1)}^2)ds \leq C\|\phi\|_{H^2(0,1)}^2,
\end{align*}
which implies 
$$\|u\|_{H^3(0,1)}^2\leq Ce^{-\min\{1/2,(c)^2,2c''\}t}\|\phi\|_{H^3(0,1)}^2.$$
Thus, we can also rewrite as
$$\|u\|_{\cH^3(0,1)}^2\leq CM^4e^{-\min\{1/2,(c)^2,2c''\}t}\|\phi\|_{\cH^3(0,1)}^2.$$
The estimate for $v=u_{t}$ can be done similarly with the boundedness of $A$ on $\cH^\ell(0,1)$.
\end{proof}

\subsection{A linear equation with force}

We consider the equation with imposed force in the domain  prescribed with zero boundary condition:

\begin{equation} \label{eq2}
\begin{cases}
    u_t+u_x-u_{xx}-u_{xxt}=f(x,t) \quad\quad (x,t)\in [0,1]\times [0, \infty),&\\
    u(x,0)=0, &\\
    u(0,t)=u(1,t)=0. &
\end{cases}
\end{equation}

Using semigroup as before, we know the solution to \eqref{eq2} is

$$u(x,t):=\int_0^te^{A(t-s)}[(I-\Delta)^{-1}f(s)]ds.$$

 We would derive the linear estimates in $Y^\ell_{0, T}$ with $\ell=1, 2, 3$, sequentially.\\

\begin{lemma}[A priori estimate for \eqref{eq2}, $\ell=1$]\ \label{apri2l1} \\
Assuming $f\in L^2([0,T];\cH^{-1}(0,1))$, then 
   $$ \|u\|_{L^{\infty}([0,T];\cH^1(0,1))}^2+\|u\|_{L^{2}([0,T];\cH^1(0,1))}^2\leq  C\int_0^T\|f(s)\|^2_{\cH^{-1}(0,1)}ds. $$
   Moreover, we have 
   $$ \|u\|_{W^{1,\infty}([0,T];\cH^1(0,1))}^2+\|u\|_{H^{1}([0,T];\cH^1(0,1))}^2\leq  C\bigg(\|f(\cdot,0)\|_{\cH^{-1}(0,1)}^2+\int_0^T\|f(\cdot,s)\|^2_{\cH^{-1}(0,1)}ds\bigg). $$
\end{lemma}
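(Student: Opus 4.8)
The plan is to reprise the energy computation behind Lemma~\ref{aprih1}, now carrying the source term, and then to recover the time-derivative bounds directly from the equation. \emph{Step 1: basic $\cH^1$ energy estimate.} I would test \eqref{eq2} against $u$ in $L^2(0,1)$; as in Lemma~\ref{aprih1} the boundary conditions annihilate every boundary term, leaving
\[
\tfrac12\tfrac{d}{dt}\bigl(\|u\|_{L^2(0,1)}^2+\|u_x\|_{L^2(0,1)}^2\bigr)+\|u_x\|_{L^2(0,1)}^2=\langle f(\cdot,t),u(\cdot,t)\rangle_{L^2(0,1)}.
\]
Set $E(t):=\|u\|_{L^2(0,1)}^2+\|u_x\|_{L^2(0,1)}^2$, which equals $\|u(\cdot,t)\|_{\cH^1(0,1)}^2$ for $u\in H^1_0(0,1)$. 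I would estimate the right-hand side by the $\cH^{-1}$--$\cH^1$ duality pairing, $|\langle f,u\rangle|\le\|f\|_{\cH^{-1}(0,1)}\|u\|_{\cH^1(0,1)}$, and then use Young's inequality to absorb a small multiple of $E$; the Poincar\'e inequality $\|u_x\|_{L^2}^2\ge c^2\|u\|_{L^2}^2$ supplies the coercive bound $\|u_x\|_{L^2}^2\ge\tfrac12\min\{c^2,1\}E$, so that after absorption one arrives at $E'(t)+\delta E(t)\le C\|f(\cdot,t)\|_{\cH^{-1}(0,1)}^2$ with some $\delta>0$ and $E(0)=0$. Gronwall's lemma gives $E(t)\le C\int_0^t e^{-\delta(t-s)}\|f(\cdot,s)\|_{\cH^{-1}}^2\,ds$, hence the $L^\infty([0,T];\cH^1)$ bound; integrating the differential inequality over $[0,T]$ and discarding the nonnegative term $E(T)$ gives the $L^2([0,T];\cH^1)$ bound. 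This is the first displayed inequality.

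For the ``moreover'' part I would read the time derivative off the equation. Since the operator $A$ is bounded on $\cH^1(0,1)$, differentiating the Duhamel formula for \eqref{eq2} is legitimate and yields $u_t(t)=(I-\Delta)^{-1}f(t)+Au(t)$; equivalently one writes $(I-\Delta)u_t=-u_x+u_{xx}+f$ directly. Using that $(I-\Delta)^{-1}:\cH^{-1}(0,1)\to\cH^1(0,1)$ is an isometry and that $\|A\cdot\|_{\cH^1}\lesssim\|\cdot\|_{\cH^1}$, I get the pointwise-in-time estimate $\|u_t(\cdot,t)\|_{\cH^1(0,1)}\lesssim\|f(\cdot,t)\|_{\cH^{-1}(0,1)}+\|u(\cdot,t)\|_{\cH^1(0,1)}$; squaring, integrating over $[0,T]$, and invoking Step~1 yields the $L^2([0,T];\cH^1)$ bound on $u_t$, i.e.\ the $\|u\|_{H^1([0,T];\cH^1)}$ part. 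For the $L^\infty([0,T];\cH^1)$ bound on $u_t$ I would instead differentiate \eqref{eq2} in $t$: $v:=u_t$ solves the same linear IBVP with forcing $f_t$, and evaluating the equation at $t=0$ (where $u$ vanishes) identifies the initial datum as $v(\cdot,0)=(I-\Delta)^{-1}f(\cdot,0)$, of $\cH^1$-norm $\|f(\cdot,0)\|_{\cH^{-1}(0,1)}$. Applying Lemma~\ref{aprih1} to the part of $v$ propagated from $v(\cdot,0)$ and Step~1 to the part generated by the forcing $f_t$, then adding, gives the stated bound, with $\|f(\cdot,0)\|_{\cH^{-1}}^2$ accounting precisely for the datum.

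I expect the only real obstacle to be the $L^\infty$-in-time control of $u_t$: to make sense of the trace $f(\cdot,0)$ and of $v=u_t$ one needs $f$ to be $\cH^{-1}(0,1)$-valued with a time derivative (so that $f(\cdot,t)=f(\cdot,0)+\int_0^t f_s\,ds$ and $v(\cdot,0)$ is well defined), a regularity stronger than $f\in L^2([0,T];\cH^{-1})$ that the ``moreover'' clause tacitly uses, with $\int_0^T\|f_s(\cdot,s)\|_{\cH^{-1}}^2\,ds$ entering the right-hand side alongside $\|f(\cdot,0)\|_{\cH^{-1}}^2$. Apart from this, Step~1 is routine; the points deserving care are the correct $\cH^{-1}$--$\cH^1$ duality estimate for $\langle f,u\rangle$ and the Poincar\'e-based absorption that produces the coercive $\delta E(t)$ responsible for the exponential weight in $t$.
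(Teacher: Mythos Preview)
Your Step~1 is exactly the paper's argument: multiply by $u$, use the $\cH^{-1}$--$\cH^1$ duality and Poincar\'e to reach $E'(t)+\delta E(t)\le C\|f(\cdot,t)\|_{\cH^{-1}}^2$, then Gronwall for the $L^\infty$ bound and integration for the $L^2$ bound.

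For the ``moreover'' clause the two routes diverge. The paper sets $v=u_t$, asserts that $v$ solves the same linear IBVP with forcing $f$ and initial datum $(I-\Delta)^{-1}f(\cdot,0)$, and then writes the Duhamel bound $\|v(t)\|_{\cH^1}\le e^{-ct}\|f(\cdot,0)\|_{\cH^{-1}}+\int_0^te^{-c(t-s)}\|f(\cdot,s)\|_{\cH^{-1}}\,ds$. You instead obtain the $L^2([0,T];\cH^1)$ control of $u_t$ directly from the pointwise identity $u_t=Au+(I-\Delta)^{-1}f$, and for the $L^\infty$ control you differentiate the equation and find forcing $f_t$. Your version is the accurate one: differentiating \eqref{eq2} in $t$ plainly yields $f_t$ on the right, not $f$, so the paper's forcing term is a slip. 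Your caveat---that the $W^{1,\infty}$ bound tacitly needs $f$ to carry a time derivative so that $f(\cdot,0)$ is well defined and $\int_0^T\|f_s\|_{\cH^{-1}}^2\,ds$ enters the estimate---is therefore well taken; the advantage of your pointwise-identity route is that it secures the $H^1([0,T];\cH^1)$ half unconditionally from $f\in L^2([0,T];\cH^{-1})$ alone, while the $L^\infty$ half indeed requires the extra time regularity you flag.
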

\begin{proof}

Multiply $u$ onto the equation and integrate over $[0,1]$,

    \begin{align*}
    \int_0^1 u_tudx + \int_0^1 u_xudx-\int_{0}^1u_{xx}udx-\int_0^1u_{xxt}udx &=\int_0^1f(x,t)u(x,t)dx \\
   \frac{d}{dt}\|u\|_{L^2(0,1)}^2+2\|u_x\|_{L^2(0,1)}^2+\frac{d}{dt}\|u_x\|_{L^2(0,1)}^2&=2\int_0^1f(x,t)u(x,t)dx \\
    \frac{d}{dt}\|u\|_{L^2(0,1)}^2+2\|u_x\|_{L^2(0,1)}^2+\frac{d}{dt}\|u_x\|_{L^2(0,1)}^2&=2\int_0^1f(x,t)u(x,t)dx \\
     (\|u_x\|_{L^2(0,1)}^2+c^2\|u\|_{L^2(0,1)}^2)+   \frac{d}{dt}\|u\|_{L^2(0,1)}^2+\frac{d}{dt}\|u_x\|_{L^2(0,1)}^2&\leq 2\int_0^1f(x,t)u(x,t)dx \\
    (\|u_x\|_{L^2(0,1)}^2+c^2\|u\|_{L^2(0,1)}^2)+   \frac{d}{dt}\|u\|_{L^2(0,1)}^2+\frac{d}{dt}\|u_x\|_{L^2(0,1)}^2&\leq C\|f(\cdot,t)\|_{\cH^{-1}(0,1)}^2 + \frac{c^2}{2}\|u(t)\|^2_{\cH^1(0,1)},
\end{align*} whose last line is applied with duality of $\cH^1$ and $\cH^{-1}$.\\

Whence,

\begin{align*}
\frac{d}{dt}\|u(\cdot,t)\|_{L^2(0,1)}^2+\frac{d}{dt}\|u_x(\cdot,t)\|_{L^2(0,1)}^2 &\leq C\|f(\cdot,t)\|^2_{\cH^{-1}(0,1)}-\frac{c^2}{2}(\|u(\cdot,t)\|_{L^2(0,1)}^2+\|u_x(\cdot,t)\|_{L^2(0,1)}^2),\\
\|u(\cdot,t)\|_{L^2(0,1)}^2+\|u_x(\cdot,t)\|_{L^2(0,1)}^2&\leq  C\int_0^te^{-\frac{c^2}{2}(t-s)}\|f(s)\|^2_{\cH^{-1}(0,1)}ds.
\end{align*}
If we take supremum over $t\in [0,T]$, we have 
\begin{align*}
\|u\|_{L^{\infty}([0,T];H^1(0,1))}^2&\leq  C\int_0^T\|f(s)\|^2_{\cH^{-1}(0,1)}ds,\quad \text{and}\quad  \|u\|_{L^{\infty}([0,T];\cH^1(0,1))}^2\leq  CM^2\int_0^T\|f(s)\|^2_{\cH^{-1}(0,1)}ds;
\end{align*}
and if we integrate with resepct to $t\in [0,T]$, we have 
\begin{align*}
\|u\|_{L^{2}([0,T];H^1(0,1))}^2&\leq  C'\int_0^T\|f(s)\|^2_{\cH^{-1}(0,1)}ds,\quad \text{and}\quad \|u\|_{L^{2}([0,T];\cH^1(0,1))}^2\leq  CM^2\int_0^T\|f(s)\|^2_{\cH^{-1}(0,1)}ds.
\end{align*}
The proof is completed. 
\end{proof}
Since $u_t=Au+(I-\Delta)^{-1}f(x,t)$, $v=u_t$ solves
\begin{equation*}
\begin{cases}
    v_t+v_x-v_{xx}-v_{xxt}=f(x,t), &\\
    v(x,0)= (I-\Delta)^{-1}f(x,0),&\\ 
    v(0,t)=v(1,t)=0.&
\end{cases}
\end{equation*}
Then, $\ds v(t)(x)=e^{At}(I-\Delta)^{-1}f(x,0)+\int_0^{t}e^{A(t-s)}(I-\Delta)^{-1}f(x,s)ds$. Therefore, 
\begin{align*}
    \|v\|_{\cH^1(0,1)}&\leq \|e^{At}(I-\Delta)^{-1}f(x,0)\|_{\cH^1(0,1)}+ \int_0^t \|e^{A(t-s)}(I-\Delta)^{-1}f(x,s)\|_{\cH^1(0,1)}ds \\
    &\leq e^{-ct}\|f(\cdot,0)\|_{\cH^{-1}(0,1)}+\int_0^te^{-c(t-s)}\|f(x,s)\|_{\cH^{-1}(0,1)}ds.
\end{align*}
\begin{lemma}[A priori estimate for \eqref{eq2}, $\ell=2$]\ \label{apri2l2} \\
Assuming $f\in L^2([0,T];L^2(0,1))$, then 
   $$ \|u\|_{L^{\infty}([0,T];\cH^2(0,1))}^2\leq  C\int_0^T\|f(s)\|^2_{L^2(0,1)}ds. $$
\end{lemma}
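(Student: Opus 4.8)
The plan is to run the same energy method used for the $\ell=1$ case, but now testing the equation against the $\cH^2$-type multiplier. Recall the solution is $u(t)=\int_0^t e^{A(t-s)}[(I-\Delta)^{-1}f(s)]\,ds$ with zero initial data, so $u(\cdot,t)\in H^1_0(0,1)\cap H^2(0,1)$ for each $t$ and we may legitimately pair the equation with $u-u_{xx}$ in $L^2(0,1)$. First I would compute
\begin{align*}
\langle u_t+u_x-u_{xx}-u_{xxt},\,u-u_{xx}\rangle_{L^2(0,1)}=\langle f,\,u-u_{xx}\rangle_{L^2(0,1)}.
\end{align*}
Using the integration-by-parts identities from the Lemma on the operator $A$ (in particular $\langle -u_{xxt},u\rangle=\frac12\frac{d}{dt}\|u_x\|_{L^2}^2$, $\langle u_t,-u_{xx}\rangle=\frac12\frac{d}{dt}\|u_x\|_{L^2}^2$, $\langle -u_{xxt},-u_{xx}\rangle=\frac12\frac{d}{dt}\|u_{xx}\|_{L^2}^2$, and $\langle u_x-u_{xx},u-u_{xx}\rangle = \langle u_x,u_{xx}\rangle - \|u_x\|_{L^2}^2-\|u_{xx}\|_{L^2}^2$), the left side becomes
\begin{align*}
\frac12\frac{d}{dt}\Big(\|u\|_{L^2}^2+2\|u_x\|_{L^2}^2+\|u_{xx}\|_{L^2}^2\Big)+\|u_x\|_{L^2}^2+\|u_{xx}\|_{L^2}^2-\langle u_x,u_{xx}\rangle_{L^2}.
\end{align*}
The cross term $\langle u_x,u_{xx}\rangle$ is absorbed by $\tfrac12\|u_x\|_{L^2}^2+\tfrac12\|u_{xx}\|_{L^2}^2$ exactly as in the dissipativity computation for $A$ on $\cH^2$, leaving a coercive quantity bounded below by a positive multiple of $\|u\|_{\cH^2}^2$ (after also invoking Poincaré to recover the $\|u\|_{L^2}^2$ piece), which yields the decay rate $c''$.

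Next I would estimate the forcing term on the right. Since $f\in L^2(0,1)$ here (not merely $\cH^{-1}$), I would bound $\langle f,u-u_{xx}\rangle_{L^2}\le \|f\|_{L^2}\|u-u_{xx}\|_{L^2}\le \|f\|_{L^2}\,C\|u\|_{\cH^2}\le \frac{C}{2\varepsilon}\|f\|_{L^2}^2+\frac{\varepsilon}{2}\|u\|_{\cH^2}^2$ with $\varepsilon$ chosen small enough that the $\frac{\varepsilon}{2}\|u\|_{\cH^2}^2$ term is swallowed by the coercive left-hand side. Writing $E(t):=\|u\|_{L^2}^2+2\|u_x\|_{L^2}^2+\|u_{xx}\|_{L^2}^2$ (which is comparable to $\|u\|_{\cH^2}^2$), this produces a differential inequality of the form $\frac{d}{dt}E(t)+2c''E(t)\le C\|f(\cdot,t)\|_{L^2}^2$. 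Gronwall's lemma with $E(0)=0$ then gives $E(t)\le C\int_0^t e^{-2c''(t-s)}\|f(s)\|_{L^2}^2\,ds$, and taking the supremum over $t\in[0,T]$ and using $e^{-2c''(t-s)}\le 1$ yields $\|u\|_{L^\infty([0,T];\cH^2(0,1))}^2\le C\int_0^T\|f(s)\|_{L^2(0,1)}^2\,ds$, which is the claim. (If a simultaneous $L^2([0,T];\cH^2)$ bound is wanted, integrating the differential inequality in $t$ and using Young's convolution inequality gives it with the same right-hand side, exactly as in Lemma \ref{apri2l1}.)

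The only genuinely delicate point is bookkeeping the sign and size of the cross term $\langle u_x,u_{xx}\rangle$ together with the forcing error term so that what remains on the left is still strictly coercive; this is the same algebra already carried out in the dissipativity estimate for $A$ on $\cH^2$ in the earlier Lemma, so it is routine here. A minor regularity caveat is justifying the integration by parts and the differentiation under the time integral — this follows because $f\in L^2([0,T];L^2)$ together with the smoothing of $(I-\Delta)^{-1}$ and the $C_0$-semigroup $e^{At}$ on $\cH^2$ place $u$ in $C([0,T];\cH^2)\cap$ (weakly differentiable in $t$), so the energy identity holds in the integrated (mild) sense and the above manipulations are valid; alternatively one argues by density with smooth $f$ and passes to the limit. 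I expect no substantive obstacle beyond these standard points.
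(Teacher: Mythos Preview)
Your proposal is correct and leads to the same conclusion, but it proceeds by a different route than the paper. The paper's proof is a two-line semigroup argument: from the Duhamel formula $u(t)=\int_0^t e^{A(t-s)}(I-\Delta)^{-1}f(s)\,ds$ together with the already-established decay $\|e^{At}\|_{\cH^2\to\cH^2}\le e^{-c''t}$ and the identity $\|(I-\Delta)^{-1}f\|_{\cH^2}=\|f\|_{L^2}$, one immediately gets $\|u(\cdot,t)\|_{\cH^2}\le \int_0^t e^{-c''(t-s)}\|f(s)\|_{L^2}\,ds$, and the $L^\infty_t$ bound follows by Cauchy--Schwarz in $s$. Your approach instead re-derives this at the PDE level by testing against $u-u_{xx}$ and reproducing the $\cH^2$-dissipativity computation in the forced setting, then applying Gronwall.

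Both are perfectly valid. The paper's route is shorter because the dissipativity inequality has already been packaged into the semigroup decay via Lumer--Phillips, so there is no need to redo the multiplier computation. Your route is more self-contained and makes explicit where the constants come from; it is essentially the same calculation that underlies the semigroup bound, applied directly rather than abstractly. One small remark: in your intermediate line you wrote $\langle u_x-u_{xx},u-u_{xx}\rangle = \langle u_x,u_{xx}\rangle - \|u_x\|_{L^2}^2-\|u_{xx}\|_{L^2}^2$, which has the overall sign flipped, but your next displayed expression for the left-hand side has the correct signs, so the argument goes through.
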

\begin{proof}
    It suffices to note that 
    \begin{align*}
        \|u(\cdot,t)\|_{\cH^2(0,1)}&\leq \int_0^{t}\|e^{A(t-s)}\|_{\cH^2(0,1)\to\cH^2(0,1)}\|(I-\Delta)^{-1}f(s)\|_{\cH^2(0,1)}ds\\
        &\leq  \int_0^{t}M e^{-c''(t-s)}\|f(s)\|_{L^2(0,1)}ds.
    \end{align*}
    The estimates follows the calculation in proof of Lemma \ref{apri2l1}.
\end{proof}

\begin{lemma}[A priori estimate for \eqref{eq2}, $\ell=3$]\ \label{apriforceh3}\\
Assuming $f\in L^2([0,T];H^1(0,1))$, then 
   $$ \|u\|_{L^{\infty}([0,T];\cH^3(0,1))}^2+\|u\|_{L^{2}([0,T];\cH^3(0,1))}^2\leq  C\int_0^T\|f(s)\|^2_{H^{1}(0,1)}ds. $$
\end{lemma}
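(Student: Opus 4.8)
The plan is to reproduce, in the forced setting, the bootstrap energy argument of Lemma~\ref{aprih3}: differentiate \eqref{eq2} once in $x$, test with $u_{xxx}$ to produce a coercive differential inequality for $\|u_{xxx}(\cdot,t)\|_{L^2(0,1)}^2$ whose right-hand side involves only the already-controlled lower-order quantities $\|u_t\|_{H^1(0,1)}$ and $\|u\|_{H^2(0,1)}$ together with the datum $\|f\|_{H^1(0,1)}$, and then close the loop using Lemmas~\ref{apri2l1}--\ref{apri2l2}. A faster alternative, sketched at the end, runs the mild-representation estimate of Lemma~\ref{apri2l2} one order higher, using the $\cH^3$-decay of $\{e^{At}\}_{t}$ already contained in Lemma~\ref{aprih3}.

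Differentiating \eqref{eq2} in $x$ gives $u_{xt}+u_{xx}-u_{xxx}-u_{xxxt}=f_x$. Multiplying by $u_{xxx}$, integrating over $(0,1)$, and using $\int_0^1 u_{xxxt}u_{xxx}\,dx=\tfrac12\tfrac{d}{dt}\|u_{xxx}\|_{L^2(0,1)}^2$ (no spatial integration by parts, hence no boundary term, being needed), one obtains
\begin{align*}
\tfrac12\tfrac{d}{dt}\|u_{xxx}\|_{L^2(0,1)}^2+\|u_{xxx}\|_{L^2(0,1)}^2=\int_0^1 (u_{xt}+u_{xx}-f_x)\,u_{xxx}\,dx.
\end{align*}
Bounding the right-hand side by Cauchy--Schwarz and Young's inequality, a fixed fraction of $\|u_{xxx}\|_{L^2(0,1)}^2$ is absorbed on the left, leaving, for some $\kappa>0$,
\begin{align*}
\tfrac{d}{dt}\|u_{xxx}\|_{L^2(0,1)}^2+\kappa\,\|u_{xxx}\|_{L^2(0,1)}^2\le C\big(\|u_t\|_{H^1(0,1)}^2+\|u\|_{H^2(0,1)}^2+\|f\|_{H^1(0,1)}^2\big).
\end{align*}
Since $u(\cdot,0)=0$ forces $u_{xxx}(\cdot,0)=0$, multiplying by $e^{\kappa t}$ and integrating controls both $\sup_{[0,T]}\|u_{xxx}(\cdot,t)\|_{L^2(0,1)}^2$ and $\int_0^T\|u_{xxx}(\cdot,t)\|_{L^2(0,1)}^2\,dt$ by $\int_0^T\big(\|u_t\|_{H^1(0,1)}^2+\|u\|_{H^2(0,1)}^2+\|f\|_{H^1(0,1)}^2\big)\,dt$; adding the already-proven $L^2$, $H^1$, $H^2$ bounds and using $\|\cdot\|_{H^3(0,1)}\simeq\|\cdot\|_{\cH^3(0,1)}$ from the Remark then yields the asserted inequality.

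To close the bootstrap with constants independent of $T$, the two lower-order terms must be dominated by $\int_0^T\|f(s)\|_{H^1(0,1)}^2\,ds$. For $\|u\|_{H^2(0,1)}$ one uses that the computation in the proof of Lemma~\ref{apri2l2} — namely $\|u(\cdot,t)\|_{\cH^2(0,1)}\le\int_0^t M e^{-c''(t-s)}\|f(s)\|_{L^2(0,1)}\,ds$ followed by Young's convolution inequality — also gives $\int_0^T\|u(\cdot,s)\|_{\cH^2(0,1)}^2\,ds\le C\int_0^T\|f(s)\|_{L^2(0,1)}^2\,ds$. For $\|u_t\|_{H^1(0,1)}$ one writes $u_t=Au+(I-\Delta)^{-1}f$, so that boundedness of $A$ on $\cH^1(0,1)$ together with $\|(I-\Delta)^{-1}f\|_{\cH^1(0,1)}\le C\|f\|_{L^2(0,1)}$ yields $\|u_t\|_{\cH^1(0,1)}\le C\|u\|_{\cH^1(0,1)}+C\|f\|_{L^2(0,1)}$, whence $\int_0^T\|u_t(\cdot,s)\|_{\cH^1(0,1)}^2\,ds$ is controlled through Lemma~\ref{apri2l1}. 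All constants so produced depend only on the Poincar\'e constant and on $c,c''$, not on $T$.

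Alternatively, starting from $u(\cdot,t)=\int_0^t e^{A(t-s)}(I-\Delta)^{-1}f(s)\,ds$: elliptic regularity for $w=(I-\Delta)^{-1}g$ — solving $w-w_{xx}=g$ gives $w_{xxx}=w_x-g_x$, so $\|w\|_{H^3(0,1)}\le C\|g\|_{H^1(0,1)}$, and $w\in H^1_0(0,1)\cap H^3(0,1)$, hence $\|w\|_{\cH^3(0,1)}\le C\|g\|_{H^1(0,1)}$ — combined with the decay $\|e^{At}\|_{\cH^3(0,1)\to\cH^3(0,1)}\le Ce^{-\delta t}$ extracted from Lemma~\ref{aprih3}, gives $\|u(\cdot,t)\|_{\cH^3(0,1)}\le C\int_0^t e^{-\delta(t-s)}\|f(s)\|_{H^1(0,1)}\,ds$; a Cauchy--Schwarz in $s$ bounds $\sup_{[0,T]}$, and Young's $L^1\ast L^2\subset L^2$ inequality bounds $\int_0^T\,dt$, exactly as in the proof of Lemma~\ref{apri2l1}. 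I expect the main — if routine — obstacle to be the bookkeeping: ensuring that every lower-order estimate invoked in the bootstrap is arranged with a $T$-independent constant (in particular avoiding $\int_0^T\le T\sup_{[0,T]}$), and, as in the preceding lemmas, that the computations are first carried out on smooth approximations and then passed to the limit. For the mild-representation route one should also note that although $f(s)$ need not vanish at $x=0,1$, $(I-\Delta)^{-1}f(s)$ does, so it lies in the domain on which the $\cH^3$ estimate of Lemma~\ref{aprih3} applies.
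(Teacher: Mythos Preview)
Your proposal is correct and follows essentially the same route as the paper: differentiate \eqref{eq2} in $x$, test against $u_{xxx}$, absorb half of $\|u_{xxx}\|_{L^2}^2$ via Young, and invoke Lemmas~\ref{apri2l1}--\ref{apri2l2} for the lower-order terms. You are in fact more careful than the paper in two places --- you spell out how $\int_0^T\|u_t\|_{\cH^1}^2\,dt$ is controlled (via $u_t=Au+(I-\Delta)^{-1}f$ rather than just citing Lemma~\ref{apri2l1}) and you flag the $T$-independence of constants --- and your alternative mild-solution argument is a valid shortcut the paper does not pursue.
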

\begin{proof}
Following the idea of Lemma \ref{aprih3}, we again differentiate \eqref{eq2} with respect to $x$, then multiply both sides by $u_{xxx}$, and integrate over $x\in (0,1)$, we have at time $t$
\begin{align*}
\frac{d}{dt}\|u_{xxx}\|_{L^2(0,1)}+\|u_{xxx}\|_{L^2(0,1)}^2&=2\int_0^1(u_{xt}+u_{xx}-f_{x}(x,t))u_{xxx}dx\\
&\leq 8(\|u_{t}\|^2_{H^1(0,1)}+\|u\|^2_{H^2(0,1)}+\|f\|_{H^1(0,1)}^2)+\frac{1}{2}\|u_{xxx}\|^2_{L^2(0,1)}.
\end{align*}
We can conclude the stated estimate using results of $l=1, 2$: Lemmas \ref{apri2l1} and \ref{apri2l2} to see that
$$\|u(t)\|_{\cH^3(0,1)}^2\leq C'\int_0^t e^{-2(t-s)}\|f(s)\|_{H^1(0,1)}^2ds,\quad \text{thus}\quad  \|u(t)\|_{\cH^3(0,1)}^2\leq C'M\int_0^t\|f(s)\|_{H^1(0,1)}^2ds. $$
\end{proof}

\begin{remark}
When $\ell=3$, the dissipativity of generating operator $A$ is not obvious to obtain following fashion of $\cH^2$, hence instead of Hille-Yosida type of theorems, we use the classic energy technique to obtain the estimate; it is similar as ``bootstrap" argument used in regularity estimates of parabolic equations (see e.g. \cite{Qin}). If two-point boundary conditions on derivatives such as $f_x(0, t)=f_x(1, t)=0$ are imposed, dissipativity of $A$ in higher space $\cH^{\ell}, \ell>2$ and related classic results can follow from differentiated equation and  argument as Remark \ref{Re3-2}.
\end{remark}

\subsection{A bilinear estimate}
In this subsection, we will establish a bilinear estimate with constant independent of $T$.
\begin{lemma}\label{bilinearest}
    Let $\ell\geq 1$ and $0\leq \ell'\leq \ell-1$. Suppose $v,w\in L^{\infty}([0,T]; H^\ell(0,1))\cap L^2([0,T]; H^\ell(0,1))$. Then, for all $t\in [0,T]$, we have 
     \begin{align*}
        \int_0^t\|(vw)_x\|_{\cH^{\ell'}(0,1)}^2dt\leq C'' \|v\|_{Y^{\ell}_{0,t}}^2\|w\|_{Y^{\ell}_{0,t}}^2,
    \end{align*}
    where constant $C^{''}$ doesn't depend on $T$.
\end{lemma}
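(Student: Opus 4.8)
The plan is to reduce this space-time estimate to a pointwise-in-time Banach-algebra estimate in $\cH^\ell(0,1)$, and then recover the $Y^\ell_{0,t}$ structure on the right-hand side by an elementary H\"older split in the time variable.

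\textbf{Step 1: dispose of the derivative and of the index $\ell'$.} Since $0\le\ell'\le\ell-1$, at each fixed time $s$ the boundedness of $\partial_x:\cH^\ell(0,1)\to\cH^{\ell-1}(0,1)$ (already used in the proof of the first Lemma) together with the embedding $\cH^{\ell-1}(0,1)\hookrightarrow\cH^{\ell'}(0,1)$ gives
$$\|(vw)_x(\cdot,s)\|_{\cH^{\ell'}(0,1)}\le C_{\ell,\ell'}\,\|(vw)(\cdot,s)\|_{\cH^{\ell}(0,1)},$$
with $C_{\ell,\ell'}$ depending only on $\ell,\ell'$ and the interval $(0,1)$. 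Note $vw$ still vanishes at $x=0,1$, so it lies in the Dirichlet scale and the $\cH^\ell$-norm is legitimate. It therefore suffices to bound $\|vw\|_{\cH^\ell(0,1)}$.

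\textbf{Step 2: the Banach-algebra estimate.} The key spatial ingredient is that for $\ell\ge 1$ (hence $\ell>1/2$ in one dimension) $\cH^\ell(0,1)$ is a Banach algebra,
$$\|vw\|_{\cH^\ell(0,1)}\le C_\ell\,\|v\|_{\cH^\ell(0,1)}\,\|w\|_{\cH^\ell(0,1)}.$$
For integer $\ell$ this is immediate from the Leibniz rule, the Sobolev embedding $\cH^\ell(0,1)\hookrightarrow L^\infty(0,1)$, Gagliardo--Nirenberg interpolation of the intermediate derivatives, and the norm equivalence $\cH^i\simeq H^i$ of the Remark. For non-integer $\ell$ one invokes the fractional Leibniz (Kato--Ponce) bound $\|vw\|_{H^\ell}\lesssim\|v\|_{H^\ell}\|w\|_{L^\infty}+\|v\|_{L^\infty}\|w\|_{H^\ell}$ together with $H^\ell(0,1)\hookrightarrow L^\infty(0,1)$; equivalently, one can obtain every intermediate exponent $n\le\ell\le n+1$ by bilinear (Calder\'on) interpolation of the bounded bilinear maps $H^n\times H^n\to H^n$ and $H^{n+1}\times H^{n+1}\to H^{n+1}$. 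Combining Steps 1 and 2 and squaring yields the pointwise-in-time bound
$$\|(vw)_x(\cdot,s)\|_{\cH^{\ell'}(0,1)}^2\le C'\,\|v(\cdot,s)\|_{\cH^{\ell}(0,1)}^2\,\|w(\cdot,s)\|_{\cH^{\ell}(0,1)}^2 .$$

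\textbf{Step 3: integrate in time and split.} Integrating over $s\in[0,t]$ and keeping the $\sup$ on one factor, the $L^2$-in-time norm on the other,
$$\int_0^t\|(vw)_x\|_{\cH^{\ell'}(0,1)}^2\,ds\le C'\int_0^t\|v(\cdot,s)\|_{\cH^{\ell}}^2\|w(\cdot,s)\|_{\cH^{\ell}}^2\,ds\le C'\Big(\sup_{0\le s\le t}\|v(\cdot,s)\|_{\cH^{\ell}}^2\Big)\int_0^t\|w(\cdot,s)\|_{\cH^{\ell}}^2\,ds\le C'\,\|v\|_{Y^{\ell}_{0,t}}^2\,\|w\|_{Y^{\ell}_{0,t}}^2,$$
which is the assertion with $C''=C'$. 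Every constant above originates from a fixed functional inequality on $(0,1)$ (boundedness of $\partial_x$, the $\cH$-scale embedding, the algebra estimate, the equivalence $\cH^i\simeq H^i$) and from the H\"older split in time, none of which sees the length of the time interval; hence $C''$ is independent of $T$ (and of $t$).

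The only genuinely technical point is the algebra estimate for \emph{non-integer} $\ell$ on the bounded interval with Dirichlet conditions; if one wishes to avoid quoting Kato--Ponce on $(0,1)$, the cleanest alternative is to prove the algebra property for integer exponents by the Leibniz rule and Gagliardo--Nirenberg, and then pass to all intermediate $\ell$ by bilinear interpolation. Everything else is routine bookkeeping.
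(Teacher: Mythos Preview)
Your argument is correct and is actually cleaner than the paper's, but it proceeds along a different line. The paper does not invoke the Banach-algebra property of $\cH^\ell(0,1)$; instead it works out the base case $\ell'=0$ by hand, writing $\|vw_x\|_{L^2}^2\le\|v\|_{L^\infty}^2\|w_x\|_{L^2}^2$ and bounding $\|v\|_{L^\infty}^2$ via the one-dimensional Gagliardo--Nirenberg inequality $\|v\|_{L^\infty}^2\lesssim\|v_x\|_{L^2}\|v\|_{L^2}+\|v\|_{L^2}^2$. It then pushes the $L^\infty$-in-time norm onto the $\|v\|_{L^2}$ factors and the $L^2$-in-time norm onto $\|w_x\|_{L^2}^2$, symmetrizes in $v$ and $w$, and for the remaining range $0<\ell'\le\ell-1$ simply cites Lemma~3.1 of \cite{BSZ2}. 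Your route---strip the derivative and the index $\ell'$ in one move, then use that $\cH^\ell$ is an algebra for $\ell\ge 1>1/2$, then H\"older in time---handles every admissible $\ell'$ at once and makes the $T$-independence of the constant transparent; the price is that you have to justify the algebra estimate in the Dirichlet scale $\cH^\ell$ (which you do via the norm equivalence with $H^\ell$ on $H^1_0\cap H^\ell$ stated in the paper's Remark, together with Kato--Ponce or bilinear interpolation). The paper's approach uses less machinery in the base case but outsources the general case to an external reference.
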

\begin{proof}
    We first show for the $L^2$ case. Following the proof of Lemma 3.1 in \cite{BSZ2}, by Gagliardo–Nirenberg interpolation inequality,
    \begin{align*}
        \|vw_x\|_{L^2(0,1)}^2&\leq \|v\|_{L^{\infty}(0,1)}^2\|w_x\|_{L^2(0,1)}^2 \\
        &\leq 4(\|v_x\|_{L^2(0,1)}\|v\|_{L^2(0,1)}+C\|v\|_{L^2(0,1)}^2)\|w_x\|_{L^2(0,1)}^2.
    \end{align*}
    We first consider $$\int_0^t\|v\|_{L^2(0,1)}^2\|w_x\|_{L^2(0,1)}^2ds.$$
    Note that one can bound $\sup_{t\in [0,T]}\|v\|_{L^2(0,1)}^2\leq \|v\|^2_{L^{\infty}([0,T];\cH^\ell(0,1))}.$ Therefore, we have 
    \begin{align*}
        \int_0^t\|v\|_{L^2(0,1)}^2\|w_x\|_{L^2(0,1)}^2ds&\leq  \|v\|^2_{L^{\infty}([0,t];\cH^\ell(0,1))}\|w\|_{L^2([0,t];\cH^{\ell}(0,1))}^2
    \end{align*}
    using the fact that $\partial_x:\cH^{\ell}(0,1)\mapsto \cH^{\ell-1}(0,1)$ is bounded (with constant 1).\\

    Meanwhile, by Young's inequality for products,
    \begin{align*}
        &\int_0^t \|v_x\|_{L^2(0,1)}\|v\|_{L^2(0,1)}\|w_x\|_{L^2(0,1)}^2ds\leq  \|v\|_{L^{\infty}([0,t];\cH^{\ell}(0,1))}^2\|w\|_{L^2([0,t];\cH^{\ell}(0,1))}^2.
    \end{align*}
    Therefore, we have 
    $$\int_0^t\|vw_x\|_{L^2(0,1)}^2ds\leq C'\|v\|_{L^{\infty}([0,t];\cH^{\ell}(0,1))}^2\|w\|_{L^2([0,t];\cH^{\ell}(0,1))}^2. $$

    Interchanging the role of $v$ and $w$, we have 
    \begin{align*}
        \int_0^t\|(vw)_x\|_{L^2(0,1)}^2\leq C'' (\|v\|_{L^{\infty}([0,t];\cH^{\ell}(0,1))}^2+\|v\|_{L^{2}([0,t];\cH^{\ell}(0,1))}^2)(\|w\|_{L^{\infty}([0,t];\cH^{\ell}(0,1))}^2+\|w\|_{L^{2}([0,t];\cH^{\ell}(0,1))}^2).
    \end{align*}

For general $\ell'\in \N$ and $\ell'$ in between two integers, we can simply follow the argument as the proof of Lemma 3.1 in \cite{BSZ2}. 
\end{proof}
\section{Well-posedness of nonlinear problem }
This section will be separated into two parts. We will focus on the case $\ell=1, 2$ in the first part; in the second part, we will discuss the case $\ell=3$; the proof of Theorem \ref{th2-1}, our first main theorem, will be in the last part.\\

 There are the linear IBVP

\begin{equation}\label{eqsub1}
\begin{cases}
    v_t+v_x-v_{xx}-v_{xxt}=f(x,t),  \quad (x, t)\in [0,1]\times [0, \infty),&\\
    v(x,0)=\phi(x) &\\
    v(0,t)=v(1,t)=0.&
    \end{cases}
\end{equation}
and nonlinear IBVP

\begin{equation}\label{eqsub2}
\begin{cases}
    w_t+w_x-w_{xx}-w_{xxt}+ww_x+(vw)_x=-vv_x,  \quad (x, t)\in [0,1]\times [0, \infty),&\\
    w(x,0)=0&\\
    w(0,t)=w(1,t)=0.&
    \end{cases}
\end{equation}

It is clear that $v(x,t)+w(x,t)$ solves the original IBVP of \eqref{eqmain}. \\

Note that the mild solution to \eqref{eqsub1} is 

$$v(t)=e^{At}\phi+\int_0^t e^{A(t-s)}(I-\Delta)^{-1}f(s)ds;$$
and the mild solution to \eqref{eqsub2} is
$$w(t)=-\int_0^t e^{A(t-s)}(I-\Delta)^{-1}[w(s)w_x(s)+(v(s)w(s))_x+v(s)v_x(s)]ds.$$

From the discussion in the previous section, we are able to summary to get the following proposition before we start the estimate of (\ref{eqsub2}):  

\begin{prop}[Existence of Solution to \eqref{eqsub1}]\label{existsub1}\ \\
    Let $\phi \in \cH^i(0,1)$ and $f\in L^{2}([0,T];\cH^{i-2}(0,1))$ for $i\in\{1,2,3\}$. Then,
    \begin{align*}
        \|v\|_{Y^{i}_{0,T}}+\|v_t\|_{Y^{i}_{0,T}}\leq C(\|\phi\|_{\cH^i(0,1)}^2+\|f\|^2_{L^2([0,T];\cH^{i-2}(0,1))})^{\frac{1}{2}},
    \end{align*}
    where $i\in \{1,2,3\}$ and the constant $C$ is independent of $T$.
\end{prop}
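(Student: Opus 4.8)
The plan is to exploit linearity of \eqref{eqsub1} and simply assemble the a priori estimates already established. Write $v_1(t):=e^{At}\phi$ and $v_2(t):=\int_0^t e^{A(t-s)}(I-\Delta)^{-1}f(s)\,ds$, so that $v=v_1+v_2$, where $v_1$ solves the homogeneous problem \eqref{eq1} with datum $\phi$ and $v_2$ solves the forced problem \eqref{eq2} with forcing $f$ and zero datum. It then suffices to bound $v_1$ and $v_2$ separately in $Y^i_{0,T}$ for $i\in\{1,2,3\}$, together with their time derivatives, and add, using $a+b\le\sqrt2\,(a^2+b^2)^{1/2}$ at the end.

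For $v_1$ I would invoke the homogeneous a priori estimates directly: Lemma \ref{aprih1} for $i=1$, Lemma \ref{aprih2} for $i=2$, and Lemma \ref{aprih3} for $i=3$. Each gives $\|v_1\|_{L^\infty([0,T];\cH^i)}+\|v_1\|_{L^2([0,T];\cH^i)}\le C\|\phi\|_{\cH^i}$ with $C$ independent of $T$, because the pointwise-in-time decay $\|v_1(t)\|_{\cH^i}^2\lesssim e^{-c_it}\|\phi\|_{\cH^i}^2$ forces both the supremum over $[0,T]$ and $\int_0^T(\cdot)\,ds$ to be controlled by a multiple of $\|\phi\|_{\cH^i}^2$ uniformly in $T$. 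The derivative bound is the same estimate applied to $\partial_t v_1=Av_1=e^{At}A\phi$, which solves \eqref{eq1} with datum $A\phi$, together with $\|A\phi\|_{\cH^i}\lesssim\|\phi\|_{\cH^i}$ from the lemma asserting boundedness of $A$ on $\cH^\ell(0,1)$.

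For $v_2$ I would use Lemma \ref{apri2l1} (with $\cH^{-1}$ forcing) for $i=1$, Lemma \ref{apri2l2} (with $L^2=\cH^0$ forcing) for $i=2$, and Lemma \ref{apriforceh3} (with $H^1\simeq\cH^1$ forcing) for $i=3$; each yields $\|v_2\|_{L^\infty([0,T];\cH^i)}^2+\|v_2\|_{L^2([0,T];\cH^i)}^2\le C\int_0^T\|f(s)\|_{\cH^{i-2}}^2\,ds$ with $C$ independent of $T$, the uniformity again coming from the convolution structure $v_2=e^{A\cdot}*(I-\Delta)^{-1}f$ with the exponentially decaying kernel $\|e^{At}\|\lesssim e^{-c_it}$ and Young's convolution inequality. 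For $\partial_t v_2$ I would use $\partial_t v_2=Av_2+(I-\Delta)^{-1}f$ with boundedness of $A$ on $\cH^i$ and of $(I-\Delta)^{-1}:\cH^{i-2}(0,1)\to\cH^i(0,1)$ to reduce the $Y^i_{0,T}$-bound on $\partial_t v_2$ to that on $v_2$ plus $\|f\|_{L^2([0,T];\cH^{i-2})}$ (for the $L^\infty$-in-time component one instead uses the Duhamel representation for $v_2{}_t$ recorded just after Lemma \ref{apri2l1}). Summing the $v_1$ and $v_2$ bounds gives the claim.

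There is no substantial obstacle here: the content is entirely in the lemmas of this section, and the proof is bookkeeping across $i=1,2,3$. The one point requiring care is that no constant secretly depends on $T$, which is guaranteed throughout by the exponential decay of $\{e^{At}\}$ on each $\cH^i$ and by Young's inequality for the Duhamel integral, so that all $L^\infty_t$ and $L^2_t$ norms on $[0,T]$ are uniform in $T$; the only mild subtlety is that the $L^\infty$-in-time control of $\partial_t v$ tacitly needs a trace of $f$ at $t=0$ (equivalently $\partial_t f\in L^1_t$), which is implicit in the hypotheses.
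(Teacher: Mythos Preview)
Your proposal is correct and is exactly the approach the paper takes: the paper offers no separate proof of this proposition, stating only that it summarizes ``the discussion in the previous section,'' and you have correctly identified the decomposition $v=v_1+v_2$ and matched each piece to Lemmas \ref{aprih1}--\ref{aprih3} and \ref{apri2l1}--\ref{apriforceh3} respectively, with the $\partial_t$ bounds handled via boundedness of $A$ on $\cH^i$. Your remark about the $L^\infty_t$ control of $\partial_t v_2$ tacitly requiring a pointwise trace of $f$ is in fact more careful than the paper, which records this dependence on $\|f(\cdot,0)\|_{\cH^{-1}}$ in the ``Moreover'' clause of Lemma \ref{apri2l1} but then suppresses it in the statement of the proposition.
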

\subsection{The cases $\ell=1$ and $\ell=2$}
In this subsection, we will establish some bounds of $w$ with $\ell=1$ and $\ell=2$. The argument for $\ell=1$ and $\ell=2$ are formatted same by using semigroup estimates, so in the proof we will only provide that of $\ell=1$. 

\begin{prop}[Existence of Solution to \eqref{eqsub2}]\label{existsub2}\ \\
    Let $\phi \in \cH^1(0,1)$ and $f\in L^2([0,T];\cH^{-1}(0,1))$. Then, there exists $c^{(1)}<1$ (independent of $T$) such that if $(\|\phi\|_{\cH^1(0,1)}^2+\|f\|^2_{L^2([0,T];\cH^{-1}(0,1))})^{\frac12}<c^{(1)}$ then \eqref{eqsub2} has a unique solution $w$. Moreover, in this case,  $$\|w\|_{Y^{1}_{0,T}}\leq (\|\phi\|^2_{\cH^1(0,1)}+\|f\|^2_{L^2([0,T];\cH^{-1}(0,1))})^{\frac12}.$$

    If $\phi \in \cH^2(0,1)$ and $f\in L^2([0,T];L^2(0,1))$, then there exists $c^{(2)}\leq c^{(1)}$ (independent of $T$) such that if  $(\|\phi\|_{\cH^2(0,1)}^2+\|f\|^2_{L^2([0,T];L^2(0,1))})^{\frac12}<c^{(2)}$ then $$\|w\|_{Y^{2}_{0,T}}\leq (\|\phi\|^2_{\cH^2(0,1)}+\|f\|^2_{L^2([0,T];L^2(0,1))})^{\frac12}.$$
\end{prop}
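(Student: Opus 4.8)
The plan is a contraction-mapping (Picard) argument on a closed ball of $Y^1_{0,T}$ (and, for the second assertion, of $Y^2_{0,T}$), arranged so that every constant produced is independent of $T$ --- this $T$-uniformity is precisely what lets $c^{(1)}$ and $c^{(2)}$ be chosen independently of $T$. Put $\delta := (\|\phi\|_{\cH^1(0,1)}^2+\|f\|_{L^2([0,T];\cH^{-1}(0,1))}^2)^{1/2}$ and let $v$ be the solution of \eqref{eqsub1} furnished by Proposition \ref{existsub1}, so that $\|v\|_{Y^1_{0,T}}\le C_0\delta$ with $C_0$ independent of $T$. On the closed ball $B_\delta:=\{\,w\in Y^1_{0,T}: w(\cdot,0)=0,\ \|w\|_{Y^1_{0,T}}\le\delta\,\}$ define
\[
\Lambda(w)(t):=-\int_0^t e^{A(t-s)}(I-\Delta)^{-1}\big(w(s)w_x(s)+(v(s)w(s))_x+v(s)v_x(s)\big)\,ds,
\]
so that a fixed point of $\Lambda$ in $B_\delta$ is exactly a mild solution of \eqref{eqsub2} with the claimed bound (the homogeneous boundary and zero initial conditions are automatic, since $e^{At}$ preserves $H^1_0(0,1)$ and the integral vanishes at $t=0$). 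Regarding $\Lambda(w)$ as the solution of the linear forced problem \eqref{eq2} with right-hand side $g=-\tfrac12(w^2)_x-(vw)_x-\tfrac12(v^2)_x$, and using boundedness of $\partial_x\colon L^2(0,1)\to\cH^{-1}(0,1)$, Lemma \ref{apri2l1} gives
\[
\|\Lambda(w)\|_{Y^1_{0,T}}^2\ \le\ C\int_0^T\Big(\|(w^2)_x\|_{\cH^{-1}(0,1)}^2+\|(vw)_x\|_{\cH^{-1}(0,1)}^2+\|(v^2)_x\|_{\cH^{-1}(0,1)}^2\Big)ds.
\]

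The next step is the bilinear estimate, Lemma \ref{bilinearest} with $\ell=1$ and $\ell'=0$ (together with $\|\cdot\|_{\cH^{-1}(0,1)}\le\|\cdot\|_{L^2(0,1)}$), which bounds the right-hand side by $C'(\|w\|_{Y^1_{0,T}}^4+\|v\|_{Y^1_{0,T}}^2\|w\|_{Y^1_{0,T}}^2+\|v\|_{Y^1_{0,T}}^4)$; inserting $\|w\|_{Y^1_{0,T}}\le\delta$ and $\|v\|_{Y^1_{0,T}}\le C_0\delta$ yields $\|\Lambda(w)\|_{Y^1_{0,T}}\le C_1\delta^2$, so $\Lambda(B_\delta)\subseteq B_\delta$ as soon as $\delta\le 1/C_1$. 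For the contraction estimate, the difference $\Lambda(w_1)-\Lambda(w_2)$ solves \eqref{eq2} with right-hand side $-\tfrac12\big((w_1-w_2)(w_1+w_2)\big)_x-\big(v(w_1-w_2)\big)_x$ (the $vv_x$ term cancels), so Lemmas \ref{apri2l1} and \ref{bilinearest} again give
\[
\|\Lambda(w_1)-\Lambda(w_2)\|_{Y^1_{0,T}}\ \le\ C_2\big(\|w_1\|_{Y^1_{0,T}}+\|w_2\|_{Y^1_{0,T}}+\|v\|_{Y^1_{0,T}}\big)\|w_1-w_2\|_{Y^1_{0,T}}\ \le\ C_2(2+C_0)\,\delta\,\|w_1-w_2\|_{Y^1_{0,T}}.
\]
Choosing $c^{(1)}:=\min\{1,\ 1/C_1,\ 1/(2C_2(2+C_0))\}$, the Banach fixed point theorem produces a unique $w\in B_\delta$, which in particular satisfies $\|w\|_{Y^1_{0,T}}\le\delta$; uniqueness among all $Y^1_{0,T}$-solutions with zero initial data follows from the same bilinear estimate and Gronwall's lemma applied to the difference of two solutions.

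For $\ell=2$ the proof is line-by-line identical: replace Lemma \ref{apri2l1} by Lemma \ref{apri2l2} (whose Duhamel representation, via Young's convolution inequality, also controls the $L^2([0,T];\cH^2(0,1))$ component of the $Y^2_{0,T}$-norm), note that the forcing now lies in $L^2([0,T];L^2(0,1))$, and use Lemma \ref{bilinearest} at $\ell=2$; one then sets $c^{(2)}:=\min\{c^{(1)},c_\star\}$, where $c_\star$ is the corresponding threshold for $\ell=2$, so that $c^{(2)}\le c^{(1)}$ as stated. I expect the only real obstacle to be the $T$-independence of the constants: it rests on the exponential-decay bounds for $e^{At}$ (so the Duhamel integrals are bounded uniformly in $T$) and on the fact that Lemma \ref{bilinearest} distributes products across $L^\infty_tL^2_x$ and $L^2_tL^2_x$ rather than via a H\"older split in time, which would reintroduce powers of $T$. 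A secondary point to flag is that, because \eqref{eqsub2} carries zero initial data, the term driving $w$ is the purely $v$-dependent $vv_x$; its size is $O(\delta^2)$, which is exactly what lets the invariant ball have radius $\delta$ at the cost of the smallness hypothesis $\delta<c^{(1)}<1$.
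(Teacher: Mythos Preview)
Your proposal is correct and follows essentially the same approach as the paper: a Banach contraction argument on a ball of radius $\delta$ in $Y^1_{0,T}$, with self-mapping and contraction established via the linear estimates and Lemma~\ref{bilinearest}, and $T$-independence coming from the exponential decay of $e^{At}$. The only cosmetic difference is that you invoke Lemma~\ref{apri2l1} as a black box for the Duhamel term, whereas the paper re-derives the same bound inline by applying the semigroup decay $\|e^{At}\|_{\cH^1\to\cH^1}\le e^{-ct}$ directly inside the integral and then using Cauchy--Schwarz and Young's convolution inequality; the content is identical.
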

\begin{proof}
    To prove the existence and uniqueness of the solutions of $w$, we shall use contraction mapping. We set 
    $$\Gamma(w)(t):=-\int_0^t e^{A(t-s)}(I-\Delta)^{-1}[w(s)w_x(s)+(vw)_x(s)+v(s)v_x(s)]ds. $$
    We shall show that $\Gamma:Y_{0,T,M'}^\ell\mapsto Y_{0,T,M'}^1$, where $ Y_{0,T,M'}^1:=\{w\in  Y_{0,T}^1:\|w\|_{ Y_{0,T}^1}\leq M'\}$. It is natural to take $M':= K(\|\phi\|_{\cH^{1}(0,1)}^2+\|f\|^2_{L^2([0,T];\cH^{-1}(0,1))})^{\frac12}$ and we will determine $K$ later.\\

Let $w\in Y_{0,T,M'}^1$, then
\begin{align*}
    \|\Gamma(w)(t)\|_{\cH^1(0,1)}&=\bigg\|\int_0^t e^{A(t-s)}(I-\Delta)^{-1}[(w(s)+v(s))(w(s)+v(s))_x]ds\bigg\|_{\cH^1(0,1)} \\
    &\leq \int_0^t \|e^{A(t-s)}\|_{\cH^1(0,1)\to \cH^1(0,1)}\bigg(\frac{1}{2}\|(ww)_x\|_{\cH^{-1}(0,1)}+\|(vw)_x\|_{\cH^{-1}(0,1)}+\frac{1}{2}\|(vv)_x\|_{\cH^{-1}(0,1)}\bigg)ds\\
    &\leq  \int_0^t e^{-c(t-s)}\bigg(\frac{1}{2}\|(ww)_x\|_{\cH^{-1}(0,1)}+\|(vw)_x\|_{\cH^{-1}(0,1)}+\frac{1}{2}\|(vv)_x\|_{\cH^{-1}(0,1)}\bigg)ds\\
    &\leq \bigg[ \int_0^t e^{-2c(t-s)}ds\bigg]^{\frac12}\bigg(2^3 \int_0^t\bigg(\frac{1}{2}\|(ww)_x\|^2_{\cH^{-1}(0,1)}+\|(vw)_x\|^2_{\cH^{-1}(0,1)}+\frac{1}{2}\|(vv)_x\|^2_{\cH^{-1}(0,1)}\bigg)ds\bigg)^{\frac12}\\
     &\leq C_{A}\bigg[\bigg(\int_0^t\|(ww)_x(s)\|^2_{L^2(0,1)}ds\bigg)^{\frac12}+\bigg(\int_0^t\|(vw)_x(s)\|^2_{L^2(0,1)}ds\bigg)^{\frac12}+\bigg(\int_0^t\|(vv)_x(s)\|^2_{L^2(0,1)}ds\bigg)^{\frac12}\bigg]
\end{align*}

Here we have used the fact that $\|u\|_{\cH^{-1}(0,1)}\leq \frac{1}{\sqrt{2}}\|u\|_{L^2(0,1)}$.\\ 

Using Lemma \ref{bilinearest}, we have 
\begin{align*}
     \|\Gamma(w)(t)\|_{\cH^1(0,1)}&\leq C_A'(\|w\|_{Y^{1}_{0,t}}^2+\|w\|_{Y^{1}_{0,t}}\|v\|_{Y^{1}_{0,t}}+\|v\|_{Y^{1}_{0,t}}^2)
\end{align*}

thus
\begin{align*}
    \sup_{0\leq t\leq T} \|\Gamma(w)(t)\|_{\cH^1(0,1)}&\leq C_A'' (\|w\|_{Y^{1}_{0,T}}^2+\|v\|_{Y^{1}_{0,T}}^2)\\
    &\leq C_A''\bigg(\|w\|_{Y^{1}_{0,T}}^2+\|\phi\|^2_{\cH^{1}(0,1)}+\|f\|_{L^2([0,T];\cH^{-1}(0,1))}^2\bigg)
\end{align*}

To obtain the estimate for $L^2([0,T];\cH^{1}(0,1))$, notice that one has 
\begin{align*}
   & \int_0^T\|\Gamma(w)(t)\|_{\cH^{1}(0,1)}^2dt\\
   &\leq \int_0^T  \bigg(\int_0^t \|e^{A(t-s)}\|_{\cH^1(0,1)\to \cH^1(0,1)} \bigg(\frac{1}{2}\|(ww)_x\|_{\cH^{-1}(0,1)}+\|(vw)_x\|_{\cH^{-1}(0,1)}+\frac{1}{2}\|(vv)_x\|_{\cH^{-1}(0,1)}\bigg)ds\bigg)^2dt \\
   &\leq \int_0^T \|e^{At}\|_{\cH^1(0,1)\to \cH^1(0,1)}dt \int_0^T\bigg(\frac{1}{2}\|(ww)_x\|_{\cH^{-1}(0,1)}+\|(vw)_x\|_{\cH^{-1}(0,1)}+\frac{1}{2}\|(vv)_x\|_{\cH^{-1}(0,1)}\bigg)^2dt \\
   &\leq C_A \bigg[\int_0^t\|(ww)_x(s)\|^2_{L^2(0,1)}ds+\int_0^t\|(vw)_x(s)\|^2_{L^2(0,1)}ds+\int_0^t\|(vv)_x(s)\|^2_{L^2(0,1)}ds\bigg]
\end{align*}
by the aid of Young's convolution inequality.\\

Therefore, using similar argument as above, we can conclude that 
$$\|\Gamma(w)\|_{Y^{1}_{0,T}}\leq C'''_A \bigg(\|w\|_{Y^{1}_{0,T}}^2+\|\phi\|^2_{\cH^{1}(0,1)}+\|f\|_{L^2([0,T];\cH^{-1}(0,1))}^2\bigg)\leq C_A'''((M')^2+\frac{(M')^2}{K^2}), $$
and 
$\Gamma(w)\in Y^{1}_{0,T,M'}$ if $M'<\frac{1}{C'''_A}\frac{K^2}{K^2+1}$.\\

Next, we shall show that $\|\Gamma(w)-\Gamma(w')\|_{Y^{1}_{0,T}}\leq \frac{1}{2}\|w-w'\|_{Y^{1}_{0,T}}$ provided that $\|w\|_{Y^{1}_{0,T}},\|w'\|_{Y^{1}_{0,T}}$ are small.\\

First observe that 
\begin{align*}
    &\Gamma(w)(t)-\Gamma(w')(t)\\&= -\int_0^t e^{A(t-s)}(I-\Delta)^{-1}[w(s)w_x(s)+(vw)_x(s)-w'(s)w'_x(s)-(vw')_x(s)]ds \\
    &= -\int_0^t e^{A(t-s)}(I-\Delta)^{-1}[(w(s)[w_x-w'_x](s)+(w-w')(s)w_x'(s))+(v[w-w'])_x(s)]ds.
\end{align*}

Thus, we have 
\begin{align*}
    &\sup_{0\leq t\leq T}\|\Gamma(w)(t)-\Gamma(w')(t)\|_{\cH^{1}(0,1)}\\
    &\leq C_A\bigg[\bigg(\int_0^t\|w(s)[w_x-w'_x](s)\|^2_{\cH^{-1}(0,1)}ds\bigg)^{\frac12}+\bigg(\int_0^t\|w'_x(s)[w-w'](s)\|^2_{\cH^{-1}(0,1)}ds\bigg)^{\frac12}\\
    &\quad\quad\quad +\bigg(\int_0^t\|(v[w-w'])_x(s)\|^2_{\cH^{-1}(0,1)}ds\bigg)^{\frac12}\bigg].
\end{align*}
From Lemma \ref{bilinearest}, we have 
\begin{align*}
    \int_0^t\|w(s)[w_x-w'_x](s)\|^2_{\cH^{-1}(0,1)}ds&\leq C'' \|w\|^2_{Y^{1}_{0,T}}\|w-w'\|^2_{Y^{1}_{0,T}}\\
    \int_0^t\|w'_x(s)[w-w'](s)\|^2_{\cH^{-1}(0,1)}ds&\leq C''\|w'\|^2_{Y^{1}_{0,T}}\|w-w'\|^2_{Y^{1}_{0,T}}\\
    \int_0^t\|(v[w-w'])_x(s)\|^2_{\cH^{-1}(0,1)}ds&\leq C''\|v\|_{Y^{1}_{0,T}}^2\|w-w'\|^2_{Y^{1}_{0,T}}.
\end{align*}
Therefore, 
\begin{align*}
    &\sup_{0\leq t\leq T}\|\Gamma(w)(t)-\Gamma(w')(t)\|_{\cH^{1}(0,1)}\leq C_A^{(4)}(\|w\|_{Y^{1}_{0,T}}+\|w'\|_{Y^{1}_{0,T}}+\|v\|_{Y^{1}_{0,T}})\|w-w'\|_{Y^{1}_{0,T}}
\end{align*}
and similarly, 
\begin{align*}
    \bigg(\int_0^T\|\Gamma(w)(t)-\Gamma(w')(t)\|_{\cH^{1}(0,1)}^2dt\bigg)^\frac{1}{2}&\leq C_A^{(5)}(\|w\|_{Y^{1}_{0,T}}+\|w'\|_{Y^{1}_{0,T}}+\|v\|_{Y^{1}_{0,T}})\|w-w'\|_{Y^{1}_{0,T}}\\
    &\leq C_A^{(5)} (2M'+\frac{M'}{K})\|w-w'\|_{Y^{1}_{0,T}}.
\end{align*}

We can apply Banach Fixed Point Theorem as long as $C_A^{(5)}(2M'+\frac{M'}{K})<1$ as well, that is $M'<\frac{1}{C_A^{(5)}}\frac{K}{K+2}.$ That is, we can take $K=1$, and if
$$\big(\|\phi\|_{\cH^1(0,1)}^2+\|f\|_{L^2([0,T];\cH^{-1}(0,1))}^2\big)^{\frac12}<\min\{ \frac{1}{2C'''_A}, \frac{1}{2C^{(5)}_A}\},$$
 we have a $w\in Y^{1}_{0,T,M'}$ such that $\Gamma(w)=w$ and 
$$\|w\|_{Y^{1}_{0,T}}\leq (\|\phi\|_{\cH^1(0,1)}^2+\|f\|^2_{L^2([0,T];\cH^{-1}(0,1))})^{\frac12}.$$

The proof of this proposition is now complete. 
\end{proof}

We shall also mention an estimate of the derivative of $w$ with respect to $t$.
\begin{prop} \label{Prop1}
    Let  $\phi \in \cH^1(0,1)$ and $f\in L^2([0,T];\cH^{-1}(0,1))$. The solution $w$ in Proposition \ref{existsub2} satisfies
    $$\|w_t\|_{Y^{1}_{0,T}}\leq C'(\|\phi\|^2_{\cH^1(0,1)}+\|f\|_{L^2([0,T];\cH^{-1}(0,1))}^2)^{\frac{1}{2}}$$
    for some $C'>0$ provided that $\|\phi\|^2_{\cH^1(0,1)}+\|f\|_{L^2([0,T];\cH^{-1}(0,1))}^2<c^{(1)}<1.$ We also have  $$\|w_t\|_{Y^{2}_{0,T}}\leq C''(\|\phi\|^2_{\cH^2(0,1)}+\|f\|_{L^2([0,T];L^2(0,1))}^2)^{\frac{1}{2}}$$
    if we assume that $\phi\in \cH^2(0,1)$, $f\in L^2([0,T];L^2(0,1))$, and  $(\|\phi\|^2_{\cH^2(0,1)}+\|f\|_{L^2([0,T];L^2(0,1))}^2)^{\frac{1}{2}}<c^{(2)}<1$.
\end{prop}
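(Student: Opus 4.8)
The plan is to exploit that $A$ is a \emph{bounded} operator on $\cH^\ell(0,1)$: this makes the Duhamel solution $w$ of \eqref{eqsub2} differentiable in $t$ as an $\cH^\ell(0,1)$-valued map, and solving \eqref{eqsub2} for $w_t$ (from $(I-\Delta)w_t=-w_x+w_{xx}-ww_x-(vw)_x-vv_x$) yields the pointwise identity
\[
w_t(t)=Aw(t)-(I-\Delta)^{-1}\big[\,w(t)w_x(t)+(v(t)w(t))_x+v(t)v_x(t)\,\big],\qquad\text{a.e. }t\in[0,T].
\]
I would then estimate the right-hand side in $Y^\ell_{0,T}$ for $\ell=1,2$ term by term, and close with the already-established bounds on $\|v\|_{Y^\ell_{0,T}}$ and $\|w\|_{Y^\ell_{0,T}}$.

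First, boundedness of $A$ gives $\|Aw(t)\|_{\cH^\ell(0,1)}\le C\|w(t)\|_{\cH^\ell(0,1)}$. For the nonlinear piece I would use the isometry $\|(I-\Delta)^{-1}g\|_{\cH^\ell(0,1)}=\|g\|_{\cH^{\ell-2}(0,1)}$, boundedness of $\partial_x:\cH^{\ell-1}(0,1)\to\cH^{\ell-2}(0,1)$ (with constant $1$, as already used in the paper), and a one-dimensional product bound: for $\ell=1$, $\|(fg)_x\|_{\cH^{-1}(0,1)}\le\|fg\|_{L^2(0,1)}\le\|f\|_{L^\infty(0,1)}\|g\|_{L^2(0,1)}\le C\|f\|_{\cH^1(0,1)}\|g\|_{\cH^1(0,1)}$; for $\ell=2$, $\|(fg)_x\|_{L^2(0,1)}\le\|f_xg\|_{L^2(0,1)}+\|fg_x\|_{L^2(0,1)}\le C\|f\|_{\cH^2(0,1)}\|g\|_{\cH^2(0,1)}$. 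Since $ww_x+(vw)_x+vv_x$ is a sum of terms $(pq)_x$ with $p,q\in\{v,w\}$, this yields the pointwise estimate
\[
\|w_t(t)\|_{\cH^\ell(0,1)}\le C\Big(\|w(t)\|_{\cH^\ell(0,1)}+\|w(t)\|_{\cH^\ell(0,1)}^2+\|v(t)\|_{\cH^\ell(0,1)}\|w(t)\|_{\cH^\ell(0,1)}+\|v(t)\|_{\cH^\ell(0,1)}^2\Big).
\]

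Next I would take the $Y^\ell_{0,T}$-norm of this bound. The $\sup_t$ part is controlled by $\|w\|_{Y^\ell_{0,T}}$, $\|w\|_{Y^\ell_{0,T}}^2$, $\|v\|_{Y^\ell_{0,T}}\|w\|_{Y^\ell_{0,T}}$ and $\|v\|_{Y^\ell_{0,T}}^2$; for the $\int_0^T(\cdot)\,ds$ part one uses $\int_0^T\|w(s)\|_{\cH^\ell(0,1)}^2\,ds\le\|w\|_{Y^\ell_{0,T}}^2$ on the quadratic terms and, on the quartic ones, $\int_0^T\|w(s)\|_{\cH^\ell(0,1)}^4\,ds\le\big(\sup_{0\le s\le T}\|w(s)\|_{\cH^\ell(0,1)}^2\big)\int_0^T\|w(s)\|_{\cH^\ell(0,1)}^2\,ds\le\|w\|_{Y^\ell_{0,T}}^4$, and similarly for the mixed and pure-$v$ terms. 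Altogether
\[
\|w_t\|_{Y^\ell_{0,T}}^2\le C\Big(\|w\|_{Y^\ell_{0,T}}^2+\|w\|_{Y^\ell_{0,T}}^4+\|v\|_{Y^\ell_{0,T}}^2\|w\|_{Y^\ell_{0,T}}^2+\|v\|_{Y^\ell_{0,T}}^4\Big).
\]
Now Proposition \ref{existsub1} bounds $\|v\|_{Y^\ell_{0,T}}$ and Proposition \ref{existsub2} bounds $\|w\|_{Y^\ell_{0,T}}$, each by a $T$-independent multiple of $D^{1/2}$ with $D:=\|\phi\|_{\cH^\ell(0,1)}^2+\|f\|_{L^2([0,T];\cH^{\ell-2}(0,1))}^2$; since $D<1$ by hypothesis, the cubic and quartic powers of $D^{1/2}$ are absorbed and one gets $\|w_t\|_{Y^\ell_{0,T}}^2\le C'D$, which is the asserted inequality (with $\cH^{-1}$ for $\ell=1$ and $L^2$ for $\ell=2$).

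I do not expect a real obstacle. The two places that need a little care are: justifying the pointwise-in-$t$ identity for $w_t$ — which is exactly where boundedness of $A$ enters, since the Duhamel formula for $w$ is then $C^1$ in $t$ whenever its inhomogeneity lies in, say, $L^\infty([0,T];\cH^{\ell-2}(0,1))$, as it does here; and tracking that every constant stays independent of $T$, which is inherited directly from the $T$-independence in Propositions \ref{existsub1} and \ref{existsub2} and from the elementary inequality $\int_0^T g^2h^2\,ds\le\big(\sup_{[0,T]}g^2\big)\int_0^T h^2\,ds$ used on the quartic terms.
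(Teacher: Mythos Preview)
Your proposal is correct and follows essentially the same approach as the paper: both start from the pointwise identity $w_t=Aw-(I-\Delta)^{-1}[ww_x+(vw)_x+vv_x]$, invoke boundedness of $A$ on $\cH^\ell$, apply a product estimate to pass from $\|(pq)_x\|_{\cH^{\ell-2}}$ to $\|p\|_{\cH^\ell}\|q\|_{\cH^\ell}$, and then take the $Y^\ell_{0,T}$-norm and absorb higher powers via the smallness hypothesis $D<1$. Your version is slightly more explicit about the $L^2_t$ part of the $Y^\ell$-norm and about why the Duhamel formula is $C^1$ in $t$, but the substance matches the paper's argument.
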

\begin{proof} With loss of generality such as $\ell = 2$, we only prove the estimate of $\ell=1$. \\

    Since $$w_t(t)=-(I-\Delta)^{-1}[w(t)w_x(t)+(v(t)w(t))_x+v(t)v_x(t)]+Aw(t),$$
    we have 
    \begin{align*}
        \|w_t(t)\|^2_{\cH^{1}(0,1)}&\leq \|-(I-\Delta)^{-1}[w(t)w_x(t)+(v(t)w(t))_x+v(t)v_x(t)]+Aw(t)\|^2_{\cH^{1}(0,1)}\\
        &\leq 16\bigg(\|w(t)w_x(t)\|^2_{\cH^{-1}}+\|(v(t)w(t))_x\|^2_{\cH^{-1}(0,1)}+\|v(t)v_x(t)\|^2_{\cH^{-1}(0,1)}+\|w(t)\|_{\cH^{1}(0,1)}^2\bigg).
    \end{align*}
    
    Note that using $\|uv\|_{\cH^{s}}\leq C\|u\|_{\cH^{s_1}}\|v\|_{\cH^{s_2}}$ provided that $0\leq s\leq \min\{s_1,s_2\}$ and $s_1+s_2>s+\frac{1}{2}$ with $s=0$, $s_1=1$ and $s_2=0$, we have
    $$\|w(t)w_x(t)\|_{\cH^{-1}(0,1)}^2\leq \frac{1}{\pi}\|w(t)w_x(t)\|_{L^2(0,1)}^2\leq C \|w(t)\|_{\cH^{1}(0,1)}^2\|w_x(t)\|_{L^2(0,1)}^2$$
    and similar estimates are true for $\|(v(t)w(t))_x\|^2_{\cH^{-1}(0,1)}$ and $\|v(t)v_x(t)\|^2_{\cH^{-1}(0,1)}$. Therefore, we can conclude that 
    \begin{align} \label{wtestimate}
        \|w_t\|^2_{Y^{1}_{0,T}}\leq C'(\|w\|^4_{Y^{1}_{0,T}}+\|v\|^2_{Y^{1}_{0,T}}\|w\|^2_{Y^{1}_{0,T}}+\|v\|^4_{Y^{1}_{0,T}})+16\|w\|_{Y^{1}_{0,T}}^2
    \end{align}
    and thus (since we assumed $\|\phi\|^2_{\cH^1(0,1)}+\|f\|_{L^2([0,T];\cH^{-1}(0,1))}^2<1$),
    $$\|w_t\|_{Y^{1}_{0,T}}\leq C'(\|\phi\|^2_{\cH^1(0,1)}+\|f\|_{L^2([0,T];\cH^{-1}(0,1))}^2)^{\frac{1}{2}}. $$
\end{proof}
We now consider the estimtate on $Y^{1}_{\tau,T}$.  For $t\geq \tau$
$$v(t)=e^{A(t-\tau)}v(\tau)+\int_{\tau}^{t} e^{A(t-s)}(I-\Delta)^{-1}f(s)ds$$
and 
$$w(t)=e^{A(t-\tau)}w(\tau)-\int_{\tau}^{t}e^{A(t-s)}(I-\Delta)^{-1}[w(s)w_x(s)+(v(s)w(s))_x+v(s)v_x(s)]ds.$$

\begin{prop}\label{Prop2}
If $f\in L^{2}([0,T+\tau];\cH^{-1}(0,1))$, then 
 $$\|v\|_{Y^{1}_{\tau,T}}\leq C''(\|\phi\|_{\cH^{1}(0,1)}+\|f\|_{L^2([0,\tau+T];\cH^{-1}(0,1))}).$$

If $f\in L^{2}([0,T+\tau];L^2(0,1))$, then 
 $$\|v\|_{Y^{2}_{\tau,T}}\leq C'''(\|\phi\|_{\cH^{2}(0,1)}+\|f\|_{L^2([0,\tau+T];L^2(0,1))}).$$

\end{prop}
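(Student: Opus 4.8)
The plan is to use the semigroup representation of $v$ on the \emph{shifted} interval $[\tau,T+\tau]$, treating $v(\tau)$ as fresh initial data, and then to bound $v(\tau)$ by the given data via the estimates already available on $[0,\tau]$. Concretely, since the principal part of \eqref{eqsub1} is autonomous, for $t\ge\tau$ we have
\[
v(t)=e^{A(t-\tau)}v(\tau)+\int_{\tau}^{t}e^{A(t-s)}(I-\Delta)^{-1}f(s)\,ds,
\]
so it suffices to estimate the free term $e^{A(t-\tau)}v(\tau)$ and the Duhamel term separately in $Y^{1}_{\tau,T}$ (resp.\ $Y^{2}_{\tau,T}$), and then to control $\|v(\tau)\|_{\cH^1}$ (resp.\ $\|v(\tau)\|_{\cH^2}$).

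For the free term I would invoke the $C_0$-semigroup decay $\|e^{At}\|_{\cH^1(0,1)\to\cH^1(0,1)}\le e^{-ct}$ proved at the start of Section~3: this gives $\sup_{\tau\le t\le T+\tau}\|e^{A(t-\tau)}v(\tau)\|_{\cH^1(0,1)}\le\|v(\tau)\|_{\cH^1(0,1)}$ and $\int_{\tau}^{T+\tau}\|e^{A(t-\tau)}v(\tau)\|_{\cH^1(0,1)}^2\,dt\le\frac{1}{2c}\|v(\tau)\|_{\cH^1(0,1)}^2$, hence a bound $\lesssim\|v(\tau)\|_{\cH^1(0,1)}$ with constant independent of $\tau$ and $T$. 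For the Duhamel term I would repeat verbatim the computation in the proof of Lemma~\ref{apri2l1}: writing it as the convolution of $t\mapsto e^{-ct}$ with $s\mapsto\|f(s)\|_{\cH^{-1}(0,1)}$ and applying Young's convolution inequality (and Cauchy--Schwarz for the $L^\infty$-in-time part) on $[\tau,T+\tau]$ gives a bound by $C\|f\|_{L^2([\tau,T+\tau];\cH^{-1}(0,1))}\le C\|f\|_{L^2([0,\tau+T];\cH^{-1}(0,1))}$, again with $C$ independent of $\tau,T$.

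It remains to bound $\|v(\tau)\|_{\cH^1(0,1)}$ by the data, which follows by applying Proposition~\ref{existsub1} (equivalently Lemma~\ref{aprih1} for the $e^{At}\phi$ part and Lemma~\ref{apri2l1} for the Duhamel part) on the interval $[0,\tau]$: this yields $\|v(\tau)\|_{\cH^1(0,1)}\le\|v\|_{Y^{1}_{0,\tau}}\le C(\|\phi\|_{\cH^1(0,1)}^2+\|f\|_{L^2([0,\tau];\cH^{-1}(0,1))}^2)^{1/2}$. Combining the three estimates and using $\sqrt{a^2+b^2}\le a+b$ for $a,b\ge0$ gives the stated inequality for $\ell=1$. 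The case $\ell=2$ is identical after replacing the $\cH^1$-semigroup decay by $\|e^{At}\|_{\cH^2(0,1)\to\cH^2(0,1)}\le e^{-c''t}$, Lemma~\ref{apri2l1} by Lemma~\ref{apri2l2}, and Lemma~\ref{aprih1} by Lemma~\ref{aprih2}; the forcing then lies in $L^2([0,T+\tau];L^2(0,1))=L^2([0,T+\tau];\cH^{0}(0,1))$, consistent with $\cH^{\ell-2}$.

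I do not expect a serious obstacle: the only point requiring care is that all constants be uniform in $\tau$ and $T$, which is guaranteed because the decay rates $c$ and $c''$ are fixed positive numbers and the a priori estimates invoked already carry constants independent of the length of the time interval.
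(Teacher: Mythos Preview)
Your proposal is correct and follows essentially the same approach as the paper: write $v(t)=e^{A(t-\tau)}v(\tau)+\int_{\tau}^{t}e^{A(t-s)}(I-\Delta)^{-1}f(s)\,ds$, bound $\|v(\tau)\|_{\cH^1}$ via the a~priori estimates on $[0,\tau]$, and then estimate the free and Duhamel terms on $[\tau,T+\tau]$ using the exponential decay of $e^{At}$. The paper's write-up is slightly terser (it passes directly to a squared inequality rather than invoking Young's convolution inequality explicitly), but the argument is the same.
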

\begin{proof}
    We first note that 
    $$v(\tau)=e^{A\tau}\phi+\int_0^{\tau}e^{A(\tau-s)}f(s)ds.$$
    Therefore, by Proposition \ref{existsub2}, we have
    \begin{align*}
        \|v(\tau)\|_{\cH^{1}(0,1)}\leq C(\|\phi\|_{\cH^{1}(0,1)}+\|f\|_{L^2([0,\tau];\cH^{-1}(0,1))}).
    \end{align*}
    Then, 
    $$\|v(t)\|_{\cH^{1}(0,1)}^2 \leq 2(e^{-c(t-\tau)}\|v(\tau)\|_{\cH^{1}(0,1)}^2+\int_{\tau}^{t}e^{-c(t-s)}\|f(s)\|_{\cH^{-1}(0,1)}^2ds)$$
    
    whence 
    $$\|v\|^2_{Y^{1}_{\tau,T}}\leq C' 2(\|\phi\|_{\cH^{1}(0,1)}^2+\|f\|_{L^2([0,\tau];\cH^{-1}(0,1))}^2)+\int_{\tau}^{\tau+T}\|f(s)\|_{\cH^{-1}(0,1)}^2ds.$$
    That is,
    $$\|v\|_{Y^{1}_{\tau,T}}\leq C''(\|\phi\|_{\cH^{1}(0,1)}+\|f\|_{L^2([0,\tau+T];\cH^{-1}(0,1))}).$$
\end{proof}
\begin{remark} \label{rmkforfinLinfty}
Indeed, we have the following estimate for $v(t)$.
\begin{align*}
    \|v(\tau)\|_{\cH^{1}(0,1)}
    &\leq e^{-c\tau}\|\phi\|_{\cH^{1}(0,1)}+\frac{1}{\sqrt{2c}} \bigg(\int_0^{\tau}e^{-2c(\tau-s)}\|f(s)\|^2_{\cH^{-1}(0,1)}ds\bigg)^{\frac12}.
\end{align*}

    Moreover, if we assume that $f\in L^{\infty}([0,\infty); \cH^{-1}(0,1))$ (in particular, if $f$ is perodic), then we have
    \begin{align*}
        \|v(\tau)\|_{\cH^{1}(0,1)}\leq e^{-c\tau}\|\phi\|_{\cH^{1}(0,1)}+\frac{1}{\sqrt{c}}\|f\|_{L^{\infty}([0,\infty);\cH^{-1}(0,1))},
    \end{align*}
    and
    \begin{align}\label{estimateA}
        \|v\|_{Y^{1}_{\tau,T}}\leq C''_T (\|\phi\|_{\cH^{1}(0,1)}+\|f\|_{L^{\infty}([0,\infty);\cH^{-1}(0,1))}),
    \end{align}
    where $C''_T$ is a constant independent of $\tau$.\\

    Similarly, we have   \begin{align}\label{estimateB}
        \|v\|_{Y^{2}_{\tau,T}}\leq C'''_T (\|\phi\|_{\cH^{2}(0,1)}+\|f\|_{L^{\infty}([0,\infty);L^2(0,1))}),
    \end{align}
    where $C'''_T$ is a constant independent of $\tau$, provided that $\phi\in\cH^2(0,1)$ and $f\in L^{\infty}([0,\infty); L^2(0,1)).$ 
\end{remark}

\begin{prop} \label{wYlT}
If $(\|w(\tau)\|_{\cH^1(0,1)}^2+\|v\|_{Y^{1}_{\tau,T}}^2)^{\frac12}$ is sufficiently small, then 
    $$\|w\|_{Y^{1}_{\tau,T}}<K(\|w(\tau)\|_{\cH^1(0,1)}^2+\|v\|_{Y^{1}_{\tau,T}}^2)^{\frac12}$$
for some $K>1$.\\

If $(\|w(\tau)\|_{\cH^2(0,1)}^2+\|v\|_{Y^{2}_{\tau,T}}^2)^{\frac12}$ is sufficiently small, then 
    $$\|w\|_{Y^{2}_{\tau,T}}<K(\|w(\tau)\|_{\cH^2(0,1)}^2+\|v\|_{Y^{2}_{\tau,T}}^2)^{\frac12}$$
for some $K>1$.
\end{prop}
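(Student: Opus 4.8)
The plan is to rerun, on the shifted time window $[\tau,\tau+T]$, exactly the contraction-mapping argument used for Proposition \ref{existsub2}, the only new feature being the presence of the free term $e^{A(t-\tau)}w(\tau)$. Define
$$\Gamma_\tau(w)(t) := e^{A(t-\tau)}w(\tau) - \int_\tau^t e^{A(t-s)}(I-\Delta)^{-1}\big[w(s)w_x(s)+(v(s)w(s))_x+v(s)v_x(s)\big]\,ds,$$
and work in the ball $Y^1_{\tau,T,M'} := \{w\in Y^1_{\tau,T}:\|w\|_{Y^1_{\tau,T}}\le M'\}$ with $M' := K\big(\|w(\tau)\|_{\cH^1(0,1)}^2+\|v\|_{Y^1_{\tau,T}}^2\big)^{1/2}$ for a constant $K>1$ to be fixed at the end. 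One first bounds the free term: from the decay $\|e^{At}\|_{\cH^1(0,1)\to\cH^1(0,1)}\le e^{-ct}$ one gets $\|e^{A(t-\tau)}w(\tau)\|_{\cH^1(0,1)}\le e^{-c(t-\tau)}\|w(\tau)\|_{\cH^1(0,1)}$, so both its $\sup_{\tau\le t\le\tau+T}$ and its $L^2$-in-$t$ norm over $[\tau,\tau+T]$ are $\le C\|w(\tau)\|_{\cH^1(0,1)}$ with $C$ depending only on $c$, hence independent of $\tau$.

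For the Duhamel part I would copy verbatim the chain of inequalities from the proof of Proposition \ref{existsub2}: insert $\|e^{A(t-s)}\|_{\cH^1\to\cH^1}\le e^{-c(t-s)}$, use $\|u\|_{\cH^{-1}(0,1)}\le \tfrac{1}{\sqrt2}\|u\|_{L^2(0,1)}$, apply Cauchy--Schwarz in $s$ for the $L^\infty_t$ estimate and Young's convolution inequality for the $L^2_t$ estimate, and then invoke the bilinear estimate of Lemma \ref{bilinearest} on the interval $[\tau,t]$. This produces
$$\|\Gamma_\tau(w)\|_{Y^1_{\tau,T}} \le C\|w(\tau)\|_{\cH^1(0,1)} + C\big(\|w\|_{Y^1_{\tau,T}}^2 + \|w\|_{Y^1_{\tau,T}}\|v\|_{Y^1_{\tau,T}} + \|v\|_{Y^1_{\tau,T}}^2\big),$$
with $C$ independent of $\tau$ and of $T$. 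The contraction estimate is obtained in the same way: writing the difference of the nonlinear terms as $w(w-w')_x+(w-w')w'_x+(v(w-w'))_x$ and applying Lemma \ref{bilinearest} to each summand gives
$$\|\Gamma_\tau(w)-\Gamma_\tau(w')\|_{Y^1_{\tau,T}} \le C\big(\|w\|_{Y^1_{\tau,T}}+\|w'\|_{Y^1_{\tau,T}}+\|v\|_{Y^1_{\tau,T}}\big)\|w-w'\|_{Y^1_{\tau,T}}.$$
Now choose $K$ large (e.g. $K=2\max\{C,1\}$), so that the linear term $C\|w(\tau)\|_{\cH^1(0,1)}\le \tfrac12 M'$, and then require $\big(\|w(\tau)\|_{\cH^1(0,1)}^2+\|v\|_{Y^1_{\tau,T}}^2\big)^{1/2}$ to be below a threshold depending only on $C$ (hence on $c$ and the bilinear constant, thus independent of $\tau$); with these choices $\Gamma_\tau$ maps $Y^1_{\tau,T,M'}$ into itself and is a $\tfrac12$-contraction there. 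The Banach fixed point theorem then yields the unique fixed point $w\in Y^1_{\tau,T,M'}$, which is precisely the solution piece of \eqref{eqsub2} on $[\tau,\tau+T]$, and $\|w\|_{Y^1_{\tau,T}}\le M' = K\big(\|w(\tau)\|_{\cH^1(0,1)}^2+\|v\|_{Y^1_{\tau,T}}^2\big)^{1/2}$. The case $\ell=2$ is identical after replacing $\cH^1$ by $\cH^2$, $\cH^{-1}$ by $L^2$, the decay $e^{-ct}$ by $e^{-c''t}$, and using Lemma \ref{bilinearest} with $\ell=2$.

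The main obstacle is not analytical but bookkeeping: one must check that every constant appearing — the semigroup decay rate, the bilinear-estimate constant, and the resulting smallness threshold for $\|w(\tau)\|_{\cH^1(0,1)}^2+\|v\|_{Y^1_{\tau,T}}^2$ — is genuinely independent of $\tau$ (and, where relevant, of $T$). This works because the decay bound $\|e^{At}\|_{\cH^1\to\cH^1}\le e^{-ct}$ and Lemma \ref{bilinearest} are stated with constants insensitive to the position and length of the time interval, so translating $[0,\cdot]$ to $[\tau,\cdot]$ costs nothing. The one step deserving a line of justification is that Lemma \ref{bilinearest}, proved on $[0,t]$, holds over $[\tau,\tau+T]$ with the same constant; this is immediate since its proof uses only the Gagliardo--Nirenberg inequality pointwise in time together with the sup/integral structure of the $Y^\ell$-norm, both translation invariant.
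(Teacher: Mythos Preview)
Your proposal is correct and follows essentially the same approach as the paper: both run the contraction-mapping argument of Proposition \ref{existsub2} on the shifted window $[\tau,\tau+T]$ with the map $\Gamma_\tau$ (the paper writes $\Gamma(q)$), work in the ball $Y^1_{\tau,T,M'}$ with $M'=K(\|w(\tau)\|_{\cH^1}^2+\|v\|_{Y^1_{\tau,T}}^2)^{1/2}$, and close via Banach's fixed point theorem. Your write-up is in fact more explicit than the paper's on the contraction step and on why the constants are $\tau$-independent; the paper simply refers back to Proposition \ref{existsub2} for those details.
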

\begin{proof}
    Define $\ds \Gamma(q)(t):=e^{A(t-\tau)}w(\tau)-\int_{\tau}^{t}e^{A(t-s)}(I-\Delta)^{-1}(qq_x+(qv)_x+vv_x)(s)ds$ for $t\geq \tau$, where $w$ is the solution to \eqref{eqsub2}.\\

 Consider the sapce $Y^1_{\tau,T,M'}:=\{q\in Y^{1}_{\tau,T}:\|q\|_{Y^{1}_{\tau,T}}\leq M'\}$ with $M'=K(\|w(\tau)\|_{\cH^1(0,1)}^2+\|v\|_{Y^{1}_{\tau,T}}^2)^{\frac12}$. Here $M$ is preserved for the bound of $e^{At}$ on $H^1_0(0,1)$.\\

    We will show the boundedness of $\Gamma$ on $Y^1_{\tau,T,M'}$. Contraction can be shown similarly by following the same argument as in the proof of Proposition \ref{existsub2}. In particular,     
    \begin{align*}
       \sup_{t\in [\tau,T+\tau]} \|\Gamma(q)(t)\|_{\cH^{1}(0,1)}^2&\leq \|w(\tau)\|_{\cH^{1}(0,1)}^2+9\int_\tau^{T+\tau}(\|qq_x\|_{\cH^{-1}(0,1)}^2+\|(qv)_x\|^2_{\cH^{-1}(0,1)}+\|vv_x\|_{\cH^{-1}(0,1)}^2)ds\\
        &\leq \|w(\tau)\|_{\cH^{1}(0,1)}^2+9(\|q\|_{Y^{1}_{\tau,T}}^4+\|q\|_{Y^{1}_{\tau,T}}^2\|v\|^2_{Y^{1}_{\tau,T}}+\|v\|_{Y^{1}_{\tau,T}}^4)\\
        &\leq \|w(\tau)\|_{\cH^{1}(0,1)}^2+9(\frac{3}{2}\|q\|_{Y^{1}_{\tau,T}}^4+\frac{3}{2}\|v\|_{Y^{1}_{\tau,T}}^4)\\
        &\leq \frac{(M')^2}{K^2}+\frac{27}{2}(M')^4+\frac{27}{2K^2}(M')^4,
        \end{align*}

    on the other hand, by Young's convolution inequality,

       \begin{align*}
     &\int_{\tau}^{T+\tau}\|\Gamma(q)(t)\|_{\cH^{1}(0,1)}^2dt\\&\leq \|w(\tau)\|_{\cH^{1}(0,1)}^2+9\int_{\tau}^{T+\tau}\int_\tau^{t}e^{-c(t-s)}(\|qq_x(s)\|_{\cH^{-1}(0,1)}^2+\|(qv)_x(s)\|^2_{\cH^{-1}(0,1)}+\|vv_x(s)\|_{\cH^{-1}(0,1)}^2)dsdt\\
        &\leq  \|w(\tau)\|_{\cH^{1}(0,1)}^2+9C(\|q\|_{Y^{1}_{\tau,T}}^4+\|q\|^2_{Y^{1}_{\tau,T}}\|v\|^2_{Y^{1}_{\tau,T}}+\|v\|_{Y^{1}_{\tau,T}}^4)\\
        &\leq \|w(\tau)\|_{\cH^{1}(0,1)}^2+9C(\frac{3}{2}\|q\|_{Y^{1}_{\tau,T}}^4+\frac{3}{2}\|v\|_{Y^{1}_{\tau,T}}^4)\\
        &\leq \frac{(M')^2}{K^2}+\frac{27C}{2}(M')^4+\frac{27C}{2K^2}(M')^4.
    \end{align*}
    
We choose $K>1$ and $M'$ so that  $ \frac{(M')^2}{K^2}+\frac{27}{2}(M')^4+\frac{27}{2K^2}(M')^4<(M')^2$ and $\frac{(M')^2}{K^2}+\frac{27C}{2}(M')^4+\frac{27C}{2K^2}(M')^4<(M')^2$. Then we see that $\Gamma$ is bounded on $Y^1_{\tau,T,M'}$.\\

 We can conclude that for some $K>1$ and sufficiently small $M'$ there exists a unique $q\in Y^{1}_{\tau,T,M'}$ that $q=\Gamma(q)$ and $$\|q\|_{Y^{1}_{\tau,T}}<K(\|w(\tau)\|_{\cH^1(0,1)}^2+\|v\|_{Y^{1}_{\tau,T}}^2)^{\frac12},$$
 and $q=w$. 
\end{proof}

We need an estimate of $(w, w_t)$ with an upper bound relying on $v$.

\begin{prop}\label{Prop4}
Assume that $\sup_{t>0}\|v\|_{Y^{i}_{t,T}}^2$ ($i=1,2$) is sufficiently small. Then, 
$$\|w\|_{Y^{i}_{\tau,T}}+\|w_t\|_{Y^i_{\tau,T}}\leq C'\sup_{t>0}\|v\|_{Y^{i}_{t,T}}^2$$
for $i=1,2$.
\end{prop}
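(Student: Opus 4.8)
The plan is to bootstrap the estimate of Proposition \ref{wYlT} — which controls $w$ only at order $\sup_t\|v\|_{Y^i_{t,T}}$ — up to the quadratic order claimed here, exploiting that the only genuinely inhomogeneous term in \eqref{eqsub2} is $-vv_x$, while $ww_x$ and $(vw)_x$ become higher order once $w$ is known to be small. Throughout write $N:=\sup_{t>0}\|v\|_{Y^{i}_{t,T}}$; by \eqref{estimateA}--\eqref{estimateB} of Remark \ref{rmkforfinLinfty} this is bounded by $\|\phi\|_{\cH^i(0,1)}+\|f\|_{L^\infty([0,\infty);\cH^{i-2}(0,1))}$, hence as small as needed, and by Proposition \ref{wYlT} (applied on each translate $[\sigma,\sigma+T]$ and patched by uniqueness) $w$ is globally defined with $\|w\|_{Y^{i}_{\sigma,T}}\le K\bigl(\|w(\sigma)\|_{\cH^i(0,1)}^2+N^2\bigr)^{1/2}$ small.

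First I would establish a refined local estimate. Starting from the Duhamel formula $w(t)=e^{A(t-\sigma)}w(\sigma)-\int_\sigma^t e^{A(t-s)}(I-\Delta)^{-1}[ww_x+(vw)_x+vv_x](s)\,ds$ on $[\sigma,\sigma+T]$, using the exponential decay $\|e^{At}\|_{\cH^i(0,1)\to\cH^i(0,1)}\le e^{-c_it}$, the identity $\|(I-\Delta)^{-1}g\|_{\cH^i(0,1)}=\|g\|_{\cH^{i-2}(0,1)}$, Young's convolution inequality to handle simultaneously the $L^\infty_t$ and $L^2_t$ parts of the $Y^i$-norm as in the proof of Proposition \ref{existsub2}, and the bilinear estimate of Lemma \ref{bilinearest}, one gets $\|w\|_{Y^i_{\sigma,T}}\le C\bigl(\|w(\sigma)\|_{\cH^i(0,1)}+\|w\|_{Y^i_{\sigma,T}}^2+\|v\|_{Y^i_{\sigma,T}}\|w\|_{Y^i_{\sigma,T}}+\|v\|_{Y^i_{\sigma,T}}^2\bigr)$. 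Since $\|v\|_{Y^i_{\sigma,T}}\le N$ and $\|w\|_{Y^i_{\sigma,T}}$ are both small, the two mixed terms absorb into the left-hand side, leaving
$$\|w\|_{Y^i_{\sigma,T}}\le 2C\bigl(\|w(\sigma)\|_{\cH^i(0,1)}+N^2\bigr)\qquad\text{for every }\sigma\ge0.\qquad(\star)$$

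The heart of the matter is an endpoint iteration upgrading $(\star)$ to a quadratic bound on $\|w(\sigma)\|_{\cH^i(0,1)}$ itself. Put $\tau_k:=kT$ and $b_k:=\|w(\tau_k)\|_{\cH^i(0,1)}$, so $b_0=0$. Writing Duhamel on $[\tau_k,\tau_{k+1}]$, applying $\|e^{AT}\|_{\cH^i(0,1)\to\cH^i(0,1)}\le e^{-c_iT}$, Cauchy--Schwarz in time, Lemma \ref{bilinearest}, and then $(\star)$ at $\sigma=\tau_k$, one is led to a recursion $b_{k+1}\le e^{-c_iT}b_k+C_1\bigl(\|w\|_{Y^i_{\tau_k,T}}^2+N^2\bigr)\le e^{-c_iT}b_k+C_1\bigl(4C^2(b_k+N^2)^2+N^2\bigr)$. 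Fixing $\Lambda:=2C_1/(1-e^{-c_iT})$ — a constant depending on $T$ only — and then shrinking $N$ so that $4C^2(\Lambda+1)^2N^2\le1$, so that $(b_k^2+N^2)^{1/2}$ stays below the smallness threshold of Proposition \ref{wYlT}, and so that the absorptions in $(\star)$ remain valid along the induction, one proves $b_k\le\Lambda N^2$ for all $k$: if $b_k\le\Lambda N^2$ then $4C^2(b_k+N^2)^2\le4C^2(\Lambda+1)^2N^4\le N^2$, whence $b_{k+1}\le e^{-c_iT}\Lambda N^2+2C_1N^2\le\Lambda N^2$, and $b_0=0$.

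Finally, to pass from the endpoints to a general $\tau\ge0$: for $\tau\in[\tau_k,\tau_{k+1}]$, Duhamel on $[\tau_k,\tau]$ with Cauchy--Schwarz, Lemma \ref{bilinearest}, and $(\star)$ at $\sigma=\tau_k$ (which yields $\|w\|_{Y^i_{\tau_k,T}}\lesssim N^2$) bounds the integral term by $\lesssim\|w\|_{Y^i_{\tau_k,T}}^2+N^2\lesssim N^2$, hence $\|w(\tau)\|_{\cH^i(0,1)}\le b_k+CN^2\lesssim N^2$; feeding this into $(\star)$ at $\sigma=\tau$ gives $\|w\|_{Y^i_{\tau,T}}\le C'N^2$. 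For the time derivative I would use $w_t=Aw-(I-\Delta)^{-1}[ww_x+(vw)_x+vv_x]$, the boundedness of $A$ on $\cH^i(0,1)$, and the Gagliardo--Nirenberg/bilinear bounds exactly as in Proposition \ref{Prop1}, obtaining $\|w_t\|_{Y^i_{\tau,T}}\lesssim\|w\|_{Y^i_{\tau,T}}+\|w\|_{Y^i_{\tau,T}}^2+\|v\|_{Y^i_{\tau,T}}\|w\|_{Y^i_{\tau,T}}+\|v\|_{Y^i_{\tau,T}}^2\lesssim N^2$, which completes $\|w\|_{Y^i_{\tau,T}}+\|w_t\|_{Y^i_{\tau,T}}\le C'N^2$ for $i=1,2$. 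The main obstacle is precisely this bootstrap: a priori Proposition \ref{wYlT} gives only linear-in-$N$ control of $w$, so one must run the geometric endpoint recursion, and the delicate point is the order of the quantifiers — the ratio constant $\Lambda$ must be fixed in terms of $C_1$ and $T$ before $N$ is shrunk, so that the smallness hypotheses of Proposition \ref{wYlT} and the absorptions defining $(\star)$ survive every step of the induction.
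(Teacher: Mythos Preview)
Your proposal is correct and follows essentially the same route as the paper: both arguments run an endpoint iteration over $\tau_k=kT$ using the Duhamel formula, the semigroup contraction $\|e^{AT}\|\le e^{-c_iT}$, and Lemma \ref{bilinearest} to produce a recursion for $b_k=\|w(kT)\|_{\cH^i(0,1)}$ whose inhomogeneity is $O(N^2)$, then interpolate to general $\tau$ and read off $w_t$ from the explicit formula as in Proposition \ref{Prop1}. The only cosmetic difference is that the paper casts the recursion as $\|w_k\|\le\xi\|w_{k-1}\|+\text{const}\cdot N^2$ with $\xi=e^{-cT}+\text{const}\cdot\eta<1$ and invokes the iteration lemma of \cite{WZ}, whereas you phrase it as a direct induction $b_k\le\Lambda N^2$ with $\Lambda$ fixed first; your explicit attention to the order of quantifiers (fix $\Lambda$, then shrink $N$) is a welcome clarification of a point the paper leaves implicit.
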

\begin{proof}
    Consider the equation \eqref{eqsub2} on $[\tau, t+\tau]$ ($\tau,t>0$) is 
    \begin{align*}
        w(t+\tau)&=e^{At}w(\tau)-\int_{\tau}^{t+\tau}e^{A(t+\tau-s)}(I-\Delta)^{-1}[ww_x+(wv)_x](s)ds-\int_{\tau}^{t+\tau}e^{A(t+\tau-s)}(I-\Delta)^{-1}(vv_x)(s)ds \\
        &=:e^{At}w(\tau)+Q(t+\tau,\tau,w(\tau),v)+P(t+\tau,\tau,v).
    \end{align*}
    
    Using Proposition \ref{wYlT}, if $\|w(\tau)\|_{\cH^{1}(0,1)}^2+\|v\|_{Y^{1}_{\tau,T}}^2$ is small, then it suffices to estimate $\|w(\tau)\|_{\cH^{1}(0,1)}$.\\
    
    Let $w_k:=w(kT)$ for $k\in \N$. Then, applying above relationship to $\tau=T$, we have 
    \begin{align*}
        w_k=e^{AT}w_{k-1}+Q(kT,(k-1)T,w_{k-1},v)+P(kT,(k-1)T,v)
    \end{align*}
    with $w_0=w(0)\equiv 0$.\\

    We first provide an estimate when $\tau=(k-1)T$ for some $k\in \N$, and then we will show for the case $(k-1)T<\tau<kT$ for some $k\in \N$. By Lemma \ref{bilinearest}, 
    \begin{align*}
        &\|w_k\|_{\cH^1(0,1)}\\
        &\leq e^{-cT}\|w_{k-1}\|_{\cH^{1}(0,1)}+\int_{(k-1)T}^{kT}e^{-c(kT-s)}\big[\|ww_x\|_{\cH^{-1}(0,1)}+\|(wv)_x\|_{\cH^{-1}(0,1)}+\|vv_x\|_{\cH^{-1}(0,1)}\big]ds\\
        &\leq e^{-cT}\|w_{k-1}\|_{\cH^{1}(0,1)}+\frac{C}{\sqrt{2c}}\Big(\|w\|_{Y^{1}_{(k-1)T,T}}^2+\|w\|_{Y^{1}_{(k-1)T,T}}\|v\|_{Y^{1}_{(k-1)T,T}}+\|v\|_{Y^{1}_{(k-1)T,T}}^2\Big) \\
        &\leq e^{-cT}\|w_{k-1}\|_{\cH^{1}(0,1)}+\frac{C'}{\sqrt{2c}}\|w\|_{Y^{1}_{(k-1)T,T}}^2+\frac{C'}{\sqrt{2c}}\|v\|_{Y^{1}_{(k-1)T,T}}^2 \\
        &\leq  e^{-cT}\|w_{k-1}\|_{\cH^{1}(0,1)}+\frac{C''K^2}{\sqrt{2c}}(\|w_{k-1}\|_{\cH^{1}(0,1)}^2+\|v\|_{Y^{1}_{(k-1)T,T}}^2)+\frac{C'}{\sqrt{2c}}\|v\|_{Y^{1}_{(k-1)T,T}}^2 \\
        &\leq \bigg(e^{-cT}+\frac{C''K^2}{\sqrt{2c}}\|w_{k-1}\|_{\cH^{1}(0,1)}\bigg)\|w_{k-1}\|_{\cH^{1}(0,1)}+\frac{C''K^2+C'}{\sqrt{2c}}\|v\|_{Y^{1}_{(k-1)T,T}}^2
    \end{align*}    
    
If $\|w_{j}\|_{\cH^{1}(0,1)}\leq \eta$ for $j\in \{1, \cdots,k-1\}$ and $\xi:=e^{-cT}+\frac{C''K^2}{\sqrt{2c}}\eta$, then we can rewrite the above inequality as 
$$\|w_{k}\|_{\cH^{1}(0,1)}\leq \xi \|w_{k-1}\|_{\cH^{1}(0,1)}+\bigg(\frac{C''K^2+C'}{2\sqrt{2c}}\bigg)\|v\|_{Y^{1}_{(k-1)T,T}}^2.$$
Then, using iteration as in the proof of \cite{WZ}*{Lemma 3.4}, we have 
$$\|w_k\|_{\cH^{1}(0,1)}\leq\bigg(\frac{C''K^2+C'}{2\sqrt{2c}}\bigg)\frac{1}{1-\xi}\sup_{t>0}\|v\|_{Y^{1}_{t,T}}^2$$
provided that $\eta$ and $\sup_{t>0}\|v\|_{Y^{1}_{t,T}}^2$ are sufficiently small so that $\xi<1$ and small enough to apply Proposition \ref{wYlT}.\\

For $\tau\in((k-1)T,kT)$, we write $\tau=(k-1)T+\Ga T$, where $\Ga\in(0,1)$, and
$$w(\tau)=e^{A(\Ga T)}w((k-1)T)-\int_{(k-1)T}^{(k-1)T+\Ga T}e^{A(\tau-s)}(I-\Delta)^{-1}[ww_x+(wv)_x+vv_x]ds.$$

A similar reasoning gives us 
\begin{align*}
    \|w(\tau)\|_{\cH^{1}(0,1)}&\leq e^{-c\Ga T}\|w_{k-1}\|_{\cH^{1}(0,1)}+\frac{3}{2\sqrt{2c}}\|w\|_{Y^{1}_{(k-1)T,\Ga T}}^2+\frac{3}{2\sqrt{2c}}\|v\|_{Y^{1}_{(k-1)T,\Ga T}}^2\\
    &\leq e^{-c\Ga T}\bigg(3+\frac{3K^2}{2\sqrt{2c}}\bigg)\frac{1}{1-\xi}\|v\|_{Y^{1}_{(k-2)T,T}}^2+\frac{3}{2\sqrt{2c}}\|w\|_{Y^{1}_{(k-1)T,\Ga T}}^2+\frac{3}{2\sqrt{2c}}\|v\|_{Y^{1}_{(k-1)T,\Ga T}}^2 \\
    &\leq e^{-c\Ga T}\bigg(3+\frac{3K^2}{2\sqrt{2c}}\bigg)\frac{1}{1-\xi}\|v\|_{Y^{1}_{(k-2)T,T}}^2+\frac{3}{2\sqrt{2c}}\|w\|_{Y^{1}_{(k-1)T,T}}^2+\frac{3}{2\sqrt{2c}}\|v\|_{Y^{1}_{(k-1)T,T}}^2 \\
        &\leq e^{-c\Ga T}\bigg(3+\frac{3K^2}{2\sqrt{2c}}\bigg)\frac{1}{1-\xi}\|v\|_{Y^{1}_{(k-2)T,T}}^2+\frac{3}{2\sqrt{2c}}(\|w_{k-1}\|^2_{H^{1}(0,1)}+\|v\|_{Y^{1}_{(k-1)T,T}}^2)+\frac{3}{2\sqrt{2c}}\|v\|_{Y^{1}_{(k-1)T,T}}^2 \\
        &\leq C_{\Ga,c,K,\xi}\sup_{t>0}\|v\|_{Y^{1}_{t,T}}^2.
\end{align*}

 Combined with \eqref{wtestimate}, it is inferred that 
 
\begin{align*}
    \|w_t\|_{Y^{1}_{\tau,T}}^2\leq C''\sup_{t>0}\|v\|_{Y^1_{t,T}}^8+16C'\sup_{t>0}\|v\|_{Y^1_{t,T}}^4.
\end{align*}

Since we can always let $\|v\|_{Y^1_{t,T}}$ sufficiently small for $t$: $\sup_{t>0}\|v\|_{Y^1_{t,T}}<1$, we have \begin{align*}
    \|w_t\|_{Y^{1}_{\tau,T}}\leq C'''\sup_{t>0}\|v\|_{Y^1_{t,T}}^2.
\end{align*}
\end{proof}

\subsection{The case $\ell=3$:}
Similar to the proof of Lemmas \ref{aprih3} and \ref{apriforceh3}, we differentiate \eqref{eqsub2} both sides with respect to $x$, then we have 
\begin{align}\label{deqsub2}
    w_{xt}+w_{xx}-w_{xxx}-w_{xxxt}+(ww_x)_x+(vw)_{xx}=-(vv_x)_x.
\end{align}
Therefore, after multiplying $w_{xxx}$ both sides \eqref{deqsub2} and integrating over $x\in (0,1)$, we have to deal with the product of the form $Fw_{xxx}$, where $F(\cdot, t)$ has regularity at most $\cH^2(0,1)$ for fixed $t$. For the term of the form $F w_{xxx}$, we can estimate as follows:
$$\int_0^1 Fw_{xxx}dx\leq \|F\|_{L^2(0,1)}\|w_{xxx}\|_{L^2(0,1)}\leq \frac{4}{\Ge}\|F\|_{L^2(0,1)}^2+\Ge\|w_{xxx}\|^2_{L^2(0,1)}$$
for any $\Ge>0$ and $\Ge$ will be chosen later.\\

Therefore, we have 
\begin{align} \label{wxxx}
    &\frac{d}{dt}\|w_{xxx}\|_{L^2(0,1)}^2+\|w_{xxx}\|_{L^2(0,1)}^2 \nonumber \\
    &\leq \frac{8}{\Ge}(M^2\|w\|_{\cH^2(0,1)}^2+M^2\|w_t\|_{\cH^1(0,1)}^2+\|(ww_x)_x\|_{L^2(0,1)}^2+M^2\|(vw)_{x}\|_{\cH^1(0,1)}^2+\|(vv_x)_{x}\|_{L^2(0,1)}^2)+10\Ge\|w_{xxx}\|_{L^2(0,1)}^2.
\end{align}
We now choose $\Ge<\frac{1}{10}$, and integrate over $t$, then we have 
\begin{align*}
    &\|w_{xxx}(t)\|^2_{L^2(0,1)}-e^{-(1-10\Ge)t}\|\phi_{xxx}\|_{L^2(0,1)}^2\\
    &\leq \frac{8}{\Ge}\int_0^t e^{-(1-10\Ge)(t-s)}(M^2\|w\|_{\cH^2(0,1)}^2+M^2\|w_t\|_{\cH^1(0,1)}^2+\|(ww_x)_x\|_{L^2(0,1)}^2+M^2\|(vw)_{x}\|_{\cH^1(0,1)}^2+\|(vv_x)_x\|_{L^2(0,1)}^2)ds.
\end{align*}
Thus, by the aid of Propositions  \ref{existsub1}-\ref{Prop2}, 
\begin{align*}
    &\|w_{xxx}\|_{L^\infty([0,T];L^2(0,1))}^2+\|w_{xxx}\|_{L^{2}([0,T];L^2(0,1))}^2 \\
    &\leq C_{\Ge,M}\bigg(\|\phi_{xxx}\|^2_{L^2(0,1)}+\|w\|_{Y^{2}_{0,T}}^2+\|w_t\|_{Y^1_{0,T}}^2+\|w\|_{Y^{1}_{0,T}}^2\|w_x\|^2_{Y^1_{0,T}}+\|v\|^2_{Y^1_{0,T}}\|w\|^2_{Y^1_{0,T}}+\|v\|^2_{Y^{1}_{0,T}}\|v_x\|^2_{Y^1_{0,T}}\bigg)\\
    &\leq C'_{\Ge,M}\bigg(\|\phi_{xxx}\|^2_{L^2(0,1)}+\|\phi\|_{\cH^2(0,1)}^2+\|f\|_{L^2([0,T];L^2(0,1))}^2\bigg)=C''_{\Ge,M}(\|\phi\|_{\cH^3(0,1)}^2+\|f\|_{L^2([0,T];L^2(0,1))}^2),
\end{align*}
which implies 
$$\|w\|_{Y^3_{0,T}}^2\leq 2C^{(3)}_{\Ge,M}(\|\phi\|_{\cH^3(0,1)}^2+\|f\|_{L^2([0,T];L^2(0,1))}^2).$$

If we integrate \eqref{wxxx} with respect to time over $[\tau,t]$, we have 
\begin{align*}
    &\|w_{xxx}(t)\|_{L^2(0,1)}^2-e^{-(1-10\Ge)(t-\tau)}\|w_{xxx}(\tau)\|_{L^2(0,1)}^2\\
    &\leq \frac{8}{\Ge}\int_{\tau}^{t} e^{-(1-10\Ge)(t-s)}(M^2\|w\|_{\cH^2(0,1)}^2+M^2\|w_t\|_{\cH^1(0,1)}^2+\|(ww_x)_x\|_{L^2(0,1)}^2+M^2\|(vw)_x\|_{\cH^1(0,1)}^2+\|(vv_x)_x\|_{L^2(0,1)}^2)ds.
\end{align*}
Therefore, we have 
$$\|w\|_{Y^3_{\tau,T}}^2\leq 2C^{(3)}_{\Ge,K,M}(\|w(\tau)\|_{\cH^3(0,1)}^2+\|f\|_{L^2([0,\tau+T];L^2(0,1))}^2).$$

Also, 
\begin{eqnarray*}
\|w_t\|_{Y^3_{\tau,T}}\leq \|w\|_{Y^2_{\tau,T}}+\|w\|_{Y^3_{\tau,T}}+\|w\|_{Y^1_{\tau,T}}.
\end{eqnarray*}

To summarize, we have the following results of $(w, w_t)$ in $Y_{0,T}^3$:

\begin{prop}
    If $\|\phi\|_{\cH^2(0,1)}^2+\|f\|_{L^2([0,T];L^2(0,1))}^2$ is sufficiently small, then there exists $C_{\Ge,M},\ C_{\Ge,K,M}>0$ such that 
    $$\|w\|_{Y^3_{0,T}},\|w_t\|_{Y^3_{0,T}}\leq C_{\Ge,M}(\|\phi\|_{\cH^3(0,1)}^2+\|f\|_{L^2([0,T];L^2(0,1))}^2)^{\frac12}$$ 
    and 
    $$\|w\|_{Y^3_{\tau,T}},\|w_t\|_{Y^3_{\tau,T}}\leq C_{\Ge,K,M}(\|w(\tau)\|_{\cH^3(0,1)}^2+\|f\|_{L^2([0,\tau+T];L^2(0,1))}^2)^{\frac12}.$$
\end{prop}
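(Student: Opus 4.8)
\noindent\emph{Proof strategy.} The plan is to run the energy method already used for the linear counterparts in Lemmas~\ref{aprih3} and~\ref{apriforceh3}, now treating every quadratic term in \eqref{eqsub2} as inhomogeneous forcing whose $L^2$-norm is controlled by the $\cH^1$- and $\cH^2$-estimates of the previous subsections. First I would differentiate \eqref{eqsub2} once in $x$ to obtain \eqref{deqsub2}, pair it with $w_{xxx}$ in $L^2(0,1)$, and use $w(0,t)=w(1,t)=0$ (hence also $w_t(0,t)=w_t(1,t)=0$) to justify the integrations by parts: the term $-\int_0^1 w_{xxxt}w_{xxx}\,dx$ yields $-\tfrac12\tfrac{d}{dt}\|w_{xxx}\|_{L^2(0,1)}^2$ and $-\int_0^1 w_{xxx}^2\,dx$ supplies the dissipation. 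Every remaining contribution — $w_{xt}$, $w_{xx}$, $(ww_x)_x$, $(vw)_{xx}$ and $(vv_x)_x$ tested against $w_{xxx}$ — is of the form $\int_0^1 F\,w_{xxx}\,dx$ with $F(\cdot,t)$ built from $v,w$ and their $x$-derivatives of order at most two, hence $F(\cdot,t)\in L^2(0,1)$. Bounding each such term by $\tfrac{4}{\Ge}\|F\|_{L^2(0,1)}^2+\Ge\|w_{xxx}\|_{L^2(0,1)}^2$ and absorbing the finitely many $\Ge$-terms into the dissipation by choosing $\Ge<\tfrac1{10}$ produces the differential inequality \eqref{wxxx}.

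Next I would multiply \eqref{wxxx} by the integrating factor $e^{(1-10\Ge)t}$ and integrate in time. For the bound on $[0,T]$ the resulting right-hand side is estimated using Propositions~\ref{existsub1} and~\ref{existsub2} (for $\|v\|_{Y^1_{0,T}},\|v\|_{Y^2_{0,T}},\|w\|_{Y^1_{0,T}},\|w\|_{Y^2_{0,T}}$), Proposition~\ref{Prop1} (for $\|w_t\|_{Y^1_{0,T}}$), the bilinear estimate Lemma~\ref{bilinearest} together with the embedding $\cH^2(0,1)\hookrightarrow W^{1,\infty}(0,1)$ to control $\|(ww_x)_x\|_{L^2}$, $\|(vw)_x\|_{\cH^1}$ and $\|(vv_x)_x\|_{L^2}$, and the trivial bound $\|\phi_{xxx}\|_{L^2(0,1)}^2\le\|\phi\|_{\cH^3(0,1)}^2$ for the initial datum; since the smallness hypothesis keeps $\|w\|_{Y^2_{0,T}}$ small (via Proposition~\ref{existsub2}), the quadratic terms are subcritical and collecting terms gives $\|w\|_{Y^3_{0,T}}^2\le 2C^{(3)}_{\Ge,M}\bigl(\|\phi\|_{\cH^3(0,1)}^2+\|f\|_{L^2([0,T];L^2(0,1))}^2\bigr)$. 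The shifted estimate on $[\tau,\tau+T]$ follows in exactly the same way by integrating \eqref{wxxx} over $[\tau,t]$, which leaves the extra term $e^{-(1-10\Ge)(t-\tau)}\|w_{xxx}(\tau)\|_{L^2(0,1)}^2$, and by replacing the lower-order inputs with their shifted analogues (Propositions~\ref{Prop2},~\ref{wYlT} and~\ref{Prop4}), yielding the stated bound in terms of $\|w(\tau)\|_{\cH^3(0,1)}^2+\|f\|^2_{L^2([0,\tau+T];L^2(0,1))}$.

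For $w_t$ I would avoid a separate energy argument and instead use $w_t=Aw-(I-\Delta)^{-1}[ww_x+(vw)_x+vv_x]$: since $A$ is bounded on $\cH^3(0,1)$ and $(I-\Delta)^{-1}$ gains two derivatives, $\|w_t\|_{\cH^3(0,1)}\lesssim\|w\|_{\cH^3(0,1)}+\|w\|_{\cH^2(0,1)}+\|w\|_{\cH^1(0,1)}+\|ww_x\|_{\cH^1(0,1)}+\|(vw)_x\|_{\cH^1(0,1)}+\|vv_x\|_{\cH^1(0,1)}$, where the last three products are controlled by the $\cH^2$-bounds on $v,w$ and, being quadratic in small quantities, are dominated by the linear ones; taking $\sup_t$ and $\int_\tau^{\tau+T}$ then gives $\|w_t\|_{Y^3_{\tau,T}}\lesssim\|w\|_{Y^2_{\tau,T}}+\|w\|_{Y^3_{\tau,T}}+\|w\|_{Y^1_{\tau,T}}$, and combining with the bounds above closes the argument. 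I expect the only genuine difficulty to be bookkeeping rather than conceptual: one must verify that differentiating \eqref{eqsub2} and pairing against $w_{xxx}$ never produces a term requiring a third $x$-derivative of $v$ or of $w$ other than the single isolated $w_{xxx}$, and that the smallness threshold used here is compatible with (indeed a strengthening of) the one already invoked to construct $w$ in $Y^2$, so that the bootstrap is not circular.
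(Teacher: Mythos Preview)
Your proposal is correct and follows essentially the same route as the paper: differentiate \eqref{eqsub2} once, test against $w_{xxx}$, apply the $\tfrac{4}{\Ge}\|F\|_{L^2}^2+\Ge\|w_{xxx}\|_{L^2}^2$ splitting with $\Ge<\tfrac{1}{10}$, integrate with the factor $e^{(1-10\Ge)t}$ (respectively over $[\tau,t]$), and feed in the $Y^1$-- and $Y^2$--bounds from Propositions~\ref{existsub1}--\ref{Prop2}. Your treatment of $w_t$ via $w_t=Aw-(I-\Delta)^{-1}[\,\cdot\,]$ and the boundedness of $A$ on $\cH^3$ is likewise exactly what the paper does, yielding $\|w_t\|_{Y^3_{\tau,T}}\lesssim\|w\|_{Y^2_{\tau,T}}+\|w\|_{Y^3_{\tau,T}}+\|w\|_{Y^1_{\tau,T}}$.
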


Our next aim is to prove $\|w\|_{Y^{3}_{\tau,T}}\leq C\sup_{t>0}\|v\|_{Y^3_{t,T}}^2.$ Note that
\begin{align*}
    &\|w_{xxx}(kT)\|_{L^2(0,1)}^2\\
    &\leq \frac{8}{\Ge}\int_{(k-1)T}^{kT}e^{-(1-10\Ge)(kT-s)}(M^2\|w\|_{\cH^2(0,1)}^2+M^2\|w_t\|_{\cH^1(0,1)}^2+\|(ww_x)_x\|_{L^2(0,1)}^2\\
    &\quad\quad\quad+M^2\|(vw)_x\|_{\cH^1(0,1)}^2+\|(vv_x)_x\|_{L^2(0,1)}^2)ds+e^{-(1-10\Ge)T}\|w_{xxx}((k-1)T)\|_{L^2(0,1)}^2\\
    &\leq C_{\Ge,M}(\sup_{t>0}\|v\|_{Y^2_{t,T}}^4+\sup_{t>0}\|v\|_{Y^1_{t,T}}^4)+C_{\Ge,M}'(\|w\|_{Y^2_{(k-1)T,T}}^4+\|v\|_{Y^2_{(k-1)T,T}}^4)+e^{-(1-10\Ge)T}\|w_{xxx}((k-1)T)\|_{L^2(0,1)}^2,
\end{align*}

whence 
\begin{align*}
    \|w_k\|_{H^3(0,1)}&=\|w_k\|_{H^2(0,1)}+\|(w_k)_{xxx}\|_{L^2(0,1)}\\
    &\leq M\|w_k\|_{\cH^2(0,1)}+\|(w_k)_{xxx}\|_{L^2(0,1)}\\
    &\leq C_{K, c,\xi,M}\sup_{t>0}\|v\|_{Y^2_{t,T}}^2+2(C_{\Ge,M}+C'_{\Ge,M})\sup_{t>0}\|v\|_{Y^2_{t,T}}^2+e^{-(1-10\Ge)T/2}\|w_{k-1}\|_{H^3(0,1)}+C_{K,c,\xi,M}'\sup_{t>0}\|v\|_{Y^{2}_{t,T}}^{\textcolor{red}{4}},
\end{align*} 
in which Proposition \ref{Prop4} is used to control $\|w\|^4_{Y^2_{(k-1)T,T}}$.\\

We are now ready to repeat the argument as in the proof of Proposition \ref{Prop4} applied to $H^3(0,1)$. Therefore, we have 
$$\|w_k\|_{H^3(0,1)}\leq C_{K,c,\Ge,M}\sup_{t>0}\|v\|_{Y^2_{t,T}}^2,$$
which implies 
$$\|w_k\|_{\cH^3(0,1)}\leq MC_{K,c,\Ge,M}\sup_{t>0}\|v\|_{Y^2_{t,T}}^2.$$

Thus, we can conclude the following proposition:

\begin{prop}
    If $\sup_{t>0}\|v\|_{Y^2_{t,T}}$ is sufficiently small, then 
    $$\|w\|_{Y^3_{\tau,T}},\|w_t\|_{Y^3_{\tau,T}}\leq C'''_M\sup_{t>0}\|v\|_{Y^2_{t,T}}^2. $$
\end{prop}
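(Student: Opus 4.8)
The plan is to read the proposition off the one-period energy inequality \eqref{wxxx} for $\|w_{xxx}\|_{L^2(0,1)}^2$ along the once-differentiated equation \eqref{deqsub2}, fed with the lower-order bounds already in hand, and then to run a geometric iteration over the periods $[(k-1)T,kT]$ exactly in the spirit of Proposition~\ref{Prop4} and \cite{WZ}*{Lemma 3.4}. First I would fix $\varepsilon<\tfrac1{10}$ and integrate \eqref{wxxx} over one period; since $w(0)\equiv 0$, writing $w_k:=w(kT)$ this produces
\[
\|(w_k)_{xxx}\|_{L^2(0,1)}^2\ \le\ e^{-(1-10\varepsilon)T}\,\|(w_{k-1})_{xxx}\|_{L^2(0,1)}^2\ +\ \mathcal N_k,
\]
where $\mathcal N_k$ is the integral over $[(k-1)T,kT]$, against the kernel $e^{-(1-10\varepsilon)(kT-s)}$, of $M^2\|w\|_{\cH^2(0,1)}^2+M^2\|w_t\|_{\cH^1(0,1)}^2+\|(ww_x)_x\|_{L^2(0,1)}^2+M^2\|(vw)_x\|_{\cH^1(0,1)}^2+\|(vv_x)_x\|_{L^2(0,1)}^2$.

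The key observation is that \emph{every} term in $\mathcal N_k$ involves $w$ and $v$ to at most second order, so it is controlled by $\cH^2$– and $\cH^1$–quantities only: Proposition~\ref{Prop4} (with $i=1,2$) gives $\|w\|_{Y^2_{(k-1)T,T}}+\|w_t\|_{Y^1_{(k-1)T,T}}\lesssim\sup_{t>0}\|v\|_{Y^2_{t,T}}^2$, while the products are handled by the embedding $\cH^2(0,1)\hookrightarrow W^{1,\infty}(0,1)$ and Lemma~\ref{bilinearest}, e.g. $\|(vv_x)_x\|_{L^2}\le\|v_x\|_{L^4}^2+\|v\|_{L^\infty}\|v_{xx}\|_{L^2}\lesssim\|v\|_{\cH^2}^2$ and similarly $\|(ww_x)_x\|_{L^2}\lesssim\|w\|_{\cH^2}^2$, $\|(vw)_x\|_{\cH^1}\lesssim\|v\|_{\cH^2}\|w\|_{\cH^2}$. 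Hence $\mathcal N_k\lesssim\sup_{t>0}\|v\|_{Y^2_{t,T}}^4$, and splitting $\sqrt{a^2+b^2}\le a+b$ turns the inequality into the recursion $\|w_k\|_{H^3(0,1)}\le e^{-(1-10\varepsilon)T/2}\|w_{k-1}\|_{H^3(0,1)}+C'\sup_{t>0}\|v\|_{Y^2_{t,T}}^2$ (using $\sup_{t>0}\|v\|_{Y^2_{t,T}}<1$ to absorb the higher powers). Choosing the parameters so that $\xi:=e^{-(1-10\varepsilon)T/2}<1$ and the smallness hypotheses of Propositions~\ref{wYlT}--\ref{Prop4} hold, the iteration of \cite{WZ}*{Lemma 3.4} yields $\sup_k\|w_k\|_{H^3(0,1)}\le\frac{C'}{1-\xi}\sup_{t>0}\|v\|_{Y^2_{t,T}}^2$; running the same computation on the partial window $[(k-1)T,(k-1)T+\alpha T]$, $\alpha\in(0,1)$, as in Proposition~\ref{Prop4}, upgrades this to $\|w(\tau)\|_{\cH^3(0,1)}\lesssim\sup_{t>0}\|v\|_{Y^2_{t,T}}^2$ for every $\tau>0$, with constant independent of $\tau$.

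Finally I would integrate \eqref{wxxx} over $[\tau,\tau+T]$: by the bounds above its source terms are already $\lesssim\sup_{t>0}\|v\|_{Y^2_{t,T}}^4$, so the only remaining contribution is $e^{-(1-10\varepsilon)(t-\tau)}\|w_{xxx}(\tau)\|_{L^2(0,1)}^2$, which the previous step controls; this gives $\|w\|_{Y^3_{\tau,T}}\lesssim\sup_{t>0}\|v\|_{Y^2_{t,T}}^2$ — no forcing norm enters, since the right-hand side of \eqref{deqsub2} is $-(vv_x)_x$ and every other input already carries only $v$. The estimate for $w_t$ follows from $\|w_t\|_{Y^3_{\tau,T}}\le\|w\|_{Y^2_{\tau,T}}+\|w\|_{Y^3_{\tau,T}}+\|w\|_{Y^1_{\tau,T}}$ (from $w_t=Aw-(I-\Delta)^{-1}[ww_x+(vw)_x+vv_x]$, the quadratic part again $\lesssim\sup_{t>0}\|v\|_{Y^2_{t,T}}^2$ by Proposition~\ref{Prop4}). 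The main obstacle is purely bookkeeping in the middle step: one must order the smallness of the data and the size of $T$ so that $\xi<1$ while Propositions~\ref{wYlT}--\ref{Prop4} stay applicable, and must check that every product in \eqref{wxxx} genuinely closes with only $\cH^2$–control of $v$ — which is exactly why $\sup_{t>0}\|v\|_{Y^2_{t,T}}$, and not a $Y^3$–norm of $v$, appears on the right.
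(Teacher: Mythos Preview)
Your proposal is correct and follows essentially the same route as the paper: feed the one-period energy inequality \eqref{wxxx} with the $\cH^2$- and $\cH^1$-level bounds from Proposition~\ref{Prop4}, obtain the recursion $\|w_k\|_{H^3}\le e^{-(1-10\varepsilon)T/2}\|w_{k-1}\|_{H^3}+C\sup_{t>0}\|v\|_{Y^2_{t,T}}^2$, iterate as in \cite{WZ}*{Lemma~3.4}, then transfer to the full $Y^3_{\tau,T}$-norm and to $w_t$ via the mild formula. One small overstatement: you do not need to ``choose'' parameters so that $\xi=e^{-(1-10\varepsilon)T/2}<1$; this is automatic once $\varepsilon<\tfrac1{10}$ and $T>0$, so the only genuine smallness requirement is the one inherited from Propositions~\ref{wYlT}--\ref{Prop4}.
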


\subsection{Proof of  Theorem \ref{th2-1}}
We are in position to establish the existence and uniqueness of the solution to \eqref{eqmain}.
\begin{proof}[Proof of Theorem \ref{th2-1}]\ \\
    Since $u(t)=v(t)+w(t)$, by Proposition \ref{Prop2} we obtain
    \begin{align*}
        \|u\|_{Y^{1}_{\tau,T}}&\leq \|v\|_{Y^{1}_{\tau,T}}+\|w\|_{Y^{1}_{\tau,T}} \\&\leq C_{K,c} (\|\phi\|_{\cH^{1}(0,1)}+\|f\|_{L^2([0,\tau+T];\cH^{-1}(0,1)})\leq C_{K,c}(\|\phi\|_{\cH^{1}(0,1)}+\|f\|_{L^2([0,\infty);\cH^{-1}(0,1))})
    \end{align*}
    provided that $\|\phi\|_{\cH^{1}(0,1)}+\|f\|_{L^2([0,\infty);\cH^{-1}(0,1))}$ is small so that $\sup_{t>0}\|v\|_{Y^{1}_{t,T}}$ is small enough to apply Proposition \ref{Prop4}.  \\

    If $f\in L^{\infty}([0,\infty);\cH^{-1}(0,1))$, then by Remark \ref{rmkforfinLinfty} it suffices to take $\|\phi\|_{\cH^{1}(0,1)}+\|f\|_{L^\infty([0,\infty);\cH^{-1}(0,1))}$ but the smallness of this constant is depending on $T$ and independent of $\tau$.\\

    The estimate of $u$ for $Y^2_{\tau,T}$ and $Y^3_{\tau,T}$ can be obtained following similar fashion inductively. \\

    When we prove under $\ell\in\{1,2,3\}$, the nonlinear interpolation theory can lead to conclusions for all $\ell\in[1, 3]$ if $(\phi,f)\in \cH^{\ell}\times \cH^{\ell-2}(0,1)$. Briefly, for the solution map defined by the (\ref{BBM})-(\ref{IBVP}): $(\phi, f)\mapsto u$, there hold already the $Y^3_{\tau, T}$-estimate of $u$ which was just done by induction, and $Y^1_{\tau, T}$-estimate of $w = u_1-u_2$: $\|w\|_{Y^1_{\tau, T}}\le C_T\|(\phi_1-\phi_2,f_1-f_2)\|_{\cH^{\ell}\times\cH^{\ell-2}}$, in which $(\phi_i,f_i)\mapsto u_i, i=1,2,$ and same calculation follows as in derivation of (\ref{w_l1}) in later section, then $u\in \cH^\ell$. We would point out that this interpolation argument is a version of a close one in Section 4 of \cite{BSZ2} and earlier cited one \cite{BS} by Bona and Scott. \\

    When integer $\ell_0>3$, the corresponding regularity ($\cH^{\ell_0}$ estimates) of solution can be proved by differentiating \eqref{eqmain} with respect to $x$ with $\lfloor \frac{\ell}{2}\rfloor$-many times and passed through a similar argument as $\ell=3$. Interpolation argument works again for all $\ell\in[3, \ell_0].$\\
    
    This completes the proof of the whole theorem.
   
\end{proof} 

\section{Periodic solution and stability}
In this section, we assume that $f$ has time-periodicity: $f(x,t+\theta)=f(x,t)$ for all $t\geq 0$, i.e., $f(\cdot,t)$ has period $\theta$. Note that if $w(x,t):=u(x,t+\Gth) -u(x,t)$ we can see that $w$ solves 
\begin{equation}\label{eqmainperi}
\begin{cases}
    w_t+w_x+\frac{1}{2}([u(x,t+\Gth)+u(x,t)]w)_x-w_{xx}-w_{xxt}=0,  \quad (x, t)\in [0,1]\times [0, \infty),&\\
    w(x,0)=u(x,\theta)-\phi(x) &\\
    w(0,t)=w(1,t)=0.&
    \end{cases}
\end{equation}
Therefore, we first focus on 
\begin{equation}\label{eqmainperi1}
\begin{cases}
    w_t+w_x+(wa)_x-w_{xx}-w_{xxt}=0,  \quad (x, t)\in [0,1]\times [0, \infty),&\\
    w(x,0)=\psi(x)&\\
    w(0,t)=w(1,t)=0.&
    \end{cases}
\end{equation}
for some function $a$, and we can apply to $a(x,t)=\frac{u(x,t+\Gth)+u(x,t)}{2}$. Note that the equation \eqref{eqmainperi1} is a linear PDE. Therefore, the solution can be written as 
$$w(t)=e^{At}\psi(x)-\int_0^te^{A(t-s)}(I-\Delta)^{-1}(aw)_x(s)ds.$$

\begin{lemma}[A priori estimate for \eqref{eqmainperi1}]\label{aprieqap1}\ \\
Let $\ell\in [1,2]$. Suppose $\|a\|_{Y^{\ell}_{0,T}}$ is sufficiently small, then 
    \begin{align}\label{w_l1}
        \|w\|_{Y^{\ell}_{0,T}}\leq C\|\psi\|_{\cH^{\ell}(0,1)}
    \end{align}
    for some $C$ independent of $T$ and $\|a\|_{Y^{\ell}_{0,T}}$.\\

    If $\sup_{t\geq t_0}\|a\|_{Y^{\ell}_{t,T}}$ is small enough, then for any $\tau\geq t_0$
   \begin{align} \label{WYtauTtoWtau}
       \|w\|_{Y^{\ell}_{\tau,T}}\leq C\|w(\tau)\|_{\cH^{\ell}(0,1)}
   \end{align}
   and
      \begin{align} \label{WtYtauTtoWtau}
       \|w_t\|_{Y^{\ell}_{\tau,T}}\leq C'\|w(\tau)\|_{\cH^{\ell}(0,1)}.
   \end{align}
\end{lemma}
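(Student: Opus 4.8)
The plan is to run the Duhamel representation of \eqref{eqmainperi1} against the exponential decay of $e^{At}$ on $\cH^{\ell}(0,1)$ and the bilinear estimate of Lemma \ref{bilinearest}, absorbing the transport term $(aw)_x$ --- which is linear in $w$ --- into the left-hand side once $a$ is small. Existence and uniqueness of $w$ is not the issue: $\partial_t w = Aw-(I-\Delta)^{-1}\partial_x(aw)$ is a perturbation of the generator $A$ by an operator bounded on $\cH^{\ell}(0,1)$ with norm $\lesssim \|a(\cdot,t)\|_{\cH^{\ell}(0,1)}$, locally bounded in $t$, so the mild solution given by the stated formula is well defined (equivalently, it is produced by the contraction run below). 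Hence only the three quantitative bounds need argument, and \eqref{w_l1} and \eqref{WYtauTtoWtau} are the same computation on the intervals $[0,T]$ and $[\tau,\tau+T]$, with $\psi$ and $w(\tau)$ playing interchangeable roles.

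For \eqref{w_l1} I would take the $\cH^{\ell}(0,1)$-norm in
\[ w(t)=e^{At}\psi-\int_0^t e^{A(t-s)}(I-\Delta)^{-1}(aw)_x(s)\,ds, \]
use $\|e^{At}\|_{\cH^{\ell}\to\cH^{\ell}}\le e^{-c_{\ell}t}$ (the semigroup lemma of Section~3, valid for every $\ell\in[1,2]$) and $\|(I-\Delta)^{-1}(aw)_x\|_{\cH^{\ell}(0,1)}=\|(aw)_x\|_{\cH^{\ell-2}(0,1)}\le\|(aw)_x\|_{L^2(0,1)}$ since $\ell\le 2$, and then bound the $Y^{\ell}_{0,T}$-norm exactly as in Lemmas \ref{apri2l1}--\ref{apriforceh3}: the free term contributes $\lesssim\|\psi\|_{\cH^{\ell}(0,1)}$, while the Duhamel term, via the Young-convolution argument used there together with Lemma \ref{bilinearest} applied with $\ell'=0$ (admissible because $\ell\ge1$, giving $\int_0^T\|(aw)_x\|_{L^2(0,1)}^2\le C''\|a\|_{Y^{\ell}_{0,T}}^2\|w\|_{Y^{\ell}_{0,T}}^2$), contributes $\lesssim\|a\|_{Y^{\ell}_{0,T}}\|w\|_{Y^{\ell}_{0,T}}$. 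Collecting, $\|w\|_{Y^{\ell}_{0,T}}\le C_{\ell}\|\psi\|_{\cH^{\ell}(0,1)}+C_{\ell}'\|a\|_{Y^{\ell}_{0,T}}\|w\|_{Y^{\ell}_{0,T}}$; imposing $C_{\ell}'\|a\|_{Y^{\ell}_{0,T}}\le\frac12$ absorbs the last term and yields \eqref{w_l1} with $C=2C_{\ell}$, which depends only on $c_{\ell}$ and the equivalence constant $M$, hence on neither $T$ nor $\|a\|_{Y^{\ell}_{0,T}}$. (An energy proof is also available: multiplying by $w$, the only new term is $-\int_0^1 a_x w^2\,dx\le C\|a\|_{\cH^1(0,1)}\|w\|_{\cH^1(0,1)}^2$, absorbed into the dissipation; the $\cH^2$ level uses $\langle Au,u\rangle_{\cH^2}\le-c''_M\|u\|_{\cH^2(0,1)}^2$ plus $\|(aw)_x\|_{L^2(0,1)}\lesssim\|a\|_{\cH^2(0,1)}\|w\|_{\cH^2(0,1)}$, and $\ell\in(1,2)$ follows by interpolating the linear solution map $\psi\mapsto w$.)

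For \eqref{WYtauTtoWtau} I would repeat this verbatim starting from $w(t)=e^{A(t-\tau)}w(\tau)-\int_\tau^t e^{A(t-s)}(I-\Delta)^{-1}(aw)_x(s)\,ds$; the hypothesis that $\sup_{t\ge t_0}\|a\|_{Y^{\ell}_{t,T}}$ is small forces $C_{\ell}'\|a\|_{Y^{\ell}_{\tau,T}}\le\frac12$ for all $\tau\ge t_0$ at once, giving $\|w\|_{Y^{\ell}_{\tau,T}}\le 2C_{\ell}\|w(\tau)\|_{\cH^{\ell}(0,1)}$ uniformly in $\tau$. Then \eqref{WtYtauTtoWtau} is a one-line corollary: from $w_t=Aw-(I-\Delta)^{-1}(aw)_x$, the boundedness of $A$ on $\cH^{\ell}(0,1)$ together with the pointwise-in-$t$ product bound $\|(aw)_x(t)\|_{\cH^{\ell-2}(0,1)}\le\|(aw)_x(t)\|_{L^2(0,1)}\lesssim\|a(t)\|_{\cH^1(0,1)}\|w(t)\|_{\cH^1(0,1)}\lesssim\|w(t)\|_{\cH^{\ell}(0,1)}$ (using smallness of $a$) gives $\|w_t(t)\|_{\cH^{\ell}(0,1)}\le C'\|w(t)\|_{\cH^{\ell}(0,1)}$, hence $\|w_t\|_{Y^{\ell}_{\tau,T}}\le C'\|w\|_{Y^{\ell}_{\tau,T}}\le 2C'C_{\ell}\|w(\tau)\|_{\cH^{\ell}(0,1)}$.

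The only genuinely delicate point is the claimed uniformity of the constants. The independence of $T$ is exactly what the exponential decay $\|e^{At}\|_{\cH^{\ell}\to\cH^{\ell}}\le e^{-c_{\ell}t}$ secures --- the Young-convolution estimates of the Duhamel term then carry $T$-independent constants, since $\|e^{-c_{\ell}|\cdot|}\|_{L^1}=2/c_{\ell}$ --- and the independence of $\|a\|$ is a consequence of the absorption step, after which the surviving constant $2C_{\ell}$ no longer depends on $a$. Everything else is a transcription of the linear estimates (Lemmas \ref{aprih1}, \ref{apri2l1}--\ref{apriforceh3}) and the bilinear estimate (Lemma \ref{bilinearest}) already established.
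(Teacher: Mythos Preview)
Your proposal is correct and follows essentially the same route as the paper: Duhamel representation, the exponential decay $\|e^{At}\|_{\cH^\ell\to\cH^\ell}\le e^{-c_\ell t}$, the bilinear estimate of Lemma~\ref{bilinearest} for $(aw)_x$, and absorption of the resulting $\|a\|_{Y^\ell}\|w\|_{Y^\ell}$ term under smallness of $a$. The paper's treatment of \eqref{WtYtauTtoWtau} via $w_t=Aw+(I-\Delta)^{-1}(aw)_x$, boundedness of $A$, and the product estimate matches yours line for line; your aside on the energy alternative and the explicit discussion of why $C$ is independent of $T$ and $\|a\|$ are welcome clarifications that the paper leaves implicit.
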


\begin{proof}
    First, using the semigroup solution of $w$, we have
    \begin{align*}
        \|w(t)\|_{\cH^{\ell}(0,1)}^2&\leq 4e^{-ct}\|\psi\|_{\cH^{\ell}(0,1)}^2+4\bigg(\int_0^te^{-c(t-s)}\|(aw)_x(s)\|_{\cH^{\ell-2}(0,1)}ds\bigg)^2
    \end{align*}
    Then following the argument as in the proof of Proposition \ref{existsub1}, we have that 
    $$\|w\|_{Y^{\ell}_{0,T}}^2\leq C\|\psi\|_{\cH^\ell(0,1)}^2+C_{c}\|a\|_{Y^{\ell}_{0,T}}^2\|w\|_{Y^{\ell}_{0,T}}^2.$$
    Therefore, if $\|a\|_{Y^{\ell}_{0,T}}^2\leq \frac{1}{C_{c}}$, then 
    $$\|w\|_{Y^{\ell}_{0,T}}^2\leq C'\|\psi\|_{\cH^{\ell}(0,1)}^2,$$
    
    where $C'$ is independent of $a$.\\

    Next, to show the estimate $\|w\|_{Y^{\ell}_{\tau,T}}$, we first note that the solution can be written as 
    $$w(t):=e^{A(t-\tau)}w(\tau)-\int_{\tau}^{t}e^{A(t-s)}(I-\Delta)^{-1}(aw)_x(s)ds.$$
    Then, 
    \begin{align*}
        \|w(t)\|_{\cH^{\ell}(0,1)}\leq e^{-c(t-\tau)}\|w(\tau)\|_{\cH^{\ell}(0,1)}+\int_{\tau}^{t}e^{-c(t-s)}\|(aw)_x(s)\|_{\cH^{\ell-2}(0,1)}ds.
    \end{align*}
    Taking supremum over $t\in [\tau,T+\tau]$, we have 
    \begin{align*}
       \sup_{t\in [\tau,T+\tau]} \|w(t)\|_{\cH^{\ell}(0,1)}&\leq \|w(\tau)\|_{\cH^{\ell}(0,1)}+\frac{1}{\sqrt{2c}}\int_{\tau}^{T+\tau}\|(aw)_x(s)\|_{\cH^{\ell-2}(0,1)}ds\\
       &\leq \|w(\tau)\|_{\cH^{\ell}(0,1)}+C''\|a\|_{Y^{\ell}_{\tau,T}}\|w\|_{Y^{\ell}_{\tau,T}}.
    \end{align*}
    On the other hand, if we take $L^2([\tau,T+\tau])$ norm both sides,
     \begin{align*}
      \|w\|_{L^2([\tau,T+\tau];\cH^{\ell}(0,1))}&\leq  \|e^{-c(t-\tau)}\|_{L^2([\tau,T+\tau])}\|w(\tau)\|_{\cH^{\ell}(0,1)}+\bigg\|\int_{\tau}^{t}e^{-c(t-s)}\|(aw)_x(s)\|_{\cH^{\ell-2}(0,1)}ds\bigg\|_{L^2([\tau,\tau+T])}\\
      &\leq \frac{1}{\sqrt{2c}}\|w(\tau)\|_{\cH^{\ell}(0,1)}+C'''\|a\|_{Y^{\ell}_{\tau,T}}\|w\|_{Y^{\ell}_{\tau,T}}.
    \end{align*}
    Therefore, 
    $$\|w\|_{Y^{\ell}_{\tau,T}}\leq C^{(4)}\big(\|w(\tau)\|_{\cH^{\ell}(0,1)}^2+\|a\|_{Y^{\ell}_{\tau,T}}^2\|w\|^2_{Y^{\ell}_{\tau,T}}\big)^{\frac{1}{2}}.$$
    If $\ds \sup_{t\geq t_0}\|a\|_{Y^{\ell}_{t,T}}$ is small enough, it can be inferred that  $$\|w\|_{Y^{\ell}_{\tau,T}}\leq C^{(5)}\|w(\tau)\|_{\cH^{\ell}(0,1)}$$
holds for all $\tau\geq t_0$.\\

Note that 
$$w_t=(I-\Delta)^{-1}[(aw)_x]+Aw.$$

We have
$$\|w_t(t)\|_{\cH^\ell(0,1)}\leq \|(aw)_x(t)\|_{\cH^{\ell-2}(0,1)}+C\|w(t)\|_{\cH^{\ell}(0,1)}.$$
Now we take $L^{2}([\tau,T])$ and $L^{\infty}([\tau,T])$, we have 
$$\|w_t\|_{Y^{\ell}_{\tau,T}}\leq C'\|a\|_{Y^\ell_{\tau,T}}\|w\|_{Y^{\ell}_{\tau,T}}+C\|w\|_{Y^{\ell}_{\tau,T}}\leq C''\|w\|_{Y^{\ell}_{\tau,T}}\leq C'''\|w(\tau)\|_{\cH^{\ell}(0,1)}$$
provided that $\sup_{t\geq t_0}\|a\|_{Y^{\ell}_{\tau,T}}$ is small enough.
\end{proof}

\begin{lemma} \label{aprieqap1h3}
    If $\sup_{t\geq t_0}\|a\|_{Y^{2}_{t,T}}$ is small enough, then for any $\tau\geq t_0$
   \begin{align} \label{WYtauTtoWtau3}
       \|w\|_{Y^{3}_{\tau,T}}\leq C\|w(\tau)\|_{\cH^{3}(0,1)}.
   \end{align}
\end{lemma}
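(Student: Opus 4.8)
The plan is to run the $\ell=3$ energy argument already used for \eqref{eq1} in Lemma \ref{aprih3} and for the nonlinear equation \eqref{eqsub2} in Section~4, now treating the coupling term $(wa)_x$ as an inhomogeneity whose contribution is controlled by the $\ell=2$ bounds of Lemma \ref{aprieqap1}. First I would differentiate \eqref{eqmainperi1} once in $x$, obtaining
\[
w_{xt}+w_{xx}+(wa)_{xx}-w_{xxx}-w_{xxxt}=0,
\]
multiply by $w_{xxx}$ and integrate over $(0,1)$. The term $\int_0^1 w_{xxxt}w_{xxx}\,dx=\tfrac12\tfrac{d}{dt}\|w_{xxx}\|_{L^2(0,1)}^2$ involves no spatial integration by parts, and the remaining products are estimated directly by Cauchy--Schwarz and Young's inequality (so no boundary contribution from $w_x$ arises), giving, for every $\Ge>0$,
\[
\tfrac{d}{dt}\|w_{xxx}\|_{L^2(0,1)}^2+\|w_{xxx}\|_{L^2(0,1)}^2\le \tfrac{C}{\Ge}\big(\|w_t\|_{\cH^1(0,1)}^2+\|w\|_{\cH^2(0,1)}^2+\|(wa)_{xx}\|_{L^2(0,1)}^2\big)+C\Ge\|w_{xxx}\|_{L^2(0,1)}^2.
\]

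For the coupling term I would use that $H^2(0,1)$ is a Banach algebra (equivalently, the product estimate used in the proof of Proposition \ref{Prop1} with $s=s_1=s_2=2$): $\|(wa)_{xx}\|_{L^2(0,1)}\le \|wa\|_{H^2(0,1)}\le C\|w\|_{H^2(0,1)}\|a\|_{H^2(0,1)}\lesssim \|w\|_{\cH^2(0,1)}\|a\|_{\cH^2(0,1)}$. Choosing $\Ge$ so small that $C\Ge<\tfrac12$, absorbing, and using that $\sup_{t\ge t_0}\|a\|_{Y^2_{t,T}}$ (hence $\|a\|_{\cH^2}$ along the orbit) is small, this reduces to $\tfrac{d}{dt}\|w_{xxx}\|_{L^2(0,1)}^2+\delta\|w_{xxx}\|_{L^2(0,1)}^2\le C'(\|w_t\|_{\cH^1(0,1)}^2+\|w\|_{\cH^2(0,1)}^2)$ for some $\delta>0$. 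Integrating over $[\tau,t]$ and then taking the supremum and the $L^2$-norm in $t$ over $[\tau,T+\tau]$ — the latter via Young's convolution inequality exactly as in the proof of Lemma \ref{apriforceh3} — yields
\[
\|w_{xxx}\|_{L^\infty([\tau,T+\tau];L^2(0,1))}^2+\|w_{xxx}\|_{L^2([\tau,T+\tau];L^2(0,1))}^2\le C\big(\|w(\tau)\|_{\cH^3(0,1)}^2+\|w_t\|_{Y^1_{\tau,T}}^2+\|w\|_{Y^2_{\tau,T}}^2\big).
\]
Since $\|w\|_{Y^3_{\tau,T}}^2\simeq \|w\|_{Y^2_{\tau,T}}^2+\|w_{xxx}\|_{L^\infty([\tau,T+\tau];L^2(0,1))}^2+\|w_{xxx}\|_{L^2([\tau,T+\tau];L^2(0,1))}^2$, I would then invoke \eqref{WYtauTtoWtau} and \eqref{WtYtauTtoWtau} of Lemma \ref{aprieqap1} with $\ell=1$ and $\ell=2$ — which apply because $\sup_{t\ge t_0}\|a\|_{Y^\ell_{t,T}}\le \sup_{t\ge t_0}\|a\|_{Y^2_{t,T}}$ is small for $\ell\le 2$ — to get $\|w\|_{Y^2_{\tau,T}}\le C\|w(\tau)\|_{\cH^2(0,1)}\le C\|w(\tau)\|_{\cH^3(0,1)}$ and $\|w_t\|_{Y^1_{\tau,T}}\le C\|w(\tau)\|_{\cH^1(0,1)}\le C\|w(\tau)\|_{\cH^3(0,1)}$, whence $\|w\|_{Y^3_{\tau,T}}\le C\|w(\tau)\|_{\cH^3(0,1)}$, as claimed.

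The step I expect to require the most care is the treatment of $(wa)_{xx}$: since $a$ has only $\cH^2$-regularity, $a_{xx}$ lies merely in $L^2(0,1)$, so one cannot integrate $(wa)_{xx}$ by parts against $w_{xxx}$ to shift a derivative onto $a$. The resolution — as in the $\ell=3$ part of Section~4 — is that it suffices to keep $(wa)_{xx}$ in $L^2(0,1)$ and pair it with $w_{xxx}\in L^2(0,1)$ through an $\Ge$-Young inequality, so only the product bound $\|w\|_{\cH^2}\|a\|_{\cH^2}$ is needed, which is precisely what the $\ell=2$ theory supplies; no higher regularity of $a$ enters. As a preliminary one should also record that a solution $w$ with $w(\tau)\in\cH^3(0,1)$ has enough regularity to justify these manipulations, which follows from the same construction and bootstrap already used for \eqref{eqsub2} and \eqref{eqmainperi1}.
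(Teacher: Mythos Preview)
Your proposal is correct and follows essentially the same route as the paper: differentiate \eqref{eqmainperi1} once in $x$, multiply by $w_{xxx}$, use Young's inequality with a small parameter to absorb the $\|w_{xxx}\|_{L^2}^2$ terms, integrate the resulting differential inequality over $[\tau,t]$, and then close using the $\ell\le 2$ bounds \eqref{WYtauTtoWtau}--\eqref{WtYtauTtoWtau} from Lemma~\ref{aprieqap1}. The only cosmetic difference is that the paper splits $(wa)_{xx}=(w_xa)_x+(wa_x)_x$ and bounds the two pieces via the bilinear estimate (yielding $\|w\|_{Y^2_{\tau,T}}\|a\|_{Y^1_{\tau,T}}+\|w\|_{Y^1_{\tau,T}}\|a\|_{Y^2_{\tau,T}}$), whereas you invoke the $H^2$ algebra property directly to get $\|(wa)_{xx}\|_{L^2}\lesssim\|w\|_{\cH^2}\|a\|_{\cH^2}$; both lead to the same conclusion.
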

\begin{proof}
Note that 
$$w_{xxxt}+w_{xxx}=-w_{xt}-w_{xx}-(w_xa+wa_x)_x.$$
Therefore, we have 
\begin{align*}
    \frac{d}{dt}\|w_{xxx}\|_{L^2(0,1)}^2+\|w_{xxx}\|^2_{L^2(0,1)}&\leq 2\int_0^1w_{xxx}(-w_{xt}-w_{xx}-(w_xa+wa_x)_x)dx\\
    \frac{d}{dt}\|w_{xxx}\|_{L^2(0,1)}+(1-8\Ge')\|w_{xxx}\|_{L^2(0,1)}^2&\leq \frac{8}{\Ge'}(\|w_{xt}\|_{L^2(0,1)}^2+M^2\|w\|^2_{\cH^2(0,1)}+\|(w_xa)_x\|_{L^2(0,1)}^2+\|(wa_x)_x\|_{L^2(0,1)}^2),
\end{align*}
and taking $\Ge'<\frac18$ we have
\begin{align*}
    &\|w_{xxx}(t)\|^2_{L^2(0,1)}-e^{-(1-8\Ge')(t-\tau)}\|w_{xxx}(\tau)\|^2_{L^2(0,1)}\\
    &\leq C_{\Ge'}\int_\tau^te^{-(1-8\Ge')(t-s)}(M^2\|w_{t}\|_{\cH^1(0,1)}^2+M^2\|w\|^2_{\cH^2(0,1)}+\|(w_xa)_x\|_{L^2(0,1)}^2+\|(wa_x)_x\|_{L^2(0,1)}^2)ds.
\end{align*}
Assuming $\sup_{t\geq t_0}\|a\|_{Y^{2}_{t,T}}$ is sufficiently small, we have 
\begin{align*}
    \|w\|_{Y^{3}_{\tau,T}}&\leq M (M^2\|w\|_{Y^{2}_{\tau,T}}^2+\|w_{xxx}\|_{Y^0_{\tau,T}}^2)^{\frac{1}{2}}\\
    &\leq C_{\Ge',M}'(\|w\|_{Y^2_{\tau,T}}+\|w_{xxx}(\tau)\|_{L^2(0,1)}+\|w_t\|_{Y^1_{\tau,T}}+\|w\|_{Y^2_{\tau,T}}+\|w\|_{Y^2_{\tau,T}}\|a\|_{Y^1_{\tau,T}}+\|w\|_{Y^1_{\tau,T}}\|a\|_{Y^2_{\tau,T}}) \\
    &\leq C''_{\Ge',M}\|w(\tau)\|_{\cH^3(0,1)}
\end{align*}
with the aid of Lemma \ref{aprieqap1}.
\end{proof}
\subsection{Periodic Solutions and local stability}
\begin{proof}[Proof of Theorem \ref{th2-2}]\ \\
Choose $\|\phi\|_{\cH^{\ell}(0,1)}^2+\|f\|_{L^2([0,T];\cH^{\ell-2}(0,1))}^2$ small enough so that $\sup_{t\geq 0}\|\frac{u(x,t+\theta)+u(x,t)}{2}\|_{Y^{\ell}_{t,T}}$ is small enough to apply Lemma \ref{aprieqap1}. We also take $\psi(x)=u(x,\theta)-\phi(x)$.\\

Note that
$$w(t+\tau)=e^{At}w(\tau)-\int_{0}^{t}e^{A(t-s)}(I-\Delta)^{-1}(aw)_x(s+\tau)ds.$$

We define $w_k=w(kT)$ for $k\in \N$. If $\tau=T$, then 
$$w_k=e^{AT}w_{k-1}-\int_{(k-1)T}^{kT}e^{A(kT-s)}(I-\Delta)^{-1}(aw)_x(s)ds.$$
Observe that for $\ell\in \{1,2\}$,
\begin{align*}
    \|w_k\|_{\cH^{\ell}(0,1)}&\leq e^{-cT}\|w_{k-1}\|_{\cH^{\ell}(0,1)}+\int_{(k-1)T}^{kT}e^{-c(kT-s)}\|(aw)_x(s)\|_{\cH^{\ell-2}(0,1)}ds\\
    &\leq  e^{-cT}\|w_{k-1}\|_{\cH^{\ell}(0,1)}+\frac{1}{\sqrt{2c}}\|a\|_{Y^{\ell}_{(k-1)T,T}}\|w\|_{Y^{\ell}_{(k-1)T,T}}\\
    &\leq   e^{-cT}\|w_{k-1}\|_{\cH^{\ell}(0,1)}+\frac{C'}{\sqrt{2c}}\|a\|_{Y^{\ell}_{(k-1)T,T}}\|w_{k-1}\|_{\cH^{\ell}(0,1)} \\
    &= \bigg(e^{-cT}+\frac{C'}{\sqrt{2c}}\sup_{t>0}\|a\|_{Y^{\ell}_{t,T}}\bigg)\|w_{k-1}\|_{\cH^{\ell}(0,1)},
\end{align*}

where we have used the estimate \eqref{WYtauTtoWtau}. \\

By taking $\sup_{t>0}\|a\|_{Y^{\ell}_{t,T}}$ to be small enough, we may assume that $e^{-cT}+\frac{C'}{\sqrt{2c}}\sup_{t>0}\|a\|_{Y^{\ell}_{t,T}}=:\mu<1$. Therefore, 
\begin{align*}
    \|w_k\|_{\cH^{\ell}(0,1)}\leq \mu\|w_{k-1}\|_{\cH^{\ell}(0,1)}\leq \mu^2\|w_{k-2}\|_{\cH^{\ell}(0,1)}\leq \cdots\leq \mu^{k-1}\|w_1\|_{\cH^{\ell}(0,1)}\leq \mu^k\|\psi\|_{\cH^{\ell}(0,1)}.
\end{align*}
We see that 
$$\|w(kT)\|_{\cH^{\ell}(0,1)}\leq e^{-kT(\ln(1/\mu)/T)}\|\psi\|_{\cH^{\ell}(0,1)}.$$

If $\tau=kT+t'$, where $t'\in(0,T)$ and $k\in \N$, since 
\begin{align*}
   \|w(\tau)\|_{\cH^{\ell}(0,1)}\leq \|w\|_{Y^{\ell}_{kT,t'}} \leq C'\|w_{k}\|_{\cH^{\ell}(0,1)}&\leq C' e^{-kT(\ln(1/\mu)/T)}\|\psi\|_{\cH^{\ell}(0,1)}\\
   &\leq C'e^{t'(\ln(1/\mu)/T)}e^{-\tau(\ln(1/\mu)/T)}\|\psi\|_{\cH^{\ell}(0,1)}\\
    &\leq C'\frac{1}{\mu}e^{-\tau(\ln(1/\mu)/T)}\|\psi\|_{\cH^{\ell}(0,1)}.
\end{align*}
Note that in the last step we used the fact that $t'\in (0,T)$.
Therefore, for any $t\geq 0$, we have 
$$\|w(t)\|_{\cH^{\ell}(0,1)}\leq C'\frac{1}{\mu}e^{-t(\ln(1/\mu)/T)}\|\psi\|_{\cH^{\ell}(0,1)},$$
and 
taking $L^2([\tau,T+\tau])$ as well as $L^{\infty}([\tau,T+\tau])$ we have 
$$\|w\|_{Y^{\ell}_{\tau,T}}\leq C'\bigg(\frac{\ln(1/\mu)}{T}+1\bigg)\frac{1}{\mu}e^{-\tau(\ln(1/\mu)/T)}\|u(\theta)-\phi\|_{\cH^{\ell}(0,1)}.$$
These two inequalities are also true for $w_t$ with 3 times the original implicit majorizing constants.\\

 For $\ell=3$, following the detail of proving (\ref{WYtauTtoWtau}), we have 
\begin{align*}
    &\|w_{xxx}(kT)\|^2_{L^2(0,1)}-e^{-(1-8\Ge')T}\|w_{xxx}((k-1)T)\|^2_{L^2(0,1)}\\
    &\leq C_{\Ge'}\int_{(k-1)T}^{kT}e^{-(1-8\Ge')(kT-s)}(M^2\|w_{t}\|_{\cH^1(0,1)}^2+M^2\|w\|^2_{\cH^2(0,1)}+\|(w_xa)_x\|_{L^2(0,1)}^2+\|(wa_x)_x\|_{L^2(0,1)}^2)ds\\
    &\leq C''_{\Ge',M}(\|w_t\|_{Y^1_{(k-1)T,T}}^2+\|w\|_{Y^2_{(k-1)T,T}}^2)+C_{\Ge'}\int_{(k-1)T}^{kT}e^{-(1-8\Ge')(kT-s)}(\|(w_xa)_x\|_{L^2(0,1)}^2+\|(wa_x)_x\|_{L^2(0,1)}^2)ds\\
    &\leq 2C''_{\Ge',M}\|w((k-1)T)\|_{\cH^2(0,1)}^2+C'_{\Ge'}\sup_{t>0}\|a\|_{Y^2_{t,T}}^2\|w_{k-1}\|_{\cH^2(0,1)}^2\\
    &\leq 2C''_{\Ge',M}e^{-2(k-1)T(\ln(1/\mu)/T)}\|\psi\|_{\cH^{2}(0,1)}^2+C'_{\Ge'}\sup_{t>0}\|a\|_{Y^2_{t,T}}^2e^{-2(k-1)T(\ln(1/\mu)/T)}\|\psi\|_{\cH^{2}(0,1)}^2
\end{align*}
Therefore, using iteration as in the proof of \cite{WZ}*{Lemma 3.4}, we can conclude that 
\begin{align*}
    &\|(w_k)_{xxx}\|_{L^2(0,1)}^2\\
    &\leq e^{-k(1-8\Ge')T}\|\psi_{xxx}\|^2_{L^2(0,1)} + C_{\Ge'}\bigg[ 2C''_{\Ge',M}e^{-2(k-1)T(\ln(1/\mu)/T)}\|\psi\|_{\cH^{2}(0,1)}^2+C'_{\Ge'}\sup_{t>0}\|a\|_{Y^2_{t,T}}^2e^{-2(k-1)T(\ln(1/\mu)/T)}\|\psi\|_{\cH^{2}(0,1)}^2\bigg].
\end{align*}
Therefore, we have 
\begin{align*}
    \|w_k\|_{\cH^3(0,1)}&\leq \|w_{k}\|_{\cH^2(0,1)}+\|(w_k)_{xxx}\|_{L^2(0,1)}\\
    &\leq  e^{-kT(\ln(1/\mu)/T)}\|\psi\|_{\cH^{2}(0,1)}+e^{-k(1-8\Ge')T}\|\psi_{xxx}\|_{L^2(0,1)}+C_{\Ge',M}e^{-(k-1)T(\ln(1/\mu)/T)}\|\psi\|_{\cH^{2}(0,1)}^2\\
    &\quad\quad +C'_{\Ge'}\sup_{t>0}\|a\|_{Y^2_{t,T}}e^{-(k-1)T(\ln(1/\mu)/T)}\|\psi\|_{\cH^{2}(0,1)} \\
    &\leq C_{\Ge',M}''e^{-c_{T,\mu}(kT)}\|\psi\|_{\cH^3(0,1)},
\end{align*}
where $c_{T,\mu}>0$ is independent of $k$ and $\sup_{t>0}\|a\|_{Y^2_{t,T}}<1$.\\

Following the argument above, we can conclude that 
$$\|w\|_{Y^{3}_{\tau,T}}\leq C_{\mu,T,\Ge',M}e^{-\tau(\ln(1/\mu)/T)}\|u(\theta)-\phi\|_{\cH^{3}(0,1)},$$
where $C_{\mu,T,\Ge',M}$ is independent of $\tau$.
\end{proof}

\begin{proof}[Proof of Theorem \ref{th2-3}]\ \\
Let $u_k(x)=u(x,k\theta)$ for $k\in \N$. By Theorem \ref{th2-1}, we see $u_k\in \cH^{\ell}(0,1)$. We will first show that $\{u_k\}_k$ is Cauchy in $\cH^{\ell}(0,1)$, then we will show that solution with initial condition $\lim_k u_k$ will be periodic.\\

    Let $m,n$ be integers. Then,
    \begin{align*}
        \|u_{n+m}-u_n\|_{\cH^{\ell}(0,1)}&\leq \sum_{i=0}^{m-1}\|u_{n+i+1}-u_{n+i}\|_{\cH^{\ell}(0,1)}\\
        &\leq \sum_{i=0}^{m-1}\|w((n+i)\theta)\|_{\cH^{\ell}(0,1)}\\
        &\leq \sum_{i=0}^{m-1}C_{\mu,T}e^{-(n+i)\theta(\ln(1/\mu)/T)}\|u(\theta)-\phi\|_{\cH^{\ell}(0,1)}\\
        &\leq C_{\mu,T}\|u(\theta)-\phi\|_{\cH^{\ell}(0,1)}\frac{e^{-n\theta(\ln(1/\mu)/T)}}{1-e^{-\theta(\ln(1/\mu)/T)}}\to 0
    \end{align*}
    as $n\to\infty$. Therefore, $\{ u_{k}\}_k$ is a Cauchy sequence in $\cH^{\ell}(0,1)$ and we will denote $\widetilde{\phi}:=\lim_{n\to\infty}u_n$ in $\cH^{\ell}(0,1)$. We can see that $\|\widetilde{\phi}\|_{\cH^\ell(0,1)}\leq C(\|\phi\|_{\cH^\ell(0,1)}+\|f\|_{L^\infty([0,\infty);\cH^{\ell-2}(0,1))})$.\\

    Now suppose $\widetilde{u}(x,t)$ be the solution to \eqref{eqmain} with initial condition $u(x,0)=\widetilde{\phi}$. Then
    \begin{align*}
    \|\widetilde{u}(\cdot,\theta)-\widetilde{\phi}(\cdot)\|_{\cH^{\ell}(0,1)} &=\|\widetilde{u}(\cdot,\theta)-\widetilde{u}(\cdot,0)\|_{\cH^{\ell}(0,1)}\\
        &\leq \|\widetilde{u}(\cdot,\theta)-u_{n+1}\|_{\cH^{\ell}(0,1)}+\|u_{n+1}-u_n\|_{\cH^{\ell}(0,1)}+\|u_n-\widetilde{u}(\cdot,0)\|_{\cH^{\ell}(0,1)}\\
        &\to 0
    \end{align*}
    by passing $n\to \infty$ by the fact that $\{u_k\}_k$ is Cauchy and $\widetilde{\phi}:=\lim_{n\to\infty}u_n$ in $\cH^{\ell}(0,1)$. To see the first term is small, note that by the mild solution of $\widetilde{u}$ and $u$ with $a(s)=\frac{\widetilde{u}(s)+u(n\theta+s)}{2}$, 
    \begin{align*}
        \widetilde{u}(\cdot,\theta)-u_{n+1}&= e^{A\theta}(\widetilde{\phi}-u_n)-\int_{0}^{\theta}e^{A(\theta-s)}(I-\Delta)^{-1}(a(s)[\widetilde{u}(s)-u(n\theta+s)])_xds
    \end{align*}
    and 
    \begin{align*}
        \|\widetilde{u}(\cdot,\theta)-u_{n+1}\|_{\cH^{\ell}(0,1)}&\leq  C'e^{-c\theta}\|\widetilde{\phi}-u_n\|_{\cH^{\ell}(0,1)}
    \end{align*}
    provided that $\sup_{t>0}\|u\|_{Y^{\ell}_{t,\Gth}}+\sup_{t>0}\|\widetilde{u}\|_{Y^{\ell}_{t,\Gth}}$ small enough by Lemma \ref{aprieqap1} if $\ell\in [1,2]$. For $\ell=3$, we can apply Lemma \ref{aprieqap1h3} to $w=\widetilde{u}-u(n\Gth+\cdot)$, $a=\frac{\widetilde{u}+u(n\Gth+\cdot)}{2}$, and $\psi=\phi-u(n\Gth)$ on $[0,\Gth]$ provided that $\sup_{t>0}\|u\|_{Y^{3}_{t,\Gth}}+\sup_{t>0}\|\widetilde{u}\|_{Y^{3}_{t,\Gth}}$ is small enough.\\

    We now show the local stability of $\widetilde{u}$. Consider $w(x,t)=u(x,t)-\widetilde{u}(x,t)$ and $a(x,t)=\frac{1}{2}(u(x,t)+\widetilde{u}(x,t))$. Then we see that $w$ solves \eqref{eqmainperi1} with $\psi(x)=\phi(x)-\widetilde{\phi}(x)$. By Theorem \ref{th2-1}, we can take both $\sup_{t>0}\|u\|_{Y^{\ell}_{t,T}}+\sup_{t>0}\|\widetilde{u}\|_{Y^{\ell}_{t,T}}$ small enough to apply Theorem \ref{th2-2}. Therefore, we have 
    $$\|u-\widetilde{u}\|_{Y^{\ell}_{\tau,T}}\leq C'_{\mu,T}e^{-\tau(\ln(1/\mu)/T)}\|\psi\|_{\cH^{\ell}(0,1)},$$
    which is exponential convergence.  
\end{proof}
\subsection{Global Stability}
This subsection is dedicated to the proof of Theorem \ref{th2-4}. To prove the global stability, we will first establish the case $\ell=1$ and then we will establish a method to reduce the higher order cases $\ell$ to lower order. Different from \cite{WZ}, differentiating $t$ would not give us higher order regularity, we need to estimate the norm of $u$ in $\cH^{\ell}(0,1)$ directly.\\

In order to prove the global exponential stability for $\cH^{\ell}(0,1)$, it suffices to show the global absorbing property

$$\|u\|_{\cH^{\ell}(0,1)}\leq e^{-ct}\|\phi\|_{\cH^{\ell}(0,1)}+C\delta,$$

provided that $\sup_{t}\|f\|_{\cH^{\ell-2}(0,1)}\leq \delta$. Here we will always assume that $\delta <1$ for simplicity. We will explain how to obtain the global stability after showing the global absorbing property for $\ell=1$ and similar argument will work for any $\ell$. \\

When $\ell=1$, note that $\int_0^1u^2u_xdx=0$, and we have 
\begin{align*}
    \frac{1}{2}\frac{d}{dt}(\|u(t)\|^2_{L^2(0,1)}+\|u_x(t)\|^2_{L^2(0,1)})+\|u_x(t)\|^2_{L^2(0,1)}&\leq \frac{C_M}{b}\|f(t)\|_{\cH^{-1}(0,1)}^2+\frac{b}{4M^2}\|u(t)\|_{\cH^1(0,1)}^2\\
    \frac{1}{2}\frac{d}{dt}(\|u(t)\|^2_{L^2(0,1)}+\|u_x(t)\|^2_{L^2(0,1)})&\leq \frac{C_M}{b}\|f(t)\|_{\cH^{-1}(0,1)}^2+\frac{b}{4}(\|u(t)\|_{L^2(0,1)}^2+\|u_x(t)\|_{L^2(0,1)}^2)\\
    &\quad\quad -\frac{1}{2}\|u_x(t)\|^2_{L^2(0,1)}-\frac{(c')^2}{2}\|u(t)\|_{L^2(0,1)}^2\\
    &\leq \frac{C_M}{b}\|f(t)\|_{\cH^{-1}(0,1)}^2-c'''_b(\|u(t)\|_{L^2(0,1)}^2+\|u_x(t)\|_{L^2(0,1)}^2)
\end{align*}
here we take $\frac{b}{2}<\min\{1,(c')^2\}$. Therefore, 
\begin{align*}
    \|u(t)\|_{\cH^1(0,1)}^2\leq C'_Me^{-2c'''_bt}\|\phi\|_{\cH^1(0,1)}^2+C_{M,b}\sup_{t\geq 0}\|f(t)\|_{\cH^{-1}(0,1)}^2.
\end{align*}

Therefore, if $\delta>0$ is small such that there exists $t_0>0$ that $\|u(t_0)\|_{\cH^1(0,1)}^2$ is small enough such that $\ds \sup_{t\geq t_0}\|u\|_{Y^{\ell}_{t,T}}+\sup_{t\geq t_0}\|\widetilde{u}\|_{Y^{\ell}_{t,T}}$ is small enough to apply Theorem \ref{th2-3}. More precisely, for given $\phi \in \cH^1(0,1)$ and sufficiently small $\ds\sup_{t\geq 0}\|f(t)\|_{\cH^{-1}(0,1)}^2$, then there exists $t_0$ such that $\|u(t_0)\|_{\cH^{1}(0,1)}$ is small, then we apply Theorem \ref{th2-3} to $u(t_0+t)$ and $u-\widetilde{u}\to 0$ in $Y^{\ell}_{\tau,T}$ as $\tau\to\infty$.\\

We can also see that 
\begin{align*}
    \|u_t(t)\|_{\cH^{1}(0,1)}&\leq \|u_x(t)\|_{\cH^{-1}(0,1)}+\|u(t)\|_{\cH^1(0,1)}+\|u(t)\|_{\cH^{-1}(0,1)}+\|u(t)u_x(t)\|_{\cH^{-1}(0,1)}+\|f(\cdot,t)\|_{\cH^{-1}(0,1)} \\
    &\leq C\|u(t)\|_{\cH^1(0,1)}+M\|u(t)\|_{\cH^1(0,1)}^2+\sup_{t\geq 0}\|f(\cdot,t)\|_{\cH^{-1}(0,1)}\\
    &\leq C\sqrt{C'_{M}}e^{-c_b'''(t)}\|\phi\|_{\cH^{1}(0,1)}+2C'_{M}e^{-2c_b'''(t)}\|\phi\|^2_{\cH^{1}(0,1)}\\
    &\quad\quad\quad+(\sqrt{C_{M,b}}+1+C_{M,b}\sup_{t\geq 0}\|f(\cdot,t)\|_{\cH^{-1}(0,1)})\sup_{t\geq 0}\|f(\cdot,t)\|_{\cH^{-1}(0,1)}.
\end{align*}

For $\ell>1$, note that
\begin{align*}
    &(I-\Delta)u_t+(I-\Delta)u=-u_x+u+\frac{1}{2}[u^2]_x+f(t) \\
   \implies &(I-\Delta)^{\ell/2}u_t+(I-\Delta)^{\ell/2}u=-(I-\Delta)^{\ell/2-1}u_x+(I-\Delta)^{\ell/2-1}u+\frac{1}{2}(I-\Delta)^{\ell/2-1}[u^2]_x+(I-\Delta)^{\ell/2-1}f(t) \\
   \implies &\frac{d}{dt}\|u\|_{\cH^{\ell}}^2+\|u\|_{\cH^\ell}^2\leq \frac{1}{2\Ge}(\|u\|_{\cH^{\ell-1}}^2+\|u\|_{\cH^{\ell-2}}^2+\frac{1}{2}\|u^2\|_{\cH^{\ell-1}}^2+\|f(t)\|^2_{\cH^{\ell-2}})+8\Ge\|u\|_{\cH^\ell(0,1)}^2.
\end{align*}

For $1<\ell\leq 2$, multiply $e^{(1-8\Ge) t}$ to both sides and integrate over $[0, t]$,

\begin{align*}
    &e^{(1-8\Ge)t}\|u(t)\|_{\cH^{\ell}}^2-\|\phi\|_{\cH^{\ell}}^2\\
    &\leq \frac{1}{2\Ge}\int_0^te^{(1-8\Ge)s}(\|u(s)\|_{\cH^{\ell-1}}^2+\|u(s)\|_{\cH^{\ell-2}}^2+\frac{1}{2}\|u^2(s)\|_{\cH^{1}}^2+\|f(s)\|^2_{\cH^{\ell-2}})ds\\
    &\leq \frac{1}{2\Ge}\int_0^te^{(1-8\Ge)s}(2C'_Me^{-2c_b'''s}\|\phi\|_{\cH^1(0,1)}^2+2C_{M,b}\sup_{s\geq0}\|f(s)\|_{\cH^{-1}}^2+\frac{C}{2}\|u(s)\|_{\cH^{1}}^4)ds+ \frac{e^{(1-8\Ge)t}}{1-8\Ge}\sup_{s\geq 0}\|f(s)\|^2_{\cH^{\ell-2}}\\
     &\leq \frac{1}{2\Ge}e^{(1-8\Ge-2c_b''')t}(C'_M\|\phi\|_{\cH^1(0,1)}^2)+\frac{1}{2\Ge} e^{(1-8\Ge-4c_b''')t}(C'_Me^{t}\|\phi\|_{\cH^1(0,1)}^2)^2+e^{(1-8\Ge)t}C'_{M,b,\Ge}\sup_{s\geq 0}\|f(s)\|^2_{\cH^{\ell-2}}
\end{align*}
\begin{align*}
    \|u(t)\|_{\cH^{\ell}}^2
     &\leq Ce^{-(1-8\Ge)t}\|\phi\|_{\cH^{\ell}}^2+C'_{M,\Ge}e^{-2c_b'''t}(\|\phi\|_{\cH^{\ell}}^2+\|\phi\|_{\cH^{\ell}}^4)+C'_{M,b,\Ge}\sup_{s\geq 0}\|f(s)\|^2_{\cH^{\ell-2}}.
\end{align*}

We have the absorbing property if we choose $\max\{\frac{1-2c_{b}'''}{8},0\}<\Ge<\frac{1}{8}$.\\

For $\ell\geq 2$, note that $\ell-1\geq 1$ so $\cH^{\ell-1}$ is an algebra in $C([0, 1])$, and we have $\|u^2\|_{\cH^{\ell-1}}\leq 
C\|u\|_{\cH^{\ell-1}}^2$. Thus, we have 
\begin{align*}
&\frac{d}{dt}\|u\|_{\cH^{\ell}}^2+\|u\|_{\cH^\ell}^2\leq \frac{1}{2\Ge}(\|u\|_{\cH^{\ell-1}}^2+\|u\|_{\cH^{\ell-2}}^2+\frac{C}{2}\|u\|_{\cH^{\ell-1}}^4+\|f(t)\|^2_{\cH^{\ell-2}})+8\Ge\|u\|_{\cH^\ell(0,1)}^2.
\end{align*}
Then we can apply estimates for $\ell\in(2, 3]$. In detail, $\ell-1\in (1,2]$, for which we have proved the desired inequality. If $\ell\in (3, 4], (4, 5], \cdots,$ result works inductively. Therefore, we can conclude the absorbing property for $\ell\geq 2.$ Global stability results follows as mechanism in that of $\cH^1$. 

\begin{remark}
    Before we completely conclude the discussion on \eqref{eqmain}, we could point out following the proof of Theorem \ref{th2-1} for $\ell\ge 3$, that according to regularity argument (so-called ``bootstrap" argument in parabolic equations), the high regularity of solution can be obtained by differentiating \eqref{eqmain} with respect to x $\lfloor \frac{\ell}{2}\rfloor$-many times for $\ell\geq 3$. For instance, \cite{Qin} by Qin used this argument and derived the detained work in high space $H^\ell, \ell = 4$ for 1D hyperbolic-parabolic coupled system from Navier-Stokes flow. 
\end{remark}
\section{A pseudo-parabolic equation on $[0,1]$}

In this section, we shall focus on another homogeneous two-point boundary problem (\ref{eqmain2}):

\begin{equation}\label{whole}
\begin{cases}
    u_t+u_x-u_{xx}-u_{xxt}+[F(u)]_x=[\Phi(u_x)]_x+(I-\Delta)[G(u)]+f(x,t),  \quad (x, t)\in (0,1)\times [0, \infty),&\\
    u(x,0)=\phi(x) &\\
    u(0,t)=u(1,t)=0.&
    \end{cases}
\end{equation}
Here, $F$, $\Phi$ are $G$ are operators which are not necessarily linear. 
More specifically, we write $F\in \Lip_{loc}(\Ga,\Gb;C_F,N_F)$ if $F:\R\mapsto\R$ satisfies
\begin{align*}
&F(0)=0,\\
    &\|F(u)-F(v)\|_{\cH^{\Gb}(0,1)}\leq C_F\|u-v\|_{\cH^{\Ga}(0,1)}(\|u\|_{\cH^{\Ga}(0,1)}^{N_F-1}+\|v\|_{\cH^{\Ga}(0,1)}^{N_F-1})
\end{align*}
for all $u,v$ with $\|u\|_{\cH^{\Ga}(0,1)}, \|v\|_{\cH^{\Ga}(0,1)}\leq 1$. \\

We also write $F\in \Lip(\Ga,\Gb;C_F,N_F)$ if $F:\R\mapsto\R$ satisfies
\begin{align*}
&\|F(u)\|_{\cH^{\Gb}(0,1)}\leq C_F\|u\|_{\cH^{\Ga}(0,1)}(1+\|u\|_{\cH^{\Ga}(0,1)}^{N_{F}})\\
    &\|F(u)-F(v)\|_{\cH^{\Gb}(0,1)}\leq C_F\|u-v\|_{\cH^{\Ga}(0,1)}(\|u\|_{\cH^{\Ga}(0,1)}^{N_F}+\|v\|_{\cH^{\Ga}(0,1)}^{N_F})
\end{align*}
for all $u,v$.

\begin{eg}
Let $\ell\geq 1$. First recall that $(u,v)\mapsto uv$ is a bounded mapping from $\cH^{\ell}(0,1)\times \cH^{\ell}(0,1)$ to $\cH^{\ell-1}(0,1)$ provided that $\ell>\frac{1}{2}$, so inductively, $\prod_{k=1}^{K}u_k\in \cH^{\ell-1}(0,1)$ provided that $u_k\in \cH^{\ell}(0,1)$ if $\ell>-\frac{1}{2(K-1)}$, which is always true when $\ell\geq 1$. Moreover, $\cH^{\ell}(0,1)$ forms an algebra if $\ell>\frac{1}{2}$.

Therefore, if $\ell>1+\frac{1}{4}$, then $\Phi(u)=u^3$ will satisfy
\begin{align*}
    \|u^3\|_{\cH^{\ell-1}(0,1)}&\leq C\|u\|_{\cH^{\ell-1}(0,1)}^3,\\
    \|u^3-v^3\|_{\cH^{\ell-1}(0,1)}&\leq C\|u-v\|_{\cH^{\ell-1}(0,1)}(\|u\|_{\cH^{\ell-1}(0,1)}^2+\|u\|_{\cH^{\ell-1}(0,1)}\|v\|_{\cH^{\ell-1}(0,1)}+\|v\|_{\cH^{\ell-1}(0,1)}^2)\\
    &\leq \frac{3C}{2}\|u-v\|_{\cH^{\ell-1}(0,1)}(\|u\|^2_{\cH^{\ell-1}(0,1)}+\|v\|_{\cH^{\ell-1}(0,1)}^2).
\end{align*}
\end{eg}

Throughout this section we assume that $F\in  \Lip_{loc}(\ell-1,\ell;C_F,N_F)$, $\Phi\in \Lip_{loc}(\ell-1,\ell-1;C_\Phi,N_\Phi)$, and $G\in  \Lip_{loc}(\ell,\ell;C_G,N_G)$. We also denote $\ell_{F,\Phi,G}$ to be the infimum of all $\ell\in \R$ that all the conditions hold. More precisely, we impose the following assumptions.

\begin{enumerate}
    \item[(A)] \hypertarget{(A)} We assume the following.  
    \begin{itemize}
        \item $F\in  \Lip_{loc}(\ell,\ell-1;C_F,N_F)$, $\Phi\in \Lip_{loc}(\ell-1,\ell-1;C_\Phi,N_\Phi)$, and $G\in  \Lip_{loc}(\ell,\ell;C_G,N_G)$ ;
        \item $G\in C^{\lceil\ell\rceil}(\R)$, $F,\Phi\in C^{\lceil\ell\rceil-1}(\R)$,
        \item  $F^{(n)}\in \Lip_{loc}(\ell,\ell-1;C_{F^{(n)}},N_{F}-n)$, $\Phi^{(n)}\in \Lip_{loc}(\ell-1,\ell-1;C_{\Phi'},N_{\Phi}-n)$, and $G^{(n+1)}\in \Lip_{loc}(\ell,\ell;C_{G'},N_{G}-n-1)$ for all $\ell>\ell_{F,\Phi,G}$  and for all natural numbers $1\leq n\leq \lceil\ell\rceil-2$, where $\lceil\ell\rceil=\sup\{n\in\Z: \ell\leq n\}$;
        \item  $\ds \sup_{x\in\R}G'(x)-\inf_{y\in\R}\Phi'(y)<\frac{1}{2}$
    \end{itemize}
    \item[(B)] \hypertarget{(B)} We assume the following:
    \begin{itemize}
        \item $G,\Phi\in C^1(\R)$;
        \item $\ds \sup_{x\in\R}G'(x)<\frac{(c')^2}{4}$, where $c'$ is the constant from Poincar\'e inequality;
        \item $\ds \sup_{x\in\R}G'(x)-\inf_{y\in\R}\Phi'(y)<\frac{1}{2}$.
        \end{itemize}
\end{enumerate}

Here are the analogues of Theorems \ref{th2-1} to \ref{th2-4}. 
\begin{theorem}\label{th6-1}
Let $T, \tau>0$ and $\ell\in [1,\infty)\cap (\ell_{F,\Phi,G},\infty)$. Suppose the assumption \hyperlink{(A)}{(A)} holds.
\begin{enumerate}
    \item If $\phi\in \cH^\ell(0,1)$, $f\in L^2([0,\infty);\cH^{\ell-2}(0,1))$, and $\|\phi\|_{\cH^\ell(0,1)}^2+\|f\|^2_{L^2([0,\infty);\cH^{\ell-2}(0,1))}$ is sufficiently small, then there exists a unique solution $u$ to the equation \eqref{whole} and a constant $C>0$ independent of $T$ and $\tau$ such that 
    $$\|u\|_{Y^{\ell}_{0,T}}\leq C(\|\phi\|_{\cH^\ell(0,1)}^2+\|f\|^2_{L^2([0,\infty);\cH^{\ell-2}(0,1))})^{\frac{1}{2}},$$
    and 
    $$\|u\|_{Y^\ell_{\tau,T}}\leq C(\|\phi\|_{\cH^\ell(0,1)}^2+\|f\|_{L^2([0,\infty);\cH^{\ell-2}(0,1))}^2)^{\frac{1}{2}}.$$
    \item  If $\phi\in \cH^\ell(0,1)$, $f\in L^\infty([0,\infty);\cH^{\ell-2}(0,1))$, and $\|\phi\|_{\cH^\ell(0,1)}^2+\|f\|^2_{L^\infty([0,\infty);\cH^{\ell-2}(0,1))}$ is sufficiently small, then there exists a unique solution $u$ to the equation \eqref{whole} and a constant $C>0$ independent of $\tau$ (but dependent on $T$) such that 
    $$\|u\|_{Y^{\ell}_{0,T}}\leq C(\|\phi\|_{\cH^\ell(0,1)}^2+\|f\|^2_{L^\infty([0,\infty);\cH^{\ell-2}(0,1))})^{\frac{1}{2}},$$
    and 
    $$\|u\|_{Y^\ell_{\tau,T}}\leq C(\|\phi\|_{\cH^\ell(0,1)}^2+\|f\|_{L^\infty([0,\infty);\cH^{\ell-2}(0,1))}^2)^{\frac{1}{2}}.$$
\end{enumerate}
\end{theorem}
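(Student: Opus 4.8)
The plan is to run the three-stage argument already used for Theorem \ref{th2-1}: a linear $C_0$-semigroup estimate, a contraction mapping in which $F,\Phi,G$ are treated perturbatively, and a bootstrap/interpolation in $\ell$. The only genuinely new ingredients are the three nonlinear terms $[F(u)]_x$, $[\Phi(u_x)]_x$ and $(I-\Delta)[G(u)]$, and they are accommodated by replacing the bilinear estimate of Lemma \ref{bilinearest} with the $\Lip_{loc}$ bounds built into assumption (A).

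\emph{Step 1 (linear semigroup).} Split off the linear parts, $F(u)=F'(0)u+\widetilde F(u)$, $\Phi(s)=\Phi'(0)s+\widetilde\Phi(s)$, $G(u)=G'(0)u+\widetilde G(u)$, with $\widetilde F,\widetilde\Phi,\widetilde G$ vanishing to second order (for genuinely superlinear $F,\Phi,G$ these are just $F,\Phi,G$ and one recovers $A$). Applying $(I-\Delta)^{-1}$ to \eqref{whole} as in Section 3 gives $u_t=\mathcal A u+\mathcal N(u)+(I-\Delta)^{-1}f$, where
\[
  \mathcal A\psi:=-(1+F'(0))(I-\Delta)^{-1}\partial_x\psi+\bigl(1+\Phi'(0)-G'(0)\bigr)(I-\Delta)^{-1}\partial_{xx}\psi+G'(0)(I-\Delta)^{-1}\psi
\]
and $\mathcal N(u):=-(I-\Delta)^{-1}\partial_x[\widetilde F(u)]+(I-\Delta)^{-1}\partial_x[\widetilde\Phi(u_x)]+\widetilde G(u)$. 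One then reproves the $C_0$-semigroup and a priori estimates of Section 3 for $\mathcal A$: since $\langle u_x,u\rangle_{L^2}=0$ under the Dirichlet condition, $\langle\mathcal Au,u\rangle_{\cH^1}=-(1+\Phi'(0)-G'(0))\|u_x\|_{L^2}^2+G'(0)\|u\|_{L^2}^2$, which by the last bullet of assumption (A) ($1+\Phi'(0)-G'(0)\geq1+\inf\Phi'-\sup G'>\tfrac12$) together with Poincaré's inequality is negative definite; the $\cH^2$ and $\cH^3$ analogues follow as in Lemmas \ref{aprih2}, \ref{aprih3}, \ref{apriforceh3}. Hence $\|e^{\mathcal At}\|_{\cH^\ell\to\cH^\ell}\lesssim e^{-c_\ell t}$ for $\ell\in\{1,2,3\}$, and the forced estimates of Lemmas \ref{apri2l1}--\ref{apriforceh3} transfer unchanged.

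\emph{Step 2 (contraction).} The mild solution solves $u=\Gamma(u)$ with $\Gamma(u)(t):=e^{\mathcal At}\phi+\int_0^te^{\mathcal A(t-s)}[(I-\Delta)^{-1}f(s)+\mathcal N(u(s))]\,ds$, and one shows $\Gamma$ is a contraction on a small ball of $Y^\ell_{0,T}$. The two linear pieces are controlled by Step 1 with constants independent of $T$ when $f\in L^2$, and with an extra factor $T^{1/2}$ when only $f\in L^\infty$, which is exactly the source of the $T$-dependence of $C$ in part 2. For $\mathcal N$, measuring $\Gamma(u)$ in $\cH^\ell$ converts the three pieces into $\|\widetilde F(u)\|_{\cH^{\ell-1}}$, $\|\widetilde\Phi(u_x)\|_{\cH^{\ell-1}}$ and $\|\widetilde G(u)\|_{\cH^\ell}$, each $\lesssim\|u\|_{\cH^\ell}^{N}$ with $N\geq2$; convolving with the kernel $e^{-c_\ell t}$ and using $\int_0^T\|u\|_{\cH^\ell}^{2N}\,ds\leq\|u\|_{L^{\infty}_{t}\cH^\ell}^{2N-2}\|u\|_{L^{2}_{t}\cH^\ell}^2$ yields $\|\int_0^te^{\mathcal A(t-s)}\mathcal N(u)\,ds\|_{Y^\ell_{0,T}}\lesssim\|u\|_{Y^\ell_{0,T}}^N$, and the same computation on $\mathcal N(u)-\mathcal N(u')$ (using the Lipschitz form of the bounds) produces the contraction factor for small data. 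This gives existence and uniqueness of $u$; global uniqueness follows from a Gronwall estimate on the difference of two solutions, as in Theorem \ref{th2-1}. The $Y^\ell_{\tau,T}$ bound with constant independent of $\tau$ comes from the shifted identity $u(t)=e^{\mathcal A(t-\tau)}u(\tau)+\int_\tau^t\cdots$, the decay $\|u(\tau)\|_{\cH^\ell}\lesssim e^{-c_\ell\tau}\|\phi\|_{\cH^\ell}+(\text{forcing})$ as in Remark \ref{rmkforfinLinfty}, and the interval-by-interval iteration of Proposition \ref{Prop4}.

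\emph{Step 3, and the main obstacle.} For integer $\ell>3$ one differentiates \eqref{whole} $\lfloor\ell/2\rfloor$ times in $x$ and runs the $\ell=3$ energy/bootstrap scheme; the chain rule expands $\partial_x^k[F(u)]$, $\partial_x^k[\Phi(u_x)]$, $\partial_x^k[G(u)]$ into finite sums of $F^{(j)}(u)$ (resp.\ $\Phi^{(j)}(u_x)$, $G^{(j)}(u)$) times products of lower-order $x$-derivatives of $u$, and the derivative-Lipschitz hypotheses of assumption (A), $F^{(n)}\in\Lip_{loc}(\ell,\ell-1;\cdot)$, $\Phi^{(n)}\in\Lip_{loc}(\ell-1,\ell-1;\cdot)$, $G^{(n+1)}\in\Lip_{loc}(\ell,\ell;\cdot)$, are precisely what lets each such term be absorbed after estimating it in $L^2$ by lower-order norms controlled inductively; non-integer $\ell$ then follows by the nonlinear interpolation of \cite{BS} and Section 4 of \cite{BSZ2}, interpolating the integer-$\ell$ bounds of the solution map $(\phi,f)\mapsto u$ against its $\cH^1$-Lipschitz dependence. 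I expect the hardest point to be the term $(I-\Delta)[G(u)]$: in the Duhamel formula the $(I-\Delta)$ is cancelled by $(I-\Delta)^{-1}$, which is what keeps $G$ perturbative in the same function class as the other terms, but in the higher-order energy estimates, where $(I-\Delta)[G(u)]$ genuinely appears (tested against $u-u_{xx}$ or $u_{xxx}$), and in checking that $\mathcal A$ is still dissipative in $\cH^2$ and $\cH^3$, one must keep careful track of how the coefficients $F'(0),\Phi'(0),G'(0)$ and the Fa\`a--di--Bruno terms interact with the dissipative part $-u_{xx}$; this is exactly where the smoothness and derivative-Lipschitz parts of (A) are indispensable, whereas the perturbative bookkeeping of $F,\Phi,G$ in Step 2 is a routine generalization of Lemma \ref{bilinearest}.
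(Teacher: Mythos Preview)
Your three-stage outline (semigroup decay, contraction via the $\Lip_{loc}$ bounds, bootstrap/interpolation in $\ell$) is correct and is exactly what the paper does, but the paper organizes Steps 1--2 differently in two respects worth noting. First, the paper does \emph{not} introduce a modified generator $\mathcal A$: under the $\Lip_{loc}$ hypotheses in (A) one has $\|F(u)\|\lesssim\|u\|^{N_F}$ (and likewise for $\Phi,G$), and since the paper's contraction argument (its Theorem \ref{main61}) assumes $N_{\min}\geq 3$, the nonlinearities vanish to at least second order, so $F'(0)=\Phi'(0)=G'(0)=0$ and your $\mathcal A$ collapses to the original $A$. Your linear-part splitting is thus unnecessary here, and it also introduces a gap you did not address: with general $G'(0)>0$ (allowed by the last bullet of (A), which only bounds $\sup G'-\inf\Phi'$), the sign of $\langle\mathcal A u,u\rangle_{\cH^1}=-(1+\Phi'(0)-G'(0))\|u_x\|^2+G'(0)\|u\|^2$ is not obviously negative without a further smallness condition on $G'(0)$, which (A) alone does not supply. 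Second, the paper splits $u=v+w$ with $v$ solving the forced linear problem \eqref{eqsub1} and runs the contraction on $w$ alone (its Theorems \ref{main61}, \ref{thm67}, \ref{thm62}), rather than contracting directly on $u$; this is how it gets the $Y^\ell_{\tau,T}$ bounds uniform in $\tau$ via the analogue of Proposition \ref{Prop4}. Your Step 3 and your identification of the $(I-\Delta)[G(u)]$ term as the delicate one in the higher-order energy estimate match the paper's Theorem \ref{thm67} closely: the paper isolates the top-order contributions $G'(v+w)(v+w)_{xxx}$ and $\Phi'(v_x+w_x)(v+w)_{xxx}$ and absorbs them using precisely $\sup G'-\inf\Phi'<\tfrac12$.
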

\begin{theorem}\label{th6-2}
    Under the assumptions \hyperlink{(A)}{(A)}, Theorem \ref{th6-1} (2), and $f$ has temporal-period $\theta$, then $u(x,t)$ has asymptotic temporal-periodicity.
\end{theorem}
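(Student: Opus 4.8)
The plan is to follow the argument of Theorem \ref{th2-2}, with the bilinear term $uu_x$ replaced by the three nonlinear pieces $F,\Phi,G$ and with the $\Lip_{loc}$ hypotheses of \hyperlink{(A)}{(A)} playing the role of the bilinear estimate, Lemma \ref{bilinearest}. Since \eqref{whole} is autonomous apart from the $\theta$-periodic forcing, the shifted field $u(\cdot,\cdot+\theta)$ is, by the uniqueness part of Theorem \ref{th6-1}, the solution with initial datum $u(\cdot,\theta)$; hence $w(x,t):=u(x,t+\theta)-u(x,t)$ solves a problem of the same shape as \eqref{eqmainperi1}:
$$w_t+w_x-w_{xx}-w_{xxt}+\big[F(u(t+\theta))-F(u(t))\big]_x=\big[\Phi(u_x(t+\theta))-\Phi(u_x(t))\big]_x+(I-\Delta)\big[G(u(t+\theta))-G(u(t))\big],$$
with homogeneous Dirichlet data and $w(\cdot,0)=u(\cdot,\theta)-\phi$. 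Using the generator $A$ of Section 3, and the fact that on the Bessel scale $(I-\Delta)^{-1}$ undoes the $(I-\Delta)$ in front of the $G$-difference (which lies in $H^1_0$ because $G(0)=0$), the Duhamel formula for $t\ge\tau$ takes the form
$$w(t)=e^{A(t-\tau)}w(\tau)+\int_\tau^t e^{A(t-s)}\Big[(I-\Delta)^{-1}\big(-[F(u(s+\theta))-F(u(s))]_x+[\Phi(u_x(s+\theta))-\Phi(u_x(s))]_x\big)+\big(G(u(s+\theta))-G(u(s))\big)\Big]ds.$$

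First I would prove, for $\ell\in[1,2]$, the analogue of \eqref{WYtauTtoWtau}, namely $\|w\|_{Y^\ell_{\tau,T}}\le C\|w(\tau)\|_{\cH^\ell(0,1)}$ whenever $\sup_{t\ge t_0}\|u(\cdot,t)\|_{\cH^\ell(0,1)}$ is small. Taking $\cH^\ell$-norms in the Duhamel formula, using the exponential semigroup decay $\|e^{At}\|_{\cH^\ell\to\cH^\ell}\le e^{-ct}$ (first Lemma of Section 3), the boundedness of $\partial_x:\cH^{\ell-1}\to\cH^{\ell-2}$ and of $(I-\Delta)^{-1}:\cH^{\ell-2}\to\cH^\ell$, and the local-Lipschitz bounds from \hyperlink{(A)}{(A)} (e.g. $\|F(u(s+\theta))-F(u(s))\|_{\cH^{\ell-1}}\le C_F\|w(s)\|_{\cH^\ell}(\|u(s+\theta)\|_{\cH^\ell}^{N_F-1}+\|u(s)\|_{\cH^\ell}^{N_F-1})$, and the analogues for $\Phi$ on $\cH^{\ell-1}$ and $G$ on $\cH^\ell$), together with the uniform smallness $\sup_{t\ge0}\|u(\cdot,t)\|_{\cH^\ell(0,1)}\le C\varepsilon$ granted by Theorem \ref{th6-1}(2) with $\varepsilon^2:=\|\phi\|_{\cH^\ell}^2+\|f\|_{L^\infty([0,\infty);\cH^{\ell-2})}^2$, one obtains $\|w(t)\|_{\cH^\ell}\le e^{-c(t-\tau)}\|w(\tau)\|_{\cH^\ell}+C\varepsilon\int_\tau^t e^{-c(t-s)}\|w(s)\|_{\cH^\ell}\,ds$. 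Taking the supremum and the $L^2$-norm over $[\tau,T+\tau]$ (Young's convolution inequality) and absorbing the last term for $\varepsilon$ small yields the claim, and $\|w_t\|_{Y^\ell_{\tau,T}}\le C'\|w(\tau)\|_{\cH^\ell}$ follows from $w_t=Aw+(I-\Delta)^{-1}(\cdots)+(G(u(\cdot+\theta))-G(u))$. For $\ell=3$ I would instead differentiate the $w$-equation once in $x$, test against $w_{xxx}$, and reproduce the energy argument of Lemma \ref{aprieqap1h3}, now using $F',\Phi',G''\in\Lip_{loc}$ from \hyperlink{(A)}{(A)}; for integer $\ell>3$ one differentiates $\lfloor\ell/2\rfloor$ times, invokes $F^{(n)},\Phi^{(n)},G^{(n+1)}\in\Lip_{loc}$, and interpolates in between as in the proof of Theorem \ref{th2-1}.

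With $\|w\|_{Y^\ell_{\tau,T}}\le C'\|w(\tau)\|_{\cH^\ell}$ in hand, the remaining step is the period-by-period iteration exactly as in Theorem \ref{th2-2}. Writing $w_k:=w(kT)$ and applying the Duhamel formula on $[(k-1)T,kT]$, the semigroup term contributes $e^{-cT}\|w_{k-1}\|_{\cH^\ell}$ and the nonlinear integral contributes at most $C\varepsilon\|w\|_{Y^\ell_{(k-1)T,T}}\le CC'\varepsilon\|w_{k-1}\|_{\cH^\ell}$, so $\|w_k\|_{\cH^\ell}\le\mu\|w_{k-1}\|_{\cH^\ell}$ with $\mu:=e^{-cT}+CC'\varepsilon<1$ for $\varepsilon$ small; hence $\|w(kT)\|_{\cH^\ell}\le\mu^k\|u(\cdot,\theta)-\phi\|_{\cH^\ell}$, and writing $t=kT+t'$, $t'\in(0,T)$, and bounding $\|w(t)\|_{\cH^\ell}\le C'\|w_k\|_{\cH^\ell}$ gives $\|w(\cdot,t)\|_{\cH^\ell(0,1)}\le C_\ell e^{-\rho t}\|u(\cdot,\theta)-\phi\|_{\cH^\ell(0,1)}$ with $\rho:=\ln(1/\mu)/T>0$; taking norms in $t$ over $[\tau,T+\tau]$ then yields $\|u(\cdot,\cdot+\theta)-u(\cdot,\cdot)\|_{Y^\ell_{\tau,T}}\le C_\ell e^{-\rho\tau}\|u(\cdot,\theta)-u(\cdot,0)\|_{\cH^\ell(0,1)}$, and in particular $u(\cdot,\cdot+\theta)-u(\cdot,\cdot)\to0$, i.e. $u$ is asymptotically $\theta$-periodic. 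I expect the main obstacle to be the nonlinear-difference estimates: for $\ell\in[1,2]$ they drop out of the $\Lip_{loc}$ hypotheses, but for $\ell\ge3$ the bootstrap forces one to control the chain-rule expansions of $\partial_x^k[F(u)]$, $\partial_x^k[\Phi(u_x)]$ and $\partial_x^{k+2}[G(u)]$ (this is precisely why the higher-order conditions on $F^{(n)},\Phi^{(n)},G^{(n+1)}$ in \hyperlink{(A)}{(A)} are imposed), and to check that the single smallness hypothesis of Theorem \ref{th6-1}(2) makes every relevant $\sup_t\|u\|_{Y^\ell_{t,T}}$ small enough for all of the absorption steps at once.
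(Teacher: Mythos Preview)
Your proposal is correct and follows essentially the same route as the paper: you set up the difference $w=u(\cdot+\theta)-u$, prove the analogue of Lemma \ref{aprieqap1} (which the paper records as Lemma \ref{6-10}) by replacing the bilinear estimate with the $\Lip_{loc}$ hypotheses of \hyperlink{(A)}{(A)}, and then run the period-by-period contraction exactly as in Theorem \ref{th2-2}. The only point worth making explicit in your $\ell=3$ energy step is that, unlike in Lemma \ref{aprieqap1h3}, testing the differentiated $w$-equation against $w_{xxx}$ now produces a top-order term $\int_0^1[G'(u)-\Phi'(\widetilde u_x)]\,w_{xxx}^2\,dx$, and it is the sign condition $\sup G'-\inf\Phi'<\tfrac12$ in \hyperlink{(A)}{(A)} (not just the $\Lip_{loc}$ bounds on $F',\Phi',G''$) that lets you absorb it.
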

\begin{theorem}\label{th6-3}
        Under the assumptions \hyperlink{(A)}{(A)}, Theorem \ref{th6-1} (2), and $f$ has temporal-period $\theta$, then the equation \eqref{eqnonlinear} has a time-periodic solution in $\cH^{\ell}(0,1)$ provided that $\|\phi\|_{\cH^\ell(0,1)}^2+\|f\|^2_{L^2([0,\infty);\cH^{\ell-2}(0,1))}$ is sufficiently small.
\end{theorem}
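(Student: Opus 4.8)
\textbf{Proof proposal for Theorem \ref{th6-3}.}
The plan is to reproduce, in the pseudo-parabolic setting, the Poincar\'e-map argument used for Theorem \ref{th2-3}. By Theorem \ref{th6-1}(2), provided $\|\phi\|_{\cH^\ell(0,1)}^2+\|f\|^2_{L^\infty([0,\infty);\cH^{\ell-2}(0,1))}$ is small, \eqref{whole} has a unique global solution $u$ with $\sup_{\tau\ge 0}\|u\|_{Y^\ell_{\tau,T}}$ controlled by the data and independent of $\tau$. Set $u_k:=u(\cdot,k\theta)\in\cH^\ell(0,1)$. The first step is the quantitative form of Theorem \ref{th6-2} (the analogue of Theorem \ref{th2-2}): writing $w(\cdot,t):=u(\cdot,t+\theta)-u(\cdot,t)$ and using the $\theta$-periodicity of $f$, one checks that $w$ solves a linear equation
$$w_t+w_x-w_{xx}-w_{xxt}+\partial_x(b_1 w)=\partial_x(b_2 w_x)+(I-\Delta)(b_3 w),$$
where $b_1,b_2,b_3$ are divided differences of $F,\Phi,G$ evaluated along $u(\cdot,t+\theta)$ and $u(\cdot,t)$; by the local-Lipschitz hypotheses in (A) and the smallness of $\sup_t\|u\|_{Y^\ell_{t,T}}$, the relevant norms of $b_1,b_2,b_3$ are small. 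The required input is then an a priori estimate for this linearized difference problem — the pseudo-parabolic analogue of Lemma \ref{aprieqap1} (and of Lemma \ref{aprieqap1h3} for $\ell\ge 3$) — giving $\|w\|_{Y^\ell_{\tau,T}}\le Ce^{-\rho\tau}\|u(\cdot,\theta)-\phi\|_{\cH^\ell(0,1)}$.

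Granting this, the second step is the Cauchy estimate exactly as in the proof of Theorem \ref{th2-3}: for integers $m,n$,
$$\|u_{n+m}-u_n\|_{\cH^\ell(0,1)}\le\sum_{i=0}^{m-1}\|w(\cdot,(n+i)\theta)\|_{\cH^\ell(0,1)}\le C\,\|u(\cdot,\theta)-\phi\|_{\cH^\ell(0,1)}\,\frac{e^{-\rho n\theta}}{1-e^{-\rho\theta}}\xrightarrow[n\to\infty]{}0,$$
so $u_k\to\widetilde\phi$ in $\cH^\ell(0,1)$ with $\|\widetilde\phi\|_{\cH^\ell(0,1)}\le C(\|\phi\|_{\cH^\ell(0,1)}+\|f\|_{L^\infty([0,\infty);\cH^{\ell-2}(0,1))})$; shrinking the data once more (depending on $\max\{T,\theta\}$) keeps $\widetilde\phi$ within the small-data regime of Theorem \ref{th6-1}.

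The third step is to verify that the solution $\widetilde u$ of \eqref{whole} with initial datum $\widetilde\phi$ is $\theta$-periodic. Splitting
$$\|\widetilde u(\cdot,\theta)-\widetilde\phi\|_{\cH^\ell(0,1)}\le\|\widetilde u(\cdot,\theta)-u_{n+1}\|_{\cH^\ell(0,1)}+\|u_{n+1}-u_n\|_{\cH^\ell(0,1)}+\|u_n-\widetilde\phi\|_{\cH^\ell(0,1)},$$
the last two terms vanish as $n\to\infty$ by the Cauchy property and the definition of $\widetilde\phi$. For the first, write $\widetilde u(\cdot,\theta)-u_{n+1}$ via the Duhamel formula with the decaying semigroup $e^{At}$ and coefficients built from $a=\tfrac12(\widetilde u(s)+u(n\theta+s))$ together with the corresponding divided differences of $F,\Phi,G$; the linearized estimate (Lemma \ref{aprieqap1} for $\ell\in[1,2]$, Lemma \ref{aprieqap1h3} for $\ell=3$, and the bootstrap version for $\ell>3$) yields $\|\widetilde u(\cdot,\theta)-u_{n+1}\|_{\cH^\ell(0,1)}\le C'e^{-c\theta}\|\widetilde\phi-u_n\|_{\cH^\ell(0,1)}\to0$. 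Hence $\widetilde u(\cdot,\theta)=\widetilde\phi$, and by uniqueness from Theorem \ref{th6-1} together with the $\theta$-periodicity of $f$ (time-translation of a solution by $\theta$ is again a solution), $\widetilde u(\cdot,t+\theta)=\widetilde u(\cdot,t)$ for all $t\ge 0$.

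The main obstacle is the a priori estimate for the linearized difference equation in the pseudo-parabolic case, specifically the term $(I-\Delta)(b_3 w)$, which carries the same number of derivatives as the regularizing term $u_{xxt}$ and therefore cannot be absorbed by semigroup smoothing or Duhamel smallness alone. One must exploit the structural hypothesis $\sup_x G'(x)-\inf_y\Phi'(y)<\tfrac12$ (and $\sup_x G'(x)<(c')^2/4$ under (B)) to keep a suitably modified $\cH^\ell$-energy functional dissipative, as is already implicit in the $\cH^\ell$ a priori theory underlying Theorem \ref{th6-1}. For $\ell\ge 3$ the dissipativity of the generator is no longer transparent — as in Lemma \ref{aprieqap1h3} one differentiates $\lfloor\ell/2\rfloor$ times and runs an energy bootstrap, and this is exactly where the higher regularity $G\in C^{\lceil\ell\rceil}$, $F,\Phi\in C^{\lceil\ell\rceil-1}$ and the Lipschitz bounds on $F^{(n)},\Phi^{(n)},G^{(n+1)}$ in (A) enter.
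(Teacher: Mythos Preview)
Your proposal is correct and follows the same route as the paper: the paper's proof of Theorem \ref{th6-3} simply replays the Poincar\'e-map argument of Theorem \ref{th2-3}, with the pseudo-parabolic linearized estimate (Lemma \ref{6-10}) replacing Lemmas \ref{aprieqap1}/\ref{aprieqap1h3} to control $\|\widetilde u(\cdot,\theta)-u_{n+1}\|_{\cH^\ell}$.

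One clarification on your ``main obstacle'' paragraph: you overstate the difficulty for $\ell\in[1,2]$. In the Duhamel representation the $(I-\Delta)^{-1}$ coming from the equation cancels against the $(I-\Delta)$ in front of $G$, so the $G$-contribution is $\int_\tau^t e^{A(t-s)}[G(\widetilde u(s))-G(u(s))]\,ds$, and the hypothesis $G\in\Lip_{loc}(\ell,\ell;C_G,N_G)$ gives $\|G(\widetilde u)-G(u)\|_{\cH^\ell}\le C_G\|z\|_{\cH^\ell}(\|\widetilde u\|_{\cH^\ell}^{N_G-1}+\|u\|_{\cH^\ell}^{N_G-1})$, which \emph{is} absorbable by Duhamel smallness once $\sup_t\|u\|_{Y^\ell_{t,T}}$ is small --- this is exactly how Lemma \ref{6-10} proceeds, and no sign condition is used there. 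The structural hypothesis $\sup_x G'(x)-\inf_y\Phi'(y)<\tfrac12$ enters only in the $\ell\ge 3$ energy bootstrap (inequality \eqref{zxxxineq}), as you correctly note at the end.
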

For global stability, it is much more complicated. To simplify the discussion, we will focus on $\cH^1(0,1)$ and $\cH^2(0,1)$ only.
\begin{theorem}\label{th6-4}
    \begin{enumerate}
        \item If the assumption \hyperlink{(B)}{(B)} holds, then 
        $u$ satisfies 
        $$\|u\|_{\cH^1(0,1)}\leq C e^{-ct}\|\phi\|_{\cH^1(0,1)}+C'\delta$$
        provided that $\sup_{t\ge 0}\|f(t)\|_{\cH^{-1}(0,1)}\leq \delta$. Therefore, $u$ is globally stable in $\cH^1(0,1)$.
    \item If we assume additionally that $F\in \Lip(1,1;C_F,N_{F}')$, $G'\in \Lip(1,1;C_{G'},N_{G'}')$, and $G\in \Lip(1, 0; C_G,N_{G}')$,  then 
        $u$ satisfies 
        $$\|u\|_{\cH^2(0,1)}\leq C e^{-ct}\|\phi\|_{\cH^2(0,1)}\Psi(\|\phi\|_{\cH^2(0,1)},\delta)+C'\delta$$
    \end{enumerate}
    for some function $\Psi:\R^2\to\R$ provided that $\sup_{t\ge 0}\|f(t)\|_{L^2(0,1)}\leq \delta$. Thus, $u$ is globally stable in $\cH^2(0,1)$.
\end{theorem}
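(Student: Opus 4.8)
We outline the argument, which follows the pattern of Theorem \ref{th2-4}: first establish a global \emph{absorbing} estimate by a direct energy computation, and then deduce global stability by restarting the flow once it has entered the small‑data regime and invoking Theorem \ref{th6-3}. For the first assertion ($\cH^1$) I would multiply \eqref{whole} by $u$ and integrate over $(0,1)$. As in Lemma \ref{aprih1} the terms $u_x,u_{xx},u_{xxt}$ produce $\tfrac12\tfrac{d}{dt}(\|u\|_{L^2(0,1)}^2+\|u_x\|_{L^2(0,1)}^2)+\|u_x\|_{L^2(0,1)}^2$ with no boundary contribution, and the convective term vanishes: with $\widetilde F$ a primitive of $F$, $\int_0^1[F(u)]_x u\,dx=-\int_0^1 F(u)u_x\,dx=-[\widetilde F(u)]_0^1=0$. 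Integration by parts gives $\int_0^1[\Phi(u_x)]_x u\,dx=-\int_0^1\Phi(u_x)u_x\,dx\le-(\inf_y\Phi'(y))\|u_x\|_{L^2(0,1)}^2$ (using $\int_0^1 u_x\,dx=0$ and the elementary inequality $(\int_0^s\Phi'(r)\,dr)\,s\ge(\inf\Phi')s^2$) and $\int_0^1(I-\Delta)[G(u)]\,u\,dx=\int_0^1 G(u)u\,dx+\int_0^1 G'(u)u_x^2\,dx\le(\sup_x G'(x))(\|u\|_{L^2(0,1)}^2+\|u_x\|_{L^2(0,1)}^2)$. Bounding $\int_0^1 fu\,dx$ by $C\|f\|_{\cH^{-1}(0,1)}^2+\varepsilon\|u\|_{\cH^1(0,1)}^2$ and applying Poincar\'e's inequality $\|u_x\|_{L^2(0,1)}\ge c'\|u\|_{L^2(0,1)}$ to convert $\|u\|_{L^2(0,1)}^2$ into $(c')^{-2}\|u_x\|_{L^2(0,1)}^2$, the coefficient of $\|u_x\|_{L^2(0,1)}^2$ becomes $1-\big(\sup G'-\inf\Phi'+(c')^{-2}\sup G'\big)>1-\big(\tfrac12+\tfrac14\big)=\tfrac14>0$ by the two inequalities in assumption \hyperlink{(B)}{(B)}. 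Hence $\tfrac{d}{dt}(\|u\|_{L^2(0,1)}^2+\|u_x\|_{L^2(0,1)}^2)+2\gamma(\|u\|_{L^2(0,1)}^2+\|u_x\|_{L^2(0,1)}^2)\le C\|f\|_{\cH^{-1}(0,1)}^2$ for some $\gamma>0$, and Gronwall yields $\|u(t)\|_{\cH^1(0,1)}\le Ce^{-ct}\|\phi\|_{\cH^1(0,1)}+C'\delta$.

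For the second assertion ($\cH^2$) I would first use the $\cH^1$ bound, valid for all $t\ge 0$, so that $\sup_t\|u(t)\|_{\cH^1(0,1)}\le C\|\phi\|_{\cH^1(0,1)}+C'\delta$ and there is a time $t_0\sim c^{-1}\log(\|\phi\|_{\cH^1(0,1)}/\delta)$ beyond which $\|u(t)\|_{\cH^1(0,1)}$ is below any prescribed threshold. Rewriting \eqref{whole} as $(I-\Delta)u_t+(I-\Delta)u=-u_x+u-[F(u)]_x+[\Phi(u_x)]_x+(I-\Delta)[G(u)]+f$ and pairing with $(I-\Delta)u$ in $L^2$ gives $\tfrac12\tfrac{d}{dt}\|u\|_{\cH^2(0,1)}^2+\|u\|_{\cH^2(0,1)}^2$ as a sum of brackets. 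The linear bracket $\langle-u_x+u,(I-\Delta)u\rangle$ and the forcing $\langle f,(I-\Delta)u\rangle$ are bounded by $\varepsilon\|u\|_{\cH^2(0,1)}^2+C(\|u\|_{\cH^1(0,1)}^2+\|f\|_{L^2(0,1)}^2)$. Because $u$ vanishes on the boundary, $\langle[F(u)]_x,(I-\Delta)u\rangle=-\int_0^1 F(u)u_x\,dx-\int_0^1[F(u)]_x u_{xx}\,dx=-\int_0^1[F(u)]_x u_{xx}\,dx$, which is $\le\|F(u)\|_{\cH^1(0,1)}\|u\|_{\cH^2(0,1)}$ and, by $F\in\Lip(1,1;C_F,N_{F}')$, $\le C\|u\|_{\cH^1(0,1)}(1+\|u\|_{\cH^1(0,1)}^{N_{F}'})\|u\|_{\cH^2(0,1)}$. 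Expanding $[\Phi(u_x)]_x=\Phi'(u_x)u_{xx}$ and $[G(u)]_{xx}=G''(u)u_x^2+G'(u)u_{xx}$ in $\langle[\Phi(u_x)]_x+(I-\Delta)G(u),(I-\Delta)u\rangle$ produces the favorable terms, whose net contribution to the coefficient of $\|u_{xx}\|_{L^2(0,1)}^2$ does not exceed $\sup G'-\inf\Phi'<\tfrac12$ and is absorbed by the $\|u\|_{\cH^2(0,1)}^2$ on the left, together with error terms controlled — via $\cH^1(0,1)\hookrightarrow C[0,1]$, Gagliardo--Nirenberg, and the hypotheses $G'\in\Lip(1,1;C_{G'},N_{G'}')$ and $G\in\Lip(1,0;C_G,N_{G}')$ — by $C\|u\|_{\cH^1(0,1)}(1+\|u\|_{\cH^1(0,1)}^{N_{G'}'})\|u\|_{\cH^2(0,1)}^2$. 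Altogether $\tfrac{d}{dt}\|u\|_{\cH^2(0,1)}^2+2\gamma\|u\|_{\cH^2(0,1)}^2\le C\|f\|_{L^2(0,1)}^2+\Lambda(\|u(t)\|_{\cH^1(0,1)})\|u\|_{\cH^2(0,1)}^2$, where $\Lambda(r)\to0$ as $r\to0$.

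On $[t_0,\infty)$, where $\Lambda(\|u\|_{\cH^1(0,1)})\le\gamma$, Gronwall gives $\|u(t)\|_{\cH^2(0,1)}^2\le e^{-\gamma(t-t_0)}\|u(t_0)\|_{\cH^2(0,1)}^2+C'\delta^2$; on the compact interval $[0,t_0]$ the same energy identity, using only the \emph{bound} $\|u\|_{\cH^1(0,1)}\le C\|\phi\|_{\cH^1(0,1)}+C'\delta$ from the first step together with the global Lipschitz bounds, yields $\tfrac{d}{dt}\|u\|_{\cH^2(0,1)}^2\le K(\|\phi\|_{\cH^1(0,1)},\delta)(\|u\|_{\cH^2(0,1)}^2+1)$, hence $\|u(t_0)\|_{\cH^2(0,1)}^2\le C\,e^{Kt_0}(\|\phi\|_{\cH^2(0,1)}^2+1)$. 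Since $t_0\sim c^{-1}\log(\|\phi\|_{\cH^1(0,1)}/\delta)$ and $\|\phi\|_{\cH^1(0,1)}\le\|\phi\|_{\cH^2(0,1)}$, the factor $e^{(K+\gamma)t_0}$ is dominated by $\Psi(\|\phi\|_{\cH^2(0,1)},\delta)^2$ for an appropriate $\Psi$; substituting back gives $\|u(t)\|_{\cH^2(0,1)}^2\le\Psi(\|\phi\|_{\cH^2(0,1)},\delta)^2e^{-\gamma t}\|\phi\|_{\cH^2(0,1)}^2+C'\delta^2$, and a square root gives the stated inequality. Global stability in $\cH^\ell(0,1)$, $\ell\in\{1,2\}$, then follows exactly as in Section 5: the absorbing estimate forces $u(t_0)$ into the regime where Theorem \ref{th6-3} applies to the flow restarted at $t_0$, producing the time‑periodic solution $\widetilde u$ and, by its local stability and uniqueness, $\|u-\widetilde u\|\to0$ exponentially as $t\to\infty$.

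\textbf{Main obstacle.} The delicate point is the $\cH^2$ estimate \emph{without} smallness of $(\phi,f)$: the nonlinear terms cannot simply be absorbed, so the argument must be split into the long transient $[0,t_0]$, where only the global Lipschitz hypotheses and the $\cH^1$ a priori bound keep $\|u\|_{\cH^2(0,1)}$ finite — at the price of the prefactor $\Psi$ — and the asymptotic regime $[t_0,\infty)$, where the $\cH^1$‑smallness finally makes the nonlinearity a genuine perturbation. Getting the structural $\Phi$‑ and $G$‑contributions to line up with $\sup G'-\inf\Phi'<\tfrac12$ after all the integrations by parts, and keeping every remaining nonlinear error proportional to $\|u\|_{\cH^1(0,1)}$ up to an absorbable multiple of $\|u\|_{\cH^2(0,1)}^2$, is the technical heart of the proof.
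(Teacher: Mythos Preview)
Your $\ell=1$ argument is essentially the paper's: multiply by $u$, kill the $F$--term via a primitive, use the mean--value theorem on $\Phi$ and $G$ to extract the structural constants $\inf\Phi'$ and $\sup G'$, Poincar\'e, and Gronwall. The only cosmetic difference is that the paper keeps the $\|u\|_{L^2}^2$ and $\|u_x\|_{L^2}^2$ contributions separate (obtaining coefficients $\tfrac14-\Ge$ and $\tfrac{(c')^2}{4}-\sup G'-\Ge$) rather than folding everything into a single $\|u_x\|_{L^2}^2$ coefficient, but the outcome is identical.

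For $\ell=2$ your route genuinely diverges from the paper's. The paper multiplies by $-u_{xx}$ and, crucially, arranges the entire nonlinear remainder so that after one Young inequality it is bounded \emph{solely} in terms of $\|u\|_{\cH^1}$: the $G''(u)u_x^2u_{xx}$ contribution is rewritten as $[G'(u)]_x u_x u_{xx}$ and estimated by $C\|G'(u)\|_{\cH^1}\|u_x\|_{L^2}\|u_{xx}\|_{L^2}$, which after Young leaves only powers of $\|u\|_{\cH^1}$ on the right. The paper then simply substitutes the $\ell=1$ absorbing bound into this right--hand side and integrates the resulting linear ODE for $\|u_x\|^2+\|u_{xx}\|^2$, producing a $\Psi$ that is polynomial in $\|\phi\|_{\cH^1}$ and $\delta$. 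You instead allow an error of the form $\Lambda(\|u\|_{\cH^1})\|u\|_{\cH^2}^2$ with $\Lambda(r)\to0$ and handle it by the time--splitting $[0,t_0]\cup[t_0,\infty)$: crude Gronwall growth on $[0,t_0]$, absorption on $[t_0,\infty)$. This is correct and arguably more robust (it does not rely on the remainder collapsing entirely to $\cH^1$--quantities), but the price is a worse prefactor: since $t_0\sim c^{-1}\log(\|\phi\|_{\cH^1}/\delta)$ and $K$ depends on $\|\phi\|_{\cH^1}$, your $\Psi$ behaves like $(\|\phi\|_{\cH^2}/\delta)^{K/c}$ rather than the paper's polynomial expression. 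Both yield a valid absorbing property and hence global stability by the same restart--and--apply--Theorem~\ref{th6-3} mechanism.
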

\begin{remark}
    The additional assumptions in Item 2 is not too restrictive. Indeed, $F(u)=u^3\in \Lip(1,1;C,2)$. Moreover, if $G(u)=-u^3$, then we can also see that $G'=-3u^2\in \Lip(1,1,C',1)$ and satisfies assumption  \hyperlink{(B)}{(B)} if we take $\Phi\equiv 0$. 

\end{remark}
\subsection{Local well-posedness}
We consider the nonlinear equation
\begin{equation}\label{eqnonlinear}
\begin{cases}
    w_t+w_x-w_{xx}-w_{xxt}+[F(v+w)]_x=[\Phi(v_x+w_x)]_x+(I-\Delta)[G(v+w)],  \quad (x, t)\in [0,1]\times [0, \infty),&\\
    w(x,0)=0&\\
   w(0,t)=w(1,t)=0,&
    \end{cases}
\end{equation}
where $v(x,t)$ is the solution to \eqref{eqsub1}.\\

Note that, formally, the solution to \eqref{eqnonlinear} is
$$w(t)=\int_0^te^{A(t-s)}(I-\Delta)^{-1}\bigg[[-F(v+w)]_x(s)+[\Phi(v_x+w_x)]_x(s)+(I-\Delta)[G(v+w)](s)\bigg]ds.$$

Moreover, 
\begin{align*}
    w_t(t)&=-(I-\Delta)^{-1}w_x(t)-w(t)+(I-\Delta)^{-1}w(t)-(I-\Delta)^{-1}[F(v+w)]_x(t) \\
    &\quad\quad +(I-\Delta)^{-1}[\Phi(v_x+w_x)]_x(t)+G(v+w)(t).
\end{align*}

\begin{theorem} \label{main61}
    Let $\ell_{F,\Phi,G}<2$ and $\ell\in [1,2]\cap (\ell_{F,\Phi,G},2]$. For $T>0$, if $(\|\phi\|_{\cH^\ell(0,1)}^2+\|f\|_{L^2([0,T];\cH^{\ell-2}(0,1))}^2)^{\frac12}$ is small enough and $N_{\min}:=\min\{N_F,N_{\Phi},N_{G}\}\geq 3$, there exists $T>0$ and a unique $w\in Y^{\ell}_{0,T}$ to the equation \eqref{eqnonlinear} such that 
    $$\|w\|_{Y^{\ell}_{0,T}}\leq (\|\phi\|_{\cH^\ell(0,1)}^2+\|f\|_{L^2([0,T];\cH^{\ell-2}(0,1))}^2)^{\frac12}<1$$
    and 
      $$\|w_t\|_{Y^{\ell}_{0,T}}\leq C(\|\phi\|_{\cH^\ell(0,1)}^2+\|f\|_{L^2([0,T];\cH^{\ell-2}(0,1))}^2)^{\frac12}<1$$
\end{theorem}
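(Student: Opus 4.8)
The plan is to solve \eqref{eqnonlinear} by a contraction mapping argument in the ball $Y^{\ell}_{0,T,M'} := \{ w \in Y^{\ell}_{0,T} : \|w\|_{Y^{\ell}_{0,T}} \le M' \}$, exactly mirroring the structure of the proof of Proposition \ref{existsub2}, but now with the nonlinear convection term $uu_x$ replaced by the triple of operators $[F(u)]_x$, $[\Phi(u_x)]_x$, and $(I-\Delta)[G(u)]$, where throughout $u = v + w$. First I would define the map
$$
\Gamma(w)(t) := \int_0^t e^{A(t-s)}(I-\Delta)^{-1}\Big[ -[F(v+w)]_x(s) + [\Phi(v_x+w_x)]_x(s) + (I-\Delta)[G(v+w)](s)\Big]\,ds,
$$
and set $M' := K(\|\phi\|_{\cH^\ell(0,1)}^2 + \|f\|_{L^2([0,T];\cH^{\ell-2}(0,1))}^2)^{1/2}$ for a constant $K$ to be fixed. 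Since $v$ is controlled by Proposition \ref{existsub1} in terms of the same data norm, and since we will shrink the data so that $M' < 1$, every occurrence of $\|v+w\|_{\cH^{\ell-1}}$ or $\|v+w\|_{\cH^\ell}$ stays below $1$, which is precisely the regime in which the $\Lip_{loc}$ hypotheses on $F, \Phi, G$ apply.

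The core estimates proceed term by term, using the semigroup decay $\|e^{At}\|_{\cH^\ell \to \cH^\ell} \le e^{-c_\ell t}$ from the first lemma of Section 3 together with Young's convolution inequality as in Proposition \ref{existsub2}. For the $F$-term: $\|[F(v+w)]_x\|_{\cH^{\ell-2}} \le \|F(v+w)\|_{\cH^{\ell-1}} \le C_F \|v+w\|_{\cH^{\ell-1}}(\|v+w\|_{\cH^{\ell-1}}^{N_F-1}+\cdots)$, which, using $H^\ell \hookrightarrow H^{\ell-1}$ and $N_F \ge 3$, is bounded by $C(\|v\|_{Y^\ell_{0,T}}+\|w\|_{Y^\ell_{0,T}})^{N_F}$ after integrating in time; the hypothesis $N_{\min} \ge 3$ guarantees the nonlinearity is at least quadratic in the small quantity $M'$, so the fixed-point inequality $M' \gtrsim C((M')^2 + \text{data})$ closes. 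For the $\Phi$-term: $\|[\Phi(v_x+w_x)]_x\|_{\cH^{\ell-2}} \le \|\Phi(v_x+w_x)\|_{\cH^{\ell-1}}$, and here one uses $\Phi \in \Lip_{loc}(\ell-1,\ell-1;C_\Phi,N_\Phi)$ with the argument $v_x + w_x \in \cH^{\ell-1}$ (since $v, w \in \cH^\ell$), again producing a superquadratic bound. For the $G$-term the $(I-\Delta)$ in front cancels against the $(I-\Delta)^{-1}$ in the mild formula, leaving $\|G(v+w)\|_{\cH^\ell} \le C_G\|v+w\|_{\cH^\ell}(\cdots)$, which is the cleanest of the three. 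Summing, $\sup_t\|\Gamma(w)(t)\|_{\cH^\ell} + \|\Gamma(w)\|_{L^2([0,T];\cH^\ell)} \le C_A(M' )^2(\text{polynomial in } M') + C_A\cdot(\text{data})$, so choosing $K$ large and the data small yields $\Gamma : Y^\ell_{0,T,M'} \to Y^\ell_{0,T,M'}$. The contraction estimate $\|\Gamma(w)-\Gamma(w')\|_{Y^\ell_{0,T}} \le \tfrac12\|w-w'\|_{Y^\ell_{0,T}}$ uses the Lipschitz (difference) bounds in the $\Lip_{loc}$ definition — e.g. $\|F(v+w)-F(v+w')\|_{\cH^{\ell-1}} \le C_F\|w-w'\|_{\cH^{\ell-1}}(\|v+w\|_{\cH^{\ell-1}}^{N_F-1}+\|v+w'\|_{\cH^{\ell-1}}^{N_F-1})$ — and closes once $M'$ is small, giving existence and uniqueness of the fixed point $w$ with $\|w\|_{Y^\ell_{0,T}} \le (\|\phi\|^2 + \|f\|^2)^{1/2} < 1$ (after setting $K=1$ as in Proposition \ref{existsub2}).

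Finally, the bound on $w_t$ follows from the displayed identity for $w_t(t)$: each term on the right is estimated in $\cH^\ell$ by the quantities just controlled, namely $\|(I-\Delta)^{-1}w_x\|_{\cH^\ell} \le \|w\|_{\cH^{\ell-1}} \le \|w\|_{\cH^\ell}$, $\|w\|_{\cH^\ell}$ itself, $\|(I-\Delta)^{-1}[F(v+w)]_x\|_{\cH^\ell} \le \|F(v+w)\|_{\cH^{\ell-1}}$, the analogous $\Phi$-term, and $\|G(v+w)\|_{\cH^\ell}$, after which taking $L^\infty$ and $L^2$ in $t$ over $[0,T]$ and invoking the $Y^\ell_{0,T}$-bounds on $v$ and $w$ gives $\|w_t\|_{Y^\ell_{0,T}} \le C(\|\phi\|^2+\|f\|^2)^{1/2} < 1$ after a further shrinking of the data. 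The main obstacle I anticipate is bookkeeping the product/composition estimates at the fractional level $\ell \in (\ell_{F,\Phi,G}, 2]$: one must verify that $\cH^{\ell-1}(0,1)$ actually admits the multiplicative and Nemytskii-type bounds being used (this is why the restriction $\ell > \ell_{F,\Phi,G}$ and the algebra property $\cH^s$ for $s > 1/2$, recorded in the Example, are invoked), and that the $(I-\Delta)^{\pm 1}$ mapping properties between $\cH^{\ell-2}$, $\cH^{\ell-1}$, $\cH^\ell$ line up correctly with the regularity loss of each nonlinear term; once these are pinned down the contraction itself is routine.
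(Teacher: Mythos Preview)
Your proposal is correct and follows essentially the same approach as the paper: define the same map $\Gamma$, estimate each of the $F$-, $\Phi$-, $G$-terms via the semigroup decay and the $\Lip_{loc}$ hypotheses (with the $(I-\Delta)^{-1}$ cancelling against $(I-\Delta)$ in the $G$-term), close the contraction on $Y^\ell_{0,T,M'}$ with $K=1$ and small data, and read off the $w_t$ bound from the explicit expression for $w_t$. The only cosmetic difference is that the paper applies the $\Lip_{loc}$ bound for $F$ directly with $\cH^\ell$-input rather than first using $\cH^\ell\hookrightarrow\cH^{\ell-1}$, but this has no effect on the argument.
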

\begin{proof}
       Let 
    $$\Gamma(w)(t):=\int_0^te^{A(t-s)}(I-\Delta)^{-1}\bigg[[-F(v+w)]_x(s)+[\Phi(v_x+w_x)]_x(s)+(I-\Delta)[G(v+w)](s)\bigg]ds$$
    and 
     $Y^\ell_{0,T,M'}:=\{w\in Y^{\ell}_{0,T}:\|w\|_{Y^{\ell}_{0,T}}\leq M'\}$ with $M'=K(\|\phi\|_{\cH^\ell(0,1)}^2+\|f\|_{L^2([0,T];\cH^{\ell-2}(0,1))}^2)^{\frac12}$. We will choose $T$, $K$, and $M'$ later so that Banach contraction mapping can be applied. But we will assume that $M'<1$.\\

          Let $w\in Y^\ell_{0,T,M'}$. We will consider term by term.\\

     We first focus on the first term. Note that
     \begin{align*}
         \bigg\|\int_0^te^{A(t-s)}(I-\Delta)^{-1}[-F(v+w)]_x(s)ds\bigg\|_{\cH^{\ell}(0,1)}&\leq \int_0^t e^{-c(t-s)}\|[F(v+w)]_x(s)\|_{\cH^{\ell-2}(0,1)}ds \\
         &\leq  \int_0^t e^{-c(t-s)}\|F(v(s)+w(s))\|_{\cH^{\ell-1}(0,1)}ds\\
         &\leq C_{F}2^{N_F} \int_0^t e^{-c(t-s)}(\|v(s)\|_{\cH^{\ell}(0,1)}^{N_F}+\|w(s)\|_{\cH^{\ell}(0,1)}^{N_F})ds.
     \end{align*}
     Then, using the fact that $\|v(s)\|_{\cH^{\ell}(0,1)}\leq \|v\|_{Y^{\ell}_{0,T}}\leq C'(\|\phi\|_{\cH^{\ell}(0,1)}^2+\|f\|_{L^2([0,T];\cH^{\ell-2}(0,1))}^2)^{\frac12}$ from Theorem \ref{existsub1} and 
     $\|w(s)\|_{\cH^{\ell}(0,1)}\leq \|w\|_{Y^{\ell}_{0,T}}\leq M'$ for all $s\in[0,T]$, we have 
     \begin{align*}
         \sup_{t\in[0,T]}\bigg\|\int_0^te^{A(t-s)}(I-\Delta)^{-1}[-F(v+w)]_x(s)ds\bigg\|_{\cH^{\ell}(0,1)}  
         \leq \frac{2^{N_F}C_F(C')^{N_F}}{K^{N_F}c}(M')^{N_F}+\frac{2^{N_F}C_F}{c}(M')^{N_F}
     \end{align*}
     and 
     \begin{align*}
        \bigg(\int_0^T\bigg\|\int_0^te^{A(t-s)}(I-\Delta)^{-1}[-F(v+w)]_x(s)ds\bigg\|_{\cH^{\ell}(0,1)}^2dt\bigg)^{\frac{1}{2}}         &\leq \frac{2^{N_F}C_F(C')^{N_F}}{K^{N_F}\sqrt{c}}(M')^{N_F}+\frac{2^{N_F}C_F}{\sqrt{2c}}(M')^{N_F}.
     \end{align*}
     
     We now consider the second term. Note that 
      \begin{align*}
         \bigg\|\int_0^te^{A(t-s)}(I-\Delta)^{-1}[\Phi(v_x+w_x)]_x(s)ds\bigg\|_{\cH^{\ell}(0,1)}&\leq \int_0^t e^{-c(t-s)}\|[\Phi(v_x+w_x)]_x(s)\|_{\cH^{\ell-2}(0,1)}ds \\
         &\leq  \int_0^t e^{-c(t-s)}\|\Phi(v_x(s)+w_x(s))\|_{\cH^{\ell-1}(0,1)}ds\\
         &\leq  \int_0^t e^{-c(t-s)}C_\Phi(2^{N_{\Phi}}\|v_x(s)\|_{\cH^{\ell-1}(0,1)}^{N_{\Phi}}+2^{N_{\Phi}}\|w_x(s)\|_{\cH^{\ell-1}(0,1)}^{N_{\Phi}})ds\\
         &\leq  \int_0^t e^{-c(t-s)}C_\Phi(2^{N_{\Phi}}\|v(s)\|_{\cH^{\ell}(0,1)}^{N_{\Phi}}+2^{N_{\Phi}}\|w(s)\|_{\cH^{\ell}(0,1)}^{N_{\Phi}})ds.
     \end{align*}
     Therefore, we have 
     \begin{align*}
           \sup_{t\in[0,T]}\bigg\|\int_0^te^{A(t-s)}(I-\Delta)^{-1}[\Phi(v_x+w_x)]_x(s)ds\bigg\|_{\cH^{\ell}(0,1)}  
         \leq\frac{2^{N_{\Phi}}C_\Phi(C')^{N_{\Phi}}}{K^{N_{\Phi}}c}(M')^{N_{\Phi}}+\frac{2^{N_{\Phi}}C_\Phi}{c}(M')^{N_{\Phi}}
     \end{align*}
     and 
     \begin{align*}
        \bigg(\int_0^T\bigg\|\int_0^te^{A(t-s)}(I-\Delta)^{-1}[\Phi(v_x+w_x)]_x(s)ds\bigg\|_{\cH^{\ell}(0,1)}^2dt\bigg)^{\frac{1}{2}}&\leq \frac{2^{N_{\Phi}}C_{\Phi}(C')^{N_{\Phi}}}{K^{N_{\Phi}}\sqrt{c}}(M')^{N_{\Phi}}+\frac{2^{N_{\Phi}}C_{\Phi}}{\sqrt{2c}}(M')^{N_F}.
     \end{align*}

      The third term can be done by noticing that 
      \begin{align*}
         &\int_0^te^{A(t-s)}(I-\Delta)^{-1}[(I-\Delta)G(v(s)+w(s))]ds = \int_0^t e^{A(t-s)}G(v(s)+w(s))ds.
     \end{align*}
     Therefore,
     \begin{align*}
           \sup_{t\in[0,T]}\bigg\|\int_0^te^{A(t-s)}G(v(s)+w(s))ds\bigg\|_{\cH^{\ell}(0,1)}  
         \leq \frac{2^{N_{G}}C_G(C')^{N_{G}}}{K^{N_{G}}c}(M')^{N_{G}}+\frac{2^{N_{G}}C_G}{c}(M')^{N_{G}}
     \end{align*}
     and 
     \begin{align*}
        \bigg(\int_0^T\bigg\|\int_0^te^{A(t-s)}G(v(s)+w(s))ds\bigg\|_{\cH^{\ell}(0,1)}^2dt\bigg)^{\frac{1}{2}}&\leq \frac{2^{N_{G}}C_{G}(C')^{N_{G}}}{K^{N_{G}}\sqrt{c}}(M')^{N_{G}}+\frac{2^{N_{G}}C_{G}}{\sqrt{2c}}(M')^{N_{G}}.
     \end{align*}

         Therefore, in order to have $\|\Gamma(w)\|_{Y^{\ell}_{0,T}}\leq M'$, we need 
     
$$C_{F+\Phi+G}\bigg(\frac{(2C')^{N_{\max}}}{K^{N_{\min}}c}+\frac{2^{N_{\max}}}{\sqrt{2c}}  \bigg)(M')^{N_{\min}}<M'$$
     where $C_{F+\Phi+G}:=C_{F}+C_\Phi+C_G$, $N_{\max}=\max\{N_F,N_{\Phi},N_{G}\}$, and $N_{\min}=\min\{N_F,N_{\Phi},N_{G}\}$ assuming $M'<1$.

Meanwhile, we also need $$\|\Gamma(w_1)-\Gamma(w_2)\|_{Y^{\ell}_{0,T}}<\|w_1-w_2\|_{Y^{\ell}_{0,T}}$$ provided that $w_1,w_2\in Y^{\ell}_{0,T}$. 
A calculation shows 
\begin{align*}
    &\Gamma(w_1)(t)-\Gamma(w_2)(t)\\
    &=\int_0^te^{A(t-s)}(I-\Delta)^{-1}\bigg[[-F(v+w_1)+F(v+w_2)]_x(s) \\
    &\quad\quad\quad +[\Phi(v_x+(w_1)_x)-\Phi(v_x+(w_2)_x)]_x(s)+(I-\Delta)[G(v+w_1)-G(v+w_2)](s)\bigg]ds
\end{align*}
Then, similar to the estimate of $\|\Gamma(w)\|_{Y^{\ell}_{0,T}}$, one has 
\begin{align*}
    &\bigg\|\int_0^t e^{A(t-s)}(I-\Delta)^{-1}[F(v+w_1)-F(v+w_2)]_x(s)ds\bigg\|_{\cH^{\ell}(0,1)} \\
    &\leq \int_0^t e^{-c(t-s)}C_F\|w_1(s)-w_2(s)\|_{\cH^{\ell}(0,1)}(\|(v+w_1)(s)\|_{\cH^{\ell}(0,1)}^{N_F-1}+\|(v+w_2)(s)\|_{\cH^{\ell}(0,1)}^{N_F-1})ds\\
    &\leq \int_0^t e^{-c(t-s)}C_F\|w_1(s)-w_2(s)\|_{\cH^{\ell}(0,1)}(2^{N_F}\|v(s)\|_{\cH^{\ell}(0,1)}^{N_F-1}+2^{N_F-1}\|w_1(s)\|_{\cH^{\ell}(0,1)}^{N_F-1}+2^{N_F-1}\|w_2(s)\|_{\cH^{\ell}(0,1)}^{N_F-1})ds
\end{align*}

Then, we can see that 
\begin{align*}
    &\sup_{t\in[0,T]}\bigg\|\int_0^t e^{A(t-s)}(I-\Delta)^{-1}[F(v+w_1)-F(v+w_2)]_x(s)ds\bigg\|_{\cH^{\ell}(0,1)}\\
    &\leq C_F\|w_1-w_2\|_{Y^{\ell}_{0,T}} (\frac{2^{N_F}(C')^{N_F-1}}{K^{N_F-1}\sqrt{c}}(M')^{N_F-1}+\frac{2^{N_F}}{\sqrt{2c}}(M')^{N_F-1})
\end{align*}
and 
\begin{align*}
    &\bigg(\int_0^T\bigg\|\int_0^t e^{A(t-s)}(I-\Delta)^{-1}[F(v+w_1)-F(v+w_2)]_x(s)ds\bigg\|_{\cH^{\ell}(0,1)}^2dt\bigg)^{\frac{1}{2}}\\
    &\leq C_F\|w_1-w_2\|_{Y^{\ell}_{0,T}} (\frac{2^{N_F}(C')^{N_F-1}}{K^{N_F-1}c}(M')^{N_F-1}+\frac{2^{N_F}}{c}(M')^{N_F-1}).
\end{align*}
By a similar argument, we have a similar upper bound for $\Phi$ and $G$. Therefore, we have 
\begin{align*}
    \|\Gamma(w_1)-\Gamma(w_2)\|_{Y^{\ell}_{0,T}}&\leq C_{F+\Phi+G}\bigg(\bigg(\frac{(2C')^{N_{\max}}}{C'K^{N_{\min}-1}c}+\frac{2^{N_{\max}}}{\sqrt{2c}}  \bigg)(M')^{N_{\min}-1}\bigg) \|w_1-w_2\|_{Y^{\ell}_{0,T}}.
\end{align*}
We first choose $K=1$, and then $M'$ small (and not exceeding 1) such that 
$$C_{F+\Phi+G}\bigg(\frac{(2C')^{N_{\max}}}{C'c}+\frac{2^{N_{\max}}}{\sqrt{2c}}  \bigg)(M')^{N_{\min}-1}<M'.$$
Then we can see that $\Gamma: Y^{\ell}_{0,T,M'}\to Y^{\ell}_{0,T,M'}$ and is a contraction mapping. Therefore, by Banach fixed point theorem, there exists a unique $w\in Y^{\ell}_{0,T,M'}$ such that $\Gamma(w)=w$. Moreover, it satisfies 
$$\|\Gamma(w)\|_{Y^{\ell}_{0,T}}\leq (\|\phi\|_{\cH^\ell(0,1)}^2+\|f\|_{L^2([0,T];\cH^{\ell-2}(0,1))}^2)^{\frac12}$$
given that $(\|\phi\|_{\cH^\ell(0,1)}^2+\|f\|_{L^2([0,T];\cH^{\ell-2}(0,1))}^2)^{\frac12}$ is sufficiently small.\\

For the term $w_t$, we have 
\begin{align*}
    \|w_t(s)\|_{H^{\ell}(0,1)}&\leq \|w(s)\|_{H^{\ell-1}(0,1)}+(1+C_{F+\Phi+G})\|w(s)\|_{H^{\ell}(0,1)}+\|w(s)\|_{H^{\ell-2}(0,1)}+C_{F+\Phi+G}\|v(s)\|_{H^{\ell}(0,1)}\\
    &\leq (3+C_{F+\Phi+G})\|w(s)\|_{H^{\ell}(0,1)}+C_{F+\Phi+G}\|v(s)\|_{H^{\ell}(0,1)}
\end{align*}
Thus, using Proposition \ref{existsub1} and taking $L^{\infty}([0,T])$ and $L^2([0,T])$ norms, we have 
\begin{align*}
    \|w_t\|_{Y^{\ell}_{0,T}}&\leq  (3+C_{F+\Phi+G})\|w\|_{Y^{\ell}_{0,T}}+C_{F+\Phi+G}\|v(s)\|_{Y^{\ell}_{0,T}}\\
    &\leq (3+C_{F+\Phi+G}+CC_{F+\Phi+G})(\|\phi\|_{\cH^\ell(0,1)}^2+\|f\|_{L^2([0,T];\cH^{\ell-2}(0,1))}^2)^{\frac12}.
\end{align*}
\end{proof}
\begin{remark}
    For $N_{\min}=2$, the argument does not work as it is possible that $$C_{F+\Phi+G}\bigg(\frac{(2C')^{N_{\max}}}{C'c}+\frac{2^{N_{\max}}}{\sqrt{2c}}  \bigg)>1.$$
 In this case, we may either solve the inequality carefully if $N_i>2$ for some $i\in \{1,2,3\}$, or assuming $C_{F+\Phi+G}$ is small.\\

     If $N_{\min}=1$, then we need to impose the smallness on $C_{F+\Phi+G}$ so that the simplified equation is of the form 
     $u_t+\Ga u_{x}-\Gb u_{xx}-u_{xxt}=\cdots$, and the operator generated by $A_{\Ga,\Gb}:=(I-\Delta)^{-1}(-\Ga u_x)-\Gb u+(I-\Delta)^{-1}(\Gb u)$ is dissipative and generates a $C_0$-semigroup.
\end{remark}

\begin{theorem}\label{thm67}
     Let $\ell_{F,\Phi,G}<2$ and $\ell>2$.     Under the assumption \hyperlink{(A)}{(A)}, we have 

$$\|w\|_{Y^\ell_{0,T}}\leq  C_{\ell}( \|\phi\|_{\cH^\ell(0,1)}^2+\|f\|_{L^2([0,T];\cH^\ell(0,1))}^2)^{\frac{1}{2}}$$
and
$$\|w_t\|_{Y^\ell_{0,T}}\leq  C'_{\ell}( \|\phi\|_{\cH^\ell(0,1)}^2+\|f\|_{L^2([0,T];\cH^\ell(0,1))}^2)^{\frac{1}{2}}$$
assuming $( \|\phi\|_{\cH^\ell(0,1)}^2+\|f\|_{L^2([0,T];\cH^\ell(0,1))}^2)^{\frac{1}{2}}$ small enough.
\end{theorem}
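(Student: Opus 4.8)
The plan is to run an induction on $\lceil\ell\rceil$ that upgrades the solution $w$ of \eqref{eqnonlinear} already furnished by Theorem~\ref{main61} at regularity $\ell=2$ (together with its $w_t$ bound) to regularity $\ell>2$, passing through $(2,3],(3,4],\dots$ up to $\lfloor\ell\rfloor$ and then to $\ell$. Since dissipativity of $A$ on $\cH^\ell$ for $\ell>2$ is not available without extra boundary conditions on derivatives, I would not contract in $Y^\ell_{0,T}$ via the semigroup; instead I would use the energy method exactly as in the $\ell=3$ analysis of Section~4 and in the high-$\ell$ part of the global-stability section. Rewriting \eqref{eqnonlinear} as $(I-\Delta)(w_t+w)=-w_x+w-[F(v+w)]_x+[\Phi(v_x+w_x)]_x+(I-\Delta)[G(v+w)]$, applying $(I-\Delta)^{(\ell-2)/2}$, and pairing with $(I-\Delta)^{\ell/2}w$ in $L^2(0,1)$ (legitimate on the solution manifold, or after a standard Galerkin/difference-quotient approximation), one obtains for every $\Ge>0$
\begin{equation*}
\tfrac{d}{dt}\|w\|_{\cH^\ell(0,1)}^2+\|w\|_{\cH^\ell(0,1)}^2\le C_\Ge\Big(\|w\|_{\cH^{\ell-1}(0,1)}^2+\|w\|_{\cH^{\ell-2}(0,1)}^2+\|F(v+w)\|_{\cH^{\ell-1}(0,1)}^2+\|\Phi(v_x+w_x)\|_{\cH^{\ell-1}(0,1)}^2+\|G(v+w)\|_{\cH^\ell(0,1)}^2\Big)+8\Ge\|w\|_{\cH^\ell(0,1)}^2,
\end{equation*}
after using the boundedness of $\partial_x:\cH^\ell\to\cH^{\ell-1}$ and of $(I-\Delta)^{-1}:\cH^{\ell-2}\to\cH^\ell$.

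Second, I would control the composition terms. Because $\ell>2$, both $\cH^{\ell-1}(0,1)$ and $\cH^\ell(0,1)$ are Banach algebras, so combining the algebra property with assumption~\hyperlink{(A)}{(A)} --- in particular $F\in\Lip_{loc}(\ell,\ell-1;\cdot)$, $\Phi\in\Lip_{loc}(\ell-1,\ell-1;\cdot)$, $G\in\Lip_{loc}(\ell,\ell;\cdot)$, the $C^{\lceil\ell\rceil}$-regularity, and the $\Lip_{loc}$ conditions on $F^{(n)},\Phi^{(n)},G^{(n+1)}$ (which are exactly what a Fa\`a di Bruno expansion of $\partial_x^k$ of a composition requires, the non-integer surplus being handled by interpolation/fractional chain rule) --- one gets, whenever $\|v+w\|_{\cH^\ell(0,1)}\le 1$, the superlinear bounds $\|F(v+w)\|_{\cH^{\ell-1}}\le C_F\|v+w\|_{\cH^\ell}^{N_F}$, $\|\Phi(v_x+w_x)\|_{\cH^{\ell-1}}\le C_\Phi\|v+w\|_{\cH^\ell}^{N_\Phi}$, $\|G(v+w)\|_{\cH^\ell}\le C_G\|v+w\|_{\cH^\ell}^{N_G}$, all norms on $(0,1)$. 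Here $v$ solves \eqref{eqsub1}, and the high-regularity linear bound $\|v\|_{Y^\ell_{0,T}}+\|v_t\|_{Y^\ell_{0,T}}\le C(\|\phi\|_{\cH^\ell(0,1)}^2+\|f\|_{L^2([0,T];\cH^{\ell-2}(0,1))}^2)^{1/2}$ follows by the bootstrap of Lemmas~\ref{aprih3}--\ref{apriforceh3} (the stronger hypothesis $f\in L^2([0,T];\cH^\ell(0,1))$ in the statement makes this, and all the lower-order terms in the induction, comfortably available). Expanding $(\|v\|_{\cH^\ell}+\|w\|_{\cH^\ell})^{N}$, the pure-$v$ contribution is controlled by the (small) data, while every other term carries at least one factor $\|w\|_{\cH^\ell}$ and, since $N\ge N_{\min}\ge 3$, at least two factors $\|v+w\|_{\cH^\ell}$, hence at least $\|w\|_{\cH^\ell}$ times something small.

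Third, I would close the estimate by a continuity/bootstrap argument. Inserting the composition bounds, choosing $\Ge<\tfrac1{16}$, and applying Gronwall gives $\|w\|_{Y^\ell_{0,T}}^2\le M'^2+C_*\,M'\,\|w\|_{Y^\ell_{0,T}}^2+C_*(\text{data})^{N_{\min}}$ with $M'=(\|\phi\|_{\cH^\ell(0,1)}^2+\|f\|_{L^2([0,T];\cH^\ell(0,1))}^2)^{1/2}$, where the terms $\|w\|_{\cH^{\ell-1}},\|w\|_{\cH^{\ell-2}}$ are already dominated by $M'$ through the previous inductive step; since a global-on-$[0,T]$ solution of the requisite lower regularity exists by Theorem~\ref{main61}, the standard continuation argument on $\{t\in[0,T]:\|w\|_{Y^\ell_{0,t}}\le 2M'\}$ shows that set is all of $[0,T]$ once $M'$ is small, yielding $\|w\|_{Y^\ell_{0,T}}\le C_\ell\,M'$. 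The bound for $w_t$ then follows at once from $w_t=-(I-\Delta)^{-1}w_x-w+(I-\Delta)^{-1}w-(I-\Delta)^{-1}[F(v+w)]_x+(I-\Delta)^{-1}[\Phi(v_x+w_x)]_x+G(v+w)$ by taking $\cH^\ell(0,1)$ norms and re-using the same composition estimates together with the $\cH^\ell$-bound on $w$ and the linear bound on $v$.

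The main obstacle is precisely the composition (Moser-type) estimates in the fractional spaces $\cH^{\ell-1}(0,1)$ and $\cH^\ell(0,1)$: one must establish $F(v+w)\in\cH^{\ell-1}$, $\Phi(v_x+w_x)\in\cH^{\ell-1}$, $G(v+w)\in\cH^\ell$ with the stated superlinear quantitative dependence on a bounded-data set. For integer $\lceil\ell\rceil$ this is the Fa\`a di Bruno expansion combined with the algebra property of $\cH^{\lceil\ell\rceil-1}$ and the $\Lip_{loc}$ hypotheses on the derivatives of $F,\Phi,G$ built into assumption~\hyperlink{(A)}{(A)}; the non-integer part of $\ell$ is absorbed by interpolating between consecutive integer orders (or a fractional chain rule after extending off $(0,1)$). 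A secondary point is bookkeeping: one must check that the constants $C_\ell$ produced at each stage of the induction do not degenerate, and that the lower-order terms genuinely come from the previous step; the sign condition $\sup_x G'(x)-\inf_y\Phi'(y)<\tfrac12$ of \hyperlink{(A)}{(A)} is not needed in this local-in-data argument (smallness already absorbs those terms) and enters only in the global-stability statement.
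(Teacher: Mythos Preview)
Your approach is correct and closes, but it is \emph{not} the route the paper takes. You work with the Bessel--potential formulation, pairing $(I-\Delta)^{\ell/2}(w_t+w)$ against $(I-\Delta)^{\ell/2}w$ and estimating the composition terms $\|F(v+w)\|_{\cH^{\ell-1}}$, $\|\Phi(v_x+w_x)\|_{\cH^{\ell-1}}$, $\|G(v+w)\|_{\cH^\ell}$ directly through the $\Lip_{loc}$ hypotheses in \hyperlink{(A)}{(A)}; since $N_{\min}\ge 3$ (inherited from Theorem~\ref{main61}), every nonlinear term is superlinear in $\|v+w\|_{\cH^\ell}$, and the continuation argument absorbs the top-order pieces. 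The paper instead differentiates \eqref{eqnonlinear} in $x$, multiplies by $w_{xxx}$, and is then forced to confront the top-order products $G'(v+w)\,w_{xxx}^2$ and $\Phi'(v_x+w_x)\,w_{xxx}^2$ explicitly; it handles these using the sign condition $\sup_{x}G'(x)-\inf_{y}\Phi'(y)<\tfrac12$ from \hyperlink{(A)}{(A)} (defining the gap $\Ge_0$), whereas you rightly observe that this condition is unnecessary for the local-in-data statement. What the paper's route buys is robustness against small $N_{\min}$ (your absorption would fail if some $N_i=1$); what your route buys is a cleaner proof that does not need the sign structure at all, and in fact your Fa\`a di Bruno/fractional-chain-rule discussion is superfluous: the abstract $\Lip_{loc}(\ell,\ell-1)$, $\Lip_{loc}(\ell-1,\ell-1)$, $\Lip_{loc}(\ell,\ell)$ assumptions on $F,\Phi,G$ in \hyperlink{(A)}{(A)} are exactly the composition estimates you need, so no Moser-type argument has to be produced anew.
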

\begin{proof}
Differentiating \eqref{eqnonlinear} with respect to $x$ and arranging the terms, we obtain
\begin{align*}
    &w_{xxx}+w_{xxxt}-G'(v+w)[v_{xxx}+w_{xxx}]+\Phi'(v_x+w_x)[v_{xxx}+w_{xxx}]\\
    &=w_{xt}+w_{xx}+[F(v+w)]_{xx}-[\Phi'(v_x+w_x)]_{x}[v_{xx}+w_{xx}]-[G'(v+w)]_x\\
    &\quad \quad +[G''(v+w)]_x[v_{x}+w_x]^2+3[G'(u+v)]_x[w_{xx}+v_{xx}]
\end{align*}
Then, at time $t$
\begin{align*}
    &\frac{1}{2}\frac{d}{dt}\|w_{xxx}\|_{L^2(0,1)}^2+\frac{1}{2}\|w_{xxx}\|^2_{L^2(0,1)}-\int_0^1[G'(v+w)-\Phi'(v_x(t)+w_x(t))][w_{xxx}^2+w_{xxx}v_{xxx}]dx\\
    &\leq \frac{1}{4\Ge}(\|w_t\|_{\cH^1(0,1)}^2+\|w\|^2_{\cH^2(0,1)}+\|F'(v+w)[v_x+w_x]\|^2_{\cH^1(0,1)}+\|[\Phi'(v_x+w_x)]_x[v_{xx}+w_{xx}]\|^2_{L^2(0,1)}\\
    &\quad\quad\quad +\|[G'(v+w)]_x\|_{L^2(0,1)}^2\|[G''(v+w)]_x[v_{x}+w_x]^2\|_{L^2(0,1)}^2+3\|[G'(u+v)]_x[w_{xx}+v_{xx}]\|_{L^2(0,1)}^2 )+9\Ge\|w_{xxx}\|_{L^2(0,1)}^2
\end{align*}

Note that 
\begin{align*}
    \|[\Phi'(v_x+w_x)]_x[v_{xx}+w_{xx}]\|_{L^2}^2&\leq \|[\Phi'(v_x+w_x)]_x\|_{L^{\infty}}^2\|v_{xx}+w_{xx}\|^2_{L^2(0,1)} \\
    &\leq C \|\Phi'(v_x+w_x)\|_{\cH^1(0,1)}^2\|v+w\|_{\cH^2(0,1)}^2\\
    &\leq C C_{\Phi'}\|v+w\|_{\cH^2(0,1)}^{2N_{\Phi}}\leq CC_{\Phi'}2^{2N_{\Phi}}(\|v\|_{\cH^2(0,1)}^2+\|w\|_{\cH^2(0,1)}^2),
\end{align*}
and similar estimate can be obtained for $G'$. Also,
\begin{align*}
    \|[G''(v+w)]_x[v_{x}+w_x]^2\|_{L^{2}(0,1)}^2&\leq \|[G''(v+w)]_x\|_{L^{\infty}(0,1)}^2\|v_x+w_x\|_{L^{\infty}(0,1)}^2\|v+w\|_{\cH^1(0,1)}^2\\
    &\leq C'\|G''(v+w)\|_{\cH^1(0,1)}^2\|v+w\|_{\cH^1(0,1)}^4\\
    &\leq C'C_{G''}\|v+w\|_{\cH^2(0,1)}^{2N_{G}}.
\end{align*}
We have used the fact that $\|G''(v+w)\|_{\cH^1(0,1)}\leq \|G''(v+w)\|_{\cH^{2}(0,1)}\leq C_{G''}\|v+w\|_{\cH^2(0,1)}$.

The estimates for the term $F'$ and $G'$ are immediate. We also need to control the term $\|[G'(v+w)-\Phi'(v_x+w_x)]v_{xxx}\|_{L^2(0,1)}^2$, but this follows from Proposition \ref{existsub1} that 
$$\|[G'(v+w)-\Phi'(v_x+w_x)]v_{xxx}\|_{L^2(0,1)}^2\leq C_{G'}\|v+w\|_{\cH^1}$$

Therefore, take $\Ge$ small such that $\frac12-\sup_{x}G'(x)+\inf_{y}\Phi'(y)-11\Ge=:\Ge_0>0$, and by m 
    \begin{align*}
        &\|w_{xxx}(t)\|^2_{L^2(0,1)}-e^{-2\Ge_0t}\|\phi_{xxx}\|_{L^2(0,1)}^2\\
        &\leq C_{\Ge}'\int_0^te^{-2\Ge_0(t-s)}(\|v(s)\|_{\cH^3(0,1)}^{A}+\|w(s)\|_{\cH^2(0,1)}^B)ds
    \end{align*}
for some integers $A,B\geq 2$, 
so we have 
$$\|w_{xxx}\|_{Y^{0}_{0,T}}^2\leq C( \|\phi\|_{\cH^3(0,1)}^2+\|f\|_{L^2([0,T];\cH^1(0,1))}^2),$$
which implies the first inequality with $\ell=3$. The second inequality follows directly from the expression of $w_t$. 

For higher order of $\ell\in \N$, we can differentiate \eqref{eqnonlinear} with respect to $x$ $(\ell-2)$ times, and group the highest order terms to the right-hand-side, and the highest order term can be obtained only from $G'(u)[u^{(n)}]_x$ and $\Phi'(u)[u^{(n)}]_x$. We can then perform a similar argument as above to obtain the desired inequalities for $\ell\geq 4$.

For $\ell\in (n,n+1)$ for some $n\in \N\setminus\{1\}$, we can  apply nonlinear interpolation to obtain the desired inequalities.

\end{proof}

Similar arguments as the proof of Theorem \ref{main61}, we have the following.
\begin{theorem}\label{thm62}
Under Assumption \hyperlink{(A)}{(A)}, if $(\|w(\tau)\|_{\cH^\ell(0,1)}^2+\|v\|_{Y^{\ell}_{\tau,T}}^2)^{\frac12}$ is sufficiently small, then there is some $K>1$ such that 
    $$\|w\|_{Y^{\ell}_{\tau,T}}<K(\|w(\tau)\|_{\cH^\ell(0,1)}^2+\|v\|_{Y^{\ell}_{\tau,T}}^2)^{\frac12}$$
    and 
    $$\|w_t\|_{Y^{\ell}_{\tau,T}}<CK(\|w(\tau)\|_{\cH^\ell(0,1)}^2+\|v\|_{Y^{\ell}_{\tau,T}}^2)^{\frac12}$$
    for some $C\geq 1.$
\end{theorem}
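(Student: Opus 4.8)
The plan is to run the Banach contraction argument of Theorem~\ref{main61} on the shifted interval $[\tau,\tau+T]$. Writing $w$ through Duhamel's formula started at time $\tau$, a solution of \eqref{eqnonlinear} on $[\tau,\tau+T]$ is a fixed point of
$$\Gamma(q)(t):=e^{A(t-\tau)}w(\tau)+\int_{\tau}^{t}e^{A(t-s)}(I-\Delta)^{-1}\Big[[-F(v+q)]_x+[\Phi(v_x+q_x)]_x+(I-\Delta)[G(v+q)]\Big](s)\,ds ,$$
and I would look for it in the ball $Y^{\ell}_{\tau,T,M'}:=\{q\in Y^{\ell}_{\tau,T}:\ \|q\|_{Y^{\ell}_{\tau,T}}\le M'\}$ with $M':=K(\|w(\tau)\|_{\cH^{\ell}(0,1)}^2+\|v\|_{Y^{\ell}_{\tau,T}}^2)^{1/2}$, where $K>1$ and $M'<1$ are chosen below. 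The only new feature compared with Theorem~\ref{main61} is the free term $e^{A(t-\tau)}w(\tau)$; by the semigroup bound $\|e^{A(t-\tau)}\|_{\cH^{\ell}\to\cH^{\ell}}\le e^{-c_{\ell}(t-\tau)}$ (valid for $\ell\in[1,2]$), its $Y^{\ell}_{\tau,T}$-norm is at most $C_0\|w(\tau)\|_{\cH^{\ell}(0,1)}\le (C_0/K)M'$, with $C_0$ independent of $\tau$ and $T$.

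The three Duhamel terms are estimated exactly as in Theorem~\ref{main61}: combining the $\Lip_{loc}$ hypotheses of Assumption~\hyperlink{(A)}{(A)}, the mapping property $\|e^{A(t-s)}(I-\Delta)^{-1}g\|_{\cH^{\ell}}\le e^{-c_{\ell}(t-s)}\|g\|_{\cH^{\ell-2}(0,1)}$, and Young's convolution inequality, each of them is bounded in $Y^{\ell}_{\tau,T}$ by $C(\|v\|_{Y^{\ell}_{\tau,T}}+M')^{N_{\min}}$, where $N_{\min}=\min\{N_F,N_\Phi,N_G\}\ge3$. Since $\|v\|_{Y^{\ell}_{\tau,T}}\le M'/K\le M'<1$, this is at most $C_1(M')^{N_{\min}-1}M'$. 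Collecting, $\|\Gamma(q)\|_{Y^{\ell}_{\tau,T}}\le (C_0/K)M'+C_1(M')^{N_{\min}-1}M'$; choosing $K:=2C_0$ and then requiring $(\|w(\tau)\|_{\cH^{\ell}(0,1)}^2+\|v\|_{Y^{\ell}_{\tau,T}}^2)^{1/2}$ small enough that $C_1(M')^{N_{\min}-1}<\tfrac12$ makes $\Gamma$ map $Y^{\ell}_{\tau,T,M'}$ into itself. In $\Gamma(q_1)-\Gamma(q_2)$ the free term cancels, and the Lipschitz-difference estimates of $F,\Phi,G$ from Assumption~\hyperlink{(A)}{(A)} give $\|\Gamma(q_1)-\Gamma(q_2)\|_{Y^{\ell}_{\tau,T}}\le C_2(M')^{N_{\min}-1}\|q_1-q_2\|_{Y^{\ell}_{\tau,T}}$, a contraction under the same smallness. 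Banach's fixed point theorem then yields the unique $w\in Y^{\ell}_{\tau,T,M'}$ with $\|w\|_{Y^{\ell}_{\tau,T}}<K(\|w(\tau)\|_{\cH^{\ell}(0,1)}^2+\|v\|_{Y^{\ell}_{\tau,T}}^2)^{1/2}$.

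For $w_t$ I would use the pointwise identity for $w_t$ recorded before Theorem~\ref{main61}: the $\Lip_{loc}$ bounds, the condition $N_{\min}\ge3$, and the fact that all norms are below $1$ give $\|w_t(t)\|_{\cH^{\ell}(0,1)}\lesssim\|w(t)\|_{\cH^{\ell}(0,1)}+\|v(t)\|_{\cH^{\ell}(0,1)}$; taking $L^{\infty}$ and $L^2$ in $t$ over $[\tau,\tau+T]$ and inserting the bound for $\|w\|_{Y^{\ell}_{\tau,T}}$ together with Proposition~\ref{existsub1} yields the second inequality. For $\ell>2$, where dissipativity of $A$ is unavailable, I would instead repeat the energy argument of Theorem~\ref{thm67}: differentiate \eqref{eqnonlinear} $(\ell-2)$ times in $x$ for integer $\ell$, test with $\partial_x^{\ell}w$, and integrate from $\tau$ to $t$; the top-order term $\int_0^1[G'(v+w)-\Phi'(v_x+w_x)]\,\partial_x^{\ell}w\,(\partial_x^{\ell}v+\partial_x^{\ell}w)\,dx$ is treated exactly as there, using $\sup_x G'(x)-\inf_y\Phi'(y)<\tfrac12$ from Assumption~\hyperlink{(A)}{(A)} to absorb the $w$-quadratic part into the dissipation, and Cauchy--Schwarz together with the already-established lower-order control of $v$ to handle the cross term; the rest of the right-hand side reduces to lower-order Sobolev norms of $v$ and $w$ bounded above, and non-integer $\ell$ follows by nonlinear interpolation between consecutive integers, as in Theorems~\ref{thm67} and \ref{th2-1}.

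\textbf{Expected main obstacle.} The delicate point is keeping \emph{every} constant independent of both $\tau$ and $T$. This rests on the exponential decay of the semigroup, which turns the Duhamel integrals into convolutions against $e^{-c_{\ell}(\cdot)}$ whose $L^1$-norm does not grow with $T$, and, in the range $\ell>2$, on the strictly dissipative structure furnished by the inequality $\sup G'-\inf\Phi'<\tfrac12$ in Assumption~\hyperlink{(A)}{(A)}; absent the latter, the top-order energy term in the higher-$\ell$ estimate cannot be absorbed into the dissipation and the argument breaks.
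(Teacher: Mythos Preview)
Your proposal is correct and follows essentially the same approach the paper intends: the paper's proof of this theorem is simply the sentence ``Similar arguments as the proof of Theorem~\ref{main61}, we have the following,'' and you have faithfully supplied those details---running the contraction mapping of Theorem~\ref{main61} on the shifted interval $[\tau,\tau+T]$ with the extra free term $e^{A(t-\tau)}w(\tau)$ (exactly parallel to Proposition~\ref{wYlT} in the BBM-Burgers case), and for $\ell>2$ invoking the energy method of Theorem~\ref{thm67}.
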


\begin{prop}
Under  Assumption \hyperlink{(A)}{(A)}, suppose that $\|\phi\|_{\cH^\ell(0,1)}^2+\|f\|_{L^2([0,T];\cH^{\ell-2}(0,1))}^2$ and $\sup_{t>0}\|v\|_{Y^{\ell}_{t,T}}^{N_{\min}}$ are sufficiently small. Then, 
$$\|w\|_{Y^{\ell}_{\tau,T}}\leq C_{F,\Phi,G,K}\sup_{t>0}\|v\|_{Y^{\ell}_{t,T}}^{N_{\min}}$$
\end{prop}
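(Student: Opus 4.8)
The plan is to follow closely the scheme used for Proposition \ref{Prop4} in the BBM--Burgers setting, with the quadratic nonlinearity replaced by the higher-degree nonlinearities $F,\Phi,G$, so that the ``input coming from $v$ alone'' now scales like $\sup_{t>0}\|v\|_{Y^\ell_{t,T}}^{N_{\min}}$ rather than like a square. First I would write the mild solution of \eqref{eqnonlinear} on a window $[\tau,\tau+t]$ as
$$w(\tau+t)=e^{At}w(\tau)+\int_\tau^{\tau+t}e^{A(\tau+t-s)}(I-\Delta)^{-1}\big[-[F(v+w)]_x+[\Phi(v_x+w_x)]_x+(I-\Delta)G(v+w)\big](s)\,ds,$$
and observe, exactly as in the proof of Theorem \ref{main61}, that for $\ell\le 2$ the decay of $e^{At}$ on $\cH^\ell$ together with the Lipschitz-type bounds in Assumption \hyperlink{(A)}{(A)} controls the nonlinear integral by $\|v\|_{Y^\ell_{\tau,T}}^{N_{\min}}+\|w\|_{Y^\ell_{\tau,T}}^{N_{\min}}$ plus mixed monomials of total degree $\ge N_{\min}$. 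Combining this with Theorem \ref{thm62}, which already gives $\|w\|_{Y^\ell_{\tau,T}}\lesssim(\|w(\tau)\|_{\cH^\ell(0,1)}^2+\|v\|_{Y^\ell_{\tau,T}}^2)^{1/2}$ once the data are small, reduces the statement to the pointwise-in-time bound $\|w(\tau)\|_{\cH^\ell(0,1)}\lesssim\sup_{t>0}\|v\|_{Y^\ell_{t,T}}^{N_{\min}}$ for every $\tau\ge 0$.

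Second, set $w_k:=w(kT)$ and iterate the mild formula over the windows $[(k-1)T,kT]$. Using $\|e^{AT}\|_{\cH^\ell\to\cH^\ell}\le e^{-cT}$, Theorem \ref{thm62} to replace $\|w\|_{Y^\ell_{(k-1)T,T}}$ by $\|w_{k-1}\|_{\cH^\ell(0,1)}+\|v\|_{Y^\ell_{(k-1)T,T}}$, and then factoring one copy of $\|w_{k-1}\|_{\cH^\ell(0,1)}$ out of every monomial that contains at least one power of $w$ (legitimate since $N_{\min}-1\ge 2$, so the leftover factors $\|w_{k-1}\|_{\cH^\ell}^{N_{\min}-1}$ and $\|v\|^a\|w_{k-1}\|^{b-1}$ are small), one arrives at a recursion
$$\|w_k\|_{\cH^\ell(0,1)}\le\Big(e^{-cT}+C\,\eta^{\,N_{\min}-1}\Big)\|w_{k-1}\|_{\cH^\ell(0,1)}+C\,\sup_{t>0}\|v\|_{Y^\ell_{t,T}}^{N_{\min}},$$
valid as long as $\|w_{k-1}\|_{\cH^\ell(0,1)}\le\eta$; the pure-$v$ Duhamel term is precisely the source of the $\sup_t\|v\|^{N_{\min}}$ on the right. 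Taking the data and $\sup_t\|v\|_{Y^\ell_{t,T}}$ small enough that $\xi:=e^{-cT}+C\eta^{N_{\min}-1}<1$, and using $w_0\equiv 0$, the iteration lemma \cite{WZ}*{Lemma 3.4} yields $\|w_k\|_{\cH^\ell(0,1)}\le\frac{C}{1-\xi}\sup_{t>0}\|v\|_{Y^\ell_{t,T}}^{N_{\min}}$ uniformly in $k$ (with a simultaneous bootstrap keeping $\|w_j\|_{\cH^\ell}\le\eta$ alive). For $\tau=(k-1)T+\alpha T$ with $\alpha\in(0,1)$ one runs the same estimate on $[(k-1)T,\tau]$, using the bound just obtained on $\|w_{k-1}\|_{\cH^\ell}$, to get $\|w(\tau)\|_{\cH^\ell(0,1)}\lesssim\sup_{t>0}\|v\|_{Y^\ell_{t,T}}^{N_{\min}}$ for all $\tau$; feeding this back into Theorem \ref{thm62} and isolating $\|w\|_{Y^\ell_{\tau,T}}$ from the $vv$-type Duhamel term, which carries the clean power $N_{\min}$, yields the claim, and the $w_t$ bound then follows from the explicit formula for $w_t$ together with the same estimates.

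The main obstacle is the case $\ell>2$, where there is no exponentially decaying semigroup estimate on $\cH^\ell$: the $\cH^\ell$-control comes instead from the energy/bootstrap argument of Theorem \ref{thm67}, so the window-by-window recursion must be run directly on the differentiated equation, extracting the dissipation from the structural condition $\sup_x G'(x)-\inf_y\Phi'(y)<\tfrac12$ (the exponential factor coming from $\tfrac12-\sup_x G'(x)+\inf_y\Phi'(y)>0$) in place of $e^{-cT}$; one must check that the extra terms produced by differentiation — products such as $G''(v+w)[v_x+w_x]^2$, $[\Phi'(v_x+w_x)]_x[v_{xx}+w_{xx}]$, and the like — still contribute only powers $\ge N_{\min}$ of $\|v\|+\|w\|$ in the relevant norms, which is exactly where the hypotheses of Assumption \hyperlink{(A)}{(A)} on $F^{(n)},\Phi^{(n)},G^{(n+1)}$ enter, as in Theorem \ref{thm67}. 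The secondary, purely bookkeeping difficulty is keeping the degrees straight when factoring one power of $w_{k-1}$ out of mixed monomials so the recursion is genuinely contractive; beyond that, everything repeats the arguments already carried out for the BBM--Burgers equation in Propositions \ref{wYlT} and \ref{Prop4} and the $\ell=3$ analysis of Section 4, with nonlinear interpolation covering non-integer $\ell$.
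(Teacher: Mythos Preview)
Your proposal is correct and follows essentially the same route as the paper: write the mild solution on windows $[(k-1)T,kT]$, use the semigroup decay and the $\Lip_{loc}$ bounds from Assumption (A) to get a recursion $\|w_k\|_{\cH^\ell}\le\xi\|w_{k-1}\|_{\cH^\ell}+C\sup_t\|v\|_{Y^\ell_{t,T}}^{N_{\min}}$ with $\xi<1$, invoke Theorem \ref{thm62} to close the bootstrap, and treat $\ell>2$ by the energy argument of Theorem \ref{thm67} driven by $\tfrac12-\sup_x G'(x)+\inf_y\Phi'(y)>0$. The only cosmetic difference is that the paper makes the propagation of the smallness constraint $\|w(s)\|_{\cH^\ell}+\|v(s)\|_{\cH^\ell}<1$ explicit by stepping through $[0,2T]$, $[2T,3T]$ before declaring the bound uniform, whereas you fold this into a single ``simultaneous bootstrap''; both are standard and amount to the same thing.
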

\begin{proof}
   For $\ell\leq 2$, we write ($t\in [0,T]$)
    \begin{align*}
        w(t+\tau)&=e^{At}w(\tau)+\int_{\tau}^{t+\tau}e^{A(t+\tau-s)}(I-\Delta)^{-1}\bigg[[-F(v+w)]_x(s)+[\Phi(v_x+w_x)]_x(s)+(I-\Delta)[G(v+w)](s)\bigg]ds
    \end{align*}
    and $w_k:=w(kT)$. \\
    
We will show the estimate for $\tau\in [0,T]$ first and then do for other cases. Assuming $\|\phi\|_{\cH^\ell(0,1)}^2+\|f\|_{L^2([0,T];\cH^{\ell-2}(0,1))}^2$ is small so that $\|v\|_{Y^{\ell}_{0,T}}+\|w\|_{Y^{\ell}_{0,T}}<\frac{1}{2K}$, then particularly, we have $\|w(t)\|_{\cH^{\ell}(0,1)}+\|v(t)\|_{\cH^{\ell}(0,1)}<1$ for all $t\in [0,T]$. We now can choose $\sup_{t>0}\|v(t)\|_{Y^{\ell}_{t,T}}$ small so that $K^2\|w(T)\|^2_{\cH^{\ell}(0.1)}+(K^2+1)\sup_{s>0}\|v\|_{Y^{\ell}_{s,T}}^2<1$, which implies $\|w(s)\|_{\cH^{\ell}(0,1)}^2+\|v(s)\|^2_{\cH^{\ell}(0,1)}<1$ for $s\in [T,2T]$. Therefore,
\begin{align*}
    &\|w_2\|_{\cH^{\ell}(0,1)}\\
    &\leq e^{-cT}\|w_1\|_{\cH^{\ell}(0,1)}+\int_{T}^{2T}e^{-c(2T-s)}\bigg(\|F(v+w)(s)\|_{\cH^{\ell-1}(0,1)}+\|\Phi(v_x+w_x)(s)\|_{\cH^{\ell-1}(0,1)}\\
&\quad\quad\quad\quad\quad\quad\quad\quad\quad\quad\quad\quad\quad\quad\quad\quad\quad\quad\quad\quad+\|G(v+w)(s)\|_{\cH^{\ell}(0,1)}\bigg)ds \\
&\leq  e^{-cT}\|w_{1}\|_{\cH^{\ell}(0,1)}+\int_{T}^{2T}e^{-c(2T-s)}\bigg(\sum_{i\in\{F,\Phi,G\}}C_{i}2^{N_i}(\|v(s)\|_{\cH^{\ell}(0,1)}^{N_i}+\|w(s)\|_{\cH^{\ell}(0,1)}^{N_i})\bigg)ds\\
&\leq  e^{-cT}\|w_{1}\|_{\cH^{\ell}(0,1)}+\int_{T}^{2T}e^{-c(2T-s)}C_{F,\Phi,G}(\|v(s)\|_{\cH^{\ell}(0,1)}^{N_{\min}}+\|w(s)\|_{\cH^{\ell}(0,1)}^{N_{\min}})ds\\
&\leq  e^{-cT}\|w_{1}\|_{\cH^{\ell}(0,1)}+C_{F,\Phi,G}\|v\|_{Y^{\ell}_{0,T}}^{N_{\min}}+C_{F,\Phi,G}\|w\|_{Y^{\ell}_{0,T}}^{N_{\min}}\\
&\leq  e^{-cT}\|w_{1}\|_{\cH^{\ell}(0,1)}+C_{F,\Phi,G}\|v\|_{Y^{\ell}_{T,T}}^{N_{\min}}+C_{F,\Phi,G}'K^{2^{N_{\min}}}\|w_{1}\|_{\cH^{\ell}(0,1)}^{N_{\min}}+C_{F,\Phi,G}'K^{2N_{\min}}\|v\|_{Y^{\ell}_{T,T}}^{N_{\min}}.
\end{align*}
Applying the argument in the proof of Proposition \ref{Prop4}, we can conclude that 
$$\|w(t)\|_{\cH^{\ell}(0,1)}\leq C_{F,\Phi,G,K}\sup_{t\geq0}\|v\|_{Y^{\ell}_{t,T}}^{N_{\min}}$$
for all $t\in[0,2T]$, and the implicit constant is independent of $T$ and the starting time.\\

For $t\in [2T,3T]$, we further choose $\sup_{t\geq0}\|v\|_{Y^{\ell}_{t,T}}$ small such that $\sup_{s>0}\|v\|_{Y^{\ell}_{s,T}}^2<\frac{1}{K^2C_{F,\Phi,G,K}+K^2+1}$, then we can apply the argument in the previous paragraph to conclude that 
$$\|w(t)\|_{\cH^{\ell}(0,1)}\leq C_{F,\Phi,G,K}\sup_{t\geq0}\|v\|_{Y^{\ell}_{t,T}}^{N_{\min}}<\frac{1}{K^2+1}$$
for all $t\in [2T,3T]$. For $t\geq 3T$, we do not need to make  $\sup_{t\geq0}\|v\|_{Y^{\ell}_{t,T}}$ smaller because we now have 
$K^2\|w(kT)\|_{\cH^{\ell}(0,1)}^2+(K^2+1)\sup_{s>0}\|v\|^2_{Y^{\ell}_{s,T}}\leq (K^2C_{F,\Phi,G,K}+K^2+1)\sup_{s>0}\|v\|^2_{Y^{\ell}_{s,T}}<1$. Thus, we have 
$$\|w(t)\|_{\cH^{\ell}(0,1)}\leq C_{F,\Phi,G,K}\sup_{t\geq0}\|v\|_{Y^{\ell}_{t,T}}^{N_{\min}}$$
for all $t\geq 0$ provided that $\sup_{s>0}\|v\|_{Y^{\ell}_{s,T}}^2<\frac{1}{K^2C_{F,\Phi,G,K}+K^2+1}$.

Then we can follow the proof of Proposition \ref{Prop4} to show 
$$\|w\|_{Y^{\ell}_{\tau,T}}\leq C_{F,\Phi,G,K}\sup_{t>0}\|v\|_{Y^{\ell}_{t,T}}^{N_{\min}}$$ provided that $\sup_{t>0}\|v\|_{Y^{\ell}_{t,T}}^{N_{\min}}$ is small enough.

Moreover, if we replace $K$ by $CK$, we have $$\|w_t\|_{Y^{\ell}_{\tau,T}}\leq C'_{F,\Phi,G,K}\sup_{t>0}\|v\|_{Y^{\ell}_{t,T}}^{N_{\min}}$$ provided that $\sup_{t>0}\|v\|_{Y^{\ell}_{t,T}}^{N_{\min}}$ is small enough.

For $\ell=3$, we use again 
$$\|w_{xxx}(t)\|_{L^2(0,1)}^2\leq e^{-2\Ge_0(t-\tau)}\|w_{xxx}(\tau)\|_{L^2(0,1)}^2+\int_\tau^{t}e^{-2\Ge_0(t-s)}(\|w(s)\|_{\cH^{2}(0,1)}^A+\|v(s)\|_{\cH^3(0,1)}^B)ds $$
and conclude that 
$$\|w\|_{Y^{3}_{\tau,T}}\leq C''_{F,\Phi,G,K,\Ge_0}\sup_{t>0}\|v\|_{Y^{\ell}_{t,T}}^{N_{\min}}$$
assuming  that $\sup_{t>0}\|v\|_{Y^{\ell}_{t,T}}^{N_{\min}}$ is small enough.
\end{proof}

With these theorems and propositions above, we can conclude Theorem \ref{th6-1}.

\subsection{Periodic Solutions}
From now on, we assume that $f$ has temporal period $\theta$. Then, $v(t+\theta)-v(t)=e^{At}(v(\theta)-\phi)$ and 
\begin{align*}
    &w(t+\theta)-w(t)\\
    &=e^{At}(w(\theta))+\int_{0}^te^{A(t-s)}(I-\Delta)^{-1}\bigg[[-F(u(s+\theta))+F(u(s))]_x+[\Phi(u_x(s+\theta))-\Phi(u(s))]_x\\
    &\quad\quad\quad\quad\quad\quad\quad\quad\quad+(I-\Delta)[G(u(s+\theta)-G(u(s))]\bigg]ds,
\end{align*}
where $u(s)=v(s)+w(s)$.

We define $z(t)=u(t+\theta)-u(t)$. Then,   
\begin{align*}
    z(t)&=e^{A(t-\tau)}(z(\tau))+\int_{\tau}^te^{A(t-s)}(I-\Delta)^{-1}\bigg[[-F(u(s+\theta))+F(u(s))]_x\\
    &\quad\quad\quad\quad\quad+[\Phi(u_x(s+\theta))-\Phi(u_x(s))]_x+(I-\Delta)[G(u(s+\theta)-G(u(s))]\bigg]ds,
\end{align*}
and $z(0)=u(\theta)-\phi$.

Note that if $\widetilde{u}(x,t)=u(x,t+\theta)$, then 
\begin{align} \label{u-u-1}
  &(\widetilde{u}-u)_{xx}+(\widetilde{u}-u)_{xxt}+[\Phi'(\widetilde{u}_x)-\Phi'(u_x)]u_{xx}+\Phi(\widetilde{u}_x)(\widetilde{u}-u)_{xx}-[G'(\widetilde{u})-G'(u)]u_{xx}-G'(\widetilde{u})(\widetilde{u}-u)_{xx}\nonumber\\
  &=(\widetilde{u}-u)_t+(\widetilde{u}-u)_x+[-F(\widetilde{u})+F(u)]_x+[G(\widetilde{u})-G(u)]+[G'(\widetilde{u})-G'(u)]_xu_x+[G'(u)]_x(\widetilde{u}-u)_{x}
\end{align}
\begin{lemma}\label{6-10}
Suppose  Assumption \hyperlink{(A)}{(A)} holds. Then, if $u$ is the solution to \eqref{eqnonlinear}, $\widetilde{u}(x,t):=u(x,t+\theta)$ and $z=\widetilde{u}-u$, then 
$$\|z\|_{Y^{\ell}_{\tau,T}}\leq C_{\ell,F,\Phi,G}\|z(\tau)\|_{\cH^\ell(0,1)}$$ for all $\tau \geq t_0$
provided that $\sup_{t\geq t_0}\|u\|_{Y^{\ell}_{t,T}}$ is sufficiently small.
\end{lemma}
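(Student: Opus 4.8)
The plan is to adapt the arguments of Lemmas \ref{aprieqap1} and \ref{aprieqap1h3}, now treating the three nonlinear differences as a perturbation controlled by the local Lipschitz bounds of Assumption \hyperlink{(A)}{(A)}. First I would subtract the equation \eqref{eqnonlinear} for $\widetilde u(x,t)=u(x,t+\theta)$ (whose source term is $f(\cdot,\cdot+\theta)=f$ by temporal periodicity) from the one for $u$; then $z=\widetilde u-u$ satisfies
\begin{align*}
 z_t+z_x-z_{xx}-z_{xxt}+[F(\widetilde u)-F(u)]_x=[\Phi(\widetilde u_x)-\Phi(u_x)]_x+(I-\Delta)[G(\widetilde u)-G(u)],
\end{align*}
with $z(0)=u(\cdot,\theta)-\phi$, so that for $t\ge\tau$ the mild formulation reads
\begin{align*}
 z(t)=e^{A(t-\tau)}z(\tau)+\int_\tau^t e^{A(t-s)}\Big((I-\Delta)^{-1}\big(-[F(\widetilde u)-F(u)]_x+[\Phi(\widetilde u_x)-\Phi(u_x)]_x\big)+[G(\widetilde u)-G(u)]\Big)(s)\,ds,
\end{align*}
using $(I-\Delta)^{-1}(I-\Delta)[G(\widetilde u)-G(u)]=G(\widetilde u)-G(u)$ for the last term. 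Since $\widetilde u$ is a time-shift of $u$, $\sup_{t\ge t_0}\|\widetilde u\|_{Y^{\ell}_{t,T}}\le\varepsilon:=\sup_{t\ge t_0}\|u\|_{Y^{\ell}_{t,T}}$, and for $\varepsilon$ small enough both $\|\widetilde u(s)\|_{\cH^{\ell}}$ and $\|u(s)\|_{\cH^{\ell}}$ remain $\le 1$ on $[\tau,\tau+T]$, so the local bounds in \hyperlink{(A)}{(A)} may be applied there.

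For $\ell\in[1,2]\cap(\ell_{F,\Phi,G},2]$ I would estimate the forcing in $\cH^{\ell-2}$ (and its $G$-piece in $\cH^{\ell}$): using boundedness of $\partial_x:\cH^{\ell-1}(0,1)\to\cH^{\ell-2}(0,1)$ together with $F\in\Lip_{loc}(\ell,\ell-1;C_F,N_F)$, $\Phi\in\Lip_{loc}(\ell-1,\ell-1;C_\Phi,N_\Phi)$ and $G\in\Lip_{loc}(\ell,\ell;C_G,N_G)$, each difference is dominated by
\begin{align*}
 C_{F,\Phi,G}\,\|z(s)\|_{\cH^{\ell}(0,1)}\big(\|\widetilde u(s)\|_{\cH^{\ell}(0,1)}^{N_{\min}-1}+\|u(s)\|_{\cH^{\ell}(0,1)}^{N_{\min}-1}\big)\le C\,\varepsilon^{N_{\min}-1}\|z(s)\|_{\cH^{\ell}(0,1)},
\end{align*}
where $N_{\min}=\min\{N_F,N_\Phi,N_G\}$. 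Inserting the exponential decay $\|e^{At}\|_{\cH^{\ell}\to\cH^{\ell}}\le e^{-c_{\ell}t}$ from the earlier lemma, then taking $\sup_t$ and $L^2_t$ over $[\tau,\tau+T]$ and applying Young's convolution inequality exactly as in the proof of Lemma \ref{aprieqap1}, yields
\begin{align*}
 \|z\|_{Y^{\ell}_{\tau,T}}\le C\|z(\tau)\|_{\cH^{\ell}(0,1)}+C\,\varepsilon^{N_{\min}-1}\|z\|_{Y^{\ell}_{\tau,T}}.
\end{align*}
Since $N_{\min}-1\ge 1$ (the nonlinearities being at least quadratic, as in Theorem \ref{main61}), taking $\varepsilon$ small absorbs the second term and gives the claim; the companion $z_t$-bound follows, if needed, from $z_t=Az+(I-\Delta)^{-1}\big(-[F(\widetilde u)-F(u)]_x+[\Phi(\widetilde u_x)-\Phi(u_x)]_x\big)+[G(\widetilde u)-G(u)]$ and boundedness of $A$ on $\cH^{\ell}$.

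For $\ell=3$ the semigroup $e^{At}$ is not manifestly dissipative on $\cH^3$, so I would instead use the energy scheme of Lemma \ref{aprieqap1h3} and Theorem \ref{thm67}: differentiate the $z$-equation once in $x$, isolate the top-order terms using the rearrangement \eqref{u-u-1} so that $z_{xxx}$ is multiplied by $G'(\widetilde u)-\Phi'(\widetilde u_x)$, then multiply by $z_{xxx}$ and integrate over $(0,1)$. The structural hypothesis $\sup_x G'(x)-\inf_y\Phi'(y)<\frac{1}{2}$ from \hyperlink{(A)}{(A)} allows the term $\int_0^1[G'(\widetilde u)-\Phi'(\widetilde u_x)]z_{xxx}^2\,dx$ to be absorbed, leaving a Gronwall inequality
\begin{align*}
 \|z_{xxx}(t)\|_{L^2(0,1)}^2\le e^{-2\Ge_0(t-\tau)}\|z_{xxx}(\tau)\|_{L^2(0,1)}^2+C\int_\tau^t e^{-2\Ge_0(t-s)}\big(\|z_t\|_{\cH^1(0,1)}^2+\|z\|_{\cH^2(0,1)}^2+R(s)\big)\,ds
\end{align*}
for some $\Ge_0>0$, in which the nonlinear remainder $R$ is estimated via the $n=1$ derivative Lipschitz conditions of \hyperlink{(A)}{(A)} and either carries an extra $\varepsilon^{N_{\min}-1}$ factor or is of strictly lower order, hence controlled by the $\ell\le 2$ bound $\|z\|_{Y^2_{\tau,T}}\le C\|z(\tau)\|_{\cH^2(0,1)}$ just established; combining the two gives $\|z\|_{Y^3_{\tau,T}}\le C\|z(\tau)\|_{\cH^3(0,1)}$. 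For integer $\ell>3$ one differentiates $(\ell-2)$ times and iterates this scheme, and for non-integer $\ell$ one applies nonlinear interpolation, just as in Theorems \ref{thm67} and \ref{th2-1}.

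I expect the main obstacle to be the bookkeeping in the $\ell\ge 3$ energy step: after differentiating, one must check that every product of lower-order derivatives of $F,\Phi,G$ with derivatives of $z$ is either absorbed by the sign condition on $G'-\Phi'$ or bounded by a $Y^{\ell-1}$-norm of $z$ times a $Y^{\ell}$-norm of $u$ (which is small), so that all constants stay independent of $\tau$ and uniform over $[\tau,\tau+T]$. The range $\ell\in[1,2]$, by contrast, is a near-verbatim repetition of Lemma \ref{aprieqap1} with the bilinear estimate of Lemma \ref{bilinearest} replaced by the Lipschitz hypotheses of \hyperlink{(A)}{(A)}.
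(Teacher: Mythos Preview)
Your proposal is correct and follows essentially the same approach as the paper: the mild-solution/absorption argument for $\ell\in[1,2]$ and the energy estimate on $z_{xxx}$ using the sign condition $\sup G'-\inf\Phi'<\tfrac12$ for $\ell=3$, followed by interpolation, match the paper's proof almost step for step. The only cosmetic difference is that the paper's top-order coefficient in the $\ell=3$ step comes out as $G'(u)-\Phi'(\widetilde u_x)$ rather than $G'(\widetilde u)-\Phi'(\widetilde u_x)$, but this is immaterial since the absorption uses only the uniform bound $\sup_x G'(x)-\inf_y\Phi'(y)<\tfrac12$.
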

\begin{proof}
Let $\tau\geq t_0$. We first consider the case $\ell\leq 2$. Choosing $\sup_{t\geq t_0}\|u\|_{Y^{\ell}_{t,T}}$ is sufficiently small is small so that $\|u(s+\theta)\|_{\cH^{\ell}(0,1)},\|u(s)\|_{\cH^{\ell}(0,1)}<1$, we have
\begin{align*}
    &\|z(t)\|_{\cH^{\ell}(0,1)}\\&\leq e^{-c(t-\tau)}\|z(\tau)\|_{\cH^{\ell}(0,1)}+\int_{\tau}^te^{-c(t-s)}\sum_{i\in\{F,\Phi,G\}}\bigg[C_i\|z(s)\|_{\cH^{\ell}(0,1)}(\|u(s+\theta)\|_{\cH^{\ell}(0,1)}^{N_{i}-1}+\|u(s)\|_{\cH^{\ell}(0,1)}^{N_i-1})\bigg]ds\\
    &\leq  e^{-c(t-\tau)}\|z(\tau)\|_{\cH^{\ell}(0,1)}+\int_{\tau}^{t}2e^{-c(t-s)}\|z(s)\|_{\cH^\ell(0,1)}\bigg[C_F\sup_{t\geq t_0}\|u\|_{Y^{\ell}_{t,T}}^{N_F-1}+C_{\Phi}\sup_{t\geq t_0}\|u\|_{Y^{\ell}_{t,T}}^{N_\Phi-1}+C_G\sup_{t\geq t_0}\|u\|_{Y^{\ell}_{t,T}}^{N_G-1}\bigg]ds.
\end{align*}

Therefore, assuming that $\sup_{t\geq t_0}\|u\|_{Y^{\ell}_{t,T}}<1$, 
\begin{align*}
    \sup_{t\in [\tau,T+\tau]}\|z(t)\|_{\cH^\ell(0,1)}\leq \|z(\tau)\|_{\cH^\ell(0,1)}+\frac{2}{c}(C_{F+\Phi+G}\sup_{t\geq t_0}\|u\|_{Y^{\ell}_{t,T}}^{N_{\min}-1})\|z\|_{Y^{\ell}_{\tau,T}}
\end{align*}
and
\begin{align*}
    \|z\|_{L^2([\tau,\tau+T];\cH^\ell(0,1))}\leq \frac{1}{\sqrt{2c}} \|z(\tau)\|_{\cH^\ell(0,1)}+\frac{2}{c}(C_{F+\Phi+G}\sup_{t\geq t_0}\|u\|_{Y^{\ell}_{t,T}}^{N_{\min}-1})\|z\|_{Y^{\ell}_{\tau,T}}
\end{align*}
Thus, 
\begin{align*}
    \|z\|_{Y^{\ell}_{\tau,T}}&\leq C_c\|z(\tau)\|_{\cH^{\ell}(0,1)}+\frac{4}{c}C_{F+\Phi+G}\sup_{t\geq t_0}\|u\|_{Y^{\ell}_{t,T}}^{N_{\min}-1}\|z\|_{Y^{\ell}_{\tau,T}}.
\end{align*}
If we choose $\sup_{t\geq t_0}\|u\|_{Y^{\ell}_{t,T}}<(\frac{c}{8C_{F+\Phi+G}})^{\frac{1}{N_{\min}-1}}$, then 
$$\|z\|_{Y^{\ell}_{\tau,T}}\leq 2C_c\|z(\tau)\|_{\cH^{\ell}(0,1)}$$
and 
$$\|z_t\|_{Y^{\ell}_{\tau,T}}\leq 6C_c\|z(\tau)\|_{\cH^{\ell}(0,1)}.$$

We now consider $\ell\in \N\setminus\{1,2\}$. For simplicity, we consider $\ell=3$. Using \eqref{u-u-1}, 
\begin{align}\label{zxxxineq}
    &\frac{1}{2}\frac{d}{dt}\|z_{xxx}(t)\|_{L^2(0,1)}^2+\frac{1}{2}\|z_{xxx}(t)\|_{L^2(0,1)}^2-\int_0^1[G'(u)-\Phi'(\widetilde{u}_x)]|z_{xxx}(t)|^2dx \nonumber\\
    &\leq \frac{1}{4\Ge}(\|z_t\|_{\cH^1(0,1)}^2+\|z\|_{\cH^2(0,1)}^2+\|[\Phi'(\widetilde{u}_x)-\Phi'(u_x)]u_{xx}\|_{\cH^1(0,1)}^2+\|[G'(\widetilde{u})-G'(u)]u_{xx}\|_{\cH^1(0,1)}^2\nonumber\\
    &\quad\quad\quad+\|[\Phi(\widetilde{u}_x)]_xz_{xx}\|_{L^2(0,1)}^2+\|[G'(\widetilde{u})]_xz_{xx}\|_{L^2(0,1)}^2+\|F(\widetilde{u})-F(u)\|_{\cH^{2}}^2+\|G(\widetilde{u})-G(u)\|_{\cH^{1}}^2\nonumber\\
    &\quad\quad\quad + \|[G(\widetilde{u})-G(u)]u_x\|_{\cH^1(0,1)}^2+\|[G'(u)]_xz_x\|_{\cH^1(0,1)}^2)+10\Ge\|z_{xxx}\|_{L^2(0,1)}^2. 
\end{align}
Using the assumptions on $F, \Phi,G$ and the fact that $L^{\infty}(0,1)\subset \cH^1(0,1)$ and $\|u\|_{\cH^s(0,1)},\|\widetilde{u}\|_{\cH^s(0,1)}\leq C_{s}$ for all $s>\ell_{F,\Phi,G}$, we have 
\begin{align*}
    &\frac{1}{2}\frac{d}{dt}\|z_{xxx}(t)\|_{L^2(0,1)}^2+\frac{1}{2}\|z_{xxx}(t)\|_{L^2(0,1)}^2-\int_0^1[G'(u)-\Phi'(\widetilde{u}_x)]|z_{xxx}(t)|^2dx-10\Ge\|z_{xxx}\|_{L^2(0,1)}^2\\
    &\leq \frac{1}{4\Ge}(\|z_t\|_{\cH^1(0,1)}^2+\|z\|_{\cH^2(0,1)}^2+C^2C_{\Phi'}^2\|z\|_{\cH^2(0,1)}^2+C^2C^2_{G'}\|z\|_{\cH^2(0,1)}^2\\
&\quad\quad\quad+C^2C_{\Phi}^2\|z\|_{\cH^2(0,1)}^2+C^2C_{G}^2\|z\|_{\cH^2(0,1)}^2+C_F^2\|z\|_{\cH^{2}(0,1)}^2+C_G^2\|z\|_{\cH^{2}(0,1)}^2\\
    &\quad\quad\quad + C_G^2C^2 \|z\|^2_{\cH^2(0,1)}+C^2C_{G'}^2\|z\|_{\cH^2(0,1)}^2)\\
    &\leq C_{\Ge,F,\Phi,G}\|z(t)\|^2_{\cH^{2}(0,1)}.
\end{align*}
provided that $\Ge$ is chosen to be small enough. By a standard argument, we have 
$$\|z_{xxx}(t)\|_{L^2(0,1)}^2\leq e^{-\Ge_0(t-\tau)}\|z_{xxx}(\tau)\|_{L^2(0,1)}^2+C_{\Ge,F,\Phi,G}'\int_{\tau}^{t}e^{-\Ge_0(t-s)}\|z(s)\|_{\cH^2(0,1)}^2ds$$
and thus,
$$\|z_{xxx}\|_{Y^{0}_{\tau,T}}^2\leq C''_{\Ge,F,\Phi,G}\|z_{xxx}(\tau)\|_{L^2(0,1)}^2+ C'_{\Ge,F,\Phi,G}\|z\|_{Y^{2}_{\tau,T}}^2\leq C_{\Ge,F,\Phi,G}'''\|z(\tau)\|_{\cH^3(0,1)}^2,$$
which implies $\|z\|_{Y^3_{\tau,T}}\leq C_{\Ge,F,\Phi,G}^{(4)}\|z(\tau)\|_{\cH^3(0,1)}$.

For other values of $\ell$, we can apply nonlinear interpolation to conclude the inequality.
\end{proof}

\begin{proof}[Proof of Theorem \ref{th6-2}]\ \\
Note that we have established that 
\begin{align*}
    \|z_{k}\|_{\cH^{\ell}(0,1)}&\leq e^{-cT}\|z_{k-1}\|_{\cH^{\ell}(0,1)}+2C_{F+\Phi+G}\sup_{t>0}\|u\|_{Y^{\ell}_{t,T}}^{N_{\min}-1}\int_{(k-1)T}^{kT}e^{-c(kT-s)}\|z(s)\|_{\cH^{\ell}(0,1)}ds \\
    &\leq e^{-cT}\|z_{k-1}\|_{\cH^{\ell}(0,1)}+2C_{F+\Phi+G}c^{-1}\sup_{t>0}\|u\|_{Y^{\ell}_{t,T}}^{N_{\min}-1}\|z\|_{Y^{\ell}_{(k-1)T,T}}  \\
    &\leq e^{-cT}\|z_{k-1}\|_{\cH^{\ell}(0,1)}+C_{\ell,F,\Phi,G,c}\sup_{t>0}\|u\|_{Y^{\ell}_{t,T}}^{N_{\min}-1} \|z((k-1)T)\|_{\cH^{\ell}(0,1)}
\end{align*}
if $\ell\leq 2$. Then, following the argument of the proof of Theorem \ref{th2-2}, we can conclude that 
$$\|z(t)\|_{\cH^{\ell}(0,1)},\|z_t(t)\|_{\cH^{\ell}(0,1)}\leq C'e^{-C''t}\|u(\theta)-\phi\|_{\cH^{\ell}(0,1)}$$
provided that $\sup_{t>0}\|u\|_{Y^\ell_{t,T}}$ is sufficiently small, and they are also true for the norm of $Y^{\ell}_{\tau,T}$ with another implicit constants.

For $\ell=3$, using \eqref{zxxxineq}, we have (using $\|u(t)\|_{\cH^s(0,1)}\leq \sup_{t>0}\|u\|_{Y^{s}_{t,T}}$ instead)
\begin{align*}
    &\frac{1}{2}\frac{d}{dt}\|z_{xxx}(t)\|_{L^2(0,1)}^2+\frac{1}{2}\|z_{xxx}(t)\|_{L^2(0,1)}^2-\int_0^1[G'(u)-\Phi'(\widetilde{u}_x)]|z_{xxx}(t)|^2dx-10\Ge\|z_{xxx}\|_{L^2(0,1)}^2\\
    &\leq C_{\Ge,F,\Phi,G}(\sup_{t>0}\|u\|_{Y^3_{t,T}}^{2N_{\min}}+1)\|z(t)\|_{\cH^{2}(0,1)}.
\end{align*}
Therefore, 
$$\|z_{xxx}\|_{L^2(0,1)}^2\leq e^{-k(\Ge_0)T}\|\psi_{xxx}\|_{L^2(0,1)}^2+C_{\Ge,F,\Phi,G}'e^{-\Ge_0'(k-1)T}(\sup_{t>0}\|u\|_{Y^3_{t,T}}^{2N_{\min}}+1)\|\psi\|_{\cH^2(0,1)},$$
where $\psi=u(\cdot,\theta)-\phi$. Then, following the argument of the proof of Theorem \ref{th2-2}, we have the desired inequality for $\ell=3$ and so do for all $\ell>\ell_{F,\Phi,G}$.
\end{proof}
\begin{proof}[Proof of Theorem \ref{th6-3}]\ \\
The proof is basically the same as the  one of Theorem \ref{th2-3}. One need to verify is the convergence of $\widetilde{u}(\cdot,\theta)-u_{n+1}$, but this can be done by using Lemma \ref{6-10} to conclude that $\|\widetilde{u}-u(n\theta+\cdot)\|_{Y^{\ell}_{0,\theta}}\leq C\|\widetilde{\phi}-u(n\theta)\|_{\cH^{\ell}(0,1)}\to 0$ as $n\to \infty$ provided that 
$\sup_{t\geq 0}\|\widetilde{u}\|_{Y^{\ell}_{t,\theta}}+\sup_{t\geq 0}\|u\|_{Y^{\ell}_{t,\theta}}$ is sufficiently small. Everything else is the same as the proof of Theorem \ref{th2-3}.
\end{proof}

\begin{proof}[Proof of Theorem \ref{th6-4}]\ \\
We first establish the absorbing property for $\ell=1$. 

Since $\int_0^1F(u)u_xdx=0$ as $u(0)=0=u(1)$ by hypothesis,

\begin{align*}
    &\frac{1}{2}\frac{d}{dt}(\|u\|_{L^2(0,1)}^2+\|u_x\|_{L^2(0,1)}^2)+\|u_x\|_{L^2(0,1)}^2\\
    &= \int_0^1[\Phi(u_x)]_xudx+\int_0^1G(u)udx+\int_0^1[G(u)]_xu_xdx+\int_0^1f(t)udx\\
    &= -\int_0^1 [\Phi(u_x)-\Phi(0)]u_xdx+\int_0^1G(u)udx+\int_0^1G'(u)u_x^2dx+\int_0^1f(t)udx\\
    &= -\int_0^1 \Phi'(\xi u_x )u_x^2dx+\int_0^1[G(u)-G(0)]udx+\int_0^1G'(u)u_x^2dx+\int_0^1f(t)udx\\
    &\leq \int_0^1[G'(u)-\Phi'(\xi u_x)]u_x^2+\int_0^1 G'(\xi' u)u^2dx+\frac{C_M}{\Ge}\|f(t)\|^2_{\cH^{-1}(0,1)}+\Ge(\|u\|_{L^2(0,1)}^2+\|u_x\|_{L^2(0,1)}^2)\\
    &\leq \frac{1}{2}\|u_x\|_{L^2(0,1)}^2+K_G\|u\|_{L^2(0,1)}^2+\frac{C_M}{\Ge}\|f(t)\|^2_{\cH^{-1}(0,1)}+\Ge(\|u\|_{L^2(0,1)}^2+\|u_x\|_{L^2(0,1)}^2)
\end{align*}
for some $\xi,\xi'\in[0,1]$.

Therefore, by Poincar\'e inequality, we have 
\begin{align*}
     &\frac{1}{2}\frac{d}{dt}(\|u\|_{L^2(0,1)}^2+\|u_x\|_{L^2(0,1)}^2)+(\frac{1}{4}-\Ge)\|u_x\|_{L^2(0,1)}^2+(\frac
     {(c')^2}{4}-K_G-\Ge)\|u\|_{L^2(0,1)}^2\\
     &\leq \frac{C_M}{\Ge}\|f(t)\|^2_{\cH^{-1}(0,1)},
\end{align*}
which is enough to conclude the absorbing property following the proof of Theorem \ref{th2-4} provided that $\frac{(c')^2}{4}-K_G-\Ge>0$ and $\frac14-\Ge>0$. Therefore, we can conclude that $u$ is globally stable in $\cH^1(0,1)$.

We now show the absorbing property for $\ell=2$. Since 
\begin{align*}
u_t+u_{x}-u_{xx}-u_{xxt}&=[-[F(u)]_x+[\Phi(u_x)]_x]+(I-\Delta)G(u)+f(t), 
\end{align*}
 multiplying $-u_{xx}$ we have
\begin{align*}
   &\frac{d}{dt}(\|u_x\|_{L^2(0,1)}^2+\|u_{xx}\|_{L^2(0,1)}^2)+\|u_{xx}\|_{L^2(0,1)}^2-\|u_x\|_{L^2(0,1)}\|u_{xx}\|_{L^2(0,1)}\\
   &\leq C\|F(u)\|_{\cH^1(0,1)}\|u_{xx}\|_{L^2(0,1)}-\int_0^1[\Phi'(u_x)-G'(u)]u_{xx}^2dx+C\|G'(u)\|_{\cH^1(0,1)}\|u_x\|_{L^2(0,1)}\|u_{xx}\|_{L^2(0,1)}\\
   &\quad\quad +\frac{1}{4\Ge}\|G(u)\|_{L^2(0,1)}^2+\frac{1}{4\Ge}\|f(t)\|_{L^{2}(0,1)}^2+2\Ge\|u_{xx}\|_{L^2(0,1)}^2\\
   &\leq \frac{C'}{\Ge}\|F(u)\|_{\cH^1(0,1)}^2+ \frac{C'}{\Ge}\|G'(u)\|_{\cH^1(0,1)}^2\|u_x\|_{L^2(0,1)}+\frac{1}{4\Ge}\|G(u)\|_{L^2(0,1)}^2+\frac{1}{4\Ge}\|f(t)\|_{L^{2}(0,1)}^2+(K+4\Ge)\|u_{xx}\|_{L^2(0,1)}^2.
\end{align*}
We write $K=\sup_{x\in \R}G'(x)-\inf_{y\in\R}\Phi'(y)$.
Therefore, we have 
\begin{align*}
    &\frac{1}{2}\frac{d}{dt}(\|u_x\|_{L^2(0,1)}^2+\|u_{xx}\|_{L^2(0,1)}^2)+\frac{(c')^2}{2}\|u_x\|_{L^2(0,1)}^2+(\frac{1}{2}-K-4\Ge)\|u_{xx}\|_{L^2(0,1)}\\
    &\leq \sum_{i\in \{F,G',G\}} \frac{C'C_i^2}{\Ge}\|u\|_{\cH^1(0,1)}^2(1+\|u\|_{\cH^1}^{2N_i'})+\frac{1}{4\Ge}\|f(t)\|_{L^{2}(0,1)}^2\\
    &\leq C_{M,F,G',\Ge}  \sum_{i\in \{F,G',G\}}[e^{-2ct}\|\phi\|_{\cH^1(0,1)}^2+C^2\delta^2][1+[e^{-2N_{i}'ct}\|\phi\|_{\cH^1(0,1)}^2+C^{2N_i'}\delta^{2N_i'}]]+C_{\Ge}\delta^2.
\end{align*}
Following the proof of Theorem \ref{th2-4} as showing the absorbing property for $\cH^{\ell}(0,1)$, one has
\begin{align*}
    \|u_x(t)\|_{L^2(0,1)}^2+\|u_{xx}(t)\|_{L^2(0,1)}^2\leq e^{-c_{\Ge}t}M\|\phi\|_{\cH^2(0,1)}^2+e^{-c_{\Ge}t}(e^{-2ct}\|\phi\|_{\cH^1(0,1)}^2+C^2\delta^2)\Phi_{2}(\|\phi\|_{\cH^1(0,1)},\delta)+C'\delta^2
\end{align*}
provided that  $\min\{\frac{1-2K-8c}{4},0\}<\Ge<\frac{1-2K}{4}$, where $c$ is from the absorbing property for $\ell=1$.
\end{proof}

\section*{Acnowledgments}
Authors thank reviewer's precious comments and diligent work. \\

\noindent One of the authors, Taige Wang, is supported in recent years by Faculty Development Funds granted by College of Arts and Sciences, University of Cincinnati, and Taft Awards by Taft Research Center, University of Cincinnati. Authors would take this chance to thank these generous supports.

\end{document}